\documentclass[10pt]{article}

\usepackage{etex}
\usepackage[margin={1.2in}]{geometry}
\usepackage[titletoc]{appendix}

\usepackage{amsmath,blkarray,booktabs,bigstrut}
\usepackage{tikz-cd}

\usepackage{enumerate}

\usepackage{tikz,eso-pic}
\usepackage{tikz-3dplot}

\usepackage{tikz}
\usetikzlibrary{matrix}

\usepackage{eucal}
\usepackage{mathrsfs}
\usepackage{stmaryrd}

\usepackage{hyperref} 

\usepackage{rotating}

\usepackage{longtable}

\usepackage[sc]{mathpazo}
\usepackage{courier}

\usepackage[utf8]{inputenc}
\usepackage[T1]{fontenc}

\usepackage[english]{babel}
\usepackage{blindtext}

\usepackage{amsmath}

\usepackage{microtype}

\usepackage{amsmath}
\usepackage{amssymb}
\usepackage{amsthm}
\usepackage{color}
\usepackage{latexsym,extarrows}

\usepackage[all]{xy}
\usepackage{graphicx}
\usepackage{subcaption}

\usepackage[stable,multiple]{footmisc}

\RequirePackage[font=small,format=plain,labelfont=bf,textfont=it]{caption}
\makeatletter
\newsavebox{\@brx}
\newcommand{\llangle}[1][]{\savebox{\@brx}{\(\m@th{#1\langle}\)}%
  \mathopen{\copy\@brx\kern-0.5\wd\@brx\usebox{\@brx}}}
\newcommand{\rrangle}[1][]{\savebox{\@brx}{\(\m@th{#1\rangle}\)}%
  \mathclose{\copy\@brx\kern-0.5\wd\@brx\usebox{\@brx}}}
\makeatother

\def\e#1\e{\begin{equation}#1\end{equation}}
\def\ea#1\ea{\begin{align}#1\end{align}}

\theoremstyle{plain}
\newtheorem{thm}{Theorem}[section]
\newtheorem{lem}[thm]{Lemma}
\newtheorem{prop}[thm]{Proposition}
\newtheorem{cor}[thm]{Corollary}
\theoremstyle{definition}
\newtheorem{dfn}[thm]{Definition}
\newtheorem{ex}[thm]{Example}
\newtheorem{rem}[thm]{Remark}

\newtheorem{lemma}[thm]{Lemma}

\newtheorem{corollary}[thm]{Corollary}
\theoremstyle{definition}
\newtheorem{definition}[thm]{Definition}
\newtheorem{example}[thm]{Example}

\newcommand{\op}{\operatorname}
\newcommand{\C}{\mathbb{C}}

\newcommand{\Z}{\mathbb{Z}}
\renewcommand{\H}{\mathbf{H}}

\newcommand{\im}{\op{im}}

\newcommand{\pii}{2\pi \mathbf{i}}

\def\XXint#1#2#3{{\setbox0=\hbox{$#1{#2#3}{\int}$}
\vcenter{\hbox{$#2#3$}}\kern-.5\wd0}}

\numberwithin{equation}{section}

\makeatletter
\newcommand{\subjclass}[2][2010]{%
  \let\@oldtitle\@title%
  \gdef\@title{\@oldtitle\footnotetext{#1 \emph{Mathematics Subject Classification.} #2}}%
}
\newcommand{\keywords}[1]{%
  \let\@@oldtitle\@title%
  \gdef\@title{\@@oldtitle\footnotetext{\emph{Key words and phrases.} #1.}}%
}
\makeatother

\makeatletter
\let\orig@afterheading\@afterheading
\def\@afterheading{%
   \@afterindenttrue
  \orig@afterheading}
\makeatother

\begin{document}
\title{\bf On the Cohomology Classes of Certain Eisenstein-Kronecker Forms}
\author{Zhixuan Sheng and Jie Zhou}
\date{}

\maketitle

\begin{abstract}
	We study the de Rham cohomology and Deligne splitting for a class of Eisenstein-Kronecker forms by introducing a modular weight grading.
	As an application, 
we compute a particular cohomology class that gives rise to generating series of Gromov-Witten invariants of elliptic curves.

\end{abstract}
\setcounter{tocdepth}{1} \tableofcontents

\section{Introduction}

Deligne's theory of mixed Hodge structures provides a fundamental framework for studying mirror symmetry and enumerative geometry, see \cite{Deligne:1997,  Cox:1999, Voisin:1999, Katzarkov:2008}. For example, for a large class of Calabi-Yau manifolds, the mirror principle
\cite{Givental:1996, Givental:1998, Lian:1997, Lian:1998}
establishes the genus-zero mirror symmetry correspondence
 ``Gromov-Witten generating series = period integrals'', where both sides admit natural formulations in terms of mixed Hodge structures \cite{Cox:1999}.
It is   widely expected  that a similar Hodge-theoretic picture persists at higher genera, see e.g., \cite{Bershadsky:1993, Bershadsky:1994, Costello:2012}.

Recently, building on \cite{Bloch:2000, Okounkov:2006}, it is shown in \cite{Zhou:2023GWgeneratingseries} that this is indeed the case for elliptic curves.
More precisely, the all-genus Gromov-Witten invariants of elliptic curves can be expressed as explicit configuration space integrals of cohomological classes constructed from sections of Poincaré bundles. Moreover, these invariants admit a purely mixed-Hodge-theoretic formulation in de Rham cohomology \cite{Zhou:2023cohomologicalpairings, Zhou:2026iteratedintegrals}.

Let us briefly recall the relevant results.
Let $E$ be an elliptic curve.
For each $n\geq 1$, there is an associated simple normal crossing divisor $\mathsf{D}\subseteq E^{n}$
and a concrete element (see Definition \ref{dfndeterminantformula})
\begin{equation}\label{eqnGWdifferential}
\omega_{n}\in H^{0}(E^{n}, \Omega_{E^{n}}^{n}(\log \mathsf{D}))\,,
\end{equation}
where $\Omega_{E^{n}}^{n}(\log \mathsf{D})$ is the sheaf of $n$-forms with at worst logarithmic poles  along $\mathsf{D}$. 
The form $\omega_{n}$ can be conveniently expressed in terms of the Eisenstein-Kronecker functions
which arise as Laurent coefficients of the Kronecker theta function (see Section \ref{basicsonellipticfunctions}).
It determines a de Rham cohomology class 
\[
[\omega_{n}]_{\mathrm{dR}}\in H^{n}_{\mathrm{dR}}(U)\,,\quad U=E^{n}-\mathsf{D}\,.
\]
For the quasi-projective space $U$, its cohomology space  $H^{n}(U)$ carries a mixed Hodge structure $(H^{n}(U), W_{\bullet},F^{\bullet})$ and furthermore admits a Deligne canonical splitting \cite[Lemma 1.2.8]{Deligne:1971}, which generalizes the Hodge decomposition for pure Hodge structures,
\begin{equation}
H^{n}_{\mathrm{dR}}(U)=\bigoplus_{\substack{p,q\\ n\leq p+q\leq 2n}}I^{p,q}\,.
\end{equation}
Denote the $I^{n,0}$-component of the class $[\omega_{n}]_{\mathrm{dR}}$ by $[\omega_{n}]_{\mathrm{Hodge}}$.
It is shown  \cite[Proposition 4.8]{Zhou:2023cohomologicalpairings}, \cite[Equation 1.5]{Zhou:2026iteratedintegrals} that, up to the so-called elliptic completion (described in Section \ref{basicsonellipticfunctions}), the class $[\omega_{n}]_{\mathrm{Hodge}}$ coincides with the Gromov-Witten generating series of the elliptic curve mirror to $E$.\\

Motivated partially by these Hodge-theoretic aspects of mirror symmetry, the present work aims to study a broader class of Eisenstein-Kronecker forms, with particular emphasis on their de Rham cohomology classes and Deligne splittings. Our main results may be summarized as follows.
\begin{itemize}
    \item We introduce an additional structure, namely the \emph{modular weight grading}, on a certain space of Eisenstein-Kronecker forms (Definition \ref{dfnmodularweightgradingonTD}), and show that this grading behaves well under the differential (Lemma \ref{lemdclosednessformodularweightgrading}).
    \item We characterize the space 
    $F^{n}H^{n}_{\mathrm{dR}}(U)$  in terms of Eisenstein-Kronecker forms (Proposition \ref{propFnHndescription}), and construct an explicit generating set for this space (Proposition \ref{propcharacterizationofkerbarpartial}).
    \item 
    For forms in $F^{n}H^{n}_{\mathrm{dR}}(U)$ expressed in terms of these generators, we determine their Deligne splittings (Proposition \ref{proppqtypeinDelignesplitting}).
 
\end{itemize}

Among these results, the following explicit realization of the Deligne splitting on the level of forms is particularly relevant to the application considered in this work.

\begin{thm}(Theorem \ref{thmDelignecanonicalsplittingonformlevel})
For the form $\omega_{n}$ in \eqref{eqnGWdifferential}, there exist explicit $d$-closed forms $\phi_{j}$ (Lemma \ref{lemexpressgradingjpartintermsofbuldingblocks}), induced by the modular weight grading (Definition 	\ref{dfnmodularweightgradingonTD}), such that $\omega_{n}$ admits the decomposition 
\begin{equation}
\omega_{n}
	=\sum_{j=0}^{n-1}	\phi_{j}\in  H^{0}(X, \Omega_{E^{n}}^{n}(\log \mathsf{D}))\,.
\end{equation}
This decomposition induces the Deligne canonical splitting  $[	\omega_{n} ]_{\mathrm{dR}}=\sum_{j=0}^{n-1}	[\phi_{j}]_{\mathrm{dR}}\in\bigoplus_{j=0}^{n-1}I^{n,j}$.
\end{thm}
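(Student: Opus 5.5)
The plan is to build the decomposition directly from the modular weight grading and then pin down the Hodge types with the splitting criterion of Proposition \ref{proppqtypeinDelignesplitting}. First, expand $\omega_{n}$ (Definition \ref{dfndeterminantformula}) in terms of Eisenstein-Kronecker functions. Each such function is a Laurent coefficient of the Kronecker theta function. After elliptic completion, it becomes a polynomial in the non-holomorphic combination $\frac{z-\bar z}{\tau-\bar\tau}$ (the ``$A$''-variable) with holomorphic-in-$z$ coefficients. Define $\phi_{j}$ to be the modular weight $j$ component of $\omega_{n}$ under the grading of Definition \ref{dfnmodularweightgradingonTD}, where $j$ records the degree in the anti-holomorphic variable that is traded for modular weight. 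Lemma \ref{lemexpressgradingjpartintermsofbuldingblocks} writes $\phi_{j}$ explicitly in terms of the building blocks. Since $\omega_{n}$ is a top-degree holomorphic log form, its determinant expansion involves at most $n-1$ such factors, because one column is always a constant coefficient. So only $0\le j\le n-1$ occur, and $\omega_{n}=\sum_{j}\phi_{j}$ holds at the level of forms.

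Second, I would show that each $\phi_{j}$ is $d$-closed. Lemma \ref{lemdclosednessformodularweightgrading} says that $d$ (equivalently $\bar\partial$, since all forms are of top holomorphic degree) is compatible with the grading. More precisely, $\bar\partial$ lowers the weight in a controlled way and produces terms that land in distinct graded pieces. Since $\bar\partial\omega_{n}=0$ away from $\mathsf{D}$, each graded piece of $\bar\partial\omega_{n}$ vanishes separately. I would then argue inductively from the top weight $j=n-1$ downward that $\bar\partial\phi_{j}=0$, so each $\phi_{j}$ defines a class in $F^{n}H^{n}_{\mathrm{dR}}(U)$ via Proposition \ref{propFnHndescription}.

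Third, I would express each $\phi_{j}$ in terms of the generators of Proposition \ref{propcharacterizationofkerbarpartial}, namely products of the closed forms built from $dz_i$, $d\bar z_i$-corrected one-forms and log forms along the divisor components. Proposition \ref{proppqtypeinDelignesplitting} then computes the Deligne type of each generator. The key point is that a generator with $j$ factors of modular weight one, i.e.\ $j$ ``pure'' $H^{1}(E)$ anti-holomorphic contributions, lies in $I^{n,j}$. This gives $[\phi_{j}]_{\mathrm{dR}}\in I^{n,j}$. Uniqueness of the Deligne splitting then identifies $\sum_{j}[\phi_{j}]$ with the canonical decomposition of $[\omega_{n}]_{\mathrm{dR}}$.

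The main obstacle is the closedness step. The grading components of $\bar\partial\omega_n$ must not mix, which requires the precise weight shift of $\bar\partial$ on each factor. I would try first to verify Lemma \ref{lemdclosednessformodularweightgrading} on the single building block $\frac{z-\bar z}{\tau-\bar\tau}$ and on the shifted Eisenstein-Kronecker functions. Multiplicativity of the grading would then give the claim for the determinant.
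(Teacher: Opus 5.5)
Your outline has the paper's skeleton: take modular-weight components, get closedness from Lemma \ref{lemdclosednessformodularweightgrading}, get types from Proposition \ref{proppqtypeinDelignesplitting}. But the two steps that carry the content are missing or wrong.

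\textbf{The grading.} The grading is not the degree in $\bar z$, i.e.\ in $(\bar z-z)/(\bar\tau-\tau)$. The function $\det Z_n$ equals the same determinant with $e_1$ in place of $\widehat{e}_1$, so it has no $\bar z$-dependence at all. Your recipe would then give $\phi_0=\omega_n$ and $\phi_j=0$ for $j\geq 1$. This is already false for $n=2$: the form $(-\widehat{e}_1(s_{1,2})-\widehat{e}_1(s_{2,1}))\Omega$ has nonzero residue along $s_{1,2}=0$, so it is a nonzero class in $I^{2,1}$ (Example \ref{exn=2}).

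The modular weight is a formal grading: weight $m$ for $\widehat{e}_m(s_{k,\ell})$ with $k\neq\ell$, and weight $0$ for constants. It is well defined only on $\mathcal{V}^{\mathrm{for}}_{\mathsf{D}}$, where the forest monomials $\widehat{e}_\Gamma$ are linearly independent (Lemma \ref{lemelementsinB(E_D)arelinearlyindependent}). On $\mathcal{V}_{\mathsf{D}}$ the Fay-type relations of Proposition \ref{propgeneralpropofloopterm} mix weights.

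The Leibniz expansion of $\det Z_n$ contains a loop $\prod_t\widehat{e}_1(s_{a_t,\sigma_k(a_t)})$ for every cycle $\sigma_k$ of length $\geq 2$ avoiding the index $0$. Before $\phi_j$ even makes sense, you must deloop these via Corollary \ref{corforlooptermofwidehate_1}. You then resum into the forest-type identity $\det Z_n=(-1)^{n-1}\sum_{S}\Psi_{|S|-1,S}\sum_{\pi}\Phi_{\geq 1,\pi}$ of Theorem \ref{thmdecompositionofdeterminant}(i), which needs the counting of Lemmas \ref{lemmessytermsindetZnexpansion} and \ref{lemmaforcalculatingcoefficients}. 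That is the real work behind Lemma \ref{lemexpressgradingjpartintermsofbuldingblocks}. Once it is done, closedness follows in one line from \eqref{eqncompatibilitybetweenbarpartialandM} and the directness of $\bigoplus_k M_k$; it is not the main obstacle.

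\textbf{The type computation.} Proposition \ref{proppqtypeinDelignesplitting} concerns a single cycle $\Psi^{(\varepsilon)}_{j,I}$ with $I=(1,\dots,n)$ running through all variables. It says nothing about a general element of $\mathbb{C}[\Psi^{(\varepsilon)}_{M,I}]$ from Proposition \ref{propcharacterizationofkerbarpartial}, whose factors may share vertices. What is needed is the specific shape of the summands in Lemma \ref{lemexpressgradingjpartintermsofbuldingblocks}: $\phi_{(S,\tau,\sigma,\mathrm{r})}$ is a constant times $\Psi_{|S|-1,\tau}\prod_k\Psi_{|\pi_k|-r_k,\sigma_k}$, with factors on the disjoint blocks $S,\pi_1,\dots,\pi_\ell$. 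The argument then runs as follows.
\begin{enumerate}
\item The polar divisor is a product divisor, so the class is an exterior product under the K\"unneth isomorphism, which is compatible with mixed Hodge structures; one then restricts to $U$.
\item Proposition \ref{proppqtypeinDelignesplitting} applied blockwise gives types $(|S|,|S|-1)$ and $(|\pi_k|,|\pi_k|-r_k)$.
\item Types add under tensor product, giving $(n,j)$ with $j=|S|-1+\sum_k(|\pi_k|-r_k)$, which is exactly the modular weight.
\end{enumerate}
This matching of modular weight with the Hodge index $q$ is the point of the theorem, and your proposal does not supply it.

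Your heuristic for where $I^{n,j}$ comes from is also wrong. A class of type $(n,q)$ with $q\geq 1$ in $F^nH^n(U)$ has weight $n+q>n$. It is detected by $q$-fold iterated residues along strata of $\mathsf{D}$; this is how Proposition \ref{proppqtypeinDelignesplitting} is proved, via the type-$(-1,-1)$ maps $\mathrm{Res}_a$, Lemma \ref{lemmakernelofRes} and Proposition \ref{propleadingcomponent}(ii). It does not come from anti-holomorphic $H^1(E)$ classes, which would have weight $n$ and $p<n$. Nor is $\Psi_{j,I}$ a product of $j$ weight-one factors: it contains, e.g., $\widehat{e}_j(s_{v_1,v_2})$. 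Its individual monomials are not even closed; only the full sum defines a class, and that class has pure type.
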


Due to the many disguises of the Kronecker theta function and Eisenstein-Kronecker forms in various areas, such as periods \cite{Zagier:1991}, elliptic polylogarithms \cite{Levin:2007, Calaque:2009, Brown:2011} (see also \cite{Bannai:2007, Bannai:2010}) and conformal field theory \cite{Raina:1989}, 
our results seem to be interesting on their own, besides their application to Gromov-Witten invariants.

\subsection*{Structure of the paper}

Section 2 reviews the basic properties of the Kronecker theta function and Eisenstein–Kronecker forms. We also establish several functional relations among Eisenstein–Kronecker functions that will be used throughout the paper.

In Section 3, we introduce the modular weight grading on a subspace of Eisenstein–Kronecker forms and study its compatibility with the de Rham differential.

Section 4 begins with a brief review of mixed Hodge structures. We then construct a class of 
$d$-closed Eisenstein-Kronecker forms and study their de Rham cohomology classes and Deligne splittings.

In Section 5, we apply the results of Sections 2–4 to a particular Eisenstein–Kronecker form arising in the study of Gromov–Witten invariants of elliptic curves. In particular, we construct its Deligne splitting explicitly on the level of differential forms.

The proofs of several technical results are deferred to Section 6 and Appendix A.
	
	\subsection*{Acknowledgments}
	
	We thank Yizhou Li for helpful discussions. This work was supported by the National Key Research and Development	Program of China (No. 2022YFA1007100).

\subsection*{Notation and conventions}
	\begin{itemize}
\item 
Let $\mathbb{N}$ be the set of nonnegative integers and $\mathbb{N}_{+}$ be the set of positive integers.
        \item 
    Denote  $[n]=\{1,2,\cdots, n\}$. The cardinality of a set $S=\{v_{1},\cdots,v_{p}\}$ or a sequence $I=(v_{1},\cdots, v_{p})$ is denoted by $|S|$, $|I|$, respectively. For a variety $X$, denote $X_{[n]}=X_{1}\times X_{2}\cdots\times X_{n}$, where each $X_{i}$ is a copy of $X$, indexed by $i\in [n]$.
    \item For a nonempty set $S$, 
    the set of partitions of $S$ is denoted by $\Pi_{S}$. An element in $\Pi_{S}$ is denoted by $\pi=\{\pi_{1},\cdots,\pi_{\ell}\}$.
    \item 
			Denote collectively the coordinates $(z_1,\cdots, z_n)$ by $z$.
			For notational simplicity, we shall often 
		suppress the arguments of a function whenever they are clear from the surrounding text.

		\item Let $D$ be an effective divisor in the smooth complex manifold $X$.
        Denote by $\Omega_{X}^{p}$, $\Omega_{X}^{p}(\log D)$, $\Omega_{X}^{p}(\star D)$ be the sheaf of  $p$-forms that are holomorphic, holomorphic except for logarithmic poles along $D$,
      holomorphic except for poles along $D$, respectively. Denote by
		$H^{0}(X,-)$ or $\Gamma(X,-)$ the corresponding space of global sections over $X$.
        
		Similarly, denote
      by $\mathcal{A}^{p,q}_{X}$, $\mathcal{A}^{p,q}_{X}(\log D)$, $\mathcal{A}^{p,q}_{X}(\star D)$ the sheaf of smooth $(p,q)$-form versions.
		Denote the corresponding space of global sections by $A_{X}^{p,q}, A^{p,q}_{X}(\log D), A^{p,q}_{X}(\star D)$ respectively. Denote also $A^{n}_{X}=\bigoplus_{p+q=n}A^{p,q}_{X},  A^{n}_{X}(\log D)=\bigoplus_{p+q=n}A^{p,q}_{X}(\log D), A^{n}_{X}(\star D)=\bigoplus_{p+q=n}A^{p,q}_{X}(\star D)$.		
	\end{itemize}

\section{Preliminaries}
\label{secpreliminaries}

We collect some preliminaries on Jacobi theta functions and Eisenstein-Kronecker forms in this part. 

For any  point $\tau$ on the upper half-plane $\H$, let
\begin{equation}
	E=\C/\Lambda_{\tau}\,,\quad \Lambda_{\tau}:=\Z\oplus \Z \tau
\end{equation}
be the corresponding elliptic curve.
We fix once and for all a
linear holomorphic coordinate $z$ on the universal cover $\mathbb{C}\rightarrow E$ and a holomorphic volume form $dz$ on $E$.
The origin on $E$ is taken to be the image of $0\in \mathbb{C}$ under the universal covering map $\mathbb{C}\rightarrow E$.
We also fix a weak Torelli marking $\{A,B\}$, namely a symplectic frame for $H_{1}(E,\mathbb{Z})$, where the class $A$ is taken to be the image  under the map $\mathbb{C}\rightarrow E$ of the segment connecting $ \tau$ and $ \tau+1$ on $\mathbb{C}$.

Throughout this work,
constructions on the elliptic curve $E_{\tau}=\C/\Lambda_{\tau}$ (and on the corresponding universal family
$\mathcal{E}_{\H}\rightarrow \H$) will be identified with their lifts
to the universal cover.
For example, meromorphic functions on $E$
are identified with those on $\mathbb{C}$
that are periodic/elliptic under the translation action by $\Lambda_{\tau}$.

\subsection{Kronecker theta function}
\label{basicsonellipticfunctions}

We first recall some standard facts about the Kronecker theta function, following the exposition in \cite[Section 2, Section 3]{Zhou:2026iteratedintegrals}.
Define $\theta$ as the normalized Jacobi theta function $\vartheta_{({1\over 2},{1\over 2})}$: 
\begin{equation}\label{eqnJacobitheta}
\theta(z)={\vartheta_{({1\over 2},{1\over 2})}(z)\over \vartheta'_{({1\over 2},{1\over 2})}(0)}={\vartheta_{({1\over 2},{1\over 2})}(z)\over -\pii \eta^3}\,,
\end{equation}
where $\eta$ is the Dedekind eta function.
The Jacobi theta function has the following expansion (see e.g., \cite[Chapter VI.3]{Silverman:2009arithmetic})
\begin{eqnarray}\label{eqnthetazetaexpansion}
		\theta(z)
	=ze^{-G_{2}z^{2}}\prod_{\lambda\in\Lambda_{\tau}\setminus\{(0,0)\}}\left(1-{z\over \lambda}\right)e^{{z\over \lambda}+{1\over 2}({z\over \lambda})^2}=z\exp\left(\sum_{k\geq 1}-2G_{2k}{z^{2k}\over 2k}\right)
	\,.
\end{eqnarray}	
where the $G_{2k}$'s are the classical Eisenstein series.
One also has the following automorphy for the theta function 
 \begin{equation}\label{eqnautomorphyoftheta}
	\theta(z+1)=-\theta(z)\,,\quad 
	\theta(z+\tau)=-e^{-\pi \mathbf{i}\tau}e^{-\pii z}\theta(z)\,,
\end{equation}
Let $\zeta,\wp$ be the Weierstrass $\zeta$-function and $\wp$-function,
with $\wp=-\partial_{z}\zeta$. 
One defines the following semi-period
\begin{equation}
	\int_{A}\wp \,dz=-\eta_{1}\,,\quad 
    \eta_{1}=2G_{2}\,.
\end{equation}

For later use, we introduce the following notation.
\begin{dfn}\label{dfnhatthetazeta}
	Define
\begin{equation}
	\mathbold{Y}=-{\pi\over \mathrm{im}\,\tau}\,,\quad 
	\mathbold{A}=	(\bar{z}-z)\mathbold{Y}=\pii\,{\bar{z}-z\over \bar{\tau}-\tau}\,.
\end{equation}
	Introduce 	
	\begin{eqnarray}\label{eqnhatthetazeta}
		\widehat{\eta}_1&:=&
		\eta_{1}+\mathbold{Y}\,,\nonumber\\
			\widehat{\theta}(z)&:=&\theta(z)\cdot \exp\left(-2\pi {(\mathrm {im}~z)^2\over \mathrm{im}~\tau}\right)\,.
		\end{eqnarray}	
\end{dfn}
The Jacobi theta function is then related to the Weierstrass elliptic functions by 
\begin{equation}\label{eqnderivativeofhatthetazeta}
	\partial_{z} \ln {\theta}=\zeta(z)-z{\eta}_{1}\,,\quad 
	\partial_{z} \ln \widehat{\theta}=\zeta(z)-z\widehat{\eta}_{1}		+\mathbold{Y}\bar{z}\,.
\end{equation}

Hereafter, we suppress the notation for the variable $\bar{z}$
that is linearly independent of $z$.
However, when considering properties such as parity, a notation 
$f(-z)$
stands for the quantity obtained from replacing both $z,\bar{z}$ in 
$f$ by $-z,-\bar{z}$.\\

Of crucial importance in this work are the Kronecker theta function $	S_{c}(z)$ \cite{Weil:1976} or Szego kernel, and their Laurent coefficients.

\begin{dfn}\label{dfnSzegokernel}
Set
\begin{equation}
	S_{c}(z):={\theta(z+c)\over \theta(z)\theta(c)}\,,\quad 
	\widehat{S}_{c}(z):=
	e^{c\mathbold{A}(z)}{\theta(z+c)\over \theta(z)\theta(c)}\,.
\end{equation}
	These functions are meromorphic in $c$. Denote their Laurent expansions in $c$ by
\begin{equation}
	S_{c}(z)=
	{1\over c}\sum_{m\geq 0} c^{m}e_{m}(z) \,,\quad 
		\widehat{S}_{c}(z)={1\over c}\sum_{m\geq 0} c^{m} \widehat{e}_{m}(z) 
	\,,
\end{equation}
with the convention that $e_{m}(z)=0= \widehat{e}_{m}(z)$ for $m\leq -1$.
\end{dfn}

\begin{dfn}\label{dfnringofquasiellipticfunctions}
	Let the notation be as above.
	Define the ring $\widetilde{\mathcal{R}}$ of \emph{quasi-elliptic} 
    functions to be the $\mathbb{C}$-algebra\footnote{The generators $\{{e}_{m}(z)\}_{m\geq 1}, \{\widehat{e}_{m}(z)\}_{m\geq 1}$ 
	satisfy relations such as ones induced by the Weierstrass relations. These relations, however, 
	do not play an important role in this work.} generated by $ \{e_{m}\}_{m\geq 1}$,
    and 
    the ring $\widehat{\mathcal{R}}$ of \emph{almost-elliptic} functions to be the $\mathbb{C}$-algebra
    generated by $ \{\widehat{e}_{m}\}_{m\geq 1}$.
	These rings are equipped the \emph{modular weight}, denoted by $M_{\bullet}$, obtained by assigning weight $m$ to 
	the generators $e_{m},\widehat{e}_{m} $.	
\end{dfn}

\begin{rem}\label{remmodularweight}
	The generators $\{e_{m}(z)\}_{m\geq 1}, \{\widehat{e}_{m}(z)\}_{m\geq 1}$ are actually  quasi-Jacobi and almost-Jacobi forms of index zero with the weight given by $M_{\bullet}$ above.
    This weight is an incarnation of the $\mathrm{SL}_{2}(\mathbb{Z})$-equivariance of constructions on the universal family $\mathcal{E}_{\H}\rightarrow \H$.
	See  \cite{Eichler:1985,  Kaneko:1995, Libgober:2011, Goujard:2016counting}
	for more details.
	This explains the nomenclature of modular weight.
\end{rem}

From Definition \ref{dfnSzegokernel} and the automorphy
\eqref{eqnautomorphyoftheta}, we see that the functions  $\{e_{m}(z)\}_{m\geq 1}$ are meromorphic in $z$ but are not elliptic.
In contrast, due to the presence of the extra factor in $\widehat{S}_{c}(z)$, the functions $\{\widehat{e}_{m}(z)\}_{m\geq 1}$ depend on $\bar{z}$ but are elliptic.
The following results regarding the non-holomorphicity, proved using the generating series technique, will be frequently used in this work.
\begin{lem}\cite[Lemma 2.7, Lemma 2.9, Lemma 3.2]{Zhou:2026iteratedintegrals}\label{lemanalyticpropertiesofemz}
Let the notation be as above.
\begin{enumerate}[i).]
    \item The functions $\widehat{e}_{m}(z),m\geq 0$ are elliptic
    in $z$, satisfying $\widehat{e}_{m}(-z)=(-1)^{m}\widehat{e}_{m}(z)$.
    \item 
    The function $\widehat{e}_{m}(z)$
    is smooth everywhere except for a simple pole along $z\in \Lambda_{\tau}$ for $m\geq 1$. More precisely, the polar part of $\widehat{e}_{m}(z)$ at $z=0$ is $({\mathbold{Y}\bar{z})^{m-1}/(m-1)!z}$.
      In particular, their residues satisfy
\begin{equation}\label{eqnresidueofemdz}
\mathrm{res}_{z=0}(\widehat{e}_{1}(z)dz)=1\,,\quad \mathrm{res}_{z=0}(\widehat{e}_{m}(z)dz)=0\,~\mathrm{if} ~m\neq 1\,.
\end{equation}
\item     
One has 
\begin{equation}\label{eqnbarpartialem}
\bar{\partial}\widehat{e}_{m}(z)=\mathbold{Y}\,\widehat{e}_{m-1}(z)d\bar{z}\,,\quad m\geq 0\,.
\end{equation}
\end{enumerate}    
\end{lem}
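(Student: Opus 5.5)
The plan is to derive all three parts from the generating function $\widehat{S}_{c}(z)=e^{c\mathbold{A}(z)}\theta(z+c)/(\theta(z)\theta(c))$: establish each property for $\widehat{S}_{c}$ as a function of $(z,c)$, then read it off coefficientwise in the Laurent expansion in $c$.

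\emph{Ellipticity and parity (part i).} Invariance under $z\mapsto z+1$ is immediate. Indeed, $\theta(z+c+1)/\theta(z+1)=\theta(z+c)/\theta(z)$ by \eqref{eqnautomorphyoftheta}, and $\mathbold{A}$ depends only on $\bar z-z$, which is unchanged. For $z\mapsto z+\tau$, \eqref{eqnautomorphyoftheta} gives
\[
\theta(z+c+\tau)/\theta(z+\tau)=e^{-\pii c}\,\theta(z+c)/\theta(z),
\]
while
\[
\mathbold{A}(z+\tau)-\mathbold{A}(z)=\pii\,(\bar\tau-\tau)/(\bar\tau-\tau)=\pii,
\]
so $e^{c\mathbold{A}}$ acquires exactly the compensating factor $e^{\pii c}$. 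Hence $\widehat{S}_{c}$ is elliptic in $z$, and therefore so is every coefficient $\widehat{e}_{m}$.

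For parity, $\theta$ is odd by \eqref{eqnthetazetaexpansion} and $\mathbold{A}(-z)=-\mathbold{A}(z)$. This gives
\[
\widehat{S}_{c}(-z)=e^{-c\mathbold{A}}\,\frac{\theta(-z+c)}{-\theta(z)\theta(c)}
=e^{-c\mathbold{A}}\,\frac{\theta(z-c)}{\theta(z)\theta(c)}
=-\widehat{S}_{-c}(z).
\]
Comparing coefficients of $c^{m-1}$ on both sides yields $\widehat{e}_{m}(-z)=(-1)^{m}\widehat{e}_{m}(z)$.

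\emph{Singularities and polar part (part ii).} Away from $\Lambda_{\tau}$, $\theta(z)\neq0$, so $\theta(z+c)/\theta(z)$ is holomorphic in $c$ near $0$. Dividing by $\theta(c)$ therefore produces only a simple pole at $c=0$, and the coefficients are smooth in $z$ there.

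Near $z=0$, write $\theta(z+c)/(\theta(z)\theta(c))=1/z+1/c+(\text{holomorphic in }(z,c)\text{ near }0)$. This holds because the function has simple poles along $z=0$ and $c=0$ with residue $1$ in each variable, and the remainder is regular, as one checks from $\theta(z)=z(1+O(z^2))$.

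Multiply by
\[
e^{c\mathbold{A}}=e^{c\mathbold{Y}(\bar z-z)}=e^{c\mathbold{Y}\bar z}\,(1+O(z)).
\]
The only non-smooth part in $z$ is $e^{c\mathbold{Y}\bar z}/z$ plus terms that are smooth, since $O(z)/z$ is smooth. The coefficient of $c^{m-1}$ in $e^{c\mathbold{Y}\bar z}/z$ is $(\mathbold{Y}\bar z)^{m-1}/((m-1)!\,z)$, which is the claimed polar part. Its residue, in the sense of integrating over a small circle, vanishes unless $m=1$, because $\bar z^{\,k}/z$ integrates to zero for $k\geq1$ (on $|z|=r$ one has $\bar z=r^2/z$). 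This gives \eqref{eqnresidueofemdz}. By ellipticity, the same description holds at every lattice point.

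\emph{Antiholomorphic derivative (part iii).} The only $\bar z$-dependence of $\widehat{S}_{c}$ is through $e^{c\mathbold{A}}$, and $\bar\partial\mathbold{A}=\mathbold{Y}d\bar z$. So away from the poles,
\[
\bar\partial\widehat{S}_{c}=c\,\mathbold{Y}\,\widehat{S}_{c}\,d\bar z .
\]
Taking the coefficient of $c^{m-1}$ gives $\bar\partial\widehat{e}_{m}=\mathbold{Y}\widehat{e}_{m-1}d\bar z$, with $\widehat{e}_{-1}=0$.

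The main point needing care is in part (ii): the expansion $1/z+1/c+$ regular must be jointly holomorphic near $(0,0)$ in order to legitimately extract the $z$-polar parts coefficientwise. I would verify this using the Taylor expansion of $\log\theta$ from \eqref{eqnthetazetaexpansion}.
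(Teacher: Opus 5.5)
Your proposal is correct and follows essentially the same route as the paper. The paper credits these facts to the generating-series technique applied to $\widehat{S}_{c}(z)$, cites the reference for the details, and reuses your part (iii) computation of $\bar{\partial}\ln\widehat{S}_{c}$ to prove that $\Psi^{(\varepsilon)}_{M,I}$ is $\bar{\partial}$-closed. The step you flag also holds: $zc\,\theta(z+c)/(\theta(z)\theta(c))-z-c$ is holomorphic near $(0,0)$ and vanishes on both $z=0$ and $c=0$ (using $\theta'(0)=1$), so it is divisible by $zc$.
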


Using \eqref{eqnthetazetaexpansion} and \eqref{eqnderivativeofhatthetazeta}, it is straightforward to see that 
the first few terms of $\{\widehat{e}_{m}(z)\}_{m\geq 0}$ are given by 
\begin{equation}\label{eqnfirstfewhatems}
	\widehat{e}_{0}(z)=1\,,\quad 
	\widehat{e}_{1}(z)=\partial_{z} \ln \widehat{\theta}\,,\quad
	\widehat{e}_{2}(z)={1\over 2}(-\wp(z)+(\partial_{z} \ln \widehat{\theta})^{2})\,.
\end{equation}
The functions $\{e_{m}(z)\}_{m\geq 0}$ are the holomorphic limits of their $\hat{}$-versions.
In general, they can be expressed in terms of the so-called Eisenstein-Kronecker series. 
Further analytic  properties and algebraic relations of these coefficients can be found in \cite[Section 2.2]{Zhou:2026iteratedintegrals}.\\

We shall frequently make use of the following relation without explicit mentioning
\[
\widehat{\mathcal{R}}\subseteq  \widetilde{\mathcal{R}}\otimes \mathbb{C}[\mathbold{A}]\,.
\]
The map $\widehat{\mathcal{R}}\rightarrow\widetilde{\mathcal{R}} $, obtained by taking the degree-zero term  in the polynomial expansion in $\mathbold{A}$, offers an isomorphism of these two rings. Following \cite{Kaneko:1995},
we call this map the \emph{holomorphic limit} 
and its inverse map the \emph{elliptic completion}.

\subsection{Eisenstein-Kronecker forms}

We now introduce spaces of Eisenstein-Kronecker forms, which are the main players in this work.

The space of
 holomorphic volume forms on $E_{[n]}$ is given by
\begin{equation}\label{eqnholomorphicvolumeform}
H^{0} (E_{[n]},\Omega^{n}_{E_{[n]}})=\mathbb{C}\Omega\,,\qquad \Omega:=dz_{1}\wedge \cdots \wedge dz_{n}
\,.
\end{equation}

\begin{dfn}\label{dfnEKforms}
	Let the notation and conventions be as above.
	Consider the following subvector space of $\widehat{\mathcal{R}}$ consisting of the linear span of the $\widehat{e}_{m}$ generators
	\begin{equation}
		\mathcal{V}=\sum_{m\geq 0}\mathbb{C}\,\widehat{e}_{m}(z)\,.
	\end{equation}

\end{dfn}

Motivated by \cite{Zhou:2023GWgeneratingseries} and \cite{Zhou:2026iteratedintegrals}, we make the following definition.

\begin{dfn}\label{dfnfunctionspaceassociatedtoD}
	Let the notation be as above.
	Consider the smooth hypersurface arrangement $\mathsf{D}$ in $E_{[n]}$ given by (the image under the quotient $\mathbb{C}_{[n]}\rightarrow E_{[n]}$)
	\begin{equation}
\label{eqnskl}\mathsf{D}=\bigcup_{\substack{k,\ell \in [n]\\
				k\neq \ell}} \big\{s_{k,\ell}=0\big\}\,,\quad s_{k,\ell}:=c_{k,\ell}+z_{k}-z_{\ell}\,.
	\end{equation}    	
	We also regard $s_{k,l}$ as defining a morphism $E_{[n]}\rightarrow E$.
    Let $c_{k,\ell},k,\ell\in [n]$ be generic, such that 
 $\mathsf{D}$ is a simple normal crossing divisor.
Define the multiplication map (here $U=E_{[n]}-\mathsf{D}$)
\[
\mu:
\bigotimes_{k,\ell\in [n]}s_{k,\ell}^{*}\widehat{ \mathcal{R}}
\longrightarrow C^\infty(U),\qquad
\bigotimes_{k,\ell}f_{k,\ell}
\longmapsto
\prod_{k,\ell}f_{k,\ell}\,,
\]
and consequently the following spaces\footnote{Relations in $\ker \mu$ will be discussed in Section \ref{seccombinatorialrelations} and Section \ref{subsecgraphsforpolardivisors}
 below.} 
	\begin{equation}\label{eqndfnFDED}		\mathcal{R}_{\mathsf{D}}=\mu\left(\bigotimes_{k,\ell\in [n]}s_{k,\ell}^{*}\widehat{ \mathcal{R}}\right)
     \,,\quad 
	\mathcal{V}_{\mathsf{D}}=\mu\left(\bigotimes_{k,\ell\in [n]}s_{k,\ell}^{*} \mathcal{V}\right)\,,
	\end{equation}
    with the cyclic convention $ z_{n+1}=z_{1},c_{n,n+1}=c_{n,1},\cdots$.
\end{dfn}

	In this work we will usually identify functions with the corresponding forms via the trivialization $\Omega$ in \eqref{eqnholomorphicvolumeform}, and call $\mathcal{R}_{\mathsf{D}},\mathcal{V}_{\mathsf{D}}$ the spaces of Eisenstein-Kronecker forms associated to $\mathsf{D}\subseteq E_{[n]}$.
    Our emphasis will be the vector space $\mathcal{V}_{\mathsf{D}}$,
for the purpose of studying mixed Hodge structures using the logarithmic complex (see Proposition \ref{propFnHndescription} in Section \ref{seccohomologyclass}).  This space is of great interest in that it underlies a model for the cohomology of $E_{[n]}-\mathsf{D}$, as shown in \cite{Brown:2011}.

\subsection{Functional relations}
\label{seccombinatorialrelations}

In this part, we prove a generalization of Fay's triscecant identity.
Developing their Laurent expansions lead to functional relations among Eisenstein-Kronecker forms that are useful in later discussions.
\begin{lem}\label{lem:identityforhatS}
	Suppose $p\geq 2$ and the values of $a_{i},i=1,2,\cdots, p$ are distinct. Then one has
	\begin{equation}
		\prod_{i=1}^{p} \widehat{S}_{a_i}(z_i) = \sum_{k=1}^{p} \widehat{S}_{a_k}(\sum_{i=1}^{p}z_i) \prod_{\substack{i=1\\i\neq k}}^{p} \widehat{S}_{a_i-a_k}(z_i)\,.
	\end{equation}
\end{lem}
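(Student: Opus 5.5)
First reduce to the holomorphic Kronecker function. Since $\widehat{S}_{c}(z)=e^{c\mathbold{A}(z)}S_{c}(z)$ and $\mathbold{A}$ is $\mathbb{R}$-linear in $z$, the exponential factors match on the two sides. On the left the factor is $\exp(\sum_i a_i\mathbold{A}(z_i))$. In the $k$-th term on the right it is
\[
\exp\Bigl(a_k\mathbold{A}\bigl(\textstyle\sum_i z_i\bigr)+\sum_{i\neq k}(a_i-a_k)\mathbold{A}(z_i)\Bigr)=\exp\Bigl(\sum_i a_i\mathbold{A}(z_i)\Bigr).
\]
This common factor can be cancelled, so it suffices to prove
\[
\prod_{i=1}^{p} S_{a_i}(z_i) = \sum_{k=1}^{p} S_{a_k}\Bigl(\sum_{i}z_i\Bigr) \prod_{i\neq k} S_{a_i-a_k}(z_i).
\]

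For $p=2$ this is Fay's trisecant identity in the form $S_{a}(x)S_{b}(y)=S_{a}(x+y)S_{b-a}(y)+S_{b}(x+y)S_{a-b}(x)$. I would prove it by the standard Liouville argument. Fix $x,y,a$ and view both sides as functions of $b$. Using the automorphy \eqref{eqnautomorphyoftheta}, $S_c(z)$ is elliptic in $z$ up to the multiplier $e^{-\pii c}$ under $z\mapsto z+\tau$. By the symmetry $S_c(z)=S_z(c)$, the same multiplier behaviour holds in $c$. One then checks that both sides have the same multipliers under $b\mapsto b+1,\,b+\tau$; the multiplier is $e^{-\pii y}$ for $b\mapsto b+\tau$.

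Next, the poles in $b$ are only simple poles at $b\in\Lambda_\tau$ and $b\in a+\Lambda_\tau$. At $b=a$, the residues of $S_{b}(x+y)S_{a-b}(x)$ and $S_{a}(x+y)S_{b-a}(y)$ cancel. At $b=0$, the residues of the left side and of $S_{b}(x+y)S_{a-b}(x)$ agree. The difference of the two sides is therefore a holomorphic section of a degree-zero line bundle with nontrivial multiplier for generic $y$, hence zero. The identity then extends by continuity.

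For general $p$ I would induct. Assume the identity for $p-1$, and write $\prod_{i=1}^{p}S_{a_i}(z_i)=\bigl(\prod_{i<p}S_{a_i}(z_i)\bigr)S_{a_p}(z_p)$. Apply the inductive hypothesis to the first factor to get terms $S_{a_k}(Z')\prod_{i\neq k,\,i<p}S_{a_i-a_k}(z_i)$, where $Z'=\sum_{i<p}z_i$. Then apply the $p=2$ identity to $S_{a_k}(Z')S_{a_p}(z_p)$:
\[
S_{a_k}(Z')S_{a_p}(z_p)=S_{a_k}(Z)S_{a_p-a_k}(z_p)+S_{a_p}(Z)S_{a_k-a_p}(Z'),\qquad Z=Z'+z_p.
\]
The first terms, summed over $k$, give exactly the $k<p$ terms of the claimed formula. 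The remaining terms give $S_{a_p}(Z)\sum_{k<p}S_{a_k-a_p}(Z')\prod_{i\neq k,\,i<p}S_{a_i-a_k}(z_i)$. Since $(a_i-a_p)-(a_k-a_p)=a_i-a_k$, the inductive hypothesis applied with shifted parameters $a_i-a_p$ (still distinct) collapses this sum to $\prod_{i<p}S_{a_i-a_p}(z_i)$. That yields the $k=p$ term.

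The main obstacle is the base case $p=2$: carefully verifying the multipliers and the residue cancellations, including the sign conventions of $\theta$ and the fact that $\theta'(0)=1$. If that becomes messy, I would instead cite Fay's identity directly and verify the convention by comparing Laurent expansions at $x=0$.
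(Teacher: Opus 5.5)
Your proposal is correct and follows the paper's proof: induction on $p$ with Fay's trisecant identity as the base case $p=2$, and an inductive step that applies the $p=2$ identity to $\widehat{S}_{a_k}(\sum_{i<p}z_i)\,\widehat{S}_{a_p}(z_p)$ and then collapses the leftover sum using the inductive hypothesis with shifted parameters $a_i-a_p$. The differences are cosmetic: you first cancel the common exponential prefactor so that you work with $S_c$ rather than $\widehat{S}_c$, which is the content of the paper's remark that the base case is the elliptic completion of Fay's identity, and you give a correct Liouville-type check of the base case where the paper simply cites it.
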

\begin{proof}
	We prove it by induction. 
	The base case $p=2$ reads (with $a_{1}=-a, a_{2}=b, z_{1}=-x, z_{2}=y$)
		\[ 
	\widehat{S}_{a}(x)\widehat{S}_{b}(y)=\widehat{S}_{a}(x-y)\widehat{S}_{a+b}(y)+\widehat{S}_{a+b}(x)\widehat{S}_{b}(y-x)\,.
	\]
	This is the elliptic completion of the ordinary Fay's trisecant identity \cite[Equation 45]{Fay:1973}, \cite[Chapter III b.2]{Mumford:tataIII}.
	Suppose we have
	\[
	\prod_{i=1}^{p-1} \widehat{S}_{a_i}(z_i) = \sum_{k=1}^{p-1} \widehat{S}_{a_k}(\sum_{i=1}^{p-1}z_i) \prod_{\substack{i=1\\i\neq k}}^{p-1} \widehat{S}_{a_i-a_k}(z_i)\,.
	\]
	Multiplying both sides by $\widehat{S}_{a_p}(z_p)$, we obtain 
	\begin{eqnarray*}
		\prod_{i=1}^{p} \widehat{S}_{a_i}(z_i) &=& 
		\sum_{k=1}^{p-1} \widehat{S}_{a_k}(\sum_{i=1}^{p-1}z_i) \widehat{S}_{a_p}(z_p) \prod_{\substack{i=1\\i\neq k}}^{p-1} \widehat{S}_{a_i-a_k}(z_i) \\
		&=& \sum_{k=1}^{p-1} \left(\widehat{S}_{a_k}(\sum_{i=1}^{p}z_i) \widehat{S}_{a_p-a_k}(z_p) + \widehat{S}_{a_p}(\sum_{i=1}^{p}z_i) \widehat{S}_{a_k-a_p}(\sum_{i=1}^{p-1}z_i)\right) \prod_{\substack{i=1\\i\neq k}}^{p-1} \widehat{S}_{a_i-a_k}(z_i) \\
		&=& \sum_{k=1}^{p-1} \widehat{S}_{a_k}(\sum_{i=1}^{p}z_i) \prod_{\substack{i=1\\i\neq k}}^{p} \widehat{S}_{a_i-a_k}(z_i) 
		+ \widehat{S}_{a_p}(\sum_{i=1}^{p}z_i) \sum_{k=1}^{p-1} \widehat{S}_{a_k-a_p}(\sum_{i=1}^{p-1}z_i) \prod_{\substack{i=1\\i\neq k}}^{p-1} \widehat{S}_{a_i-a_k}(z_i) \\
		&=& \sum_{k=1}^{p-1} \widehat{S}_{a_k}(\sum_{i=1}^{p}z_i) \prod_{\substack{i=1\\i\neq k}}^{p} \widehat{S}_{a_i-a_k}(z_i) + \widehat{S}_{a_p}(\sum_{i=1}^{p}z_i) \prod_{i=1}^{p-1} \widehat{S}_{a_i-a_p}(z_i)  \\
		&=& \sum_{k=1}^{p} \widehat{S}_{a_k}(\sum_{i=1}^{p}z_i) \prod_{\substack{i=1\\i\neq k}}^{p} \widehat{S}_{a_i-a_k}(z_i)\,.
	\end{eqnarray*}
This finishes the proof for the $p$ case.
\end{proof}

For later simplification, we introduce the following shorthand notation.
\begin{dfn}\label{eqnemzshorthanddefinition}
 For each $\mathrm{m}=(m_{1},m_{2},\cdots, m_{p})\in \mathbb{N}^{p}$, define 
	\[
	|\mathrm{m}|:=\sum_{i=1}^{p}m_{i}\,,\quad \mu_0(\mathrm{m}) := |\{i \in \{1,\cdots,p\}|m_i=0\}|\,.
	\]
	Define also 
	\begin{equation}\label{eqnshorthandnotationforemz}
	\widehat{e}_{\mathrm{m}}(z)=\prod_{k=1}^{p}\widehat{e}_{m_{k}}(z_{k})
	\,.
	\end{equation}
	For brevity, denote $\mathbold{1}=(1,\cdots, 1)\in \mathbb{N}^{p}$.	
\end{dfn}

\begin{prop}\label{propgeneralpropofloopterm}
	Suppose $\mathrm{m}\in \mathbb{N}^{p}$ and $|\mathrm{m}|=M$.
Then one has
	\[
\widehat{e}_{\mathrm{m}}- \delta_{\mu_{0}(\mathrm{m})\neq 0}\, \widehat{e}_{\mathrm{m}} + \delta_{\mu_{0}(\mathrm{m})=0}\,
\sum_{\substack{m_0'+|\mathrm{m}'|=M\\ \mu_0(\mathrm{m}')\neq 0}}
	\prod_{i=1}^{p} \binom{m_i'-1}{m_i-1} \cdot
	\widehat{e}_{m_0'}(-\sum_{i=1}^{p}z_i)    \widehat{e}_{\mathrm{m}'} = 0 \,,
	\]
	where $\delta$ is the characteristic function.
\end{prop}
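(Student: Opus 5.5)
The case $\mu_{0}(\mathrm{m})\neq 0$ is trivial: the first two terms cancel and the sum is switched off by $\delta_{\mu_0(\mathrm{m})=0}=0$. So the content is the case where all $m_i\geq 1$. There the claim reads
\[
\widehat{e}_{\mathrm{m}}=-\sum_{\substack{m_0'+|\mathrm{m}'|=M\\ \mu_0(\mathrm{m}')\neq 0}}\prod_{i=1}^{p}\binom{m_i'-1}{m_i-1}\,\widehat{e}_{m_0'}\Big(-\sum_i z_i\Big)\,\widehat{e}_{\mathrm{m}'}\,,
\]
with the convention $\binom{-1}{m_i-1}=(-1)^{m_i-1}$ when $m_i'=0$. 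I would obtain this by extracting Laurent coefficients from the generalized Fay identity, Lemma \ref{lem:identityforhatS}.

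\textbf{Step 1: rewrite the right-hand side of Lemma \ref{lem:identityforhatS}.} Use the parity $\widehat{S}_{-c}(-z)=-\widehat{S}_{c}(z)$, which follows from Lemma \ref{lemanalyticpropertiesofemz} i), to write $\widehat{S}_{a_k}(\sum_i z_i)=-\widehat{S}_{-a_k}(-\sum_i z_i)$. The lemma then becomes
\[
\prod_{i=1}^{p}\widehat{S}_{a_i}(z_i)=-\sum_{k=1}^{p}\widehat{S}_{-a_k}\Big(-\sum_i z_i\Big)\prod_{i\neq k}\widehat{S}_{a_i-a_k}(z_i)\,.
\]

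\textbf{Step 2: extract the coefficient of $\prod_i a_i^{m_i-1}$.}
\begin{itemize}
\item On the left this coefficient is exactly $\widehat{e}_{\mathrm{m}}$, by Definition \ref{dfnSzegokernel}.
\item On the right, each summand has poles along $a_i=a_k$, and only the total sum is regular there. So I would compute the coefficient as an iterated residue $\oint\prod_i a_i^{-m_i}\,(\cdot)\,\prod_i da_i$ over small tori $|a_1|<\dots<|a_p|$, which is legitimate because the total is holomorphic away from $a_i=0$.
\item In the $k$-th summand I would change variables to $b_k=a_k$ and $b_i=a_i-a_k$ for $i\neq k$, expand $\widehat{S}_{b_i}(z_i)=\sum_{m_i'}b_i^{m_i'-1}\widehat{e}_{m_i'}(z_i)$, and expand $\widehat{S}_{-b_k}(-\sum z)=\sum_{m_0'}(-b_k)^{m_0'-1}\widehat{e}_{m_0'}(-\sum z)$.
\item Expanding $a_i^{-m_i}=(b_i+b_k)^{-m_i}$ in the region dictated by the torus ordering and taking residues produces the binomial coefficients $\binom{m_i'-1}{m_i-1}$, up to signs. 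The $m_i'=0$ terms give the factor $(-1)^{m_i-1}$.
\item The absence of a $\widehat{S}$ factor in slot $k$ forces $m_k'=0$, which is the source of the constraint $\mu_0(\mathrm{m}')\neq 0$.
\item Homogeneity in $(a_i)$ forces $m_0'+|\mathrm{m}'|=M$.
\end{itemize}

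\textbf{Step 3: recombine over $k$.} I then sum over $k$ and regroup by the tuple $(m_0',\mathrm{m}')$.

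\textbf{Main obstacle.} I expect the bookkeeping in Step 3 to be the hard part. A tuple $\mathrm{m}'$ with several zero entries can arise from several values of $k$. Moreover, whether $(b_i+b_k)^{-m_i}$ is expanded in $b_i/b_k$ or in $b_k/b_i$ depends on the relative order of $i$ and $k$ in the chosen torus ordering. I would first check that the two expansions differ only by terms with vanishing residue, and then verify that the sign and multiplicity contributions collapse to a single coefficient $\prod_i\binom{m_i'-1}{m_i-1}$.

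If this direct route gets tangled, I would instead prove the identity by induction on $p$, mirroring the proof of Lemma \ref{lem:identityforhatS}. The base case $p=1$, namely $\widehat{e}_{m}(z)=-\sum_{m_0'+m'=m,\,m'=0}\ldots$, reduces to the parity $\widehat{e}_m(-z)=(-1)^m\widehat{e}_m(z)$. The inductive step would be a single application of the $p=2$ Fay identity to the product $\widehat{e}_{m_0'}(-\sum z)\,\widehat{e}_{m_p}(z_p)$.
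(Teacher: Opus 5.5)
Your setup is legitimate, and it is a different route from the paper's. You rewrite by parity, extract the coefficient of $\prod_i a_i^{m_i-1}$ as a torus integral over $|a_1|<\cdots<|a_p|$, and integrate summand by summand. But the proposal stops exactly where the content lies, and the mechanism you anticipate for Step 3 is false.

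In the $k$-th summand, the coefficient attached to $\widehat{e}_{m_0'}(-\sum_i z_i)\,\widehat{e}_{\mathrm{m}'}$ (with $m_k'=0$) is $-(-1)^{m_0'-1}$ times the coefficient of $\prod_i a_i^{m_i-1}$ in $a_k^{m_0'-1}\prod_{i\neq k}(a_i-a_k)^{m_i'-1}$, expanded on the torus.
\begin{itemize}
\item For $m_i'\ge 1$ the factor is a polynomial. It contributes $\binom{m_i'-1}{m_i-1}(-a_k)^{m_i'-m_i}$ whatever the ordering.
\item For $m_i'=0$ and $i<k$, one has $(a_i-a_k)^{-1}=-\sum_{j\ge 0}a_i^{j}a_k^{-j-1}$. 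This contributes $-a_k^{-m_i}=\binom{-1}{m_i-1}(-a_k)^{-m_i}$.
\item For $m_i'=0$ and $i>k$, one has $(a_i-a_k)^{-1}=\sum_{j\ge 0}a_k^{j}a_i^{-j-1}$. This contains no $a_i^{m_i-1}$ since $m_i\ge 1$, so it contributes $0$.
\end{itemize}
So the two expansions do \emph{not} differ by residue-free terms, and the check you propose would fail. Your Step 2 assertion that ``the $m_i'=0$ terms give $(-1)^{m_i-1}$'' holds only for $i<k$.

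The substitution $b_i=a_i-a_k$ only obscures this. For $i<k$ the $b_i$-cycle encircles $-b_k$, so there is no annulus ``dictated by the ordering'' there. Your fallback induction is not an argument as stated either. Fay's identity relates generating functions, and applying it to a product of two individual coefficients requires the same coefficient bookkeeping.

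The missing step is short once you use the asymmetry rather than fight it. Summand $k$ contributes to the tuple $(m_0',\mathrm{m}')$ iff $k=\max\{i : m_i'=0\}$, so every tuple with $\mu_0(\mathrm{m}')\neq 0$ is hit exactly once. For that $k$, the constraint $m_0'+|\mathrm{m}'|=M$ makes the leftover $a_k$-exponent equal to $m_k-1$. Its coefficient is
$$(-1)^{m_k-m_0'}\prod_{i\neq k}\binom{m_i'-1}{m_i-1}=(-1)^{m_0'-1}\prod_{i}\binom{m_i'-1}{m_i-1},$$
using $\binom{-1}{m_k-1}=(-1)^{m_k-1}$. Multiplying by $-(-1)^{m_0'-1}$ gives exactly $-\prod_i\binom{m_i'-1}{m_i-1}$, which is the claim.

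The paper proceeds differently. It substitutes $a_i\mapsto ta_i$ and takes the $t^{M-p}$ coefficient, obtaining an identity of genuine rational functions in $a$. It then groups summands by $(m_0,\mathrm{m})$, so the spurious poles along $a_i=a_k$ cancel inside the symmetric sum $\sum_{k:\,m_k=0}a_k^{m_0-1}\prod_{i\neq k}(a_i-a_k)^{m_i-1}$. Lemma \ref{generallemmaforcomputingcoefficients} evaluates this sum as a Laurent polynomial via the global residue theorem on $\mathbb{P}^1$, and parity finishes the proof. Your ordered-torus version breaks that symmetry: it needs no such lemma and gives each tuple a single source, at the price of the case analysis above.
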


\begin{proof}
	Replacing $a_i$ by $ta_i, 1 \leq i \leq p$ in Lemma \ref{lem:identityforhatS}, we have 
	\[
	\prod_{i=1}^{p} \widehat{S}_{ta_i}(z_i) = \sum_{k=1}^{p} \widehat{S}_{ta_k}(\sum_{i=1}^{p}z_i) \prod_{\substack{i=1\\i\neq k}}^{p} \widehat{S}_{t(a_i-a_k)}(z_i) \,.
	\]
	We first extract the $t^{M-p}$-coefficient on both sides.
The left hand side gives 
	\begin{equation}
	\sum_{|\mathrm{m}|=M}  \prod_{i=1}^{p} a_i^{m_i-1} \cdot \widehat{e}_{\mathrm{m}}\,.
	\end{equation}
For the right hand side, it is 
\[
\sum_{k=1}^{p}a_k^{m_0-1} \prod_{\substack{i=1\\i\neq k}}^{p} (a_i-a_k)^{m_i-1}	\widehat{e}_{m_0} (\sum_{i=1}^{p}z_i)  \prod_{\substack{i=1\\
i\neq k}}^{p}e_{m_{i}}(z_{i})\,.
\]
Recall that $\widehat{e}_{0}=1$. This reduces to 
	\begin{equation}\label{eqnRHSreduced}
	\sum_{\substack{m_{0}+\mathrm{m}=M\\
	\mu_0(\mathrm{m})\neq 0}}\widehat{e}_{m_0} (\sum_{i=1}^{p}z_i)  \widehat{e}_{\mathrm{m}}
\sum_{k \in \mu_0}a_k^{m_0-1} \prod_{\substack{i=1\\i\neq k}}^{p} (a_i-a_k)^{m_i-1}	\,.
	\end{equation}
		Applying Lemma \ref{generallemmaforcomputingcoefficients},
		the quantity \eqref{eqnRHSreduced} becomes 
			\begin{equation*}\label{eqnRHSreducedreduced}
			\sum_{\substack{m'_{0}+\mathrm{m}'=M\\
					\mu_0(\mathrm{m}')\neq 0}}\widehat{e}_{m'_0} (\sum_{i=1}^{p}z_i) 
				 \widehat{e}_{\mathrm{m}'}					 	
	\left(
	\delta_{m'_{0},0}	\prod_{i=1}^{p}a_i^{m'_i-1} +(-1)^{m_{0}-1} \sum_{\substack{d_1,\cdots,d_p \geq 0 \\ \sum_{i=1}^{p}d_i = M-p}}
	\prod_{i=1}^{p} \binom{m'_i-1}{d_i} a_i^{d_i}
	\right) 	\,.	
		\end{equation*}
			We next compare the coefficients of $\prod_{i=1}^{p} a_i^{m_i-1}$ on the $t^{M-p}$-coefficients. The desired relation then follows immediately  using the parity relation in Lemma \ref{lemanalyticpropertiesofemz} (i).
\end{proof}

\begin{cor}\label{corforlooptermofwidehate_1}
Let the notation be as above.
Then one has
	\[
	\sum_{\substack{m_{0}\geq 0\,, \mathrm{m}\in \mathbb{N}^{p}\\m_{0}+|\mathrm{m}|=p}}\widehat{e}_{m_0} (-\sum_{i=1}^{p}z_i)  \widehat{e}_{\mathrm{m}} = 0\,.
	\]
\end{cor}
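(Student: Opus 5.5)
The plan is to specialize Proposition \ref{propgeneralpropofloopterm} to the multi-index $\mathrm{m}=\mathbold{1}=(1,\cdots,1)\in\mathbb{N}^{p}$, so that $M=|\mathrm{m}|=p$. In this case $\mu_{0}(\mathrm{m})=0$. Hence the term $\delta_{\mu_{0}(\mathrm{m})\neq 0}\,\widehat{e}_{\mathrm{m}}$ drops out, and the relation of Proposition \ref{propgeneralpropofloopterm} becomes
\[
\widehat{e}_{\mathbold{1}}+\sum_{\substack{m_0'+|\mathrm{m}'|=p\\ \mu_0(\mathrm{m}')\neq 0}}\prod_{i=1}^{p}\binom{m_i'-1}{0}\,\widehat{e}_{m_0'}\Big(-\sum_{i=1}^{p}z_i\Big)\widehat{e}_{\mathrm{m}'}=0\,.
\]

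The first key step is to check that every binomial coefficient equals $1$. For $m_i'\geq 1$ this is clear. For $m_i'=0$ it is $\binom{-1}{0}=1$ under the standard convention $\binom{x}{0}=1$. I would confirm that this convention is the one that actually arises in the coefficient extraction of the proof of Proposition \ref{propgeneralpropofloopterm}: there the factor is the coefficient of $a_i^{m_i-1}=a_i^{0}$ in $(a_i-a_k)^{m_i'-1}$, and for $i=k$ (where $m_k'=0$) the relevant contribution is simply constant in $a_i$. So the sum above is the sum of $\widehat{e}_{m_0'}(-\sum_i z_i)\,\widehat{e}_{\mathrm{m}'}$ over all $(m_0',\mathrm{m}')$ with $m_0'+|\mathrm{m}'|=p$ and $\mu_0(\mathrm{m}')\neq 0$, each with coefficient $1$.

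The second step is to identify $\widehat{e}_{\mathbold{1}}$ with the single missing term, namely the one with $\mu_0(\mathrm{m}')=0$. If all $m_i'\geq 1$, then $|\mathrm{m}'|\geq p$. Together with $m_0'+|\mathrm{m}'|=p$ this forces $\mathrm{m}'=\mathbold{1}$ and $m_0'=0$. Since $\widehat{e}_{0}=1$, the corresponding term is exactly $\widehat{e}_{0}(-\sum_i z_i)\,\widehat{e}_{\mathbold{1}}=\widehat{e}_{\mathbold{1}}$. Adding it to the sum restricted to $\mu_0(\mathrm{m}')\neq 0$ therefore gives the unrestricted sum over all $m_0+|\mathrm{m}|=p$, which proves the claimed identity.

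The only delicate point is the binomial convention at $m_i'=0$, and I expect this to be the main obstacle. If it is ambiguous, I would argue directly from the generating identity of Lemma \ref{lem:identityforhatS} with $a_i\mapsto ta_i$. One extracts the $t^{0}$-coefficient (that is, $M=p$) and then the constant term in the $a_i$ on both sides. After using the parity $\widehat{e}_{m}(-z)=(-1)^m\widehat{e}_{m}(z)$ from Lemma \ref{lemanalyticpropertiesofemz}, both sides assemble into the same symmetric sum. This yields the identity without relying on the binomial convention.
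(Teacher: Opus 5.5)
Your proposal is correct and follows the paper's argument: specialize Proposition~\ref{propgeneralpropofloopterm} to $\mathrm{m}=(1,\dots,1)$, note that every coefficient $\binom{m_i'-1}{0}$ equals $1$, and observe that the only term excluded by $\mu_0(\mathrm{m}')\neq 0$ is $m_0'=0$, $\mathrm{m}'=(1,\dots,1)$, which reproduces $\prod_{i}\widehat{e}_{1}(z_i)$. The binomial-convention worry is harmless, since the paper uses generalized binomial coefficients (see Lemma~\ref{generallemmaforcomputingcoefficients}), so $\binom{-1}{0}=1$ and your fallback argument is not needed.
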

\begin{proof}
	Set $\mathrm{m}=\mathbold{1}$ in Proposition \ref{propgeneralpropofloopterm}.
	Then one has  $\binom{m_i'-1}{m_i-1} =1$ for all $1\leq i\leq p$.
In the summation determined by  $m_0'+|\mathrm{m}'|=p$ in Proposition \ref{propgeneralpropofloopterm}, allowing
$\mu_{0}(\mathrm{m}')=0$ forces $m_{0}'=0$ and $ m_{i}'=1, i=1,2,\cdots,p$.
This gives exactly the first term $\widehat{e}_{\mathbold{1}}$ in Proposition \ref{propgeneralpropofloopterm}. This concludes the proof.
\end{proof}

\section{Modular weight grading}
\label{secmodularweightgrading}

We shall construct a subspace of the Eisenstein-Kronecker forms that is naturally equipped with
a modular weight grading inherited from the one given in Definition \ref{dfnringofquasiellipticfunctions}  on  $\widehat{\mathcal{R}}$ .
	This grading plays an important role in studying cohomology classes and Deligne splittings in  Section \ref{secGWapplication}  below.
    
Recall Definition \ref{dfnfunctionspaceassociatedtoD},
the generators of the vector space $\mathcal{R}_{\mathsf{D}}$ are of the form 
\begin{equation}\label{eqnemr}
\widehat{e}_{\mathrm{m}}^{\mathrm{r}}(s):=\prod_{k= \ell}\widehat{e}^{r_{k,\ell}}_{m_{k,\ell}}(s_{k,\ell}) \cdot 
	\prod_{k\neq \ell}\widehat{e}^{r_{k,\ell}}_{m_{k,\ell}}(s_{k,\ell})\,,\quad 
m_{k,\ell}\,, r_{k,\ell}\geq 0\,.
\end{equation}
Using the  facts  that $s_{k,k}$ is constant from \eqref{eqnskl} and   $\widehat{e}_{0}=1$ from \eqref{eqnfirstfewhatems}, we see that 
$\widehat{e}_{\mathrm{m}}^{\mathrm{r}}(s)$ is a constant multiple of 
\begin{equation}\label{eqneminfty}
		\widehat{e}_{\mathrm{m},\infty}^{\mathrm{r}}(s):=\prod_{\substack{k\neq \ell\\ m_{k,\ell},\,r_{k,\ell}\geq 1}}\widehat{e}^{r_{k,\ell}}_{m_{k,\ell}}(s_{k,\ell})\in \mathcal{R}_{\mathsf{D}}\,.
	\end{equation}
We now take the generating set of the vector space $\mathcal{R}_{\mathsf{D}}$ to be 
\begin{equation}\label{eqngeneratingsetofFD}
	\mathcal{B}(\mathcal{R}_{\mathsf{D}})=
	\Big\{\widehat{e}_{\mathrm{m},\infty}^{\mathrm{r}}~|~m_{k,\ell}\,, r_{k,\ell}\geq 1
	\Big\}\,.
\end{equation}
Similarly, the subspace $\mathcal{V}_{\mathsf{D}}$
has a generating set 
\begin{equation}\label{eqngeneratingsetofVD}
	\mathcal{B}(\mathcal{V}_{\mathsf{D}})=
	\Big\{\widehat{e}_{\mathrm{m},\infty}^{\mathrm{r}} ~|~ m_{k,\ell}\geq 1\,,  r_{k,\ell}= 1
	\Big\}\,.
\end{equation}
    
\subsection{Graphs corresponding to polar divisors}
\label{subsecgraphsforpolardivisors}

Although not absolutely necessary, the language of graphs provides 
a convenient tool to describe  geometric properties. 
Many constructions and results regarding the Eisenstein-Kronecker forms are most 
easily formulated and visualized by graphs.

\begin{dfn}\label{dfndivisorgraph}
To each monomial  $\widehat{e}_{\mathrm{m}}^{\mathrm{r}}(s)$ in \eqref{eqnemr},  attach 
a  labeled, directed, colored, decorated graph  $\Gamma(\widehat{e}_{\mathrm{m}}^{\mathrm{r}})$ as follows.
\begin{itemize}
\item
Associate a labeled edge connecting $k,\ell$
to the polar divisor $\{s_{k,\ell}=0\}$ of $\widehat{e}^{r_{k,\ell}}_{m_{k,\ell}}$, if $k\neq \ell, m_{k,\ell}\geq 1,r_{k,\ell}\geq 1$.
\item The direction of the edge is from $k$ to $ \ell$. This is unambiguous since $s_{k,\ell}\neq \pm s_{\ell,k}$ and  thus
$\widehat{e}^{r_{k,\ell}}_{m_{k,\ell}}(s_{k,\ell})$ can not be expressed as $\pm \widehat{e}^{r_{k,\ell}}_{m_{k,\ell}}(s_{\ell,k})$.
\item 
Color the edge by the multiplicity $r_{k,\ell}$ of the  corresponding polar divisor.
\item 
Decorate the edge by the modular weight $m_{k,\ell}$.
\end{itemize}
Note that $\widehat{e}_{0}=1$ is regular and thus does not give rise to an edge in the polar divisor graph.
Extend the construction linearly from the set $\mathcal{B}(\mathcal{R}_{\mathsf{D}})$ to the formal graph space $\oplus_{\Gamma}\mathbb{C}\Gamma$.
\end{dfn}

\begin{dfn}\label{dfngraphterminologies}
	Suppose a graph $\Gamma$ in Definition \ref{dfndivisorgraph} contains $\ell$ connected components, denoted by  $\Gamma_1,\cdots,\Gamma_{\ell}$. 
	Denote the set of edges and vertices of $\Gamma_{k}$ by $E(\Gamma_{k}), V(\Gamma_{k})$ respectively,
	and those of $\Gamma$ by $E(\Gamma), V(\Gamma)$ respectively.	
	Denote by $m(e)$ the decoration of an edge $e$ and 
	\begin{equation}
	m(\Gamma_{k})=\sum_{e\in E(\Gamma_{k})}m(e)\,,\quad 
	m(\Gamma)=\sum_{k=1}^{\ell}m(\Gamma_{k})\,.
	\end{equation}
\end{dfn}

Lemma \ref{lemanalyticpropertiesofemz} (ii) tells that the quantity $\widehat{e}_{\mathrm{m},\infty}^{\mathrm{r}}$ in \eqref{eqneminfty} and its associated graph $\Gamma$ in Definition \ref{dfndivisorgraph} determines each other uniquely.
For this reason, we shall label the generating set by the graphs by introducing the notation
\begin{equation}\label{eqneGamma}
\widehat{e}_{\Gamma}:=\widehat{e}_{\mathrm{m},\infty}^{\mathrm{r}}=\prod_{\substack{k\neq \ell\\ m_{k,\ell}, \,r_{k,\ell}\geq 1}}\widehat{e}^{r_{k,\ell}}_{m_{k,\ell}}(s_{k,\ell})\in \mathcal{B}(\mathcal{R}_{\mathsf{D}})\,.
	\end{equation}

Using the graphs, we define the following special subspace of $\mathcal{V}_{\mathsf{D}}$, which enjoys particularly nice properties as we shall see below.
\begin{dfn}\label{dfntreetypefunctions}
Follow the notation in \eqref{eqneGamma}.
Define the vector space of \emph{forest-type} Eisenstein-Kronecker functions $\mathcal{V}_{\mathsf{D}}^{\mathrm{for}}$, by declaring its generating set to be
	\begin{equation}\label{eqngeneratingsetTD}
	\mathcal{B}(\mathcal{V}_{\mathsf{D}}^{\mathrm{for}}):=
	\{
		\widehat{e}_{\Gamma}\in \mathcal{B}(\mathcal{V}_{\mathsf{D}})~|~
		\Gamma~\mathrm{contains~ no ~loop}
	\}\,.
	\end{equation}
Here, "$\Gamma$ contains no loop" means that $\Gamma$ is a forest of trees.
Denote the set of  graphs corresponding to $\mathcal{B}(\mathcal{V}_{\mathsf{D}}^{\mathrm{for}})$ by $\mathcal{G}_{\mathrm{for}}$.
\end{dfn}

In contrast to the generating set $\mathcal{B}(\mathcal{V}_{\mathsf{D}})$ of $\mathcal{V}_{\mathsf{D}}$, whose elements are not linearly independent (see e.g.,  the relations in Proposition \ref{propgeneralpropofloopterm}), 
the generating set $\mathcal{B}(\mathcal{V}_{\mathsf{D}}^{\mathrm{for}})$
in fact gives a basis of the vector space $\mathcal{V}_{\mathsf{D}}^{\mathrm{for}}$ by the following result.

\begin{lemma}\label{lemelementsinB(E_D)arelinearlyindependent}
    The elements in $\mathcal{B}(\mathcal{V}_{\mathsf{D}}^{\mathrm{for}})$ are linearly independent.
\end{lemma}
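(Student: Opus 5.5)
The plan is to argue by contradiction. Suppose there is a nontrivial relation $\sum_{\Gamma\in\mathcal{G}_{\mathrm{for}}} a_{\Gamma}\,\widehat{e}_{\Gamma}=0$ on $U$. We isolate the contributions of the graphs with a fixed edge set by localizing near a stratum of $\mathsf{D}$, and then separate different decorations using the antiholomorphic polar parts from Lemma \ref{lemanalyticpropertiesofemz} (ii).

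For $\Gamma\in\mathcal{G}_{\mathrm{for}}$ let $G(\Gamma)$ be its underlying directed edge set, forgetting the decorations $m(e)$; all colors are $1$ by \eqref{eqngeneratingsetofVD}. Among the graphs with $a_\Gamma\neq 0$, choose an edge set $G$ that is maximal with respect to inclusion. Since $G$ is a forest, the linear forms $z_k-z_\ell$ for $(k,\ell)\in G$ are linearly independent. Hence the divisors $\{s_e=0\}$, $e\in G$, meet transversally along a stratum $Z_G$ of codimension $|G|$, and the $s_e$ can be completed to local coordinates near a point of $Z_G$. By genericity of the $c_{k,\ell}$, we can pick a point $x\in Z_G$ that lies on no other component $\{s_{k,\ell}=0\}$ with $(k,\ell)\notin G$. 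Near $x$, every factor $\widehat{e}_{m}(s_{k,\ell})$ with $(k,\ell)\notin G$ is smooth, by Lemma \ref{lemanalyticpropertiesofemz} (ii).

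Multiply the relation by $\prod_{e\in G}s_e$ and examine it near $x$. By Lemma \ref{lemanalyticpropertiesofemz} (ii),
\[
s_e\,\widehat{e}_{m}(s_e)=\frac{(\mathbold{Y}\bar{s}_e)^{m-1}}{(m-1)!}+s_e\, g_e
\]
with $g_e$ smooth near $x$. Consider a term with $G(\Gamma)\not\supseteq G$. It misses some $e_0\in G$, so after multiplication it retains the factor $s_{e_0}$ times a smooth function. By maximality of $G$, the only other terms are those with $G(\Gamma)=G$. Each of these becomes
\[
\prod_{e\in G}\frac{(\mathbold{Y}\bar{s}_e)^{m(e)-1}}{(m(e)-1)!}\cdot h_\Gamma\;+\;\sum_{e\in G}s_e(\text{smooth}),
\]
where $h_\Gamma$ is the smooth product of the factors off $G$. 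Since $G(\Gamma)=G$, there are no such factors, so $h_\Gamma=1$.

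Now fix a decoration $\mathrm{m}=(m(e))_{e\in G}$, apply $\prod_{e\in G}\partial_{\bar{s}_e}^{\,m(e)-1}$ in the adapted coordinates, and restrict to $s=0$. Any term containing a holomorphic factor $s_e$ still carries that factor after antiholomorphic differentiation, so it vanishes on $Z_G$. A monomial $\prod_e\bar{s}_e^{\,m'(e)-1}$ with $m'\neq m$ is killed at $s=0$. What survives is $\mathbold{Y}^{\sum_e(m(e)-1)}a_{\Gamma_{\mathrm{m}}}$, where $\Gamma_{\mathrm{m}}$ is the graph with edge set $G$ and decorations $\mathrm{m}$. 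Since $\mathbold{Y}\neq 0$, this forces $a_{\Gamma_{\mathrm{m}}}=0$ for every decoration, which contradicts the choice of $G$.

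The main obstacle is the first step: making the localization rigorous. This needs two inputs. First, the forest condition gives transversality, so that the $\bar{s}_e$ are independent antiholomorphic coordinates. This is exactly where loops fail: for a cycle the $s_e$ are dependent, which is what allows relations such as Corollary \ref{corforlooptermofwidehate_1}. Second, the genericity of the $c_{k,\ell}$ is needed to choose $x$ away from all extra components of $\mathsf{D}$.
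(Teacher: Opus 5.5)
Your argument is correct and follows the same strategy as the paper's proof. Both extract the leading polar parts of the $\widehat{e}_{\Gamma}$ along strata of $\mathsf{D}$, use genericity of the $c_{k,\ell}$ to separate different edge sets, and use the independence of the $\bar{s}_e$ guaranteed by the forest condition to separate decorations. Your top-down choice of a maximal edge set $G$, combined with applying $\partial_{\bar s}$-derivatives before restricting to $Z_G$, makes rigorous a step the paper treats informally: passing from a relation among the functions $\widehat{e}_{\Gamma}$ to a relation among their formal polar parts.
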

\begin{proof}
Following the notation in \eqref{eqneGamma},
    take any generator 
    \[
    \widehat{e}_{\Gamma}=
	\prod_{\substack{k\neq \ell\\m_{k,\ell}\geq 1}}\widehat{e}_{m_{k,\ell}}(s_{k,\ell})\in \mathcal{B}(\mathcal{V}_{\mathsf{D}}^{\mathrm{for}})\,.\] Its polar divisor  is  
    \[
    \bigcup_{\substack{k\neq \ell \\m_{k,\ell}\geq 1}} \{s_{k,\ell}=0\} \,.
    \]
    The forest-type condition promises that the defining equations
    $\{s_{k,\ell}~|~k \neq \ell,m_{k,\ell}\geq 1\}$ for the irreducible components of the polar divisor are linearly independent.
    By Lemma \ref{lemanalyticpropertiesofemz} (ii), the collection of singular behaviors of $\widehat{e}_{\Gamma}$ near these irreducible components  is
    \[ 
    \Big\{ \frac{1}{s_{k,\ell}} \frac{(\mathbold{Y}\bar{s}_{k,\ell})^{m_{k,\ell}-1}}{(m_{k,\ell}-1)!}~|~ k \neq \ell \,, m_{k,\ell}\geq 1\Big\} \,.
    \]
    Construct the map
    \begin{equation}
        \rho: \mathcal{B}(\mathcal{V}_{\mathsf{D}}^{\mathrm{for}})\rightarrow \mathbb{C}[s_{k,\ell}^{-1},\overline{s}_{k,\ell}, k\neq \ell]\,,\quad   
    \widehat{e}_{\Gamma}\mapsto \prod_{\substack{k\neq \ell\\m_{k,\ell}\geq 1}}       \frac{1}{s_{k,\ell}} \frac{(\mathbold{Y}\bar{s}_{k,\ell})^{m_{k,\ell}-1}}{(m_{k,\ell}-1)!}\,.
    \end{equation}
    Here the $\bar{s}_{k,\ell}$ variables are regarded as independent complex coordinates from the $s_{k,\ell}$'s. 
       It is straightforward to see that $\rho$ is injective. 
       
    Consider a linear relation $\sum \lambda_{\Gamma}\rho(\widehat{e}_{\Gamma})=0$ in the image of $\rho$. 
    The sum is a rational function in the complex coordinates $s_{k,\ell},\bar{s}_{k,\ell}$.   
    Group terms having the same denominators in the sum together and take the multi-residues along the corresponding intersection of the simple normal crossing polar divisor.
   By the genericity\footnote{The linear independence of different groups might fail for non-generic values of the parameters $c_{k,\ell}$. For example, when $c_{k,\ell}=0$, one has the familiar Arnold relation $1/(z_{ij}z_{jk})+1/(z_{jk}z_{ki})+1/( z_{ki}z_{ij})=0$ for distinct $i,j,k$, where $z_{ij}=z_{i}-z_{j}$.} of the $c_{k,\ell}$'s, we see that the summation over each group vanishes individually.          
       For the summation over each group, the vanishing gives a polynomial identity in the $\overline{s}_{k,\ell}$'s, which are linearly independent by the fact that $\Gamma$ has no loop. This shows that each individual term in the vanishing is zero. 

By the injectivity of $\rho$, we conclude that 
        the generators $\widehat{e}_{\Gamma}\in  
     \mathcal{B}(\mathcal{V}_{\mathsf{D}}^{\mathrm{for}})$
     themselves are linearly independent.    
\end{proof}

\begin{rem}
In Definition \ref{dfntreetypefunctions}, one can replace the vector space $\mathcal{V}_{\mathsf{D}}$ by the algebra  $\mathcal{R}_{\mathsf{D}}$.
The same argument in the proof of Lemma \ref{lemelementsinB(E_D)arelinearlyindependent} above goes through if 
one collects the leading part of the polar part in the map $\rho$ instead.
\end{rem}

\subsection{Delooping to forest-types}
\label{secdivisorgraph}

The data $\mathcal{B}(\mathcal{V}_{\mathsf{D}}),\mathcal{B}(\mathcal{V}_{\mathsf{D}}^{\mathrm{for}})$  give
two \emph{vector spaces  equipped with generating sets}, namely
$(\mathcal{V}_{\mathsf{D}},\mathcal{B}(\mathcal{V}_{\mathsf{D}}))$ and  $( \mathcal{V}_{\mathsf{D}}^{\mathrm{for}},\mathcal{B}(\mathcal{V}_{\mathsf{D}}^{\mathrm{for}}))$.
In this part, we shall construct a delooping map to relate the vector spaces $\mathcal{V}_{\mathsf{D}} ,\mathcal{V}_{\mathsf{D}}^{\mathrm{for}} $  using these generating sets.

Similar to the shorthand notation in  Definition \ref{dfnfunctionspaceassociatedtoD}, we introduce the following notation adapted to the setting of Eisenstein-Kronecker forms.
The quantities introduced will be the building blocks in constructing $d$-closed forms and computing cohomology classes  in Section \ref{seccohomologyclass} below.

\begin{definition}\label{dfnpsiandPsi}
	For a sequence $I= (v_1,\cdots, v_p)$ with length $p\geq 2$ and $\mathrm{m} \in \mathbb{N}^{p}$,
	denote 
	\begin{equation}
		\psi_{\mathrm{m},I} := \prod_{k=1}^{p} \widehat{e}_{m_k}(s_{v_k,v_{k+1}})
		\,.
	\end{equation}
	where we keep the cyclic convention that $v_{p+k}=v_{k}$.
\end{definition}

\begin{dfn} 	For a sequence $I= (v_1,\cdots, v_p)$ with length $p\geq 2$ and $\mathrm{m} \in \mathbb{N}^{p}$ satisfying $\mu_{0}(\mathrm{m})=0$, define its \emph{delooping}
	\begin{equation}\label{eqndeloopingmaponsimpleloop}
	\mathfrak{d}(		\psi_{\mathrm{m},I}):=
		-\sum_{\substack{m_0'+|\mathrm{m}'|=M\\ \mu_0(\mathrm{m}')\neq 0}}
	\prod_{i=1}^{p} \binom{m_i'-1}{m_i-1} \cdot
	\widehat{e}_{m_0'}(-\sum_{i=1}^{p}s_{v_i,v_{i+1}})    	\psi_{\mathrm{m}',I}\,.
	\end{equation}
Note that the sum $\sum_{k=1}^{p}s_{v_{k},v_{k+1}}$ is a constant in $\{z_{v_{i}}\}_{i}$  due to the cyclic structure of the variables.
Extend it to $\mathcal{B}(\mathcal{V}_{\mathsf{D}}^{\mathrm{for}})$ multiplicatively and to
$\mathcal{V}_{\mathsf{D}}^{\mathrm{for}}$ linearly using Lemma \ref{lemelementsinB(E_D)arelinearlyindependent}.
\end{dfn}
Proposition \ref{propgeneralpropofloopterm} tells that 
\[
\psi_{\mathrm{m},I}=	\mathfrak{d}(		\psi_{\mathrm{m},I})\,.
\]
 Examining the polar divisors, we see that the graph of $\psi_{\mathrm{m},I}$
is a simple loop while that of $	\psi_{\mathrm{m}',I}$ consists of 
chains and isolated vertices by the condition $\mu_0(\mathrm{m}')\neq 0$. This is why $	\mathfrak{d}$ is called the delooping map.

\begin{lem}\label{lemexpressionbytreeterms}

The delooping map \eqref{eqndeloopingmaponsimpleloop} induces a map
\begin{equation}\label{eqnreducedformofED}
\mathfrak{d}: 	\mathcal{B}(\mathcal{V}_{\mathsf{D}})\setminus	\mathcal{B}(\mathcal{V}_{\mathsf{D}}^{\mathrm{for}})\rightarrow 	\mathcal{V}_{\mathsf{D}}^{\mathrm{for}}\,.
\end{equation}
In particular, for the vector spaces equipped with generating sets $(\mathcal{V}_{\mathsf{D}},  \mathcal{B}(\mathcal{V}_{\mathsf{D}}))  ,(\mathcal{V}_{\mathsf{D}}^{\mathrm{for}},\mathcal{B}(\mathcal{V}_{\mathsf{D}}^{\mathrm{for}}))$, one has 	$	\mathfrak{d}(\mathcal{V}_{\mathsf{D}}) =\mathcal{V}_{\mathsf{D}}^{\mathrm{for}}$.
\end{lem}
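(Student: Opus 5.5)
We extend the delooping map $\mathfrak{d}$ from simple loops to an arbitrary generator $\widehat{e}_{\Gamma}\in\mathcal{B}(\mathcal{V}_{\mathsf{D}})$ by iterated delooping. The induction is on the first Betti number $b_1(\Gamma)$ of the underlying undirected graph, and within a fixed $b_1$ on the number of edges. Since every edge of $\Gamma$ carries color $r=1$, $\Gamma$ is a simple graph, apart from possibly the pair of opposite edges $k\to\ell$, $\ell\to k$, which already forms a loop of length $2$. If $\Gamma$ is not a forest, pick a cycle and orient it: a sequence $I=(v_1,\dots,v_p)$, $p\ge2$, of distinct vertices such that each consecutive pair is joined by an edge of $\Gamma$. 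If an edge of the cycle points against the chosen orientation, i.e.\ it is $s_{v_{k+1},v_k}$ rather than $s_{v_k,v_{k+1}}$, we first rewrite it. Lemma~\ref{lemanalyticpropertiesofemz}(i) gives $\widehat{e}_m(s_{\ell,k})=(-1)^m\widehat{e}_m(-s_{\ell,k})$, and $-s_{\ell,k}=z_k-z_\ell-c_{\ell,k}$. This differs from $s_{k,\ell}$ only by a constant shift of $c$, so $-s_{\ell,k}$ is again of the form $c'+z_k-z_\ell$. Proposition~\ref{propgeneralpropofloopterm} applies verbatim to any linear forms whose $z$-parts sum to zero around the cycle: its proof only uses Lemma~\ref{lem:identityforhatS} with $z_i$ replaced by these forms. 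Hence the cycle part of $\widehat{e}_{\Gamma}$ has the form $\psi_{\mathrm{m},I}$ (with shifted constants) and $\mu_0(\mathrm{m})=0$.

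Next write $\widehat{e}_{\Gamma}=\psi_{\mathrm{m},I}\cdot \widehat{e}_{\Gamma\setminus I}$ and replace $\psi_{\mathrm{m},I}$ by $\mathfrak{d}(\psi_{\mathrm{m},I})$, which is an equality of functions by Proposition~\ref{propgeneralpropofloopterm}. Each resulting term is a constant $\widehat{e}_{m_0'}(\cdot)$ times $\psi_{\mathrm{m}',I}\widehat{e}_{\Gamma\setminus I}$, with at least one edge of the cycle deleted because $\mu_0(\mathrm{m}')\ne0$. The weights on the surviving edges change, but their colors remain $1$, so the product is again a generator in $\mathcal{B}(\mathcal{V}_{\mathsf{D}})$. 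The new graph $\Gamma'$ is a subgraph of $\Gamma$ on the same vertex set with strictly fewer edges. Therefore $b_1(\Gamma')\le b_1(\Gamma)$, and the edge count strictly decreases. The process terminates, after finitely many steps, in a linear combination of forest generators, which lies in $\mathcal{V}_{\mathsf{D}}^{\mathrm{for}}$.

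The main obstacle is well-definedness: the output could depend on the choice of cycle at each step. I would handle this by fixing a deterministic rule, for instance a total order on cycles that always picks the lexicographically smallest one, and defining $\mathfrak{d}$ on $\mathcal{B}(\mathcal{V}_{\mathsf{D}})\setminus\mathcal{B}(\mathcal{V}_{\mathsf{D}}^{\mathrm{for}})$ by this recursion. Independence of the choice then follows for free. Every choice yields a function equal to $\widehat{e}_{\Gamma}$ and lying in $\mathcal{V}_{\mathsf{D}}^{\mathrm{for}}$, and by Lemma~\ref{lemelementsinB(E_D)arelinearlyindependent} the expansion in the basis $\mathcal{B}(\mathcal{V}_{\mathsf{D}}^{\mathrm{for}})$ is unique. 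So the element of $\mathcal{V}_{\mathsf{D}}^{\mathrm{for}}$ obtained is canonical and equals $\widehat{e}_{\Gamma}$ as a function. The same argument shows compatibility with the multiplicative definition on products of disjoint loops.

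Finally, set $\mathfrak{d}$ to be the identity on $\mathcal{B}(\mathcal{V}_{\mathsf{D}}^{\mathrm{for}})$. Since every element of $\mathcal{V}_{\mathsf{D}}$ is a linear combination of generators, this gives $\mathfrak{d}(\mathcal{V}_{\mathsf{D}})\subseteq\mathcal{V}_{\mathsf{D}}^{\mathrm{for}}$. Because $\mathfrak{d}$ acts as the identity on the forest part, the image is all of $\mathcal{V}_{\mathsf{D}}^{\mathrm{for}}$. It follows that $\mathcal{V}_{\mathsf{D}}=\mathcal{V}_{\mathsf{D}}^{\mathrm{for}}$ as function spaces, with $\mathfrak{d}$ realizing the reduction on generators.
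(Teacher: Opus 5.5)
Your proposal is correct and follows essentially the same route as the paper: orient a cycle using the parity of $\widehat{e}_m$ (absorbing signs into the constants $c_{k,\ell}$), replace it by its delooping via Proposition~\ref{propgeneralpropofloopterm}, iterate until only forest generators remain, and invoke Lemma~\ref{lemelementsinB(E_D)arelinearlyindependent} for independence of choices. Your termination argument, that the edge count strictly decreases at each step, is if anything slightly cleaner than the paper's phrasing that the number of loops drops by at least one.
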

\begin{proof}
Take any generator $\widehat{e}_{\Gamma}=\widehat{e}_{\mathrm{m},\infty}\in \mathcal{B}(\mathcal{V}_{\mathsf{D}}) \setminus	\mathcal{B}(\mathcal{V}_{\mathsf{D}}^{\mathrm{for}})$.
Take any loop $L$ in  the graph $\Gamma$. Using the parity of $\widehat{e}_{m}$ given in Lemma \ref{lemanalyticpropertiesofemz} (i) we can assume, by a relabeling and a redefinition of the $c_{k,\ell}$'s involved in $L$ if necessary, that
		$L$ is a labeled directed graph of the form 
		\[
		L=\left(v_{1}\rightarrow v_{2}\rightarrow \cdots \rightarrow  v_{p}\rightarrow v_{1}\right)\,,
		\]	
		corresponding to the monomial
		\[
	\psi_{\mathrm{m},I}=
		\prod_{i=1}^{p}\widehat{e}_{m_{v_{i}}}(s_{v_{i}, v_{i}+1})\,,\quad 
		I=(v_{1},\cdots, v_{p})\,.
		\]
Here we have used the fact that  the color $r_{k,\ell}$ of any edge in $\Gamma$ is $1$ 
according to \eqref{eqngeneratingsetofVD}  and Definition
\ref{dfndivisorgraph}.
Also by Definition \ref{dfndivisorgraph},  the sequence $\mathrm{m}=(m_{v_{1}},\cdots m_{v_{p}})$ of decorations in the graph $L$ satisfies
$m_{v_{i}}\geq 1$ for $ i=1,2,\cdots, p$.
Applying the delooping map \eqref{eqndeloopingmaponsimpleloop} gives
 $\mathfrak{d}(	\psi_{\mathrm{m},I})$ as an element in the space $(\mathcal{V}_{\mathsf{D}}^{\mathrm{for}},\mathcal{B}(\mathcal{V}_{\mathsf{D}}^{\mathrm{for}}))$.
Note that the relabeling and redefinition of the $c_{k,\ell}$'s  do not affect this fact.
For each monomial  $\psi_{\mathrm{m}',I}$ in $\mathfrak{d}(	\psi_{\mathrm{m},I})$, its graph $\gamma$
is supported on the same set of vertices $v_{1},\cdots, v_{p}$.
These vertices do not form the loop $L$ anymore by the condition $\mu_{0}(\mathrm{m}')\neq 0$ in \eqref{eqndeloopingmaponsimpleloop} (but can be contained in a large loop in $\Gamma$ for instance).
This delooping procedure thus decreases the number of loops in the graph $\Gamma$ at least by $1$.
Applying this delooping procedure  iteratively to the remaining loops, we then obtain the desired map in \eqref{eqnreducedformofED}.
 Lemma \ref{lemelementsinB(E_D)arelinearlyindependent} implies that the map is independent of choices.
\end{proof}

\subsection{Space equipped with modular weight grading}
\label{secmodularweightgradingonTD}

Since $\mathcal{V}_{\mathsf{D}}^{\mathrm{for}}\subseteq \mathcal{V}_{\mathsf{D}}$ by Definition \ref{dfntreetypefunctions},
Lemma \ref{lemexpressionbytreeterms} implies that $\mathcal{V}_{\mathsf{D}}$ coincides with $\mathcal{V}_{\mathsf{D}}^{\mathrm{for}}$ via the delooping map $\mathfrak{d}$.
It thus reduces the study on the space  $(\mathcal{V}_{\mathsf{D}},\mathcal{B}(\mathcal{V}_{\mathsf{D}}))$ to the space $(\mathcal{V}_{\mathsf{D}}^{\mathrm{for}},\mathcal{B}(\mathcal{V}_{\mathsf{D}}^{\mathrm{for}}))$.
In this part, we show that the latter enjoy particularly nice properties.

By 
Definition \ref{dfntreetypefunctions} and Lemma \ref{lemelementsinB(E_D)arelinearlyindependent}, a function
$f\in (\mathcal{V}_{\mathsf{D}}^{\mathrm{for}},\mathcal{B}(\mathcal{V}_{\mathsf{D}}^{\mathrm{for}}))$ has a unique expression
\begin{equation}\label{eqndecomposeTDbygraphs}
	f=\sum_{\Gamma\in\mathcal{G}_{\mathrm{for}}}\lambda_{\Gamma}(f)\widehat{e}_{\Gamma}\,,
\end{equation}
where $\lambda_{\Gamma}(f)$ are constants that are independent of the variables $\{z_{k}\}_{k}$ but can depend on the parameters $\{c_{k,\ell}\}_{k,\ell}$.

\begin{dfn}\label{dfnmodularweightgradingonTD}
Let the  notation be as in Definition \ref{dfntreetypefunctions}. Equip $ (\mathcal{V}_{\mathsf{D}}^{\mathrm{for}},\mathcal{B}(\mathcal{V}_{\mathsf{D}}^{\mathrm{for}}))$  with the modular weight grading $M_{\bullet}$, by declaring
each quantity $\widehat{e}_{m}(s_{k,l})$ with  $k\neq \ell, m\geq 1$ to have modular weight $m$ and zero otherwise, then extending multiplicatively to the generating set 
$\mathcal{B}(\mathcal{V}_{\mathsf{D}}^{\mathrm{for}})$ and linearly from $\mathcal{B}(\mathcal{V}_{\mathsf{D}}^{\mathrm{for}})$ to $\mathcal{V}_{\mathsf{D}}^{\mathrm{for}}$.
By abuse of notation, we also use $M_{\bullet}$ to denote the graded vector space $M_{\bullet}\mathcal{V}_{\mathsf{D}}^{\mathrm{for}}$.
\end{dfn}

Strictly speaking, in order for $M_{\bullet}$ to be well-defined,
one needs to check that 
\begin{equation}\label{eqngradingcondition}
M_{k}\cap \sum_{\ell\neq k}M_{\ell}=\{0\}\quad \mathrm{for ~any}~k\,.
\end{equation}
This indeed follows from Lemma \ref{lemelementsinB(E_D)arelinearlyindependent} above.

\begin{rem}\label{remmodularweightgradingonTD}
Definition \ref{dfnmodularweightgradingonTD} amounts to declaring that 
the coefficients in \eqref{eqndecomposeTDbygraphs}, which only involve the parameters $\{c_{k,\ell}\}$, have modular weight grading $0$. 
In the graph language in Definition \ref{dfngraphterminologies},
the modular weight grading $M_{\bullet}$ of $\widehat{e}_{\Gamma}$ in \eqref{eqneGamma} is the decoration sum
$m(\Gamma)$. Related to the comment made  earlier in Remark \ref{remmodularweight}, this grading is the relative $\mathrm{SL}_{2}(\mathbb{Z})$-equivariance on the space $E_{[n]}\times S\rightarrow T$, where $S=\mathrm{Spec}\,\mathbb{C}[\{c_{k,\ell}\}_{ k,\ell}]$  and 
$T=\mathrm{Spec}\,\mathbb{C}[\{c_{k,k}\}_{k}]$.
\end{rem}

Thanks to the modular weight grading $M_{\bullet}$ on the spaces $ (\mathcal{V}_{\mathsf{D}}^{\mathrm{for}},\mathcal{B}(\mathcal{V}_{\mathsf{D}}^{\mathrm{for}}))$, we have the following result regarding 
the $\bar{\partial}$-closedness, which plays an important role in later discussions.

\begin{lem}\label{lemdclosednessformodularweightgrading}
	Suppose $f\in  (\mathcal{V}_{\mathsf{D}}^{\mathrm{for}},\mathcal{B}(\mathcal{V}_{\mathsf{D}}^{\mathrm{for}}))$.
	\begin{enumerate}[i).]
		\item Suppose  $f$	
	 satisfies $\bar{\partial}f=0$.
Then in the decomposition 
	\begin{equation}\label{eqnmodularweightgradingdecompositionoffunction}
	f=\sum_{k}f_{k}\in \bigoplus_{k}M_{k}\,,
	\end{equation}
one has $\bar{\partial}f_{k}=0$.
	\item 
	Concretely, the decomposition \eqref{eqnmodularweightgradingdecompositionoffunction}
	is given by 
	\begin{equation}\label{eqnconcretemodularweightgradingdecompositionoffunction}
	f_{k}=\sum_{\substack{\Gamma\in\mathcal{G}_{\mathrm{for}}\\
			m(\Gamma)=k}}\lambda_{\Gamma}(f)\widehat{e}_{\Gamma}\,,
	\end{equation}
	if $f$ is given by \eqref{eqndecomposeTDbygraphs}.
		\end{enumerate}
\end{lem}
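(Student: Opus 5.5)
Part (ii) follows directly from the definitions, so I will establish it first. By Lemma \ref{lemelementsinB(E_D)arelinearlyindependent}, $f$ has the unique expansion \eqref{eqndecomposeTDbygraphs}. By Definition \ref{dfnmodularweightgradingonTD} and Remark \ref{remmodularweightgradingonTD}, each basis element $\widehat{e}_\Gamma$ is homogeneous of weight $m(\Gamma)$, and the coefficients $\lambda_\Gamma(f)$ have weight $0$. Grouping the expansion by $m(\Gamma)$ therefore gives \eqref{eqnconcretemodularweightgradingdecompositionoffunction}, and the decomposition is unique by \eqref{eqngradingcondition}.

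For part (i), I will compute $\bar\partial \widehat{e}_\Gamma$ using Lemma \ref{lemanalyticpropertiesofemz} (iii) and the Leibniz rule:
\[
\bar\partial\widehat{e}_\Gamma=\mathbold{Y}\sum_{e\in E(\Gamma)}\widehat{e}_{\Gamma_e^-}\,d\bar{s}_e ,
\]
where $\Gamma_e^-$ is obtained from $\Gamma$ by lowering the decoration of $e$ by one. If the decoration of $e$ drops to $0$, the edge is deleted, since $\widehat{e}_0=1$. In either case $\Gamma_e^-$ is still a forest, so $\widehat{e}_{\Gamma_e^-}\in\mathcal{B}(\mathcal{V}_{\mathsf{D}}^{\mathrm{for}})$ and its weight is exactly $m(\Gamma)-1$. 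Hence $\bar\partial$ maps $M_k$ into $M_{k-1}\otimes\mathrm{span}\{d\bar{s}_e\}$, that is, it is homogeneous of degree $-1$.

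Given this, $\bar\partial f=\sum_k\bar\partial f_k$ with $\bar\partial f_k$ lying in weight $k-1$. Since the $d\bar s_e$ are combinations of the $d\bar z_j$, I will write $\bar\partial f_k=\sum_j g_{k,j}\,d\bar z_j$ with $g_{k,j}\in M_{k-1}\mathcal{V}^{\mathrm{for}}_{\mathsf{D}}$; note the coefficient of $d\bar z_j$ in $d\bar s_e$ is a constant ($\pm1$ or $0$), which has weight $0$. The assumption $\bar\partial f=0$ gives $\sum_k g_{k,j}=0$ for each $j$. By the directness of the grading, which comes from Lemma \ref{lemelementsinB(E_D)arelinearlyindependent}, each $g_{k,j}=0$, and therefore $\bar\partial f_k=0$.

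The main obstacle is checking that the terms appearing in $\bar\partial f_k$ really stay inside the forest-type space with the claimed weight, and are not, for example, affected by relations that identify them with other graphs. Deleting or lowering edges of a forest gives a forest, so no delooping is needed and no weight mixing occurs. I would also verify that the linear independence of Lemma \ref{lemelementsinB(E_D)arelinearlyindependent} holds over the constants: it is stated with coefficients depending on the $c_{k,\ell}$, and the $g_{k,j}$ have exactly such coefficients, so the argument applies coefficientwise in each $d\bar z_j$ component.
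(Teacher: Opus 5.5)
Your proposal is correct and follows essentially the same argument as the paper. In each $d\bar z_j$-component, $\bar\partial$ maps $M_k$ into a constant multiple of $M_{k-1}$, and the directness of the grading (coming from the linear independence of the forest-type basis) then forces each $\bar\partial f_k=0$; part (ii) is just grouping the unique expansion by $m(\Gamma)$. Your explicit check that lowering or deleting an edge of a forest stays inside $\mathcal{B}(\mathcal{V}_{\mathsf{D}}^{\mathrm{for}})$ with weight exactly $m(\Gamma)-1$ fills in a step the paper leaves implicit.
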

\begin{proof}
		\begin{enumerate}[i).]
			\item 
	By assumption, we see that for any $i\in [n]$, one has
	\[
	\sum_{k}\bar{\partial}_{\bar{i}}f_{k}=\bar{\partial}_{\bar{i}}f=0\,.
	\]
	According to \eqref{eqnbarpartialem} and the definition of $\mathcal{R}_{\mathsf{D}}$ in Definition \ref{dfnfunctionspaceassociatedtoD}, we see that for any $i\in [n]$
\begin{equation}\label{eqncompatibilitybetweenbarpartialandM}
	\bar{\partial}_{\bar{i}}M_{k}\subseteq \mathbold{Y} M_{k-1}\,.
\end{equation}
	Combining, this gives
	\[
	\sum_{k}\bar{\partial}_{\bar{i}}f_{k}\in 	\mathbold{Y}\bigoplus_{k}M_{k-1} \,.
	\]
	The direct sum structure then tells that $\bar{\partial}_{\bar{i}}f_{k}=0$ for any $k$, as desired.
	\item 
	This follows from the fact that the modular weight grading of $\widehat{e}_{\Gamma}$ is the decoration sum $m(\Gamma)$, as mentioned in Remark \ref{remmodularweightgradingonTD}.
		\end{enumerate}
\end{proof}

\begin{rem}\label{remonmodularweightgrading}
A few remarks regarding the modular weight grading are in order.
\begin{enumerate}[i).]
\item The modular weight grading $M_{\bullet}$ can not be defined directly on the space $(\mathcal{V}_{\mathsf{D}},\mathcal{B}(\mathcal{V}_{\mathsf{D}}))$ in \eqref{eqndfnFDED},
due to the relations in Proposition \ref{propgeneralpropofloopterm} which violate the requirement \eqref{eqngradingcondition}.
In fact, on these spaces, $M_{\bullet}$ only defines a filtration instead of grading. 
However, by Lemma \ref{lemexpressionbytreeterms} we see that
 Lemma \ref{lemdclosednessformodularweightgrading}  holds on the delooped space $\mathfrak{d}(\mathcal{V}_{\mathsf{D}},\mathcal{B}(\mathcal{V}_{\mathsf{D}}))$.
\item One can also define the modular weight by declaring all $\widehat{e}_{m}(s_{kl}), k, \ell$ to have weight $m$ instead. This is the modular weight $\widehat{e}_{m}(s_{kl})$ in both the variables $\{z_{k}\}$ and parameters $\{c_{k,\ell}\}$ in $s_{k,\ell}=z_{k}-z_{\ell}+c_{k,\ell}$.
This gives gradings on the spaces $\mathcal{V}_{\mathsf{D}},\mathcal{R}_{\mathsf{D}}$ on which Lemma \ref{lemdclosednessformodularweightgrading} holds true.
 However, the resulting decomposition is not fine enough for the later purpose of studying Deligne splittings, see Proposition \ref{proppqtypeinDelignesplitting} and Example \ref{exn=3} below.
 \end{enumerate}
\end{rem}

\section{Cohomology classes of Eisenstein-Kronecker forms}
\label{seccohomologyclass}

In this part, we shall first review the basics of mixed Hodge structures. 
Based on the functional relations in Section \ref{seccombinatorialrelations}, we construct a class of $d$-closed Eisenstein-Kronecker forms.
We then study the cohomology classes and Deligne splittings for them.

\subsection{Review on the basics of Deligne's mixed Hodge structure}

We recall Deligne's spectral sequence \cite{Deligne:1971} in computing the mixed Hodge structures for the
complement $U$ of a simple normal crossing divisor $D$ in a nonsingular projective algebraic variety $X$ of dimension $n$.
Put a linear order on the components of $D$ as $D_{a},a=1,2,\cdots, \mathcal{N}$.

For any ordered tuple $I\subseteq [\mathcal{N}]:=\{1,2,\cdots,\mathcal{N}\}$,
define
\begin{equation}\label{eqnstratumDI}
	D_{I}=\bigcap_{a\in I} D_{a}\,,\quad
	D_{[j]}=\coprod_{I:\,|I|=j}D_{I}\,, \quad
	D_\varnothing := X \,.
\end{equation}
and
\begin{equation}\label{eqnstratumDa}
D(a) = D_{a}\cap (\cup_{b\neq a} D_{b})\,.
\end{equation}
There are natural closed immersions
\[\iota_{[j]}: D_{[j]}\rightarrow X\,,\quad 
\iota_{a} : D_{a} \hookrightarrow X \,.
\]

Consider the bifiltered complex of sheaves of differential forms on $X$ which may have logarithmic poles along $D$, i.e., $(\mathcal{A}^{\bullet}_{X}(\log D),d)$, equipped with the usual 
decreasing Hodge filtration $F^{\bullet}$ induced by the 
$dz,d\bar{z}$ types of differential forms,
and the increasing 
filtration $W_{\bullet}$ that keeps track of the pole order along $D$.
Explicitly, one has 
\begin{equation}\label{eqndfnoffiltrations}
	F^{p}\mathcal{A}_{X}^{\bullet}(\log D)
	:=\bigoplus_{k\geq p}\mathcal{A}^{k,\bullet-k}_{X}(\log  D )\,,\quad 
	W_{m}\mathcal{A}^{\bullet}_{X}(\log D)
	:=\mathcal{A}_{X}^{\bullet-m}\wedge \mathcal{A}^{m}_{X}(\log D)\,.
\end{equation}
One has $\mathbb{H}^{k}(X, \mathcal{A}^{\bullet}_{X}(\log D))
\cong H^{k}(X,A_{X}^{\bullet}(\log D))\cong H^{k}(U)$.
The above data on $\mathcal{A}^{\bullet}_{X}(\log D)$ induces 
filtrations on $\mathbb{H}^{k}(X,\mathcal{A}^{\bullet}_{X}(\log D))\cong H^{k}(U)$
\begin{eqnarray}\label{eqnfiltrationsoncohomology}
	F^{p}H^{k}(X,A_{X}^{\bullet}(\log D))&=&\mathrm{im} \left(H^{k}(X,F^{p}A_{X}^{\bullet}(\log D))
	\rightarrow H^{k}(X,A_{X}^{\bullet}(\log D))\right)
	\,,\nonumber\\
	W_{j}H^{k}(X,A_{X}^{\bullet}(\log D))&=&\mathrm{im} \left(H^{k}(X,W_{j}A_{X}^{\bullet}(\log D))
	\rightarrow H^{k}(X,A_{X}^{\bullet}(\log D))\right)
	\,.
\end{eqnarray}
This puts a mixed Hodge structure on $H^{k}(U)$, with the weight filtration given by $\widetilde{W}_{\bullet}=W_{\bullet}[-k]$. 

The spectral sequence associated to the Hodge filtration $F^{\bullet}$ degenerates at the first page, and one has
\begin{equation}\label{eqnHodgefiltrationdegeneration}
	\mathrm{gr}^{p}_{F}H^{p+q}(X,A_{X}^{\bullet}(\log D))\cong 
	H^{p+q}(X,\mathrm{gr}^{p}_{F}A_{X}^{\bullet}(\log D))\,.
\end{equation}
The spectral sequence associated to the weight filtration  $W_{\bullet}$ degenerates at its second page.
Its first page is conveniently described using the residue maps as follows.
The iterated Poincar\'e residue map gives a quasi-isomorphism of complexes of sheaves
\begin{equation}\label{eqnE0pageiso}
	\mathrm{res}_{[j]}=\bigoplus_{I:\,|I|=j}\mathrm{res}_{I}:
	\mathrm{gr}^{W}_{j}\mathcal{A}^{\bullet}_{X}(\log D)
	\cong (\iota_{[j]})_{*}\mathcal{A}^{\bullet}_{D_{[j]}}[-j]\,.
\end{equation}
Passing to cohomology, it gives an isomorphism of pure Hodge structures 
\begin{equation}\label{eqnE1pageiso}
	\mathrm{Res}_{[j]}=\bigoplus_{I:\,|I|=j}\mathrm{Res}_{I}:
H^{k}(X, \mathrm{gr}^{W}_{j}A^{\bullet}_{X}(\log D))\xrightarrow{\cong}
	H^{k}(D_{[j]}, A^{\bullet}_{D_{[j]}}[-j])\otimes \mathbb{Q}(-j)\,,
\end{equation}
where $\otimes \mathbb{Q}(-j)$ is the Tate twist.
On the first page of the spectral sequence associated to the $W_{\bullet}$ filtration, the entries are given by the spaces in \eqref{eqnE1pageiso} and the differentials by the Gysin maps.

Deligne's splitting on $(H^{\bullet}(U),F^{\bullet}, W_{\bullet})$ is given by
\begin{equation}\label{eqndfnDelignsplitting}
	H^{\bullet}(U)=\bigoplus_{p,q}I^{p,q}\,,\quad 
	I^{p,q}:=F^{p}\cap\widetilde{W}_{p+q}\cap \left(
	\overline{F^{q}}\cap \widetilde{W}_{p+q}+\sum_{i\geq 2}\overline{F^{q-i+1}}\cap \widetilde{W}_{p+q-i}
	\right)\,.
\end{equation}
The subspaces $I^{p,q}$ satisfy
\begin{equation}\label{splittingofWandF}
	\widetilde{W}_{k}\otimes \mathbb{C}=\bigoplus_{p+q\leq k}I^{p,q}\,,\quad
	F^{p}=\bigoplus_{i\geq p} \bigoplus_{q}I^{i,q}\,,
\end{equation}
\begin{equation}\label{Ipq=barIqp}
	I^{p,q} = \overline{I^{q,p}} \quad \mathrm{mod} \quad  \bigoplus_{\substack{r\leq p-1\\s\leq q-1}} I^{r,s} \,.
\end{equation}
In fact, the splitting $\{I^{p,q}\}_{p,q}$ in \eqref{eqndfnDelignsplitting} is the unique one that satisfies \eqref{splittingofWandF} \eqref{Ipq=barIqp}.

\begin{definition}
	Define the natural projections
	\[
	\operatorname{pr}_{I^{k,0}} : H^k(U) \to I^{k,0}H^k(U) \,,\quad 
	0\leq k\leq \dim_{\mathbb{C}}U=n\,.
	\]
	For $[\phi]_{\mathrm{dR}} \in H^n(U)$, denote   $[\phi]_{\mathrm{Hodge}}: = \operatorname{pr}_{I^{n,0}} [\phi]_{\mathrm{dR}}$.
\end{definition}

	Using \eqref{eqnHodgefiltrationdegeneration}, one has
\begin{equation}\label{eqnFnoHn}
	F^{n} H^n(X,A^{\bullet}_{X}(\log D)) = H^{n}(X,F^{n}A^{\bullet}_{X}(\log D)) 
	= \dfrac{\Gamma(X,\Omega^{n}_{X}(\log D))}{dF^{n}A^{n-1}_{X}(\log D)} = \Gamma(X,\Omega^{n}_{X}(\log D)) \,.
\end{equation}
Combining \eqref{eqndfnDelignsplitting}, this  implies that 
\begin{equation}\label{eqnInoHn}
I^{n,0}H^n(X,A^{\bullet}_{X}(\log D))=F^{n}H^n(X,A^{\bullet}_{X}(\log D))\cap H^{n}(X,W_{0}A_{X}^{\bullet}(\log D))=\Gamma(X,\Omega_{X}^{n})\,.
\end{equation}

\subsection{Kernel of residue maps}

Before studying $d$-closed Eisenstein-Kronecker forms,  we characterize $ I^{n,0}H^{n}(U) $ in terms of the kernel space  of certain residue maps.
This will be needed to compute the cohomology classes and Deligne splittings  in Section \ref{seccohomologyclassofbuildingblocks} below.

\begin{dfn}\label{dfnkersiduetoD(a)}
	Let the notation be as above.  Denote 
	the residue map on the sheaf level by
	\[
	\mathrm{res}_{a} : \mathcal{A}^{\bullet}_{X}(\log D) \to (\iota_{a})_{\ast} \mathcal{A}^{\bullet}_{D_{a}}(\log D(a))\,.
	\]
	Denote the corresponding residue map on the cohomology level by 
	\begin{equation}\label{eqnresiduemaptoD(a)}
	\mathrm{Res}_{a} : H^n(X,A_{X}^{\bullet}(\log D)) \to H^{n-1}(D_{a},A^{\bullet}_{D_{a}}(\log D(a)) ) \,.
	\end{equation}
	\end{dfn}
	\begin{lemma}\label{lemmakernelofRes}
		Let the notation be as before. 
Upon restricting to $F^{n}H^{n}(U)$, one has  
	\[
	\bigcap_{a=1}^{\mathcal{N}} \mathrm{ker}\, \mathrm{Res}_{a} \cap F^{n}H^{n}(U) = I^{n,0}H^{n}(U) \,.
	\]
\end{lemma}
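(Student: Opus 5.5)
By \eqref{eqnFnoHn}, every class in $F^{n}H^{n}(U)$ is represented by a unique global section $\omega\in\Gamma(X,\Omega^{n}_{X}(\log D))$; no exact terms need to be quotiented out. By \eqref{eqnInoHn}, $I^{n,0}H^{n}(U)=\Gamma(X,\Omega^{n}_{X})$. So the statement reduces to a form-level claim. For $\omega\in\Gamma(X,\Omega^{n}_{X}(\log D))$, we must show that $\mathrm{Res}_{a}[\omega]=0$ for all $a$ if and only if $\omega$ is holomorphic, i.e.\ has no poles along $D$.

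The inclusion ``$\supseteq$'' is immediate. If $\omega\in\Gamma(X,\Omega^{n}_{X})$, then $\mathrm{res}_{a}\omega=0$ already on the sheaf level, so $\mathrm{Res}_{a}[\omega]=0$ for every $a$.

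For ``$\subseteq$'', take $\omega\in\Gamma(X,\Omega^{n}_{X}(\log D))$ with $\mathrm{Res}_{a}[\omega]=0$ for all $a$. The form $\mathrm{res}_{a}\omega$ is a global section of $\Omega^{n-1}_{D_{a}}(\log D(a))$ on the smooth projective $(n-1)$-dimensional $D_{a}$. Applying \eqref{eqnFnoHn} to the pair $(D_{a},D(a))$, the map
\[
\Gamma(D_{a},\Omega^{n-1}_{D_{a}}(\log D(a)))=F^{n-1}H^{n-1}(D_{a}-D(a))\hookrightarrow H^{n-1}(D_{a}-D(a))
\]
is injective. Hence the cohomological vanishing $\mathrm{Res}_{a}[\omega]=0$ forces $\mathrm{res}_{a}\omega=0$ as a form. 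A top-degree logarithmic form whose Poincaré residue along $D_{a}$ vanishes has no pole along $D_{a}$. In local coordinates with $D_{a}=\{x_{1}=0\}$, write $\omega=g\,\frac{dx_{1}}{x_{1}}\wedge\cdots$; then $\mathrm{res}_{a}\omega=g|_{D_{a}}\cdots=0$ gives $x_{1}\mid g$. Doing this for every $a$ shows $\omega$ is holomorphic away from codimension-two strata, hence holomorphic everywhere by Hartogs. So $[\omega]\in\Gamma(X,\Omega^{n}_{X})=I^{n,0}$.

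The main point needing care is the compatibility used above. One must check that $\mathrm{Res}_{a}$ on cohomology, restricted to $F^{n}$, is induced by $\mathrm{res}_{a}$ on forms. One must also check that it lands in $F^{n-1}H^{n-1}(D_{a}-D(a))$, where classes are again identified with their unique logarithmic representatives. Both hold because $\mathrm{res}_{a}$ is a morphism of filtered complexes shifting $F$ by one, and because $H^{n-1}(D_{a},F^{n-1}A^{\bullet})=\Gamma(D_{a},\Omega^{n-1}(\log D(a)))$ (no exact terms in top holomorphic degree). The injectivity of that last space into cohomology is exactly the $E_{1}$-degeneration \eqref{eqnHodgefiltrationdegeneration} applied to $D_{a}$. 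I expect this identification to be the only nontrivial step; the rest is local.
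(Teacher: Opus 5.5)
Your argument is correct, but it takes a different route from the paper. The paper applies the long exact sequence of the residue short exact sequence $0\to\Omega^{\bullet}_{X}(\log\cup_{b\neq a}D_{b})\to\Omega^{\bullet}_{X}(\log D)\to(\iota_{a})_{*}\Omega^{\bullet-1}_{D_{a}}(\log D(a))\to 0$. This identifies $\ker\mathrm{Res}_{a}$ with the image of $\gamma_{a}\colon H^{n}(X-\cup_{b\neq a}D_{b})\to H^{n}(U)$ on all of $H^{n}(U)$. The paper then asserts $\mathrm{im}\,\gamma_{a}\cap F^{n}H^{n}(U)=\Gamma(X,\Omega^{n}_{X}(\log\cup_{b\neq a}D_{b}))$ and intersects over $a$. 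That equality actually uses strictness of $\gamma_{a}$ with respect to $F$; mere compatibility with $F$, which is all the paper cites, only gives $\supseteq$.

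You never leave global holomorphic forms. You apply \eqref{eqnFnoHn} to the pair $(D_{a},D(a))$ to upgrade the cohomological vanishing of $\mathrm{Res}_{a}[\omega]$ to the form-level vanishing $\mathrm{res}_{a}\omega=0$, and a local computation finishes. Your route is more elementary and self-contained: it needs only $E_{1}$-degeneration on $X$ and on each $D_{a}$, plus a local calculation. In effect it supplies exactly the justification missing from the paper's $F^{n}$-step. The paper's route, in exchange, gives the stronger structural statement $\ker\mathrm{Res}_{a}=\mathrm{im}\,\gamma_{a}$ on the whole of $H^{n}(U)$, not just on $F^{n}$.

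A minor simplification: you do not need Hartogs. At a point where $D=\{x_{1}\cdots x_{k}=0\}$, the vanishing of all residues gives $x_{i}\mid g$ for each $i\leq k$. Since $\mathcal{O}_{X,p}$ is a UFD, this already yields $x_{1}\cdots x_{k}\mid g$, including at points of the double locus.
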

\begin{proof}
A local calculation gives the short exact sequences of sheaves on $X$
	\begin{equation}\label{shortexactsequenceaboutres_a}
		0 \to \Omega^{\bullet}_{X} (\log \cup_{b:b\neq a}D_{b}) \to \Omega^{\bullet}_{X} (\log D) \xrightarrow{\mathrm{res}_{a}} (\iota_{a})_{\ast} \Omega^{\bullet-1}_{D_{a}}(\log D(a)) \to 0 \,.
	\end{equation}
	Taking the hypercohomology of the short exact sequence \eqref{shortexactsequenceaboutres_a}, we get
	\begin{equation*}
		\cdots \to \mathbb{H}^{n}(X,\Omega^{\bullet}_{X} (\log \cup_{b:b\neq a}D_{b}))
		\to \mathbb{H}^{n}(X,\Omega^{\bullet}_{X}(\log D)) 
		\xrightarrow{\mathrm{Res}_{a}}
		\mathbb{H}^{n}(X,(\iota_{a})_{\ast} \Omega^{\bullet-1}_{D_{a}}(\log D(a))) \to \cdots
	\end{equation*}
		For the closed immersion $\iota_{a} : D_{a} \hookrightarrow X$, the map $(\iota_{a})_{\ast}$ is exact. This shows that
	\[
	\mathbb{H}^{n}(X, (\iota_{a})_{\ast} \Omega^{\bullet-1}_{D_{a}}(\log D(a)))
	\cong
	\mathbb{H}^{n}(D_{a}, \Omega^{\bullet-1}_{D_{a}}(\log D(a)))
	\cong
	H^{n-1}(D_{a}, A^{\bullet}_{D_{a}}(\log D(a))) \,.
	\]
	Therefore, we obtain the long exact sequence
	\begin{equation*}\label{longexactsequenceofRes_a}
		\cdots 
		\to  
		H^{n}(X,A^{\bullet}_{X} (\log \cup_{b:b\neq a}D_{b}))
		\xrightarrow{\gamma_{a}}
		H^{n}(X,A^{\bullet}_{X}(\log D)) 
		\xrightarrow{\mathrm{Res}_{a}}
		H^{n-1}(D_{a}, A^{\bullet}_{D_{a}}(\log D(a))) \to \cdots
	\end{equation*}
    where $\gamma_{a}$ is the inclusion map. 
In particular, one has 
	\[
	\mathrm{ker}\, \mathrm{Res}_{a} 
	= \mathrm{im}\, \gamma_{a} \,.
	\]
	
We now restrict to $F^{n}H^{n}(U)$.
Due to the fact that $\gamma_{a}$ respects the Hodge filtration and the description \eqref{eqnFnoHn}, we have
	\[
	\mathrm{im}\, \gamma_{a} \cap F^{n}H^{n}(U) = \Gamma(X,\Omega^{n}_{X}(\log \cup_{b:b\neq a}D_{b})) \,.
	\]
This shows that (recall \eqref{eqnInoHn})
	\[
	\bigcap_{a=1}^{\mathcal{N}} \mathrm{ker}\, \mathrm{Res}_{a} \cap F^{n}H^{n}(U)
	= \bigcap_{a=1}^{\mathcal{N}} \Gamma(X,\Omega^{n}_{X}(\log \cup_{b:b\neq a}D_{b})) 
	= \Gamma(X,\Omega^{n}_{X}) 
	= I^{n,0}H^{n}(U) \,.
	\]

\end{proof}
	
As a generalization of Lemma \ref{lemmakernelofRes}, we have the following result. 
\begin{lemma}\label{lemkerofResa}
	Let the notation be as before. One has 
	\[
	\bigcap_{a=1}^{\mathcal{N}} \mathrm{ker} \,\mathrm{Res}_{a} 
	= \mathrm{im} \left(H^{n}(X, A_{X}^{\bullet})
	\rightarrow H^{n}(X,A_{X}^{\bullet}(\log D))\right) = \bigoplus_{0\leq j \leq n}I^{j,n-j}H^{n}(U) \,.
	\]
\end{lemma}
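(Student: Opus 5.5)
The plan is to prove the first equality using the weight filtration and iterated residues. The second equality then follows formally from the properties \eqref{splittingofWandF} of the Deligne splitting.

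\emph{Step 1: the image of $H^{n}(X,A_{X}^{\bullet})$ lies in the kernel.} A class in the image of $H^{n}(X,A^{\bullet}_{X})\to H^{n}(X,A^{\bullet}_{X}(\log D))$ is represented by a smooth form on $X$ with no poles. Hence $\mathrm{res}_{a}$ kills it at the level of forms, so this image lies in every $\ker\mathrm{Res}_{a}$. By the degeneration of the weight spectral sequence at $E_{2}$, this image is exactly
\[
W_{0}H^{n}(X,A_{X}^{\bullet}(\log D))=\widetilde W_{n}H^{n}(U).
\]
Indeed, $W_{0}A^{\bullet}_{X}(\log D)=A^{\bullet}_{X}$ by \eqref{eqndfnoffiltrations}, and \eqref{eqnfiltrationsoncohomology} defines $W_{0}$ on cohomology as the image of its cohomology.

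\emph{Step 2: the reverse inclusion, by descending induction on the weight.} Let $\xi\in\bigcap_{a}\ker\mathrm{Res}_{a}$, and let $j\geq 1$ be minimal with $\xi\in W_{j}H^{n}$. I will show that the image of $\xi$ in $\mathrm{gr}^{W}_{j}H^{n}$ vanishes; then $\xi\in W_{j-1}$, and induction gives $\xi\in W_{0}$.

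By degeneration at $E_{2}$, $\mathrm{gr}^{W}_{j}H^{n}\cong E_{2}^{-j,n+j}$. This is a subquotient of $E_{1}$, namely the cycles of the Gysin differential in $\bigoplus_{|I|=j}H^{n-j}(D_{I})(-j)$ modulo boundaries, via \eqref{eqnE1pageiso}. So it suffices to show that a representative $\omega\in W_{j}A^{n}_{X}(\log D)$ of $\xi$ has $[\mathrm{res}_{I}\omega]=0$ in $H^{n-j}(D_{I})$ for every $|I|=j$.

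Write $I=\{a\}\cup I'$. The iterated residue factors as $\mathrm{res}_{I}=\mathrm{res}_{I'}\circ\mathrm{res}_{a}$. Here $\mathrm{res}_{a}\omega$ lies in the weight-$(j-1)$ piece of $A^{\bullet}_{D_{a}}(\log D(a))$, and $\mathrm{res}_{I'}$ is the iterated residue along the normal crossing divisor $D(a)\subseteq D_{a}$. Since $\mathrm{Res}_{a}\xi=0$, the class $[\mathrm{res}_{a}\omega]$ vanishes in $H^{n-1}(D_{a},A^{\bullet}_{D_{a}}(\log D(a)))$. Its weight-$(j-1)$ graded image therefore vanishes in $\mathrm{gr}^{W}_{j-1}H^{n-1}(D_{a}\setminus D(a))$, which is the $E_{2}$ term of the weight spectral sequence for $(D_{a},D(a))$.

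\emph{Main obstacle.} I expect the hardest step to be the passage from this $E_{2}$-level vanishing to the vanishing of the $E_{2}$-class of $\xi$ itself. I would handle it by using the compatibility of the residue map $\mathrm{Res}_{a}$ with the weight spectral sequences: $\mathrm{res}_{a}$ is a filtered morphism of complexes lowering $W$ by one. On $E_{1}$ pages it is the projection $\bigoplus_{|I|=j}H(D_{I})\to\bigoplus_{a\in I,|I|=j}H(D_{I})$, up to sign, and this commutes with the Gysin differentials. The induced map on $E_{2}=E_{\infty}$ is $\mathrm{gr}^{W}\mathrm{Res}_{a}$.

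Summing over all $a$, the map $\bigoplus_{a}\mathrm{res}_{a}$ on $E_{1}$ is injective on the $j\geq1$ columns, since every $I$ contains some $a$. The delicate point is injectivity on $E_{2}$, i.e.\ that boundaries on the target pull back to boundaries. To get it I would use strictness: morphisms of mixed Hodge structures are strict for $W$, and $\bigoplus_{a}\mathrm{Res}_{a}$ is a morphism of mixed Hodge structures (with a Tate twist). If this strictness argument causes trouble, the fallback is to run the induction directly on the hypercohomology long exact sequences used in the proof of Lemma \ref{lemmakernelofRes}, removing one component at a time.

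\emph{Step 3: the second equality.} By \eqref{splittingofWandF}, $\widetilde W_{n}H^{n}(U)\otimes\mathbb{C}=\bigoplus_{p+q\leq n}I^{p,q}$. Since $H^{n}(U)$ has weights $\geq n$ (the $I^{p,q}$ occur only for $n\leq p+q\leq 2n$), this equals $\bigoplus_{0\leq j\leq n}I^{j,n-j}$.
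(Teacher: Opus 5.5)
\textbf{There is a genuine gap, and it cannot be closed: the statement as written is false for $n=3$, and the paper's own proof has the same hole.}

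Your Steps 1 and 3 are fine, and you located the crux correctly. The proposed way through it does not work, however. Strictness of $f=\bigoplus_a\mathrm{Res}_a$ for $W$ only gives $\mathrm{gr}^W(\ker f)=\ker(\mathrm{gr}^W f)$. This makes $\ker f\subseteq\widetilde W_nH^n(U)$ \emph{equivalent} to injectivity of $\mathrm{gr}^W_k f$ for $k>n$; it does not prove it. Injectivity on $E_1$ also does not pass to $E_2$. Suppose $x=(x_I)_{|I|=j}$ is a $d_1$-cycle and, for each $a$, $(x_I)_{I\ni a}=d_1y^{(a)}$ in the weight spectral sequence of $(D_a,D(a))$. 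The local primitives $y^{(a)}\in\bigoplus_{J\ni a,\,|J|=j+1}H^{n-j-2}(D_J)$ cannot in general be chosen to agree on strata $D_J$ shared by several $a$, so $x$ need not be a global $d_1$-boundary. This only matters for $1\le j\le n-2$, the columns that receive a nonzero $d_1$. So your argument is complete for $n\le 2$, but not beyond.

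Here is a counterexample for $n=3$. Let $Y$ be $\mathbb{P}^2$ blown up at two points and then at one point of each exceptional curve. It contains two disjoint pairs $A_1\cup B_1$ and $A_2\cup B_2$ of smooth rational curves. Each pair meets transversally in one point, and the four classes are linearly independent in $H^2(Y)$. Put $D_Y=A_1\cup B_1\cup A_2\cup B_2$, $X=Y\times\mathbb{P}^1$ and $D=D_Y\times\mathbb{P}^1$.

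Then $H^3(X)=0$, while $H^3(U)\cong H^3(Y\setminus D_Y)\cong\mathbb{Q}(-2)$. This group is spanned by the $E_2^{-1,4}$-class coming from the relation $[(A_1\cap B_1)\times\mathbb{P}^1]=[(A_2\cap B_2)\times\mathbb{P}^1]$ in $H^4(X)$. That relation is not a sum of relations holding inside single components. By Künneth, every residue of this class lies in $H^2(A^\circ)\otimes H^0(\mathbb{P}^1)=0$, because each $A^\circ=A\setminus\{\text{double point}\}\cong\mathbb{A}^1$. Hence
\[
\bigcap_a\ker\mathrm{Res}_a=H^3(U)\neq 0=\mathrm{im}\bigl(H^3(X)\to H^3(U)\bigr).
\]

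The paper's proof asserts that $\mathrm{Res}_a[\omega]_{\mathrm{dR}}=0$ for all $a$ forces each successive representative $\omega-\sum_i d\theta_i\in W_j$ to have zero class in $H^n(X,\mathrm{gr}^W_jA^\bullet_X(\log D))$. That is exactly the local-to-global step above. What does hold is the $F^n$-restricted version, Lemma~\ref{lemmakernelofRes}. That is the version used later, in Proposition~\ref{proppqtypeinDelignesplitting}.
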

\begin{proof}
	Since smooth forms on $X$ have no logarithmic poles along any component $D_{a}$, one has 
	\[
	\bigcap_{a=1}^{\mathcal{N}} \mathrm{ker} \,\mathrm{Res}_{a} 
	\supseteq \mathrm{im}( H^n(X, A_{X}^{\bullet}) \to H^n(X,A_{X}(\log D) ) ) \,.
	\]
	Thus we only need to prove the inclusion.
	The proof below is essentially \cite[Proposition 5.17]{Griffiths:1973}.

 Take a closed form $\omega \in A_{X}^{n}(\log D)$ representing its cohomology class $[\omega]_{\mathrm{dR}} \in \bigcap_{a=1}^{\mathcal{N}} \mathrm{ker}\, \mathrm{Res}_{a} $. By construction, all iterated residues of $[\omega]_{\mathrm{dR}}$ vanish since it already vanishes upon taking $\mathrm{Res}_{[1]}$, that is,
	\begin{equation}
	\mathrm{Res}_{[j]} [\omega]_{\mathrm{dR}} = 0, \quad  j \geq 1\,.
	\end{equation}
Start with $\omega \in A_{X}^{n}(\log D) = W_{n}A_{X}^{n}(\log D) $. By 
$\mathrm{Res}_{[n]}[\omega]_{\mathrm{dR}}=0$ and \eqref{eqnE1pageiso}, we see that the image of $[\omega]_{\mathrm{dR}}\in  H^{n}(X,{W}_{n}A^{\bullet}_{X}(\log D))$ along the quotient map
$ H^{n}(X,W_{n}A^{\bullet}_{X}(\log D))\rightarrow H^{n}(X,\mathrm{gr}^{W}_{n}A^{\bullet}_{X}(\log D))$ is zero.
This shows that there exists $\theta_{n} \in W_{n}A^{n-1}_{X}(\log D)$, such that
	\[
	\omega - d\theta_{n} \in W_{n-1}A^{n}_{X}(\log D) \,.
	\]
	Since $d$ commutes with the residue map, we have  
	\[
	\mathrm{Res}_{a} [\omega - d\theta_{n}]_{\mathrm{dR}} 
	= \mathrm{Res}_{a} [\omega]_{\mathrm{dR}} - \mathrm{Res}_{a}[d\theta_{n}]_{\mathrm{dR}} 
	= [-d\mathrm{Res}_{a} \theta_{n}]_{\mathrm{dR}} 
	= 0 \,.
	\]
	Continuing this argument, we see that there exist $\theta_{j} \in W_{j}A^{n-1}_{X}(\log D), j = 1,\cdots,n$, such that
	\[
	\omega - \sum_{j=1}^{n} d\theta_{j} \in W_{0}A^{n}_{X}(\log D) \,.
	\]
Thus $[\omega]_{\mathrm{dR}}$ is represented by $\omega - \sum_{j=1}^{n} d\theta_{j} \in W_{0}A^{n}_{X}(\log D) $. By \eqref{eqnfiltrationsoncohomology} and \eqref{splittingofWandF}, this yields
	\[
	[\omega]_{\mathrm{dR}} \in \mathrm{im} \left(H^{n}(X, W_{0}A^{n}_{X}(\log D))
	\rightarrow H^{n}(X,A_{X}^{\bullet}(\log D))\right)
	= W_{0} H^{n}(U)
	= \bigoplus_{0\leq j \leq n}I^{j,n-j} \,.
	\]	
\end{proof}

\subsection{Cohomology classes of building blocks}
\label{seccohomologyclassofbuildingblocks}

Throughout this part, we set $U=X-\mathsf{D}$, where $X=E_{[n]}$.
By Definition \ref{dfnfunctionspaceassociatedtoD}, for generic values $c_{k,\ell},k,\ell\in [n]$ the divisor $\mathsf{D}$ is a simple normal crossing divisor.
By the construction of $\mathcal{R}_{\mathsf{D}}, \mathcal{V}_{\mathsf{D}}$ in Definition \ref{dfnfunctionspaceassociatedtoD} and Lemma \ref{lemanalyticpropertiesofemz} (ii), one has 
\begin{equation}\label{eqnEkformsarelogforms}
	\mathcal{R}_{\mathsf{D}}\Omega\subseteq  A_{X}^{n}(\star \mathsf{D})\,,\quad 
\mathcal{V}_{\mathsf{D}}\Omega\subseteq  A_{X}^{n}(\log \mathsf{D})\,.
\end{equation}
In this section, we  focus on the space $F^{n}H^{n}(U)$.  The following result shows that it is spanned by $\bar{\partial}$-closed Eisenstein-Kronecker forms in $\mathcal{V}_{\mathsf{D}}\Omega$.
\begin{prop}\label{propFnHndescription}
One has $F^{n}H^{n}(U)=\mathcal{V}_{\mathsf{D}}\Omega\cap  \ker\,d=\mathcal{V}_{\mathsf{D}}\Omega\cap  \ker\,\bar{\partial}$.

\end{prop}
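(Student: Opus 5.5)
By \eqref{eqnFnoHn}, $F^{n}H^{n}(U)=\Gamma(X,\Omega^{n}_{X}(\log \mathsf{D}))$, so the statement amounts to two equalities of subspaces of $A^{n}_{X}(\log\mathsf{D})$. The plan is to prove the easy equalities and inclusion directly, and to get the reverse inclusion by induction on $n$ via residues.

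\emph{Step 1: $\ker d=\ker\bar\partial$ on $\mathcal{V}_{\mathsf{D}}\Omega$.} For $f\in\mathcal{V}_{\mathsf{D}}$ one has $\partial(f\Omega)=0$ for type reasons, so
\[
d(f\Omega)=\bar\partial f\wedge\Omega .
\]
Since $\Omega$ is a nowhere-vanishing $(n,0)$-form, this vanishes exactly when $\bar\partial f=0$. This gives the second equality.

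\emph{Step 2: the inclusion $\mathcal{V}_{\mathsf{D}}\Omega\cap\ker\bar\partial\subseteq\Gamma(X,\Omega^{n}_{X}(\log\mathsf{D}))$.} By \eqref{eqnEkformsarelogforms}, near any point of $\mathsf{D}$ we can write $f=g/(s_{a_1}\cdots s_{a_r})$ with $g$ smooth, where the $s_{a_i}$ are local equations of the components through that point (using Lemma \ref{lemanalyticpropertiesofemz} (ii)). The condition $\bar\partial f=0$ on $U$ forces $g$ to be holomorphic on the dense open set $U$. Since $g$ is smooth, $\bar\partial g$ is continuous and vanishes off $\mathsf{D}$, hence vanishes everywhere, so $g$ is holomorphic. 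Therefore $f\Omega$ is a holomorphic logarithmic $n$-form.

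\emph{Step 3: the reverse inclusion, by induction on $n$.} Here we must show that every $\omega\in\Gamma(X,\Omega^{n}_{X}(\log\mathsf{D}))$ lies in $\mathcal{V}_{\mathsf{D}}\Omega$.
\begin{itemize}
\item Base case $n=1$. Here $\mathsf{D}$ is a finite set of points $p_a$, and $\omega=(\lambda_0+\sum_a\lambda_a\,\widehat{e}_1(z-p_a))\,dz$ with $\sum_a\lambda_a=0$. This form is holomorphic because $\bar\partial\widehat{e}_1=\mathbold{Y}d\bar z$ by \eqref{eqnbarpartialem}, so the $\bar\partial$-contributions cancel. Its residues match $\omega$ by \eqref{eqnresidueofemdz}, and the ambiguity is absorbed by the constant $\lambda_0\,\widehat{e}_0$.
\item Residues land in a space of the same type. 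For $n\geq2$, consider the residues $\mathrm{Res}_a\omega\in\Gamma(D_a,\Omega^{n-1}_{D_a}(\log\mathsf{D}(a)))$ of Definition \ref{dfnkersiduetoD(a)}. Each component $D_a=\{s_{k,\ell}=0\}$ is identified with $E_{[n-1]}$ by eliminating $z_k=z_\ell-c_{k,\ell}$. Under this identification the restrictions of the other $s_{k',\ell'}$ are again of the form $c'+z_i-z_j$. So $\mathsf{D}(a)$ is an arrangement of the same type, possibly with several divisors per pair of indices, and I would first note that Definition \ref{dfnfunctionspaceassociatedtoD} and the arguments extend verbatim to that case. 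By induction, $\mathrm{Res}_a\omega=g_a\Omega'$ with $g_a$ in the corresponding $\mathcal{V}$-space.
\item Lifting residues. Lift each $g_a$ to $\widetilde g_a\in\mathcal{V}_{\mathsf{D}}$ by substituting back into the variables $s_{k',\ell'}$, and form $\widehat{e}_1(s_a)\widetilde g_a\Omega$. By Lemma \ref{lemanalyticpropertiesofemz} (ii) this has residue $g_a\Omega'$ along $D_a$, and it still lies in $\mathcal{V}_{\mathsf{D}}\Omega$ once one delooping via Proposition \ref{propgeneralpropofloopterm}.
\end{itemize}

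\emph{Main obstacle: making the lifts $\bar\partial$-closed.} The lifts $\widehat{e}_1(s_a)\widetilde g_a\Omega$ are not themselves $\bar\partial$-closed: $\bar\partial$ hits $\widehat{e}_1(s_a)$, producing $\mathbold{Y}\,d\bar s_a\,\widetilde g_a$, and also hits $\widetilde g_a$ off $D_a$. The plan is to correct by adding terms of lower pole order and/or higher modular weight, using the identities $\bar\partial\widehat{e}_m=\mathbold{Y}\widehat{e}_{m-1}d\bar z$ to telescope. The compatibility of the $\mathrm{Res}_a\omega$ on the double intersections $D_a\cap D_b$ should make these obstructions cancel, in the same way that $\sum_a\lambda_a=0$ does for $n=1$.

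Once a $\bar\partial$-closed element $\eta\in\mathcal{V}_{\mathsf{D}}\Omega$ with the same residues as $\omega$ is in hand, $\eta$ is a holomorphic logarithmic form by Step 2. Then $\omega-\eta$ is a holomorphic logarithmic form with all residues zero, so by Lemma \ref{lemmakernelofRes} it lies in $\Gamma(X,\Omega^{n}_{X})=\mathbb{C}\Omega$. Since $\Omega=\widehat{e}_0\Omega\in\mathcal{V}_{\mathsf{D}}\Omega$, this completes the induction.

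If the explicit correction in the obstacle step proves unwieldy, I would instead compare dimensions. The idea is to count $\dim\Gamma(X,\Omega^{n}_{X}(\log\mathsf{D}))$ through the residue sequence \eqref{shortexactsequenceaboutres_a}, and to exhibit the same number of independent $\bar\partial$-closed elements of $\mathcal{V}_{\mathsf{D}}\Omega$, using the forest basis of Lemma \ref{lemelementsinB(E_D)arelinearlyindependent} and the graded $\bar\partial$-closedness of Lemma \ref{lemdclosednessformodularweightgrading}.
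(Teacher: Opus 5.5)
Your Steps 1 and 2 are fine; they are the paper's ``type reasons'' argument for the easy inclusion, and Step 2 makes explicit the removable-singularity point the paper leaves implicit. The gap is Step 3, which is the whole content of the proposition.

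\textbf{Step 3 reduces the proposition to itself.} Given Step 2 and Lemma \ref{lemmakernelofRes}, producing a $\bar\partial$-closed $\eta\in\mathcal{V}_{\mathsf{D}}\Omega$ with the same residues as an arbitrary $\omega\in\Gamma(X,\Omega^{n}_{X}(\log\mathsf{D}))$ is equivalent to the reverse inclusion. The induction only supplies the residues $g_a$. The step that turns them into such an $\eta$ is left as the hope that the obstructions ``should cancel''. Two things are missing.
\begin{enumerate}
\item The lifts do not have the right residues jointly. In general $\widetilde g_a$ has poles along other components $D_b$, so $\widehat{e}_1(s_a)\widetilde g_a\Omega$ also contributes to $\mathrm{Res}_b$. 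Moreover, the lift depends on the chosen expression of $g_a$, because $\mathcal{V}$-spaces satisfy many relations (cf.\ Proposition \ref{propgeneralpropofloopterm}). Hence the polar divisor of the lift is not controlled by $g_a$.
\item $\bar\partial\bigl(\widehat{e}_1(s_a)\widetilde g_a\bigr)$ contains $\widehat{e}_1(s_a)\,\bar\partial\widetilde g_a$, which is in general nonzero off $D_a$. You give no procedure for cancelling such terms inside $\mathcal{V}_{\mathsf{D}}$. On a curve this cancellation is the residue theorem, but its higher-dimensional analogue is exactly what must be proved.
\end{enumerate}
Two smaller points: for the paper's arrangement $\mathsf{D}=\varnothing$ when $n=1$, so your ``points on $E$'' base case is really $n=2$ after $w=z_1-z_2$. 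Also, the induction needs arrangements with several translated diagonals per pair, which you do not set up.

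\textbf{The fallback does not close the gap either.} The sequence \eqref{shortexactsequenceaboutres_a} is not right exact on global sections. Each $\mathcal{O}_X(\cup_{b\neq a}D_b)$ is pulled back from $E^{n-1}$ along \eqref{eqntranslationmap}, so Künneth gives $H^1\supseteq H^0\otimes H^1(E,\mathcal{O}_E)\neq 0$. Without control of the connecting maps, this sequence does not compute $\dim\Gamma(X,\Omega^{n}_{X}(\log\mathsf{D}))$. You also give no count of independent $\bar\partial$-closed elements of $\mathcal{V}_{\mathsf{D}}$.

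\textbf{What the paper does instead.} It first produces holomorphic sections and then recognizes them in $\mathcal{V}_{\mathsf{D}}$.
\begin{itemize}
\item Via $w_i=z_i-z_n$, it identifies $\Gamma(X,\Omega^n_X(\log\mathsf{D}))$ with $H^0(E^{n-1},\mathcal{O}(\overline{\mathsf{D}}))$. This bundle is ample with polarization $M=BB^{t}$, so $h^0=\det M=2^{n-1}n^{n-2}$.
\item The products $\prod_e\theta(s_e+\lambda_e)$, $\lambda\in\ker B$, span this space. Their Fourier coefficients indexed by $\mathbb{Z}^{n-1}/M\mathbb{Z}^{n-1}$ are linearly independent functions of $\lambda$, by orthogonality of characters on $\ker B_{\mathbb{R}}/\ker B_{\mathbb{Z}}$.
\item Hence $R_\lambda=\prod_e\theta(s_e+\lambda_e)/\theta(s_e)$ and its Taylor coefficients at $\lambda=0$ span $\mathcal{L}(\mathsf{D})$.
\item For $\lambda\in\ker B$, $R_\lambda$ differs from $\prod_e\theta(\lambda_e)\widehat{S}_{\lambda_e}(s_e)$ by a factor independent of $z$. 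So these Taylor coefficients visibly lie in $\mathcal{V}_{\mathsf{D}}$, and $\bar\partial$-closedness is automatic from the holomorphy of $R_\lambda$.
\end{itemize}
That is the idea your proposal lacks: $\bar\partial$-closedness should come from holomorphic theta products, not from correcting non-closed lifts.
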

\begin{proof}
    The proof relies on a standard but lengthy analysis using Jacobi theta functions. In order not to interrupt the flow, we defer the proof to Section \ref{secproofofpropFnHndescription}.
\end{proof}

We shall construct
a set of \emph{building blocks} of this space (detailed in  Proposition \ref{propcharacterizationofkerbarpartial} below). We then
compute their cohomology classes and Deligne splittings. 
Properties of these building blocks will be used for the applications in Section \ref{secGWapplication} below.
Discussions on more general forms, as well as on the case when $\mathsf{D}$ is not simple normal crossing (e.g., when $U$ is the configuration space), will appear in a future investigation.

\subsubsection{Building blocks of $d$-closed forms}
Since $\Omega$ is $\bar{\partial}$-closed, one has 
\begin{equation}\label{eqnholomorphicnformandholomorphicfunction}
 \mathcal{V}_{\mathsf{D}}\Omega\cap \ker\,\bar{\partial}=(
\mathcal{V}_{\mathsf{D}}\cap \ker\,\bar{\partial})\Omega\,.
\end{equation}
Thus to cook up $d$-closed  Eisenstein-Kronecker forms in $\mathcal{V}_{\mathsf{D}}\Omega$, it 
suffices to  construct suitable linear combinations of Eisenstein-Kronecker functions in $\mathcal{V}_{\mathsf{D}}$ that are $\bar{\partial}$-closed.
This is carried out below, based on the combinatorial results in Section \ref{seccombinatorialrelations}.

\begin{dfn}\label{dfnbuildingblockPsiMI}
	Let the notation be as in Definition \ref{dfnpsiandPsi}. 	For any sequence $I=(v_{1},\cdots, v_{p})$ of length $p\geq 2$ and $M\in \mathbb{N}$, denote 
	\begin{equation}\label{eqnPsiNIrecalled}
		\Psi_{M,I}:=\sum_{\substack{\mathrm{m}\in \mathbb{N}^{p}\\ |\mathrm{m}|=M}}\psi_{\mathrm{m},I} \,,\qquad \mathrm{where}\quad 		\psi_{\mathrm{m},I} := \prod_{k=1}^{p} \widehat{e}_{m_k}(s_{v_k,v_{k+1}})\in \mathcal{V}_{\mathsf{D}}
		\,.
	\end{equation}	
        Here we use the convention that $\psi_{0, I}=\Psi_{0,I}=1$ for any $I$ with $|I|\geq 1$.
    More generally, pick any sequence $I$ as above, and a sequence $\varepsilon=(\varepsilon_{1},\cdots, \varepsilon_{p})\in \{\pm 1\}^{p}$ such that 
    \begin{equation}\label{eqnepsiloncondition}
    \sum_{k=1}^{p} \varepsilon_{k}\cdot \bar{\partial}\bar{s}_{\varepsilon_{k}(v_{k},v_{k+1})}=0\,,
    \end{equation}
    where $\varepsilon_{k}(v_{k},v_{k+1}):=(v_{k}, v_{k+1})$ if $\varepsilon_{k}=1$ and $\varepsilon_{k}(v_{k},v_{k+1}):=(v_{k+1}, v_{k})$ if $\varepsilon_{k}=-1$.
    Denote
   	\begin{equation}\label{eqnPsiNIepsilon}
\Psi_{M,I}^{(\varepsilon)}:=\sum_{\substack{\mathrm{m}\in \mathbb{N}^{p}\\ |\mathrm{m}|=M}}\psi_{\mathrm{m},I}^{(\varepsilon)}\,,\qquad \mathrm{where}\quad 		\psi_{\mathrm{m},I}^{(\varepsilon)} := \prod_{k=1}^{p} \varepsilon_{k}^{m_{k}}\widehat{e}_{m_k}(s_{\varepsilon_{k}(v_{k},v_{k+1})})\in \mathcal{V}_{\mathsf{D}}
		\,.
	\end{equation}	
\end{dfn}
\begin{lemma}\label{lemmaaboutclosednessofdecompositionofloopterm}
	Let the notation be as in Definition \ref{dfnbuildingblockPsiMI}. Then one has 
	$\bar{\partial}
	\Psi^{(\varepsilon)}_{M,I} =0$.
	
\end{lemma}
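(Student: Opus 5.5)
The plan is a direct computation. Apply $\bar{\partial}$ term by term using \eqref{eqnbarpartialem} from Lemma \ref{lemanalyticpropertiesofemz} (iii), then reorganize the sum so that $\Psi^{(\varepsilon)}_{M-1,I}$ factors out. What remains is multiplied by the left-hand side of \eqref{eqnepsiloncondition}, which vanishes.

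First, for a single factor, write $s_k:=s_{\varepsilon_k(v_k,v_{k+1})}$, which is a function pulled back along a morphism $E_{[n]}\to E$. By the chain rule and \eqref{eqnbarpartialem},
\[
\bar{\partial}\big(\varepsilon_k^{m_k}\widehat{e}_{m_k}(s_k)\big)=\mathbold{Y}\,\varepsilon_k^{m_k}\,\widehat{e}_{m_k-1}(s_k)\,\bar{\partial}\bar{s}_k
=\mathbold{Y}\,\varepsilon_k\,\bar{\partial}\bar{s}_k\cdot \varepsilon_k^{m_k-1}\widehat{e}_{m_k-1}(s_k)\,,
\]
valid for all $m_k\geq 0$. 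For $m_k=0$ both sides vanish by the convention $\widehat{e}_{-1}=0$, since $\widehat{e}_0=1$. I would check that this convention is consistent with the case $m=0$ of \eqref{eqnbarpartialem}; there is no issue with the sign $\varepsilon_k^{-1}$, because $\varepsilon_k=\pm1$.

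Next, apply the Leibniz rule to $\psi^{(\varepsilon)}_{\mathrm{m},I}$ and sum over $|\mathrm{m}|=M$:
\[
\bar{\partial}\Psi^{(\varepsilon)}_{M,I}=\mathbold{Y}\sum_{k=1}^{p}\varepsilon_k\,\bar{\partial}\bar{s}_k\sum_{\substack{\mathrm{m}\in\mathbb{N}^p\\|\mathrm{m}|=M,\ m_k\geq1}}\varepsilon_k^{m_k-1}\widehat{e}_{m_k-1}(s_k)\prod_{j\neq k}\varepsilon_j^{m_j}\widehat{e}_{m_j}(s_j)\,.
\]
For each fixed $k$, the substitution $m_k'=m_k-1$ and $m_j'=m_j$ for $j\neq k$ is a bijection between $\{\mathrm{m}:|\mathrm{m}|=M,\ m_k\geq 1\}$ and $\{\mathrm{m}':|\mathrm{m}'|=M-1\}$. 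Hence the inner sum equals $\Psi^{(\varepsilon)}_{M-1,I}$, independently of $k$. An equivalent and cleaner way to organize this is the generating series $\prod_k\sum_{m\geq0}(\varepsilon_k t)^m\widehat{e}_m(s_k)$: here $\bar{\partial}$ acts as multiplication by $t\,\mathbold{Y}\sum_k\varepsilon_k\bar{\partial}\bar{s}_k$.

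Therefore
\[
\bar{\partial}\Psi^{(\varepsilon)}_{M,I}=\mathbold{Y}\,\Psi^{(\varepsilon)}_{M-1,I}\sum_{k=1}^p\varepsilon_k\,\bar{\partial}\bar{s}_{\varepsilon_k(v_k,v_{k+1})}=0
\]
by \eqref{eqnepsiloncondition}. The case $M=0$ is trivial, since then $\Psi=1$. The unsigned case $\Psi_{M,I}$ is $\varepsilon=\mathbold{1}$, where the condition holds by the cyclic telescoping $\sum_k(d\bar z_{v_k}-d\bar z_{v_{k+1}})=0$.

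The only point needing care is the bookkeeping at the boundary $m_k=0$, i.e. that terms with $m_k=0$ contribute nothing and that the reindexing is exactly bijective. Beyond that, I expect no genuine obstacle.
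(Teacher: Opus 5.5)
Your argument is correct and is essentially the paper's own: applying \eqref{eqnbarpartialem} term by term, reindexing to factor out $\Psi^{(\varepsilon)}_{M-1,I}$, and killing the remaining factor with \eqref{eqnepsiloncondition} is exactly the alternative proof in the remark right after the lemma. The generating-series version you mention is the paper's main proof, which writes the same computation as $\bar{\partial}\ln\prod_{i}\widehat{S}_{c\varepsilon_i}(s_{\varepsilon_i(v_i,v_{i+1})})=0$, using the elliptic-completion exponential factor in $\widehat{S}_{c}$.
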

\begin{proof}
Consider the generating series 
\[\prod_{i}\varepsilon_{i}\cdot 
\sum_{M\geq 0}c^{M-p}\Psi^{(\varepsilon)}_{M,I}=
\sum_{\mathrm{m}}\psi_{\mathrm{m},I}^{(\varepsilon)}\prod_{i=1}^{p}c^{m_{i}-1}=
\prod_{i=1}^{p}\widehat{S}_{c\varepsilon_{i}}(s_{\varepsilon_{i}(v_{i},v_{i+1})})\,.
\]
Using Definition \ref{dfnSzegokernel}, Definition \ref{dfnhatthetazeta} and \eqref{eqnepsiloncondition}, one has 
\[
\bar{\partial}\ln \prod_{i=1}^{p}\widehat{S}_{c \varepsilon_{i}}(s_{\varepsilon_{i}(v_{i},v_{i+1})})=\sum_{i=1}^{p}
\bar{\partial}\ln \widehat{S}_{c \varepsilon_{i}}(s_{\varepsilon_{i}(v_{i},v_{i+1})})
=c\sum_{i=1}^{p} \varepsilon_{i}\bar{\partial}\mathbold{A}(s_{\varepsilon_{i}(v_{i},v_{i+1})})=0\,.
\]
This gives $ \sum_{M\geq 0}c^{M-p}\bar{\partial}\Psi_{M,I}^{(\varepsilon)}=\bar{\partial}\sum_{M\geq 0}c^{M-p}\Psi_{M,I}^{(\varepsilon)}=0$, thus the desired claim.

\end{proof}
\begin{rem}
	Alternatively, this can be proved directly as follows.
	By  \eqref{eqnbarpartialem} and  \eqref{eqnepsiloncondition}, one has 
	\begin{eqnarray*}
		\bar{\partial}\sum_{\substack{\mathrm{m}\in \mathbb{N}^{p}\\|\mathrm{m}| = M}} 
		\psi_{\mathrm{m},I}^{(\varepsilon)} 
		&=& \sum_{|\mathrm{m}| = M}  \sum_{k:m_k \geq 1} \prod_{\substack{i=1\\ i \neq k}}^{p} \varepsilon_{i}^{m_{i}}\widehat{e}_{m_i}^{m_{i}-1}(s_{\varepsilon_{i}(v_{i},v_{i+1})}) \cdot \varepsilon_{k}^{m_{k}}\mathbold{Y} \widehat{e}_{m_k-1}(s_{\varepsilon_{k}(v_{k},v_{k+1})}) \cdot d\bar{s}_{\varepsilon_{k}(v_{k},v_{k+1})}  \\
		&=& \mathbold{Y} \sum_{|\mathrm{m}| = M-1} \psi_{\mathrm{m},I}^{(\varepsilon)} 
		\cdot \sum_{k=1}^{p} \varepsilon_{k}d\bar{s}_{\varepsilon_{k}(v_{k},v_{k+1})}= 0 \,.
	\end{eqnarray*}
\end{rem}

Lemma \ref{lemmaaboutclosednessofdecompositionofloopterm} implies
\[
\Psi_{M,I}^{(\varepsilon)}\in \mathcal{V}_{\mathsf{D}}\cap \ker\,\bar{\partial}
\,.
\]
The converse is also true and we in fact have the following result, which tells that the functions $\Psi_{M,I}^{(\varepsilon)}$ with $ M\leq |I|-1$ are building blocks of $\bar{\partial}$-closed functions in $\mathcal{V}_{\mathsf{D}}$.
\begin{prop}
    \label{propcharacterizationofkerbarpartial}
   Let the notation be as in Definition \ref{dfnbuildingblockPsiMI}. Then one has
    \[
\mathcal{V}_{\mathsf{D}}\cap \ker\,\bar{\partial}=\mathcal{V}_{\mathsf{D}}^{\mathrm{for}}\cap \mathbb{C}[\Psi_{M,I}^{(\varepsilon)}~|~I\,,~ M\leq |I|-1, \varepsilon]\,.
    \]
\end{prop}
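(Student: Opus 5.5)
We prove the two inclusions separately, working throughout in the delooped space $\mathcal{V}_{\mathsf{D}}^{\mathrm{for}}$.

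\emph{The inclusion $\supseteq$.} By Lemma \ref{lemmaaboutclosednessofdecompositionofloopterm} each $\Psi^{(\varepsilon)}_{M,I}$ is $\bar{\partial}$-closed, and $\bar{\partial}$ is a derivation, so every polynomial in these functions is $\bar{\partial}$-closed. Intersecting with $\mathcal{V}_{\mathsf{D}}^{\mathrm{for}}\subseteq \mathcal{V}_{\mathsf{D}}$ therefore lands in $\mathcal{V}_{\mathsf{D}}\cap\ker\bar{\partial}$.

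The restriction $M\leq |I|-1$ costs nothing. If $M\geq |I|$, some terms of $\Psi^{(\varepsilon)}_{M,I}$ have all $m_k\geq 1$ and so are loops. Proposition \ref{propgeneralpropofloopterm}, summed over $|\mathrm{m}|=M$, rewrites $\Psi^{(\varepsilon)}_{M,I}$ as a combination of $\widehat{e}_{m_0}(\text{const})\cdot\Psi^{(\varepsilon)}_{M-m_0,I}$. Corollary \ref{corforlooptermofwidehate_1} is the model case. So such $\Psi$ are not new generators.

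\emph{The inclusion $\subseteq$.} Take $f\in\mathcal{V}_{\mathsf{D}}\cap\ker\bar{\partial}$. By Lemma \ref{lemexpressionbytreeterms} we may write $f=\mathfrak{d}(f)\in\mathcal{V}_{\mathsf{D}}^{\mathrm{for}}$ uniquely as $\sum_\Gamma\lambda_\Gamma\widehat{e}_\Gamma$. By Lemma \ref{lemdclosednessformodularweightgrading} each graded piece $f_k$ is $\bar{\partial}$-closed, so we may assume $f$ is homogeneous of modular weight $k$.

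We then induct on $k$. The case $k=0$ gives constants, and $\Psi_{0,I}=1$.

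For the inductive step, compute $\bar{\partial}f$ using \eqref{eqnbarpartialem}:
\[
\bar{\partial}\widehat{e}_\Gamma=\mathbold{Y}\sum_{e\in E(\Gamma)}\widehat{e}_{\Gamma_e^-}\,d\bar{s}_e ,
\]
where $\Gamma_e^-$ lowers the decoration of $e$ by one, deleting $e$ if its decoration was $1$. The forms $d\bar{s}_e$ are linear combinations of $d\bar z_1,\dots,d\bar z_n$. The relations among them come exactly from cycles in the complete directed graph on $[n]$: a signed sum $\sum\varepsilon_k d\bar{s}_{\varepsilon_k(v_k,v_{k+1})}$ vanishes precisely when the edges form a closed cycle, which is condition \eqref{eqnepsiloncondition}.

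The graphs $\Gamma_e^-$ are still forests, so by Lemma \ref{lemelementsinB(E_D)arelinearlyindependent} they are linearly independent. Hence the equation $\bar{\partial}f=0$ becomes a linear system: for each forest $\Gamma'$ of weight $k-1$, the vector
\[
\sum_{(\Gamma,e):\,\Gamma_e^-=\Gamma'}\lambda_\Gamma\, d\bar{s}_e
\]
must vanish in $\bigoplus_i\mathbb{C}\,d\bar z_i$.

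Solving this system is the main obstacle: we must show its solution space is spanned by forest projections of products of the $\Psi^{(\varepsilon)}_{M,I}$. The plan is to order the edges of the support of $f$ and pick a vertex-simple cycle $C=(I,\varepsilon)$ in the underlying undirected multigraph. The edge directions matter only through $\varepsilon$, and parity, Lemma \ref{lemanalyticpropertiesofemz} (i), converts $\widehat{e}_m(s_{\ell,k})$-type terms into the right form.

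We would then show that the closedness relation forces the coefficients to be constant along ``cycle moves'', that is, shifting one unit of weight between consecutive edges of $C$. This is the pattern of $\Psi^{(\varepsilon)}_{M,I}$, whose terms have equal coefficient on every $\mathrm{m}$ with $|\mathrm{m}|=M$; the forest condition forces $M\leq |I|-1$ because at least one edge must be missing.

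Next, subtract a suitable multiple of $\Psi^{(\varepsilon)}_{M,I}\cdot g$, where $g$ is a forest term on the remaining edges. By an inner induction on the number of edges, $g$ is itself $\bar{\partial}$-closed of lower weight, so the inductive hypothesis applies to it.

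If no cycle is available, so that the span of the $d\bar{s}_e$ over the support is independent, then the top-decoration coefficients must vanish directly, again by an induction on the number of edges.

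Throughout, the genericity of the $c_{k,\ell}$ together with Lemma \ref{lemelementsinB(E_D)arelinearlyindependent} justifies comparing coefficients monomial by monomial.
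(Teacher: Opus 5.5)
Your inclusion $\supseteq$, the reduction to homogeneous modular weight, and the remark that $\Psi^{(\varepsilon)}_{M,I}$ with $M\geq |I|$ reduces to those with $M\leq |I|-1$ (the paper's Lemma~\ref{lemreductionformulaforPsiMI}) are all fine. The gap is in $\subseteq$. You correctly reduce $\bar\partial f=0$ to a linear system on the coefficients $\lambda_\Gamma$, but the step that solves it is only a plan, and the mechanism you propose is false.

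It is not true that, for a cycle $C$ in the support of a $\bar\partial$-closed $f\in\mathcal{V}^{\mathrm{for}}_{\mathsf{D}}$, closedness forces the coefficients to be constant (up to the signs $\varepsilon$) under cycle moves along $C$. Take
\[
f=\Psi_{1,(1,2,3)}+\Psi_{1,(1,2)}=2\,\widehat{e}_{1}(s_{1,2})+\widehat{e}_{1}(s_{2,1})+\widehat{e}_{1}(s_{2,3})+\widehat{e}_{1}(s_{3,1})\,.
\]
It is forest-type and $\bar\partial$-closed. Its support contains three cycles: the $2$-cycle on $s_{1,2},s_{2,1}$, the triangle through $s_{1,2}$, and the triangle through $s_{2,1}$. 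Their signed coefficients are $(2,1)$, $(2,1,1)$ and $(-1,1,1)$ respectively, so none is constant.

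Closed elements are in general superpositions of $\Psi$'s over cycles that share edges. A local argument that peels off one $\Psi^{(\varepsilon)}_{M,I}\cdot g$, with $g$ supported on the edges off $C$, cannot detect this. Nothing in your sketch ensures that the leftover factor $g$ is $\bar\partial$-closed, so the inner induction has nothing to apply to.

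The missing idea is to solve the system globally by linear algebra, as the paper does.

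\begin{enumerate}
\item Send $\widehat{e}_{m}(s_e)\mapsto x_e^{m}/m!$; this is injective on $\mathcal{V}^{\mathrm{for}}_{\mathsf{D}}$. Your formula for $\bar\partial\widehat{e}_\Gamma$ then says precisely that $\bar\partial$ becomes the constant-coefficient derivation $\delta=\sum_{e}\partial_{x_e}\otimes A(e)$ on $\mathbb{C}[x_e]$, where $A(e_{k\ell})=e_k-e_\ell$.
\item Choose a basis $y_1,\dots,y_p$ of $K=\ker A$. This is the circulation space, i.e.\ exactly your relations among the $d\bar{s}_e$. Complete it to a basis of all linear forms.
\item In these coordinates $\delta=\sum_i \partial_{y_{p+i}}\otimes \delta y_{p+i}$ with the $\delta y_{p+i}$ linearly independent. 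Hence $\ker\delta=\mathrm{Sym}\,K=\mathbb{C}[y_1,\dots,y_p]$.
\item For a cycle vector $y=\sum_e\varepsilon_e x_e$ with $\varepsilon_e\in\{0,\pm 1\}$, the multinomial expansion of $y^M/M!$ pulls back exactly to $\Psi^{(\varepsilon)}_{M,I}$.
\item Lemma~\ref{lemreductionformulaforPsiMI} then disposes of the generators with $M\geq|I|$.
\end{enumerate}

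This treats all weights and all overlapping cycles at once, with no induction on weight or on edges.
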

\begin{proof}
   The proof is deferred to Section  \ref{secpropcharacterizationofkerbarpartial}.

\end{proof}

\subsubsection{Leading component $I^{n,0}$}

We make use of the following results from \cite{Zhou:2023cohomologicalpairings, Zhou:2026iteratedintegrals} to extract the leading component  of the cohomology class $[\Psi_{M,I}^{(\varepsilon)}\Omega]_{\mathrm{dR}}$
in the Deligne splitting.

\begin{prop}\label{propleadingcomponent}
Introduce the volume form on $E_{[n]}$ by 
\[
\mathrm{vol}=\bigwedge_{i=1}^{n}{\mathbf{i}\over 2\im \tau}dz_{i}\wedge d\bar{z}_{i}\in A_{X}^{n,n}\,.
\]
\begin{enumerate}[i).]
	\item 
One has 
\begin{equation}\label{eqnHodgeclassasintegral}
[\Psi_{M,I}^{(\varepsilon)}\Omega]_{\mathrm{Hodge}}=\int_{X} \Psi_{M,I}^{(\varepsilon)}\mathrm{vol}\cdot \Omega\,.
\end{equation}
\item 
If in addition $1\leq |M|\leq |I|-1$, then $[\Psi^{(\varepsilon)}_{M,I}\Omega]_{\mathrm{Hodge}}=0$.
\end{enumerate}
\end{prop}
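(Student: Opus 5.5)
For (i), the plan is to identify the projection $\mathrm{pr}_{I^{n,0}}$ on $F^{n}H^{n}(U)$ with an orthogonal-type projection onto $\Gamma(X,\Omega^{n}_{X})=\mathbb{C}\Omega$. By Lemma \ref{lemmaaboutclosednessofdecompositionofloopterm} and Proposition \ref{propFnHndescription}, $\Psi^{(\varepsilon)}_{M,I}\Omega$ is a $d$-closed log form representing a class in $F^{n}H^{n}(U)$. The key observation is that $I^{n,0}=\mathbb{C}[\Omega]$ by \eqref{eqnInoHn}, while the other Deligne pieces inside $F^{n}$ are the $I^{n,q}$, $q\geq1$. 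These lie in $\widetilde{W}_{n+q}$ with nonzero iterated residues, and their complement is detected by pairing against classes supported away from $\mathsf{D}$.

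The step is to pair with the compactly supported class $[\overline{\Omega}]$ and normalize by $\int_{X}\Omega\wedge\overline{\Omega}$ (equivalently by $\mathrm{vol}$). First check that this pairing, $\langle[\phi],[\overline{\Omega}]\rangle=\int_{X}\phi\wedge\overline{\Omega}$, is well defined on $H^{n}(U)$. The integral converges because $\Psi$ has only simple poles, which are integrable in complex dimension one, and exactness is killed by Stokes with small tubes around $\mathsf{D}$; boundary terms vanish as the tube radius goes to zero for log-pole forms wedged with a smooth form.

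Next, show that the pairing kills $\bigoplus_{q\geq1}I^{n,q}$. I would argue this by weights: $[\overline{\Omega}]$ pulls back from $H^{n}(X)$ of type $(0,n)$, and the pairing $H^{n}(U)\otimes H^{n}_{c}(U)\to\mathbb{Q}(-n)$ is a morphism of mixed Hodge structures. So only the component of type $(n,0)$ in $H^{n}(U)$ can pair nontrivially with the $(0,n)$ class coming from $W_{n}H^{n}_{c}$. The pairing is compatible with the Deligne splitting, so it picks out exactly the $I^{n,0}$ component. This gives $[\phi]_{\mathrm{Hodge}}=\big(\int_X\phi\wedge\overline\Omega/\int_X\Omega\wedge\overline\Omega\big)\,\Omega$, which is \eqref{eqnHodgeclassasintegral} after rewriting $\phi\wedge\overline{\Omega}$ as $\Psi\,\mathrm{vol}$ up to the normalization. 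Alternatively, this is \cite[Proposition 4.8]{Zhou:2023cohomologicalpairings}, which can be cited directly.

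For (ii), compute $\int_{X}\Psi^{(\varepsilon)}_{M,I}\,\mathrm{vol}$ using Fubini, integrating first over a coordinate on which only two factors depend. A cleaner route is to use $\bar\partial \widehat{e}_{m}=\mathbold{Y}\widehat{e}_{m-1}d\bar z$ to write $\widehat{e}_{m}\,d\bar z$ locally as $\bar\partial$ of something. Since each $\widehat{e}_{m}$ with $m\geq1$ has zero average over $E$ unless compensated, the product of $p$ factors with $M\leq p-1$ forces some factor to be $\widehat{e}_{0}=1$; the cyclic structure then becomes a chain, i.e. a tree.

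For a tree, integrate over a leaf vertex. The leaf variable appears in exactly one factor $\widehat{e}_{m}(s)$ with $m\geq1$, and $\int_{E}\widehat{e}_{m}(z)\,\mathrm{vol}=0$ for $m\geq1$. This holds by Stokes: writing $\widehat{e}_{m}d\bar z\wedge dz=\mathbold{Y}^{-1}\bar\partial\widehat{e}_{m+1}\wedge dz$, the integral reduces to residues of $\widehat{e}_{m+1}dz$, which vanish by \eqref{eqnresidueofemdz} since $m+1\neq1$. Hence every term $\psi^{(\varepsilon)}_{\mathrm{m},I}$ with $\mathrm{m}\neq0$ and $\mu_0(\mathrm{m})\neq0$ integrates to zero, and $M\geq1$ excludes $\mathrm{m}=0$.

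The main obstacle is justifying these Fubini and Stokes manipulations near the poles, and checking that the leaf integration is legitimate for $\widehat{e}_m$ with $\bar z$-dependence. The pole is simple with polar part $\bar z^{m-1}/z$, so local integrability holds and the residue computation around small circles goes through.
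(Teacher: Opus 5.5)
For (i), your fallback of citing \cite[Proposition 4.8]{Zhou:2023cohomologicalpairings} is exactly the paper's proof, and you should rely on it. The mixed-Hodge sketch you lead with does not stand on its own.

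The form $\overline{\Omega}$ is not compactly supported on $U$. Its class in $H^{n}(X)$ restricts to zero on $\mathsf{D}$, so it lifts to $H^{n}_{c}(U)$, but only modulo the image of $H^{n-1}(\mathsf{D})$. That image lies in $W_{n-1}H^{n}_{c}(U)$ and pairs nontrivially precisely with the higher-weight pieces $I^{n,q}$, $q\geq 1$, that you want to annihilate. The morphism-of-MHS argument only works for the $I^{0,n}$-lift. The whole content of (i) is that the concrete functional $\phi\mapsto\int_{X}\phi\wedge\overline{\Omega}$ equals the pairing with that particular lift, and you assert this rather than prove it.

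One way to close it is to show that the functional is computed by any representative in a conjugation-stable complex with logarithmic singularities, such as the one in \cite{Burgos:1994}. It then kills $\overline{F^{1}}$, because $\overline{\eta}\wedge\overline{\Omega}=0$ by type for $\eta\in F^{1}$. Every $(p,q)\neq(n,0)$ occurring in $H^{n}(U)$ has $q\geq 1$, so an induction on weight via \eqref{Ipq=barIqp} finishes.

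Your well-definedness check is also overstated. Boundary terms do not tend to zero for an arbitrary log form wedged with a smooth form: a factor $dz_{1}/z_{1}$ contributes $2\pi\mathbf{i}$ times a residue on $\{|z_{1}|=\epsilon\}$. They vanish here only because $\overline{\Omega}$ contains $d\bar{z}_{1}$, and $dz_{1}\wedge d\bar{z}_{1}$ restricts to zero on that hypersurface.

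Your argument for (ii) is correct. It is a self-contained instance of the Fubini--Stokes--residue computation that the paper imports from the proof of \cite[Proposition 3.8 (ii)]{Zhou:2026iteratedintegrals}. Since $1\leq M\leq p-1$, every $\mathrm{m}$ with $|\mathrm{m}|=M$ has a zero entry and a nonzero entry. The zero entry comes from simple counting, not from the vanishing averages as you phrase it. So each $\psi^{(\varepsilon)}_{\mathrm{m},I}$ is a forest with at least one edge.

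Integrating first over a leaf variable and using $\int_{E}\widehat{e}_{m}\,\mathrm{vol}=0$ for $m\geq 1$ kills every term. Your Stokes computation via $\bar\partial\widehat{e}_{m+1}\propto\widehat{e}_{m}\,d\bar z$ is right, since the polar part $\bar z^{m}/z$ integrates to zero on small circles. The same leaf-by-leaf integration of absolute values justifies Fubini.

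This uses that the entries of $I$ are distinct, which the statement leaves implicit. For $I=(1,2,3,2)$, the term $\widehat{e}_{1}(s_{2,3})\widehat{e}_{1}(s_{3,2})$ of $\Psi_{2,I}$ is a $2$-cycle with no leaf. Its average is $-\widehat{e}_{2}(-c_{2,3}-c_{3,2})$, which is generically nonzero.
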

\begin{proof}
	\begin{enumerate}[i).]
		\item 
By \eqref{eqnInoHn} and the dimension reason $H^{0}(X,\Omega_{X}^{n})=\mathbb{C}\Omega$, $[\Psi_{M,I}^{(\varepsilon)}\Omega]_{\mathrm{Hodge}}$ must be a constant multiple of $\Omega$.
	Observe from \eqref{eqnEkformsarelogforms} that $\Psi_{M,I}^{(\varepsilon)}\Omega\in A_{X}^{n}(\log D)$ is indeed integrable.
Applying \cite[Proposition 4.8]{Zhou:2023cohomologicalpairings}, one obtains (i).
\item 
This follows from the proof of \cite[Proposition 3.8 (ii)]{Zhou:2026iteratedintegrals} (where the restriction of the regularized integration to integrable forms reduces to the ordinary integration),
which only uses the Fubini theorem, Stokes theorem and the residue theorem for the integration.
\end{enumerate}
\end{proof}

\subsubsection{Full Deligne splitting}

In addition to the leading component $I^{n,0}$ for the cohomology class $[\Psi_{M,I}^{(\varepsilon)}\Omega]_{\mathrm{dR}}$, we also have the following result regarding the full Deligne splitting.

\begin{prop}\label{proppqtypeinDelignesplitting}
	Let the notation be as in Definition \ref{dfnbuildingblockPsiMI}.
For $I=(1,2,\cdots, n)$ and $0\leq j\leq n-1$, one has 
\[
[\Psi_{j,I}^{(\varepsilon)}\Omega]_{\mathrm{dR}}
  \in I^{n,j}H^{n}(U) \,.
\]
\end{prop}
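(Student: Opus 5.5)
To place $[\Psi_{j,I}^{(\varepsilon)}\Omega]_{\mathrm{dR}}$ in $I^{n,j}$, the plan is to use the characterization of the Deligne splitting through the two conditions \eqref{splittingofWandF} and \eqref{Ipq=barIqp}. Concretely, I aim to show three things: the class lies in $F^{n}$; it lies in $\widetilde{W}_{n+j}$; and modulo $\widetilde W_{n+j-1}$ it is congruent to the complex conjugate of a class lying in $F^{j}$, i.e.\ its image in $\mathrm{gr}^{W}$ is of pure type $(n,j)$ relative to a conjugate-filtration representative. The first point is immediate. By Lemma \ref{lemmaaboutclosednessofdecompositionofloopterm} the form $\Psi^{(\varepsilon)}_{j,I}\Omega$ is $\bar\partial$-closed, and by \eqref{eqnEkformsarelogforms} it is a holomorphic logarithmic $n$-form. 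Proposition \ref{propFnHndescription} and \eqref{eqnFnoHn} then place its class in $F^{n}H^{n}(U)$, so all components lie in $\bigoplus_q I^{n,q}$, and it remains only to pin down $q=j$.

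For the weight bound I will use iterated residues, via \eqref{eqnE1pageiso}. Each monomial $\psi_{\mathrm{m},I}$ with $|\mathrm{m}|=j$ has at most $j$ nonzero $m_k$, so its polar divisor has at most $j$ components. Since $j\le n-1<n=|I|$, each such monomial is forest type: the loop $(1,\dots,n)$ is never completed. Consequently $\Psi_{j,I}^{(\varepsilon)}\Omega\in W_{j}A^{n}_{X}(\log\mathsf D)$, which gives the class in $\widetilde W_{n+j}$.

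The key remaining step is to show that the top graded piece $\mathrm{Res}_{[j]}[\Psi^{(\varepsilon)}_{j,I}\Omega]$ in $H^{n-j}(\mathsf D_{[j]})(-j)$ is of Hodge type $(n-j,0)$, after the Tate twist $(n,j)$. This is what forces the $I^{n,q}$-components with $q<j$ to vanish modulo lower weight. I plan to compute it explicitly. Only monomials with exactly $j$ entries $m_k=1$ contribute to the $j$-fold residue along $\bigcap\{s_{\varepsilon_k(v_k,v_{k+1})}=0\}$, since by Lemma \ref{lemanalyticpropertiesofemz}(ii) an $\widehat e_m$ with $m\ge2$ has vanishing residue. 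The residue of $\widehat e_1$ is $1$, so each residue is a constant multiple of the holomorphic volume form on the stratum $D_J$, which is a product of elliptic curves of dimension $n-j$. That form is of type $(n-j,0)$, as needed.

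What remains is the genuinely delicate part: showing that the class has no component in $I^{n,q}$ for $q<j$ as well as none for $q>j$. Because the splitting is not simply the graded pieces, a pure top graded piece only controls the leading $q$; lower-$q$ components could survive through the lower-weight part of the class. My plan is an induction on $j$. I would apply Lemma \ref{lemmakernelofRes} and Lemma \ref{lemkerofResa}, and the strategy is to show that the residue maps $\mathrm{Res}_a$ applied to $\Psi^{(\varepsilon)}_{j,I}\Omega$ produce forms of the same shape, namely $\Psi$-type forms of weight $j-1$ on $D_a$, with $D_a$ identified with $E_{[n-1]}$ minus an analogous arrangement. By induction these residues lie in $I^{n-1,j-1}$. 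Since $\mathrm{Res}_a$ is a morphism of mixed Hodge structures of type $(-1,-1)$ and is compatible with the splitting, the residues then determine all components off $\bigcap_a\ker\mathrm{Res}_a$. The component inside that kernel lies in $F^{n}\cap W_0=I^{n,0}$, and Proposition \ref{propleadingcomponent}(ii) kills it for $j\ge1$. The base case $j=0$ is $\Psi_{0,I}=1$, giving $\Omega\in I^{n,0}$.

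I expect the main obstacle to be this inductive identification of $\mathrm{Res}_a\Psi^{(\varepsilon)}_{j,I}\Omega$ as a building block of the same type, compatible with condition \eqref{eqnepsiloncondition}, on the divisor with its induced arrangement. That identification requires careful bookkeeping of the substitution $s_{k,k+1}=0$ in the remaining factors.
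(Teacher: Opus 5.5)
Your proposal is correct and follows essentially the paper's own proof. Type and weight reasons place the class in $\bigoplus_{q\le j}I^{n,q}$; induction on $(n,j)$, using that each $\mathrm{Res}_a$ is a morphism of type $(-1,-1)$ sending the class to combinations of $\Psi^{(\varepsilon)}_{j-1,J}\Omega_J$, together with Lemma \ref{lemmakernelofRes}, confines the components with $q\neq j$ to $I^{n,0}$, and Proposition \ref{propleadingcomponent}(ii) kills that component for $j\ge 1$. Your explicit computation of the top graded residue $\mathrm{Res}_{[j]}$ is harmless but not needed for the argument.
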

\begin{rem}
By examining the types, we have  $[\Psi_{j,I}\Omega]_{\mathrm{dR}}\in F^{n}H^{n}(U)$ from \eqref{eqnFnoHn}.
By examining the weight filtration, we  have 
$
[\Psi_{j,I}^{(\varepsilon)}\Omega]_{\mathrm{dR}}
  \in W_{j}H^{n}(U)$.  According to \eqref{splittingofWandF}, we thus have 
  \begin{equation}\label{eqnPsijIdecomposition}
  [\Psi_{j,I}^{(\varepsilon)}\Omega]_{\mathrm{dR}}\in F^{n}H^{n}(U)\cap  W_{j}H^{n}(U)=\bigoplus_{0\leq q\leq j}I^{n,q}H^{n}(U)\,.
  \end{equation}
Proposition \ref{proppqtypeinDelignesplitting} then gives a finer result than this one obtained by type and weight reasons.
\end{rem}
\begin{proof}

	Note that the form $\Psi^{(\varepsilon)}_{j,I}\Omega$ is $\partial$-closed by type reasons.
By Lemma \ref{lemmaaboutclosednessofdecompositionofloopterm}, it is also $\bar{\partial}$-closed. 
	Therefore it is $d$-closed and defines a de Rham cohomology class in $F^{n}H^{n}(U)$.	
	We now prove the claim by induction on the tuple $(n,j)$ as follows.

	For the base case $(n,0)$, we have $\Psi^{(\varepsilon)}_{j,I}=1$ by the definition in \eqref{eqnPsiNIrecalled}. Thus the claim is trivial by the characterization of $I^{n,0}H^{n}(U)$ in \eqref{eqnInoHn}.
Suppose the claim holds for $(n-1,j-1)$ with $j\leq n-1$, we now consider the $(n,j)$ case.
	
For each $1\leq a\leq \mathcal{N}$, consider the residue map \eqref{eqnresiduemaptoD(a)}
	\[
	\mathrm{Res}_{a} : H^n(X, A^{\bullet}_{X}(\log D) ) \to H^{n-1}(D_{a},A^{\bullet}_{D_{a}}(\log D(a)) ) \,.
	\]
	For each $a$, $D_{a}$ is isomorphic to $E_{[n-1]}$ and $D(a)$  is a normal crossing divisor consisting of 
 finitely many  translated diagonals  inside $E_{[n-1]}$.
	Since $d$ commutes with the residue map, we see that $d\mathrm{Res}_{a}(\Psi^{(\varepsilon)}_{j,I}\Omega)=0$.
	Using \eqref{eqnresidueofemdz},
 it is straightforward to check that $\mathrm{Res}_{a}(\Psi^{(\varepsilon)}_{j,I}\Omega)$ is a linear combination of the form $\Psi^{(\varepsilon)}_{j-1, J}\Omega_{J}$, where $J$ is some index set with $|J|=n-1$.
 By the range $j-1\leq n-1$ and the induction hypothesis, one thus has 
 \begin{equation}\label{eqnindcutionhypothesis}
 \mathrm{Res}_{a}(\Psi^{(\varepsilon)}_{j,I}\Omega)\in I^{n-1,j-1}H^{n-1}(D_{a},A^{\bullet}_{D_{a}}(\log D(a)) )\,.
 \end{equation}
  Now using  \eqref{eqnPsijIdecomposition} we write
 \begin{equation}\label{eqnPsijIassumofvq}
  [\Psi^{(\varepsilon)}_{j,I}\Omega]_{\mathrm{dR}}=\sum_{q=0}^{j}v_{q}\in \bigoplus_{0\leq q\leq j}I^{n,q}H^{n}(U)\,.
 \end{equation}
By \eqref{eqnindcutionhypothesis} and the fact that $\mathrm{Res}_{a}$ is a morphism of mixed Hodge structures of type $(-1,-1)$, we have 
 \begin{equation}\label{eqnresidueofvneqj}
 \mathrm{Res}_{a}(\sum_{q\neq j}v_{q})=0\,.
 \end{equation}
This relation is true for any divisor $D_{a}$. Thus we obtain
 \begin{equation}\label{eqnvneqqrange}
\sum_{q\neq j}v_{q}\in \bigcap_{a=1}^{\mathcal{N}}\ker \mathrm{Res}_{a}\,.
 \end{equation}
 Combining \eqref{eqnPsijIassumofvq} and \eqref{eqnvneqqrange}, we are led to
   \[
 [\Psi^{(\varepsilon)}_{j,I}\Omega]_{\mathrm{dR}}\in I^{n, j}H^n(X, A^{\bullet}_{X}(\log D) )+\bigcap_{a=1}^{\mathcal{N}}\ker \mathrm{Res}_{a}  \cap F^{n}H^{n}(U)\,.
 \]
 According to Lemma \ref{lemmakernelofRes}, this gives
  \[
 [\Psi^{(\varepsilon)}_{j,I}\Omega]_{\mathrm{dR}}\in I^{n, j}H^n(X, A^{\bullet}_{X}(\log D) )+\ I^{n, 0}H^n(X, A^{\bullet}_{X}(\log D) )\,.
 \]
By 
Proposition \ref{propleadingcomponent} (ii), we know that the $I^{n,0}$ component must be zero if $j\neq 0$.

In sum, we have shown the claim for the base case $(n,0)$ for all $n$, and the induction step $(n-1,j-1)\Rightarrow (n,j)$.
This concludes the proof.

\end{proof}

\begin{ex}\label{exn=3}
	This example is taken from \cite[Example 4.22]{Zhou:2023cohomologicalpairings}.
	Consider the following function
	\[
	f=\left(\widehat{e}_{1}(s_{12})-\widehat{e}_{1}(s_{13})-\widehat{e}_{1}(s_{32})\right)
	\left(\widehat{e}_{1}(s_{21})-\widehat{e}_{1}(s_{23})-\widehat{e}_{1}(s_{31})\right)\in \mathcal{V}_{\mathsf{D}}\,.
	\]
	We apply the delooping map \eqref{eqnreducedformofED}  (using Corollary \ref{corforlooptermofwidehate_1}) to simplify the loop-terms in $f$
	\begin{eqnarray*}
		\mathfrak{d}(\widehat{e}_{1}(s_{ij})\widehat{e}_{1}(s_{ji}))&=&
		-(\widehat{e}_{2}(s_{ij})+\widehat{e}_{2}(s_{ji}))\\
		&&-\widehat{e}_{1}(-s_{ij}-s_{ji})(\widehat{e}_{1}(s_{ij})+\widehat{e}_{1}(s_{ji}))\\
		&&
		-\widehat{e}_{2}(-s_{ij}-s_{ji})\\
		&=& -(\widehat{e}_{2}(s_{ij})+\widehat{e}_{2}(s_{ji}))+\widehat{e}_{1}(s_{ij}+s_{ji})\Psi_{1, (i,j)}-\widehat{e}_{2}(s_{ij}+s_{ji})\,,\quad i< j\,. 
	\end{eqnarray*}
This leads to the following decomposition for $\mathfrak{d}(f)\in \mathcal{V}_{\mathsf{D}}^{\mathrm{for}}$
	\begin{eqnarray*}
		\mathfrak{d}(f)&=&\underbrace{-\Psi_{2, (123)}-\Psi_{2, (132)}+\Psi_{1, (23)}\Psi_{1, (13)}}_{f_{2}}
		+\underbrace{\sum_{i<j}\widehat{e}_{1}(s_{ij}+s_{ji})\Psi_{1, (i,j)}}_{f_{1}}+\underbrace{-\sum_{i<j}\widehat{e}_{2}(s_{ij}+s_{ji})}_{f_{0}}\,.
	\end{eqnarray*}
	It therefore gives the decomposition in Lemma \ref{lemdclosednessformodularweightgrading}  
	\[
	f\Omega=\sum_{j=0}^{2}f_{j}\Omega\in \bigoplus_{j=0}^{2}M_{j}\Omega\,.
	\]
	By Proposition \ref{proppqtypeinDelignesplitting}, one then has 
	$[f_{j}\Omega]_{\mathrm{dR}}\in I^{n,j}$.
\end{ex}

\section{Application: Gromov-Witten invariants of elliptic curves}
\label{secGWapplication}

This  section  is devoted to applications of the results in earlier sections.
We shall focus on a particular Eisenstein-Kronecker form \cite{Zhou:2023GWgeneratingseries, Zhou:2026iteratedintegrals} that arises from the studies of Gromov-Witten invariants of elliptic curves.
This particular form is given in terms of the determinant of a matrix.
\begin{dfn}\label{dfndeterminantformula}
Let the notation be as in 	
Definition \ref{dfnfunctionspaceassociatedtoD}.
Define 
\begin{equation}\label{eqndeterminantformula}
	\omega_{n}=(\det Z_{n})\Omega\,,\quad 
 Z_n  := \begin{pmatrix}
	0 & 1 & \cdots & 1 \\
	-1 & \widehat{e}_{1}(s_{1,1}) & \cdots &\widehat{e}_{1}(s_{1,n}) \\
	\vdots & \vdots &  & \vdots \\
	-1 & \widehat{e}_{1}(s_{n,1}) & \cdots & \widehat{e}_{1}(s_{n,n}) \\
\end{pmatrix} 
\,.
\end{equation}
\end{dfn}
\begin{lem} Let the notation be as above. Then 
one has \[
\omega_{n} \in \mathcal{V}_{\mathsf{D}}\Omega\,.
\]
\end{lem}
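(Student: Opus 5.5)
We expand the determinant $\det Z_n$ and show that every monomial that appears is, up to a constant factor, a product of the form $\prod_{k,\ell}\widehat{e}_{m_{k,\ell}}(s_{k,\ell})$ with at most one factor for each ordered pair $(k,\ell)$. Such a product is by definition an element of $\mu\big(\bigotimes_{k,\ell}s_{k,\ell}^{*}\mathcal{V}\big)=\mathcal{V}_{\mathsf{D}}$.

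First we use the Leibniz formula. Index the rows and columns of $Z_n$ by $\{0,1,\dots,n\}$. A nonzero term corresponds to a permutation $\sigma$ of $\{0,\dots,n\}$ with $\sigma(0)\neq 0$, since the $(0,0)$ entry vanishes. Such a term equals
\[
\pm\,(\text{entry }(0,\sigma(0)))\cdot(\text{entry }(\sigma^{-1}(0),0))\cdot\prod_{k\neq \sigma^{-1}(0),\,k\geq 1}\widehat{e}_{1}(s_{k,\sigma(k)}).
\]
The first two factors are constants $\pm1$. What remains is a product of $\widehat{e}_{1}(s_{k,\ell})$ over pairs $(k,\ell)=(k,\sigma(k))$. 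Because $\sigma$ is a bijection, each ordered pair $(k,\ell)$ occurs at most once, so no factor $\widehat{e}_{1}(s_{k,\ell})$ is repeated.

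Next we deal with the diagonal factors $\widehat{e}_{1}(s_{k,k})$, which arise from fixed points of $\sigma$. Since $s_{k,k}=c_{k,k}$ is a constant, these factors are constants. They are finite for generic $c_{k,k}$, because by Lemma \ref{lemanalyticpropertiesofemz} (ii) the only poles of $\widehat{e}_{1}$ lie on the lattice. Such a factor is still an element of $s_{k,k}^{*}\mathcal{V}$, so including it does no harm. Equivalently, one can absorb it into the coefficient, exactly as in the passage from \eqref{eqnemr} to \eqref{eqneminfty}.

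Finally, for every pair $(k,\ell)$ that does not occur in a given term, we insert the factor $\widehat{e}_{0}(s_{k,\ell})=1$. Each term then becomes $\mu$ applied to a pure tensor $\bigotimes_{k,\ell}\widehat{e}_{m_{k,\ell}}(s_{k,\ell})$ with $m_{k,\ell}\in\{0,1\}$, and this lies in $\mathcal{V}_{\mathsf{D}}$. Since $\mathcal{V}_{\mathsf{D}}$ is a linear span, the finite signed sum $\det Z_n$ lies in it as well, and multiplying by $\Omega$ gives $\omega_n\in\mathcal{V}_{\mathsf{D}}\Omega$.

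The only point requiring care is to check that no power $\widehat{e}_{1}(s_{k,\ell})^2$ appears; such a power would lie in $\mathcal{R}_{\mathsf{D}}$ but not obviously in $\mathcal{V}_{\mathsf{D}}$. The injectivity of $\sigma$ rules this out. The fixed-point constants are the other minor subtlety, and genericity of the $c_{k,k}$ handles them.
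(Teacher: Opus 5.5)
Your proof is correct and is essentially the paper's argument: expand $\det Z_n$ by the Leibniz formula and note that each surviving term is $\pm$ a product with at most one factor $\widehat{e}_{1}(s_{k,\ell})$ per ordered pair $(k,\ell)$, the diagonal ones being constants, so it lies in $\mathcal{V}_{\mathsf{D}}$ by Definition~\ref{dfnfunctionspaceassociatedtoD}. You make explicit the bookkeeping the paper leaves implicit, and the paper's extra remark that $\det Z_n$ agrees with its holomorphic counterpart built from $e_1$ (so $\omega_n$ is meromorphic) is not needed for this membership statement.
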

\begin{proof}
One can show that $\det Z_{n}$ coincides with the corresponding determinant with $e_{1}$ in place of $\widehat{e}_{1}$ \cite[Lemma 4.2]{Zhou:2026iteratedintegrals}. Thus $(\det Z_{n})\Omega$ is in fact a meromorphic form.
Expanding $\det\,Z_n$ according to its definition, one has 
\begin{equation}\label{eqndetZnexpansion}
	\mathrm{det} \, Z_n  = \sum_{\sigma \in \mathfrak{S}_{[n]\cup\{0\} } } \mathrm{sgn}(\sigma)\prod_{i=0}^{n} (Z_{n})_{i,\sigma(i)} 
	\,.
\end{equation}
Invoking Definition \ref{dfnfunctionspaceassociatedtoD},  we obtain the claim.
\end{proof}
As mentioned in the introduction, the class $[(\det Z_{n})\Omega]_{\mathrm{Hodge}}$ essentially gives the Gromov-Witten invariants of elliptic curves by 
\cite{Zhou:2026iteratedintegrals, Zhou:2023cohomologicalpairings}.
  See also \cite{Zhou:2023GWgeneratingseries} for related studies.
Furthermore, one has the following combinatorial formula for this class.
\begin{thm}\cite[Lemma 4.2, Theorem 4.1]{Zhou:2026iteratedintegrals}\label{mainthm1Zhou}
	Suppose in 
	Definition \ref{dfnfunctionspaceassociatedtoD} one has
	\begin{equation}\label{eqnassumptiononskl}
		s_{k,\ell}=z_{k}+w_{k}-z_{\ell}\,, \qquad k,\ell\in [n]\,.
	\end{equation}
Denote by $\Pi_{[n]\setminus \{j\}}$
the set of partitions of $[n]\setminus \{j\}$ consisting of elements of the form $\pi=\{\pi_{1},\cdots, \pi_{\ell}\}$.
Then one has
	\begin{equation}\label{eqnTngwpureweightstructure}
	[(\det Z_{n})\Omega]_{\mathrm{Hodge}}=
		\left(\sum_{j\in [n]}\sum_{\pi\in \Pi_{[n]\setminus \{j\}}}
		\prod_{k=1}^{\ell} (|\pi_{k}|-1)!\cdot \widehat{e}_{|\pi_{k}|}(\sum_{i\in \pi_{k}}w_{i})\right)\Omega\,.
	\end{equation}
	
\end{thm}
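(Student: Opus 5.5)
By Proposition \ref{propleadingcomponent} (i) and \eqref{eqnInoHn}, the plan is to compute the single number $\int_{X}(\det Z_{n})\,\mathrm{vol}$. The form $(\det Z_n)\Omega$ lies in $\mathcal{V}_{\mathsf{D}}\Omega$ and is $d$-closed, being meromorphic. Every term of the permutation expansion \eqref{eqndetZnexpansion} has at worst simple poles, so it is integrable, and I can integrate term by term.

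\textbf{Step 1: organize the expansion.} I would index the rows and columns of $Z_n$ by $\{0\}\cup[n]$. Write a permutation $\sigma$ as its cycle through $0$ together with cycles on the remaining indices. Under the assumption \eqref{eqnassumptiononskl}, a diagonal entry is the constant $\widehat{e}_1(s_{k,k})=\widehat{e}_1(w_k)$. A cycle $(v_1\cdots v_p)$ with $p\geq 2$ not containing $0$ contributes $\psi_{\mathbold{1},I}$, a loop product with $I=(v_1,\dots,v_p)$. The cycle through $0$ has the form $0\to j\to\cdots\to i\to 0$. It contributes $(1)\cdot(-1)$ times a chain product of $\widehat{e}_1(s_{\bullet,\bullet})$ from $j$ to $i$, or simply $-1$ when the cycle is the transposition $(0\,j)$.

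\textbf{Step 2: kill the chains.} I would use $\int_E \widehat{e}_m(z)\,\tfrac{\mathbf{i}}{2\im\tau}dz\wedge d\bar z=\delta_{m,0}$, which follows from the $\bar\partial$-relation \eqref{eqnbarpartialem}, Stokes' theorem and the residue formula \eqref{eqnresidueofemdz}. If the chain from $j$ to $i$ is nontrivial, its last vertex $i$ appears only in the factor $\widehat{e}_1(s_{\cdot,i})$. Integrating first in $z_i$ by Fubini therefore gives zero. Consequently only the permutations with $\sigma=(0\,j)\cdot\tau$ survive, where $\tau$ is a permutation of $[n]\setminus\{j\}$. Their prefactor is $\mathrm{sgn}(0\,j)\cdot 1\cdot(-1)=+1$, and these terms produce the outer sum over $j$.

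\textbf{Step 3: integrate each loop.} Distinct cycles of $\tau$ involve disjoint sets of variables, so the integral factorizes over the cycles. For a cycle of length $p\geq 2$, I would apply Corollary \ref{corforlooptermofwidehate_1}, equivalently the delooping \eqref{eqndeloopingmaponsimpleloop}, to rewrite $\psi_{\mathbold{1},I}$. The result is minus the sum, over $m_0\geq1$, of $\widehat{e}_{m_0}(-\sum_k s_{v_k,v_{k+1}})$ times forest-type chain products with total weight $p-m_0$. By the same leaf-integration argument as in Step 2, every chain product with at least one edge integrates to zero. Only the term $m_0=p$, $\mathrm{m}'=0$ survives. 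Since $\sum_k s_{v_k,v_{k+1}}=\sum_{i\in I}w_i$, this gives
\[
\int \psi_{\mathbold{1},I}\,\mathrm{vol}=-\widehat{e}_p\Big(-\sum_{i\in I} w_i\Big)=(-1)^{p+1}\widehat{e}_p\Big(\sum_{i\in I}w_i\Big),
\]
using the parity in Lemma \ref{lemanalyticpropertiesofemz} (i). Multiplying by the cycle sign $(-1)^{p-1}$ gives $+\widehat{e}_p(\sum_{i\in I}w_i)$. Fixed points ($p=1$) give $\widehat{e}_1(w_k)$ directly, which agrees with the same formula.

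\textbf{Step 4: assemble.} Summing over the $(p-1)!$ cyclic orders on each block converts the sum over $\tau$ into the sum over partitions $\pi\in\Pi_{[n]\setminus\{j\}}$ with weight $\prod_k(|\pi_k|-1)!\,\widehat{e}_{|\pi_k|}$, which is \eqref{eqnTngwpureweightstructure}.

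\textbf{Main obstacle.} I expect the difficulty to be the analytic justification of the term-by-term Fubini and Stokes arguments near the poles. This includes checking that each chain factor is integrable and that the leaf integration really isolates a single $\widehat{e}_1$. A second point to verify is that the sign conventions in the corollary, with the argument $-\sum z_i$, match the loop orientation produced by $\tau$.
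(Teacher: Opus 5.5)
Your argument is correct, and it takes a more direct route than the paper.

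\textbf{How the paper argues.} The paper first proves the structural identity of Theorem~\ref{thmdecompositionofdeterminant}~(i), namely $\det Z_n=(-1)^{n-1}\sum_S\Psi_{|S|-1,S}\sum_{\pi}\Phi_{\geq 1,\pi}$. This requires regrouping the Type~III chains $\Phi^{\circ}_{1,\pi_0}$ together with the delooped pieces $\Phi^{\circ}_{0,\pi'}$ into the $\bar\partial$-closed blocks $\Psi_{|S|-1,S}$, which is the content of Lemma~\ref{lemmessytermsindetZnexpansion} and the counting Lemma~\ref{lemmaforcalculatingcoefficients}. Only then does it apply K\"unneth and the vanishing in Proposition~\ref{propleadingcomponent}~(ii) to each closed factor, keeping the terms with $|S|=1$ and $r_k=|\pi_k|$. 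It works with generic $c_{k,\ell}$ and specializes to \eqref{eqnassumptiononskl} only at the end.

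\textbf{How your route differs.} You integrate the raw permutation expansion term by term. Only the integral of the total closed form matters, so the individual terms need not be closed. One leaf integration, using $\int_E\widehat{e}_m\,\mathrm{vol}=\delta_{m,0}$, removes every chain through $0$ at once, so the regrouping combinatorics disappears entirely. Delooping is then applied only to the genuine cycles of $\tau$, and the same leaf integration replaces Proposition~\ref{propleadingcomponent}~(ii).

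\textbf{What each buys.} For this statement your route is shorter and more elementary. The paper's route buys a decomposition into $d$-closed pieces of definite modular weight. That decomposition is what feeds the form-level Deligne splitting of Theorem~\ref{thmDelignecanonicalsplittingonformlevel}. An integral against $\mathrm{vol}$ only detects the $I^{n,0}$ component, so it cannot produce that splitting.

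\textbf{Small points to tidy.}
\begin{itemize}
\item In Step~3, Corollary~\ref{corforlooptermofwidehate_1} also produces terms with $m_0=0$, $|\mathrm{m}|=p$, $\mathrm{m}\neq(1,\dots,1)$. These are weight-$p$ chain products and die by the same leaf integration, so your conclusion stands, but they should be listed.
\item Under \eqref{eqnassumptiononskl} the parameters $c_{k,\ell}=w_k$ are not generic. For $n\geq 4$ one has $s_{1,2}-s_{3,2}+s_{3,4}-s_{1,4}\equiv 0$, so $\mathsf{D}$ is not simple normal crossing. Proposition~\ref{propleadingcomponent}~(i) is set up in the SNC case, and it is stated for the $\Psi$'s, though its proof applies verbatim to any integrable closed form in $\mathcal{V}_{\mathsf{D}}\Omega$.
\item The clean fix is to run your computation with generic $c_{k,\ell}$ and specialize at the end, as the paper does. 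Your computation uses the specialization only in the final rewriting $\sum_{i\in C}s_{i,\tau(i)}=\sum_{i\in C}w_i$ and $s_{k,k}=w_k$. With generic parameters it yields exactly \eqref{eqnTngwpureweightstructuregeneral}.
\item On the analytic justification you flagged: "at worst simple poles" is not by itself the reason each permutation term is integrable. The reason is that a term's edge set is a disjoint union of directed paths and directed cycles. The hyperplanes of a directed cycle have empty common intersection, because $\sum_{i\in C}s_{i,\tau(i)}$ is a constant not in $\Lambda_\tau$. So the polar set of each term is locally normal crossing, which makes Fubini and the leaf integrations legitimate.
\end{itemize}
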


In this section, we study the cohomology class $[(\det Z_{n})\Omega]_{\mathrm{dR}}$.
 Along the way, we shall construct its Deligne splitting on the form level.

\subsection{Decomposition of the determinant}

To simplify the expression \eqref{eqndetZnexpansion}, we first  split $\det Z_{n}$ into a linear combination of simpler $\bar{\partial}$-closed  forms.
We introduce some notation as follows.

\begin{dfn}\label{dfnPsisigma}
	For $ \sigma \in \mathfrak{S}_{[n]}$, let $ \sigma = \sigma_1\cdots\sigma_{\ell} $ be the standard cycle decomposition of $\sigma$. The underlying set of  $\sigma_{k}$ is usually denoted by $\pi_{k}$ whose cardinality is $|\sigma_{k}|$.		
	For any $\pi_k=\{v_{1},\cdots, v_{m}\} \subseteq [n]$, define the set
	\[
	\mathrm{cyc} (\pi_k) :=\mathfrak{S}_{\pi_{k}}~/~ \mathrm{the~ subgroup ~generated~ by~ the ~permutation}~ (v_{1}v_{2}\cdots v_{m})\,,
	\]
	with cardinality $|	\mathrm{cyc} (\pi_k)|=(|\pi_{k}|-1)!$. This set   can be identified with the subgroup of $\mathfrak{S}_{\pi_{k}}$ consisting of permutations which fix a specified element.
\end{dfn}

	\begin{dfn}\label{dfnPhisigmak}
		 Following Definition \ref{dfnbuildingblockPsiMI}, for the cyclic permutation
$\sigma_k = (a_1\cdots a_p)$ with $p\geq 2$, define
	\begin{equation}
		\psi_{\mathrm{m},\sigma_k} := \prod_{t=1}^{p} \widehat{e}_{m_t}(s_{a_t,\sigma_k(a_{t})})
		\,,
		\quad 
			\Psi_{N,\sigma_k}=\sum_{\substack{\mathrm{m}\in \mathbb{N}^{p}\\|\mathrm{m}|=N}}\psi_{\mathrm{m},\sigma_k}\,.
	\end{equation}
	where we keep the convention that $a_{p+t}=a_t$.
    
	Denote 
	\begin{equation}\label{eqndnfPhir}
		\Phi_{r,\sigma_k} := \widehat{e}_{r}(-\sum_{i\in \sigma_k}s_{i,\sigma_k(i)})
		\sum_{|\mathrm{m}|=|\sigma_{k}|-r} \psi_{\mathrm{m},\sigma_k} \,~\mathrm{for}~ 0\leq r\leq |\sigma_{k}|\,,\quad 
		\Phi_{\geq 1,\sigma_k} := \sum_{r=1}^{|\sigma_{k}|} 	\Phi_{r,\sigma_k} \,.
	\end{equation}
		When $p=1$, say $\sigma_{k}=(a)$, we employ the convention that 
	\begin{equation}\label{eqnconventionPsisigmak}		 	
		\Phi_{\geq 1,\sigma_k}=\widehat{e}_{1}(-s_{a,a})\,.
	\end{equation}	
	For any nonempty set $S$ and any partition $\pi=\{\pi_{1},\cdots, \pi_{\ell}\}\in \Pi_{[n]\setminus S}$, denote 
	\[
			\Phi_{\geq 1,\pi_k} = \sum_{\sigma_k \in \mathrm{cyc} (\pi_k)} \Phi_{\geq 1,\sigma_k}\,,\quad 
\Phi_{\geq 1,\pi}=\prod_{k=1}^{\ell}\Phi_{\geq 1,\pi_{k}}\,.
	\]	
\end{dfn}

We can now express $	\det\,  Z_n$ as a summation of the building blocks $\Psi_{M,I}$ in Definition \ref{dfnbuildingblockPsiMI}, which as a byproduct gives an alternative proof of Theorem 	\ref{mainthm1Zhou}.
\begin{thm} \label{thmdecompositionofdeterminant}
	
	Let the notation be as above.
	\begin{enumerate}[i).]
		\item 

	One has
	\begin{equation}\label{finalexpansionofdetZn}
		\det\,  Z_n = (-1)^{n-1} 
		\sum_{\substack{S \subseteq [n] \\ |S| \geq 1}}  \Psi_{|S|-1,S}
		\sum_{\pi \in \Pi_{[n]\backslash S}} \Phi_{\geq 1,\pi} \,.
	\end{equation}
\item 
One has	\begin{equation}\label{eqnTngwpureweightstructuregeneral}
	[(\det Z_{n})\Omega]_{\mathrm{Hodge}}=
	\left(\sum_{j\in [n]}\sum_{\pi\in \Pi_{[n]\setminus \{j\}}}
	\prod_{k=1}^{\ell} \sum_{\sigma_{k}\in \mathrm{cyc}(\pi_{k})} \widehat{e}_{|\pi_{k}|}(\sum_{i\in \sigma_{k}}s_{i,\sigma_{k}(i)})\right)\Omega\,.
\end{equation}
In particular, Theorem \ref{mainthm1Zhou} holds.
	\end{enumerate}
\end{thm}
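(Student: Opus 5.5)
The plan is to prove (i) as an identity in $\mathcal{V}_{\mathsf{D}}$ by expanding \eqref{eqndetZnexpansion} and reorganizing it with the functional relations of Section \ref{seccombinatorialrelations}. Then (ii) follows by taking Hodge components term by term, using Proposition \ref{propleadingcomponent}.

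\emph{Step 1 (cycle expansion).} In \eqref{eqndetZnexpansion}, a permutation $\sigma$ of $[n]\cup\{0\}$ has $\sigma(0)\neq 0$, since the $(0,0)$ entry vanishes. The cycle of $\sigma$ through $0$ has the form $0\to v_1\to\cdots\to v_k\to 0$. It contributes $\pm\prod_{t=1}^{k-1}\widehat{e}_1(s_{v_t,v_{t+1}})$, because the entries $(Z_n)_{0,v_1}=1$ and $(Z_n)_{v_k,0}=-1$ are constants. The other cycles of $\sigma$ form a permutation $\tau$ of $[n]\setminus\{v_1,\dots,v_k\}$ and contribute $\prod_{i}\widehat{e}_1(s_{i,\tau(i)})$. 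Fixed points of $\tau$ give the constant $\widehat{e}_1(s_{a,a})=-\widehat{e}_1(-s_{a,a})$ by parity (Lemma \ref{lemanalyticpropertiesofemz}(i)), which matches the convention \eqref{eqnconventionPsisigmak}. I will carry out the sign bookkeeping for $\mathrm{sgn}(\sigma)$ together with these parity signs and the signs $(-1)^{|\sigma_k|}$ that appear below; I expect the total to collapse to $(-1)^{n-1}$.

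\emph{Step 2 (each $\tau$-cycle is a sum of $\Phi$'s).} For a cycle $\sigma_k$ of length $p\geq 2$, apply Proposition \ref{propgeneralpropofloopterm} with $\mathrm{m}=\mathbold{1}$, i.e. the delooping \eqref{eqndeloopingmaponsimpleloop}. This rewrites $\prod\widehat{e}_1$ around $\sigma_k$ as minus the sum of the terms $\widehat{e}_{m_0}(-\sum s)\,\psi_{\mathrm{m}',\sigma_k}$ in which some $m_i'=0$. Each such $\psi_{\mathrm{m}',\sigma_k}$ is a product over paths. The terms with $m_0=r\geq 1$ assemble exactly into $\Phi_{r,\sigma_k}$. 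The terms with $m_0=0$ break the cycle into open chains, and these chains must be merged with the $0$-path of Step 1.

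\emph{Step 3 (merging the $0$-path with broken chains).} For fixed $S$, I claim the $0$-path products over all orderings of $S$, together with the $m_0=0$ chain pieces produced in Step 2 by cycles supported in $S$, add up to the cyclic sum $\Psi_{|S|-1,S}$. Here $|\mathrm{m}|=|S|-1$ forces at least one $m_i=0$, i.e. an open chain. This is the main obstacle, because higher weights $\widehat{e}_{m}$ with $m\geq 2$ must come out with exactly the right multiplicities. To avoid unwinding binomials by hand, I would run Steps 2--3 at the level of generating series. Replace every $\widehat{e}_1$ by $\widehat{S}_{c}$, use Lemma \ref{lem:identityforhatS} with $a_i=t\,a_i$ to open each cycle, and extract coefficients as in the proof of Proposition \ref{propgeneralpropofloopterm}. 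If this bookkeeping becomes opaque, the fallback is to prove (i) by comparing both sides after applying $\mathfrak{d}$. Both sides then lie in $\mathcal{V}_{\mathsf{D}}^{\mathrm{for}}$, and Lemma \ref{lemelementsinB(E_D)arelinearlyindependent} reduces the identity to matching the coefficient of each forest $\widehat{e}_\Gamma$. That coefficient is a finite combinatorial count, which I would handle by induction on $n$ via expansion along a row.

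\emph{Part (ii).} Take $[\cdot]_{\mathrm{Hodge}}$ of \eqref{finalexpansionofdetZn}, computed as $\int_X(\cdot)\,\mathrm{vol}$ by Proposition \ref{propleadingcomponent}(i). The factors for different blocks $S,\pi_1,\dots,\pi_\ell$ involve disjoint sets of variables $z$, so Fubini factorizes the integral over these blocks. For $|S|\geq 2$, the factor $\Psi_{|S|-1,S}$ integrates to zero by Proposition \ref{propleadingcomponent}(ii), so only $S=\{j\}$ survives. Similarly, in $\Phi_{r,\sigma_k}$ the prefactor $\widehat{e}_r(-\sum s)$ is constant, and the remaining factor is $\Psi_{|\sigma_k|-r,\sigma_k}$. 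For $1\leq r\leq|\sigma_k|-1$ this integrates to zero, again by Proposition \ref{propleadingcomponent}(ii). Only $r=|\sigma_k|$ survives, giving $\widehat{e}_{|\pi_k|}(-\sum s)=(-1)^{|\pi_k|}\widehat{e}_{|\pi_k|}(\sum s)$. These signs combine with $(-1)^{n-1}$ to give $+1$, since $\sum_k|\pi_k|=n-1$. This yields \eqref{eqnTngwpureweightstructuregeneral}. Under \eqref{eqnassumptiononskl}, $\sum_{i\in\sigma_k}s_{i,\sigma_k(i)}=\sum_{i\in\pi_k}w_i$ is independent of $\sigma_k$, and $|\mathrm{cyc}(\pi_k)|=(|\pi_k|-1)!$ recovers Theorem \ref{mainthm1Zhou}.
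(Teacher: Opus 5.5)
Your Steps 1--2 and your treatment of (ii) follow the paper's proof. The shared ingredients are:
\begin{itemize}
\item the permutation expansion with the $0$-cycle producing a chain;
\item Corollary~\ref{corforlooptermofwidehate_1} applied to every other cycle, turning $\psi_{(1,\dots,1),\sigma_k}$ into $-(\Phi^{\circ}_{0,\sigma_k}+\Phi_{\geq 1,\sigma_k})$;
\item regrouping by $S$;
\item in the Hodge projection, Proposition~\ref{propleadingcomponent}(ii) applied blockwise, which kills everything except $S=\{j\}$ and $r_k=|\pi_k|$.
\end{itemize}
Your Fubini argument is a fine substitute for the paper's K\"unneth step, and the signs do collapse to $(-1)^{n-1}$ as you expect.

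\textbf{The gap is Step 3.} It is the entire combinatorial content of (i), namely the paper's Lemma~\ref{lemmessytermsindetZnexpansion}: for each $S$,
\[
\sum_{\emptyset\neq\pi_0\subseteq S}\Phi^{\circ}_{1,\pi_0}\sum_{\pi'\in\Pi_{S\setminus\pi_0}}\Phi^{\circ}_{0,\pi'}=\Psi_{|S|-1,S}.
\]
You assert this but give no argument, and the generating-series route you propose cannot supply one. Lemma~\ref{lem:identityforhatS} acts inside a single cycle. For $\mathrm{m}=(1,\dots,1)$ all coefficients in Proposition~\ref{propgeneralpropofloopterm} are already $1$, so there are no binomials to unwind, and re-running Fay's identity only reproduces Step 2. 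Step 3 needs no functional identity at all; it holds monomial by monomial. The real issue is how many different data produce the same forest of chains. These data are the choice of $\pi_0$, its ordering, a permutation of $S\setminus\pi_0$, and the decorations $\mathrm{m}$.

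Your fallback of comparing coefficients of each $\widehat{e}_\Gamma$ is the right framework, but the count itself is the missing idea. Take a forest $\Gamma$ on $S$ with $\ell$ chain components and $m(\Gamma)=|S|-1$.
\begin{itemize}
\item On the right side its coefficient is $(\ell-1)!$: the cyclic arrangements of the $\ell$ chains, joined by weight-$0$ edges.
\item On the left side, each contribution consists of one component chosen as the $0$-path, plus a grouping of the remaining components into cyclically ordered tuples. The chosen component must have defect $a_k:=m(\Gamma_k)-|V(\Gamma_k)|=-1$, i.e.\ be a chain with all decorations $1$ or a single vertex. Each tuple must have total defect $0$, and each tuple is one broken cycle in $\Phi^{\circ}_{0}$.
\end{itemize}
You must show that for any integers $a_1,\dots,a_\ell\geq -1$ with $\sum a_k=-1$ the left-side number is also $(\ell-1)!$; this is Lemma~\ref{lemmaforcalculatingcoefficients}.

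The natural induction is on $\ell$, not on $n$ via row expansion:
\begin{itemize}
\item If some $a_k>0$, that component can neither be the $0$-path nor stand alone in a tuple. Fix its cyclic successor ($\ell-1$ choices) and merge the two into one item, whose defect is still $\geq 0$; the induction hypothesis then gives $(\ell-1)\cdot(\ell-2)!$.
\item If no $a_k>0$, exactly one $a_k=-1$ and the rest are $0$. The groupings are then the cycle decompositions of elements of $\mathfrak{S}_{\ell-1}$, again $(\ell-1)!$ of them.
\end{itemize}
Without this count (i) is unproved, and (ii) inherits the gap, since it is read off from (i).
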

\begin{proof}
	See Appendix \ref{secproofofthmdecompositionofdeterminant}. 
\end{proof}

\subsection{Organization by graphs and modular weights}
\label{secmodularweightgradingongraphs}

From convention \eqref{eqnconventionPsisigmak} and Definition \ref{dfntreetypefunctions},
it is straightforward to see that the summands in the decomposition \eqref{finalexpansionofdetZn} in Theorem \ref{thmdecompositionofdeterminant} all lie in the space $ (\mathcal{V}_{\mathsf{D}}^{\mathrm{for}},\mathcal{B}(\mathcal{V}_{\mathsf{D}}^{\mathrm{for}}))$.
The polar divisor of each summand gives rise to an element in $\mathbb{C}\mathcal{G}_{\mathrm{for}}$ according to Definition \ref{dfndivisorgraph} and Definition \ref{dfntreetypefunctions}.
As in \eqref{eqndecomposeTDbygraphs}, one can expand $\det Z_n$ as
\begin{equation}\label{expressionofdetZ_nbygraphs}
	\det  Z_n = (-1)^{n-1} \sum_{\Gamma \in \mathcal{G}} c_{\Gamma} \, \widehat{e}_{\Gamma} \,,\qquad 
    \mathcal{G}:=\Big\{\Gamma\in \mathcal{G}_{\mathrm{for}}~|~c_{\Gamma}\neq 0\Big\}\,,
\end{equation}
where each $c_{\Gamma}$ is a constant independent of the variables $z_{1}, \cdots, z_{n}$.
This graph sum is particularly useful in tracking the weight filtration and modular weight grading of the cohomology class (see Definition \ref{dfnfiltrationongraphspace} below), although the coefficients $c_{\Gamma}$ are complicated.

We now further organize the decomposition 
according to the types of the graphs. This allows us to compute the Deligne splitting later in Section \ref{secDelignecanonicalsplittingonformlevel}.
The key ingredient is the  modular weight grading 
on the space $\mathcal{V}_{\mathsf{D}}^{\mathrm{for}}$, introduced earlier in Definition \ref{dfnmodularweightgradingonTD} in Section
\ref{secmodularweightgradingonTD}.

\begin{lem} \label{lemdecorateddivisorgraphs}
Suppose  $\Gamma\in \mathcal{G}$ as in the expansion \eqref{expressionofdetZ_nbygraphs}.
		\begin{enumerate}[i).]
			\item 
			One has $m(\Gamma)\leq n-1$ and $|E(\Gamma)|\leq m(\Gamma)$.
		\item  
		Each connected component $\Gamma_{k}$ must be a chain or a single vertex. In particular,
		\begin{equation}\label{eqnmGammakminusVGammak}
		|E(\Gamma_{k})|=	|V(\Gamma_{k})| - 1 \leq  m(\Gamma_{k}) \,.
		\end{equation}
	\item 
	 Furthermore, for each connected component $\Gamma_{k}$, the equality holds iff $\Gamma_{k} $
		is either a chain with the decoration on each edge being $1$,  or a single vertex.
	\end{enumerate}
\end{lem}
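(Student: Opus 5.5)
The plan is to read off the graphs directly from the decomposition \eqref{finalexpansionofdetZn} in Theorem \ref{thmdecompositionofdeterminant}. Every summand there is a product of a factor $\Psi_{|S|-1,S}$ and factors $\Phi_{\geq 1,\sigma_k}$, one for each cycle $\sigma_k$ of a permutation whose cycle supports form $\pi\in\Pi_{[n]\setminus S}$. These factors involve pairwise disjoint vertex sets $S,\pi_1,\dots,\pi_\ell$. So the graph of a monomial is the disjoint union of the graphs of its factor monomials, and each connected component comes from a single factor. Since the expansion \eqref{expressionofdetZ_nbygraphs} is in the basis $\mathcal{B}(\mathcal{V}_{\mathsf{D}}^{\mathrm{for}})$, and this basis is linearly independent by Lemma \ref{lemelementsinB(E_D)arelinearlyindependent}, every $\Gamma$ with $c_\Gamma\neq 0$ appears as the graph of some such product monomial. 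It therefore suffices to analyze the monomials of each factor separately and then add up the bounds.

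\textbf{Factor $\Psi_{|S|-1,S}$.} Its monomials are $\psi_{\mathrm{m},S}$ with $|\mathrm{m}|=|S|-1<|S|$. Hence at least one $m_i=0$, and that edge of the cyclic loop on $S$ is absent because $\widehat{e}_0=1$. The graph is therefore the cyclic chain on $S$ with some edges deleted, i.e.\ a disjoint union of chains and isolated vertices. Each present edge has decoration at least $1$, so the number of edges is at most the total decoration, which is $|S|-1$.

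\textbf{Factor $\Phi_{r,\sigma_k}$ with $r\geq 1$.} The factor $\widehat{e}_r(-\sum_i s_{i,\sigma_k(i)})$ is constant in $z$, since the sum telescopes, so it contributes no edge and no modular weight. The remaining monomials have $|\mathrm{m}|=|\sigma_k|-r\leq|\sigma_k|-1$, so as before their graphs are chains and vertices with total decoration $|\sigma_k|-r$. For the singleton convention \eqref{eqnconventionPsisigmak} there is a single vertex with decoration $0$.

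\textbf{Summing up.} Each factor monomial has weight at most $(\text{number of its vertices})-1$. Summing over the disjoint vertex sets gives $m(\Gamma)\leq n-1$. Indeed there is at least the factor from $S$, and each $\pi_k$ also contributes at most $|\pi_k|-1$, so $m(\Gamma)\leq n-1-\ell\leq n-1$. Part (ii) follows because every component is a chain or a single vertex, and a chain satisfies $|E|=|V|-1$. Since every decoration is at least $1$, we get $|E(\Gamma_k)|\leq m(\Gamma_k)$, and summing gives (i). For (iii), equality $\sum_{e}m(e)=|E(\Gamma_k)|$ with every $m(e)\geq 1$ holds iff all decorations equal $1$, with the empty-edge case being a single vertex.

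The main obstacle is the subtle point that the expansion \eqref{expressionofdetZ_nbygraphs} might, after collecting terms, still use loop graphs or reorient edges. I would handle this as follows. First, confirm that each monomial above is already a forest-type basis element: the edges are $s_{a_t,\sigma(a_t)}$ with distinct ordered pairs, and there are no loops because some $m_t=0$. Then uniqueness of the expansion forces $\mathcal{G}$ to be a subset of these monomials' graphs. One must also check that the relabeling and parity ambiguity in Definition \ref{dfndivisorgraph} does not merge edges $s_{k,\ell}$ and $s_{\ell,k}$; this holds by genericity of the $c_{k,\ell}$.
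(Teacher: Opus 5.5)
Your proposal is correct and takes essentially the same route as the paper. Both read the graphs off the decomposition \eqref{finalexpansionofdetZn}, note that every connected component comes from a monomial $\psi_{\mathrm{m},\sigma}$ with $|\mathrm{m}|\leq|\sigma|-1$ (so the cycle is broken into chains and isolated vertices), and conclude using $m(e)\geq 1$. You also spell out two points the paper leaves implicit: the linear independence in Lemma \ref{lemelementsinB(E_D)arelinearlyindependent} guarantees that every $\Gamma\in\mathcal{G}$ is the graph of one of these product monomials, and an explicit count gives the sharper bound $m(\Gamma)\leq n-1-\ell$.
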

\begin{proof}
	\begin{enumerate}[i).]
		\item 
		The first relation follows from the decomposition \eqref{finalexpansionofdetZn} in Theorem \ref{thmdecompositionofdeterminant}. The second follows from the fact that $m(e)\geq 1$ for any $e\in E(\Gamma)$.
	\item  
	By convention \eqref{eqnconventionPsisigmak}, when $|S|=1$, the term $\Psi_{|S|-1,S}$ does not contribute any edge.
	Examining the support of the arguments in the decomposition, any connected component $\Gamma_{k}$ arises from a monomial of the form
	$\psi_{\mathrm{m},\sigma_{i}}$ with $\mathrm{m}\leq |\sigma_{i}|-1 $,
	according to  \eqref{eqndnfPhir} in Definition \ref{dfnPhisigmak}. 
		Since $m(e)\geq 1$ for each edge $e\in E(\Gamma_{k})$, this immediately proves the claim.
	\item 
For the degenerate case when $\Gamma_{k}$ is a single vertex, one has $V(\Gamma_{k})=1$ and $m(\Gamma_{k})=0$. The claim is automatic.
We only need to focus on the case where $\Gamma$ is a nontrivial chain,
in which case the claim follows from the fact that $m(e)\geq 1$. 

\end{enumerate}
\end{proof}

To record the geometric data conveniently, we introduce the following filtration and grading on the space $\mathbb{C}\mathcal{G}_{\mathrm{for}}$. 
\begin{dfn}
	\label{dfnfiltrationongraphspace}
	Define the filtration $W_{\bullet}^{\mathcal{G}_{\mathrm{for}}}$ and grading $M_{\bullet}^{\mathcal{G}_{\mathrm{for}}}$ on $\mathbb{C}\mathcal{G}_{\mathrm{for}}$ as 
	\[
	W_{j}^{\mathcal{G}_{\mathrm{for}}} := \mathbb{C}\Big\{\Gamma\in \mathcal{G}_{\mathrm{for}}~|~ |E(\Gamma)| \leq j \Big\}\,,\quad 
	M_{j}^{\mathcal{G}_{\mathrm{for}}} := \mathbb{C}\Big\{\Gamma\in \mathcal{G}_{\mathrm{for}}~|~ m(\Gamma) = j\Big\}\,,
	\]
	with the convention $W^{\mathcal{G}_{\mathrm{for}}}_{j}=0=M^{\mathcal{G}_{\mathrm{for}}}_{j}$ if $j<0$.	
	We call them the weight filtration and modular weight grading, respectively.
	\end{dfn}

For $\Gamma\in \mathcal{G}_{\mathrm{for}}$,
	one has $\widehat{e}_{\Gamma}\Omega\in A_{X}^{n}(\log D)$ by  \eqref{eqnEkformsarelogforms}. By Lemma \ref{lemanalyticpropertiesofemz} (ii), the $W^{\mathcal{G}_{\mathrm{for}}}_{\bullet}$ filtration coincides with the weight filtration $W_{\bullet}$ on $A_{X}^{n}(\log D)$.
	By Remark \ref{remmodularweightgradingonTD}, $M^{\mathcal{G}_{\mathrm{for}}}_{\bullet}$ coincides with the modular weight grading $M_{\bullet}$. 	This explains the nomenclature of Definition 	\ref{dfnfiltrationongraphspace}.

  The structures $W_{\bullet}^{\mathcal{G}_{\mathrm{for}}}, M_{\bullet}^{\mathcal{G}_{\mathrm{for}}}$  on the space
     $\mathbb{C}\mathcal{G}_{\mathrm{for}}$ restrict to
     corresponding structures, denoted by $W^{\mathcal{G}}_{\bullet}, M^{\mathcal{G}}_{\bullet}$,
     on the subspace $\mathbb{C}\mathcal{G}$.
     
     	In what follows, we often identify the above graph-theoretic structures  $W^{\mathcal{G}_{\mathrm{for}}}_{\bullet}, M^{\mathcal{G}_{\mathrm{for}}}_{\bullet}$  on $\mathbb{C}\mathcal{G}_{\mathrm{for}}$ with $W_{\bullet}, M_{\bullet}$ on $\mathcal{V}_{\mathsf{D}}^{\mathrm{for}}$,  without explicit mention.

\subsection{Deligne splitting on form level}
\label{secDelignecanonicalsplittingonformlevel}

We next realize the Deligne splitting of the class $[(\det \,Z_{n})\Omega]_{\mathrm{dR}}$ on form level, following the approach developed in \cite{Zhou:2023cohomologicalpairings}.
Concretely, we shall 
reorganize the summation \eqref{expressionofdetZ_nbygraphs} into a summation of $d$-closed functions that respects the $W_{\bullet}$-filtration and that descends to the Deligne splitting on cohomology level.

\begin{definition}\label{dfnDelignecomponentsonformlevel}
	Let the coefficients $c_{\Gamma},\Gamma\in \mathcal{G}$ be as in \eqref{expressionofdetZ_nbygraphs}.
	Define
		\begin{equation}\label{eqndfnphij}
		\phi_{j}=(-1)^{n-1}\sum_{\Gamma\in M_{j}^{\mathcal{G}} } c_{\Gamma}\widehat{e}_{\Gamma}\Omega\,,\quad 0\leq j\leq n-1\,.
	\end{equation}
	\end{definition}

\begin{lem}
	\label{lemdclosedresiduelift}
	Let the notation be as above.
	\begin{enumerate}[i).]
		\item One has	$\phi_{j}\in W_{j}\Omega$ for any $ 0\leq j\leq n-1$.

\item 
				One has
				\begin{equation}\label{eqdecompositionviagraph}
				(\det  Z_n )  \Omega
				=\sum_{j=0}^{n-1}	\phi_{j}\in \bigoplus_{j=0}^{n-1}M_{j}\Omega\,,\quad 	\bar{\partial}\phi_{j}=0\,.
			\end{equation}

\end{enumerate}
\end{lem}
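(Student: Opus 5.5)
Both parts come from reading the graph expansion \eqref{expressionofdetZ_nbygraphs} through Lemma \ref{lemdecorateddivisorgraphs}, together with the $\bar{\partial}$-closedness of $\det Z_n$.

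For (i), take any $\Gamma\in M_j^{\mathcal{G}}$, so that $m(\Gamma)=j$. Lemma \ref{lemdecorateddivisorgraphs}(i) gives $|E(\Gamma)|\leq m(\Gamma)=j$, hence $\Gamma\in W_j^{\mathcal{G}}$. By the identification after Definition \ref{dfnfiltrationongraphspace}, $W_\bullet^{\mathcal{G}_{\mathrm{for}}}$ agrees with the weight filtration on $A^n_X(\log D)$: by Lemma \ref{lemanalyticpropertiesofemz}(ii), each edge contributes a single simple-pole factor along a distinct component, so $\widehat{e}_\Gamma\Omega$ has pole order exactly $|E(\Gamma)|$. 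Hence $\widehat{e}_\Gamma\Omega\in W_j$. Since $\phi_j$ is a linear combination of such terms, $\phi_j\in W_j\Omega$.

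For (ii), the decomposition is just \eqref{expressionofdetZ_nbygraphs} regrouped by the value of $m(\Gamma)$. Every $\Gamma\in\mathcal{G}$ has $0\leq m(\Gamma)\leq n-1$ by Lemma \ref{lemdecorateddivisorgraphs}(i), so
\[
(\det Z_n)\Omega=(-1)^{n-1}\sum_{j=0}^{n-1}\sum_{\Gamma\in M_j^{\mathcal{G}}}c_\Gamma \widehat{e}_\Gamma\Omega=\sum_{j=0}^{n-1}\phi_j .
\]
By Lemma \ref{lemdclosednessformodularweightgrading}(ii), this is exactly the modular weight decomposition $f=\sum_k f_k$ of $f=\det Z_n\in\mathcal{V}_{\mathsf{D}}^{\mathrm{for}}$, with $f_j\Omega=\phi_j$. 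The fact that $\det Z_n$ lies in $\mathcal{V}_{\mathsf{D}}^{\mathrm{for}}$ was noted at the start of Section \ref{secmodularweightgradingongraphs}. Uniqueness of the coefficients comes from Lemma \ref{lemelementsinB(E_D)arelinearlyindependent}.

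It remains to show $\bar{\partial}\phi_j=0$. By Lemma \ref{lemdclosednessformodularweightgrading}(i), it suffices to have $\bar{\partial}\det Z_n=0$. This can be obtained in either of two ways. The first uses the fact, quoted in the proof that $\omega_n\in\mathcal{V}_{\mathsf{D}}\Omega$, that $\det Z_n$ equals the same determinant with $e_1$ in place of $\widehat{e}_1$ and is therefore meromorphic. The second uses Theorem \ref{thmdecompositionofdeterminant}(i): each $\Psi_{|S|-1,S}$ is $\bar{\partial}$-closed by Lemma \ref{lemmaaboutclosednessofdecompositionofloopterm}. Each $\Phi_{\geq1,\sigma_k}$ is also $\bar{\partial}$-closed, because $\sum_{r\geq 0}\Phi_{r,\sigma_k}$ is a $\Psi$-type sum over a cycle extended by the constant argument $-\sum_{i\in\sigma_k} s_{i,\sigma_k(i)}$, so condition \eqref{eqnepsiloncondition} holds trivially for that extra constant factor. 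Removing the $r=0$ piece, which is itself a $\Psi$, leaves a closed form. Finally, $\Omega$ is $\bar{\partial}$-closed, so $\bar{\partial}\phi_j=0$.

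The main point needing care is the identification of the graph weight $|E(\Gamma)|$ with the geometric weight filtration. This relies on the components $\{s_{k,\ell}=0\}$ appearing in a forest-type monomial being distinct, with simple poles, which is guaranteed for generic $c_{k,\ell}$. Everything else is bookkeeping.
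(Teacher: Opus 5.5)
Your proposal is correct and follows the paper's argument. Part (i) comes from $|E(\Gamma)|\leq m(\Gamma)$ in Lemma \ref{lemdecorateddivisorgraphs}(i), and part (ii) from applying the modular weight decomposition of Lemma \ref{lemdclosednessformodularweightgrading} to $\det Z_n$. You also state explicitly the input $\bar{\partial}\det Z_n=0$, which the paper leaves implicit; either of your two justifications for it works, and in the second one each $\Phi_{r,\sigma_k}$ is simply a constant multiple of $\Psi_{|\sigma_k|-r,\sigma_k}$, so no extended sum is needed.
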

\begin{proof}
	\begin{enumerate}[i).]
		\item 
 By the relations  in Lemma \ref{lemdecorateddivisorgraphs} (i), we see that $M^{\mathcal{G}}_{j}=0=\mathrm{gr}_{j}W^{\mathcal{G}}$ if $j\geq n$. The modular weight grading  $j=m(\Gamma)$  
		thus satisfies $0\leq j\leq n-1$. 
	The relation $|E(\Gamma)|\leq m(\Gamma)$ in Lemma \ref{lemdecorateddivisorgraphs} (i) also tells that $M^{\mathcal{G}}_{j}\subseteq W^{\mathcal{G}}_{j}$ and thus $\phi_{j}\in W^{\mathcal{G}}_{j}$ indeed.
		\item 
		This follows by 
	applying the modular weight decomposition in Lemma \ref{lemdclosednessformodularweightgrading} (ii)
		to the function $\det\,Z_{n}$ in \eqref{expressionofdetZ_nbygraphs}.
\end{enumerate}
\end{proof}

To find the Deligne splitting of $[\phi_{j}]_{\mathrm{dR}}$, we apply Proposition \ref{proppqtypeinDelignesplitting}. This then requires one to express $\phi_{j}$ in terms of the building blocks $\Psi_{M,I}$  in Definition  \ref{dfnbuildingblockPsiMI}.

\begin{lem}\label{lemexpressgradingjpartintermsofbuldingblocks}
Suppose $S\subseteq [n]$ satisfies $|S|\geq 1$, let 
\[
\tau\in \mathrm{cyc}(S)\,,\quad \sigma=\sigma_{1}\cdots\sigma_{\ell}\in \mathfrak{S}_{[n]\setminus S}\,,\quad 
\mathrm{r}=(r_{1},\cdots, r_{\ell})\in (\mathbb{N}_{+})^{\ell}~\text{with}~r_{k}\leq |\sigma_{k}|~,1\leq k\leq \ell\,.
\]
For each such tuple $(S ,\tau,\sigma,\mathrm{r}) $, denote 
\[
\widehat{e}_{r}(-s_{\sigma_{k}}):=\widehat{e}_{r}(-\sum_{i\in \sigma_k}s_{i,\sigma_k(i)})\,, \]
 and 
\begin{equation}\label{eqnphijbuildingblock}
	\phi_{(S ,\tau,\sigma,\mathrm{r})}:=(-1)^{n-1}\prod_{k=1}^{\ell}   \widehat{e}_{r_{k}}(-s_{\sigma_{k}})\cdot\Psi_{|S|-1,\tau}
	\prod_{k=1}^{\ell} \Psi_{|\pi_{k}|-r_{k},\sigma_{k}}\cdot \Omega\,.
\end{equation}
Then the component $\phi_{j}$ in \eqref{eqndfnphij} satisfies
\begin{equation}\label{eqnphijintermsofbuildingblock}
\phi_{j}=
\sum_{j(S ,\tau,\sigma,\mathrm{r})=j}\phi_{(S ,\tau,\sigma,\mathrm{r})}\,, \quad \mathrm{where}~
\quad j(S ,\tau,\sigma,\mathrm{r}):=n-1-|\mathrm{r}|\,.
\end{equation}

\end{lem}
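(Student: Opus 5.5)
We expand the decomposition \eqref{finalexpansionofdetZn} of Theorem \ref{thmdecompositionofdeterminant} term by term and read off the modular weight of each summand. Fix $S\subseteq[n]$ with $|S|\geq 1$ and a partition $\pi=\{\pi_1,\cdots,\pi_\ell\}\in\Pi_{[n]\setminus S}$. First write $\Psi_{|S|-1,S}$ as $\sum_{\tau\in\mathrm{cyc}(S)}\Psi_{|S|-1,\tau}$; this is the reading of the shorthand used in Theorem \ref{thmdecompositionofdeterminant}, and we take it from the proof in Appendix \ref{secproofofthmdecompositionofdeterminant}. Next, unfold $\Phi_{\geq 1,\pi}=\prod_k\sum_{\sigma_k\in\mathrm{cyc}(\pi_k)}\sum_{r_k=1}^{|\sigma_k|}\Phi_{r_k,\sigma_k}$. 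Choosing $\sigma_k\in\mathrm{cyc}(\pi_k)$ for every $k$ is the same as choosing a permutation $\sigma=\sigma_1\cdots\sigma_\ell\in\mathfrak{S}_{[n]\setminus S}$ whose cycle supports are the blocks $\pi_k$. Summing over all $\pi$ therefore amounts to summing over all $\sigma\in\mathfrak{S}_{[n]\setminus S}$.

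By \eqref{eqndnfPhir}, $\Phi_{r_k,\sigma_k}=\widehat{e}_{r_k}(-s_{\sigma_k})\Psi_{|\pi_k|-r_k,\sigma_k}$. Hence $\det Z_n\,\Omega=\sum_{(S,\tau,\sigma,\mathrm{r})}\phi_{(S,\tau,\sigma,\mathrm{r})}$. The fixed points of $\sigma$, i.e.\ cycles with $|\sigma_k|=1$, need separate treatment. Under convention \eqref{eqnconventionPsisigmak} such a cycle contributes $\widehat{e}_1(-s_{a,a})$, which matches \eqref{eqnphijbuildingblock} with $r_k=1$ and $\Psi_{0,\sigma_k}=1$.

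It remains to show that each $\phi_{(S,\tau,\sigma,\mathrm{r})}$ is homogeneous of modular weight $n-1-|\mathrm{r}|$ in the sense of Definition \ref{dfnmodularweightgradingonTD}. The factors $\widehat{e}_{r_k}(-s_{\sigma_k})$ have constant argument, since the cyclic sum $\sum_{i\in\sigma_k}s_{i,\sigma_k(i)}$ involves only the parameters $c_{k,\ell}$. They are therefore coefficients of modular weight $0$ (Remark \ref{remmodularweightgradingonTD}). The remaining factor is a product of $\Psi$'s supported on disjoint vertex sets $S,\pi_1,\cdots,\pi_\ell$. Each monomial $\psi_{\mathrm{m},\tau}$ with $|\mathrm{m}|=|S|-1<|S|$ has some $m_i=0$, so its graph is a chain; the same holds for $\psi_{\mathrm{m},\sigma_k}$ with $|\mathrm{m}|=|\pi_k|-r_k<|\pi_k|$ because $r_k\geq1$. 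Every resulting product is thus a forest-type generator in $\mathcal{B}(\mathcal{V}_{\mathsf{D}}^{\mathrm{for}})$, consistent with Lemma \ref{lemdecorateddivisorgraphs}. Its decoration sum is
\[
(|S|-1)+\sum_k(|\pi_k|-r_k)=|S|-1+(n-|S|)-|\mathrm{r}|=n-1-|\mathrm{r}|.
\]

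Finally, collect all generators $\widehat{e}_\Gamma$ with $m(\Gamma)=j$. By Lemma \ref{lemelementsinB(E_D)arelinearlyindependent}, the expansion \eqref{expressionofdetZ_nbygraphs} is unique. So the coefficients $c_\Gamma$ with $m(\Gamma)=j$ are obtained by summing the contributions of those tuples with $j(S,\tau,\sigma,\mathrm{r})=j$. Lemma \ref{lemdclosednessformodularweightgrading}(ii) and Definition \ref{dfnDelignecomponentsonformlevel} then give \eqref{eqnphijintermsofbuildingblock}.

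The main obstacle is bookkeeping rather than a conceptual step. One must check that the indexing in Theorem \ref{thmdecompositionofdeterminant} really distributes as claimed, in particular the sum over $\mathrm{cyc}(S)$ hidden in $\Psi_{|S|-1,S}$ and the fixed-point convention. One must also check that no delooping is needed, i.e.\ that each monomial is already forest-type; otherwise the grading would not be read off directly. Both points follow from the computation above, provided the appendix proof uses the same cyclic conventions.
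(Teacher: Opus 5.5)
Your proposal is correct and matches the paper's proof. Both expand \eqref{finalexpansionofdetZn}, unfold $\Psi_{|S|-1,S}=\sum_{\tau\in\mathrm{cyc}(S)}\Psi_{|S|-1,\tau}$ and $\Phi_{\geq 1,\pi}$ into a sum over tuples $(S,\tau,\sigma,\mathrm{r})$, note that each $\phi_{(S,\tau,\sigma,\mathrm{r})}$ is homogeneous of modular weight $n-1-|\mathrm{r}|$, and conclude from the direct-sum structure of $\bigoplus_j M_j$. Your extra checks spell out details the paper leaves implicit: the fixed-point convention, that the constant-argument factors $\widehat{e}_{r_k}(-s_{\sigma_k})$ have weight $0$, and that every monomial is already forest-type (more precisely a disjoint union of chains and isolated vertices, not a single chain).
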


\begin{proof}
	The identity \eqref{finalexpansionofdetZn} and Definition \ref{dfnPhisigmak} give 
\begin{eqnarray*} 
	(\det\,  Z_n)\Omega &=& (-1)^{n-1} 
	\sum_{\substack{S \subseteq [n] \\ |S| \geq 1}}  \Psi_{|S|-1,S}
	\sum_{\pi \in \Pi_{[n]\backslash S}} \Phi_{\geq 1,\pi}\cdot \Omega\\
		&=& (-1)^{n-1} 	
	\sum_{\substack{S \subseteq [n] \\ |S| \geq 1}}  
	\sum_{\substack{\tau\in\mathrm{cyc}(S)	\\ \sigma\in\mathfrak{S}_{[n]\setminus S}}}
	\Psi_{|S|-1,\tau}\Phi_{\geq 1,\sigma}\cdot \Omega\\	
&=&  (-1)^{n-1} 	
\sum_{\substack{S \subseteq [n] \\ |S| \geq 1}}  
\sum_{\substack{\tau\in\mathrm{cyc}(S)	\\ \sigma\in\mathfrak{S}_{[n]\setminus S}}}
\sum_{r_{1},\cdots r_{\ell}: r_{k}\geq 1}\prod_{k=1}^{\ell}   \widehat{e}_{r_{k}}(-s_{\sigma_{k}})\cdot 
  \Psi_{|S|-1,\tau}
	 \prod_{k=1}^{\ell} \Psi_{|\pi_{k}|-r_{k},\sigma_{k}} \cdot \Omega\\
	 &=&  
	 \sum_{\substack{S \subseteq [n] \\ |S| \geq 1}}  
	 \sum_{\substack{\tau\in\mathrm{cyc}(S)	\\ \sigma\in\mathfrak{S}_{[n]\setminus S}}}
	 \sum_{r_{1},\cdots r_{\ell}: r_{k}\geq 1}\phi_{(S ,\tau,\sigma,\mathrm{r})}\,.
\end{eqnarray*}
Examining the modular weight degree (i.e., the quantity $m(\Gamma)$), we see that 
\begin{equation}\label{eqnbuildingblockweight}
\phi_{(S ,\tau,\sigma,\mathrm{r})}\in M_{j}\Omega\,,\quad \mathrm{with}~j=|S|-1+\sum_{k=1}^{\ell}(|\pi_{k}|-r_{k})=j(S ,\tau,\sigma,\mathrm{r})\,.
\end{equation}
The claim then follows from the direct sum structure of $\oplus_{j}M_{j}$.
\end{proof}
\begin{rem}
	A few remarks are in order.
	\begin{enumerate}[i).]
		\item Having expressed $\phi_{j}$ in terms of the building blocks $\Psi_{M,I}$ in Lemma \ref{lemexpressgradingjpartintermsofbuldingblocks}, 
		the $d$-closedness of $\phi_{j}$ in Lemma	\ref{lemdclosedresiduelift} (ii)  can also be shown from Lemma 	\ref{lemmaaboutclosednessofdecompositionofloopterm}.
	\item For $0\leq j\leq n-1$, set 
	\[
		R_{j}=(-1)^{n-1}\sum_{\Gamma\in M^{\mathcal{G}}_{j} \cap (W^{\mathcal{G}}_{j}\setminus W^{\mathcal{G}}_{j-1})} c_{\Gamma}\widehat{e}_{\Gamma}\,, \quad 
		\varepsilon_{j}=(-1)^{n-1}\sum_{\Gamma\in M_{j}^{\mathcal{G}}\cap   W^{\mathcal{G}}_{j-1}} c_{\Gamma}\widehat{e}_{\Gamma}=\phi_{j}-	R_{j}\,,
		\]
		with the convention that $\varepsilon_{0}=0$.
		Then Lemma  \ref{lemdecorateddivisorgraphs} (iii) tells that for each edge in $\Gamma\in M^{\mathcal{G}}_{j} \cap (W^{\mathcal{G}}_{j}\setminus W^{\mathcal{G}}_{j-1})$, the decoration of each edge is $1$.
		The relation \eqref{eqnresidueofemdz} then shows that  $\mathrm{res}_{[j]}(\widehat{e}_{\Gamma}\Omega)\neq 0$. Similarly, $\mathrm{res}_{[j]}(\widehat{e}_{\Gamma}\Omega)= 0$ for each $\Gamma\in M^{\mathcal{G}}_{j}\cap W^{\mathcal{G}}_{j-1}$. That is, the term $R_{j}$ collects all terms inside $W^{\mathcal{G}}_{j}$ that have nontrivial $\mathrm{res}_{[j]}$-residues. The $\mathrm{res}_{[j]}$-free correction term $\varepsilon_{j}$ is added to make $\phi_{j}=R_{j}+\varepsilon_{j}$ a $d$-closed form. See \cite[Section 4.3]{Zhou:2023cohomologicalpairings} for related discussion.

	\end{enumerate}
\end{rem}

We can now show that 
the splitting in Lemma	\ref{lemdclosedresiduelift} (i) provides the desired Deligne splitting on the form level.

\begin{thm}\label{thmDelignecanonicalsplittingonformlevel}
Let the notation be as above.
	Then the decomposition 
\[
(\det  Z_n )\Omega  
=\sum_{j=0}^{n-1}	\phi_{j}\in \bigoplus_{j}M_{j}\,.
\]
gives the Deligne splitting of $[	(\det  Z_n )\Omega  ]_{\mathrm{dR}}$  on the form level. In particular, one has 
\[
[(\det Z_{n})\Omega]_{\mathrm{Hodge}}
=[\phi_{0}]_{\mathrm{dR}}\,.
\]
\end{thm}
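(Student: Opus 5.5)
The plan is to show that each $[\phi_{j}]_{\mathrm{dR}}$ lies in $I^{n,j}H^{n}(U)$. Since $\{I^{p,q}\}$ is a direct sum decomposition and $(\det Z_{n})\Omega=\sum_{j}\phi_{j}$ with each $\phi_{j}$ $d$-closed, this identifies $\sum_{j}[\phi_{j}]_{\mathrm{dR}}$ as the Deligne splitting of $[(\det Z_{n})\Omega]_{\mathrm{dR}}$. Closedness comes from Lemma \ref{lemdclosedresiduelift} (ii): $\phi_{j}$ is $\bar\partial$-closed, and it is $\partial$-closed for type reasons. The statement about the Hodge component then follows by projecting onto $I^{n,0}$, since only $j=0$ contributes.

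To place $[\phi_{j}]_{\mathrm{dR}}$ in $I^{n,j}$, I would use Lemma \ref{lemexpressgradingjpartintermsofbuldingblocks}. This writes $\phi_{j}$ as a sum of the building blocks $\phi_{(S,\tau,\sigma,\mathrm{r})}$ with $n-1-|\mathrm{r}|=j$. Each such block is a product of three kinds of factors:
\begin{itemize}
\item constants $\widehat{e}_{r_k}(-s_{\sigma_k})$, which are independent of $z$ because the arguments sum to constants around each cycle;
\item a loop-type building block $\Psi_{|S|-1,\tau}$;
\item blocks $\Psi_{|\pi_k|-r_k,\sigma_k}$, one for each cycle of $\sigma$.
\end{itemize}
The vertex sets $S$ and $\pi_{k}$ are disjoint and partition $[n]$. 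So $X=E_{[n]}$ factors as $E_{S}\times\prod_{k}E_{\pi_{k}}$, and the relevant components of $\mathsf{D}$ are pulled back from the factors. The form $\phi_{(S,\tau,\sigma,\mathrm{r})}$ is therefore a constant times an exterior product of forms $\Psi_{M,I}\Omega_{I}$ on the factors.

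By Proposition \ref{proppqtypeinDelignesplitting}, applied to each factor after relabeling vertices (together with the convention $\Psi_{0}=1$ for singleton cycles), each factor class lies in $I^{|I|,M}$ of the cohomology of the complement in that factor.

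Next I would use that the Künneth map, followed by restriction to $U$, is a morphism of mixed Hodge structures. The cup product of a class in $I^{p_{1},q_{1}}$ with one in $I^{p_{2},q_{2}}$ lands in $I^{p_{1}+p_{2},q_{1}+q_{2}}$. This is because the Deligne splitting is functorial, multiplicative for external products, and compatible with pullback along the open inclusion of $U$ into the product of the factor complements.

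Summing the $q$-indices gives $(|S|-1)+\sum_{k}(|\pi_{k}|-r_{k})=n-1-|\mathrm{r}|=j$, which is exactly the weight computed in \eqref{eqnbuildingblockweight}. Hence every block, and so $\phi_{j}$, has class in $I^{n,j}$.

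The main obstacle is the reduction to Proposition \ref{proppqtypeinDelignesplitting}. That result was stated only for $I=(1,\dots,n)$ filling the whole space, with divisor $\mathsf{D}$ in $E_{[n]}$. Here each factor carries only the diagonals among its own vertices, while $U$ removes more divisors. I would handle this by pulling back along $U\hookrightarrow\prod U_{\mathrm{factor}}$ and invoking functoriality of $I^{p,q}$. An alternative, if this is awkward, is to rerun the inductive residue argument of Proposition \ref{proppqtypeinDelignesplitting} directly on the product blocks, using Lemma \ref{lemmakernelofRes} and Proposition \ref{propleadingcomponent} (ii) to kill the $I^{n,0}$ part when $j\ge1$. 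I also need to check that the signs $\varepsilon$ and the orientations of the cycles $\tau,\sigma_k$ fit Definition \ref{dfnbuildingblockPsiMI}; this follows from the cyclic condition \eqref{eqnepsiloncondition}.
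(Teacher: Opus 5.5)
Your proposal is correct and follows the paper's own proof. Both write $\phi_{j}$ as a sum of the blocks $\phi_{(S,\tau,\sigma,\mathrm{r})}$ from Lemma~\ref{lemexpressgradingjpartintermsofbuldingblocks}, factor each block along the disjoint vertex sets $S,\pi_{1},\dots,\pi_{\ell}$ via K\"unneth, apply Proposition~\ref{proppqtypeinDelignesplitting} to each factor, and add the $q$-indices to land in $I^{n,j}$. Your explicit restriction along $U\hookrightarrow U_{S}\times\prod_{k}U_{\pi_{k}}$, together with multiplicativity of the Deligne splitting under external products, simply spells out what the paper compresses into \emph{functoriality of mixed Hodge structure on K\"unneth isomorphism}.
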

\begin{proof}
	First, by Lemma	\ref{lemdclosedresiduelift} (ii), each $\phi_{j}$ is $d$-closed and 
	thus defines a cohomology class. We now show that $[\phi_{j}]_{\mathrm{dR}}\in I^{n,j}$ for any $0\leq j\leq n-1$.
	
For each $\phi_{(S ,\tau,\sigma,\mathrm{r})}$ in \eqref{eqnphijbuildingblock},
	observe that the variables in different factors in 
the factorization are disjoint: the argument $s_{\sigma_{k}}$ in $\widehat{e}_{r_{k}}(-s_{\sigma_{k}})$
is constant, while the remaining 
$\Psi$-factors live on disjoint vertex sets.
Suppose the polar divisor of $\phi_{(S ,\tau,\sigma,\mathrm{r})}$ is $\mathsf{D}_{\phi}$, then $E_{[n]}-\mathsf{D}_{\phi}$ factors according to the disjoint vertex blocks.
By the functoriality of mixed Hodge structure on Künneth isomorphism, this induces a factorization of its cohomology class in
\[
[\phi_{(S ,\tau,\sigma,\mathrm{r})}]_{\mathrm{dR}}\in H^{n}(E_{[n]}, A_{E_{[n]}}^{n}(\log \mathsf{D}))\,.
\]
Applying Proposition 	\ref{proppqtypeinDelignesplitting}, we thus have 
\[
[\phi_{(S ,\tau,\sigma,\mathrm{r})}]_{\mathrm{dR}}\in I^{n, j}H^{n}(U)\,,\quad 
j=|S|-1+\sum_{k=1}^{\ell}(|\pi_{k}|-r_{k})=j(S ,\tau,\sigma,\mathrm{r})\,.
\]
By \eqref{eqnphijintermsofbuildingblock} in Lemma \ref{lemexpressgradingjpartintermsofbuldingblocks}, this implies the desired claim for $\phi_{j}$.

\end{proof}
\begin{rem}
By Theorem \ref{thmDelignecanonicalsplittingonformlevel}  and 
\eqref{eqnbuildingblockweight},
we see that the $I^{n,0}$ part collects the contributions from the tuples $(S ,\tau,\sigma,\mathrm{r})$ that satisfy $j(S ,\tau,\sigma,\mathrm{r})=0$, that is,
\[
|S|=1\,,\quad r_{k}=|\pi_{k}|\,.
\]
This is exactly what singles out the
contribution \eqref{eqnTngwpureweightstructuregeneral} from \eqref{finalexpansionofdetZn}
given in Theorem \ref{thmdecompositionofdeterminant} (ii), see its proof in Theorem \ref{thmdecompositionofdeterminantrecalled} (ii).
However, here the Deligne splitting on form level makes the computations more transparent.
\end{rem}

	\begin{example}\label{exn=2}
	For $n=2$, one has 
	\begin{eqnarray*}
		\det\, Z_2=\begin{pmatrix}
			0 & 1 & 1  \\
			-1 & \widehat{e}_1(s_{1,1}) & \widehat{e}_1(s_{1,2}) \\
			-1 & \widehat{e}_1(s_{2,1}) & \widehat{e}_1(s_{2,2}) \\
		\end{pmatrix} 
		= \widehat{e}_1(s_{1,1}) + \widehat{e}_1(s_{2,2}) 
		- \widehat{e}_1(s_{1,2}) - \widehat{e}_1(s_{2,1})\,.
	\end{eqnarray*}
	Here $\widehat{e}_1(s_{1,1}) + \widehat{e}_1(s_{2,2})$ is the constant term, while $- \widehat{e}_1(s_{1,2}) - \widehat{e}_1(s_{2,1})$ is the residue term and is $\bar{\partial}$-closed.
	
Deligne's splitting in  Theorem \ref{thmDelignecanonicalsplittingonformlevel} tells  
	\[
	(\widehat{e}_1(s_{1,1}) + \widehat{e}_1(s_{2,2})) dz_{1} \wedge dz_{2} \in I^{2,0} \,,\quad 
	(-\widehat{e}_1(s_{1,2}) - \widehat{e}_1(s_{2,1})) dz_{1} \wedge dz_{2} \in I^{2,1} \,.
	\]
	The first relation is trivial according to \eqref{eqnInoHn}.
	We now provide a check of the second relation.
	Introduce (cf. \eqref{eqnhatthetazeta})
	\[
	|\theta|^{2}:=\theta\bar{\theta}\exp\left(-2\pi {(\mathrm {im}~z)^2\over \mathrm{im}~\tau}\right)\,.
	\]
	This quantity  gives a smooth function on $E$, as can be checked using the automorphy \eqref{eqnautomorphyoftheta}. In fact, it corresponds to a Hermitian metric on the line bundle $\mathcal{O}_{E}(0)$ which has $\theta$ as its holomorphic section.
	By \eqref{eqnfirstfewhatems}, one has $\widehat{e}_{1}(z)dz=\partial \log |\theta|^{2}$.
	We now write 
	\begin{eqnarray*}
	&&	(-\widehat{e}_1(s_{1,2}) - \widehat{e}_1(s_{2,1})) dz_{1} \wedge dz_{2}\\
	 &=& -\partial (\log |\theta|^2(s_{1,2}) dz_{2}) + \partial (\log |\theta|^2(s_{2,1}) dz_{2}) \\
		&=& \overline{\widehat{e}_{1}}(s_{1,2}) d\bar{z}_{1,2} \wedge dz_{2} - \overline{\widehat{e}_{1}}(s_{2,1}) d\bar{z}_{2,1} \wedge dz_{2} 
		- d (\log |\theta|^2(s_{1,2}) dz_{2}) +d (\log |\theta|^2(s_{2,1}) dz_{2})\,. 
	\end{eqnarray*}
Using the $C^{\infty}$ logarithmic Dolbeault complex  \cite{Burgos:1994} and \eqref{eqndfnDelignsplitting}, 
one obtains indeed
	\[
	(-\widehat{e}_1(s_{1,2}) - \widehat{e}_1(s_{2,1})) dz_{1} \wedge dz_{2} \in (F^{2}\cap W_{1}) \cap (\overline{F}^{1}\cap W_{1}) = I^{2,1} \,.
	\]

\end{example}

\section{Proofs of  Proposition \ref{propFnHndescription} and Proposition \ref{propcharacterizationofkerbarpartial}}
 \label{secpropFnHndescription}
 
This section is devoted to the proofs of Proposition \ref{propFnHndescription} and   Proposition \ref{propcharacterizationofkerbarpartial}.

To proceed, we introduce some notation.
For $k\neq \ell$, denote the corresponding edge by  $(k,\ell)$  or $e_{k\ell}$  interchangeably.  Let $\mathrm{V}=[n],\mathrm{E}$ be the set of all vertices and edges, respectively; and equip $\mathbb{C}^{\mathrm{V}}, \mathbb{C}^{\mathrm{E}}$ with the bases $e_{k}, k\in \mathrm{V}$ and $e_{k\ell}, (k,\ell)\in \mathrm{E}$, respectively.
Construct the incidence map 
\begin{equation}\label{eqnincidencemap}
A: \mathbb{C}^{\mathrm{E}}\rightarrow \mathbb{C}^{\mathrm{V}}\,,\quad 
e_{k\ell}\mapsto e_{k}-e_{\ell}\,.
\end{equation}

Consider the hyperplane $W:=\{\sum_{k\in \mathrm{V}}\lambda_{k}e_{k}=0\}$ in $\mathbb{C}^{\mathrm{V}}$. It is
isomorphic to the quotient space $\mathbb{C}^{\mathrm{V}}/\mathbb{C}\sum_{k\in \mathrm{V}}e_{k}$ and is
naturally equipped with the generating set  $e_{k}-e_{\ell},k\neq \ell$. 
  Clearly  $A$ is surjective onto the subspace $W$.  In particular, one has 
  \[
  \dim \ker A=|\mathrm{E}|-(n-1)=(n-1)^{2}\,.
  \]
 The space $\ker A$ is the usual circulation space of directed graphs.
  A basis of $\ker A$ can be chosen such that the entries of each basis vector lie in $\{0, \pm 1\}$.
    It is straightforward to show that such a $\mathbb{Z}$-basis of $\ker A$ is given by
  \[
  \gamma_{k}:=
  e_{kn}+e_{nk}\,,~ 1\leq k\leq n-1\,,\quad 
 \gamma_{ij}:= e_{ij}+e_{jn}+e_{ni}\,,~\quad 
  1\leq i\neq j\leq n-1\,.
  \]

\subsection{Proof of  Proposition \ref{propcharacterizationofkerbarpartial}}
 \label{secpropcharacterizationofkerbarpartial}

We now give the proof of Proposition \ref{propcharacterizationofkerbarpartial}, whose statement is recalled below for reference.
\begin{prop}    \label{propcharacterizationofkerbarpartialrecalled}
      Take any sequence $I=(v_{1},\cdots, v_{p})$ of length $p\geq 2$, $M\in \mathbb{N}$,   and a sequence $\varepsilon=(\varepsilon_{1},\cdots, \varepsilon_{p})\in \{\pm 1\}^{p}$ such that 
    \begin{equation} 
    \sum_{k=1}^{p} \varepsilon_{k}\cdot \bar{\partial}\bar{s}_{\varepsilon_{k}(v_{k},v_{k+1})}=0\,,
    \end{equation}
    where $\varepsilon_{k}(v_{k},v_{k+1}):=(v_{k}, v_{k+1})$ if $\varepsilon_{k}=1$ and $\varepsilon_{k}(v_{k},v_{k+1}):=(v_{k+1}, v_{k})$ if $\varepsilon_{k}=-1$.
    Define
   	\begin{equation}\label{eqndfnofPsiMIvarepsilonrecalled}
\Psi_{M,I}^{(\varepsilon)}:=\sum_{\substack{\mathrm{m}\in \mathbb{N}^{p}\\ |\mathrm{m}|=M}}\psi_{\mathrm{m},I}^{(\varepsilon)}\,,\qquad \mathrm{where}\quad 		\psi_{\mathrm{m},I}^{(\varepsilon)} := \prod_{k=1}^{p} \varepsilon_{k}^{m_{k}}\widehat{e}_{m_k}(s_{\varepsilon_{k}(v_{k},v_{k+1})})\in \mathcal{V}_{\mathsf{D}}
		\,.
	\end{equation}	   
   Then one has
    \[
\mathcal{V}_{\mathsf{D}}\cap \ker\,\bar{\partial}=\mathcal{V}_{\mathsf{D}}^{\mathrm{for}}\cap \mathbb{C}[\Psi_{M,I}^{(\varepsilon)}~|~I\,,~ M\leq |I|-1, ~\varepsilon]\,.
    \]
\end{prop}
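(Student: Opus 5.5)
The inclusion ``$\supseteq$'' is immediate from Lemma \ref{lemmaaboutclosednessofdecompositionofloopterm}: each $\Psi^{(\varepsilon)}_{M,I}$ is $\bar\partial$-closed, so the polynomial algebra they generate lies in $\ker\bar\partial$. Its intersection with $\mathcal{V}_{\mathsf{D}}^{\mathrm{for}}\subseteq\mathcal{V}_{\mathsf{D}}$ is therefore contained in $\mathcal{V}_{\mathsf{D}}\cap\ker\bar\partial$. The content of the statement is the reverse inclusion.

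For ``$\subseteq$'', take $f\in\mathcal{V}_{\mathsf{D}}\cap\ker\bar\partial$. By Lemma \ref{lemexpressionbytreeterms} we replace $f$ by $\mathfrak{d}(f)$, which is the same function but written uniquely in the basis $\mathcal{B}(\mathcal{V}_{\mathsf{D}}^{\mathrm{for}})$ (Lemma \ref{lemelementsinB(E_D)arelinearlyindependent}). This gives the membership $f\in\mathcal{V}_{\mathsf{D}}^{\mathrm{for}}$. By Lemma \ref{lemdclosednessformodularweightgrading} each homogeneous piece $f_k$ is again $\bar\partial$-closed, so we may assume $f\in M_k$.

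Write $\bar\partial f=\mathbold{Y}\sum_{(a,b)}g_{ab}\,d\bar s_{a,b}$, where $g_{ab}$ is obtained by lowering the decoration on edge $(a,b)$ by one, using \eqref{eqnbarpartialem}. The forms $d\bar s_{a,b}=d\bar z_a-d\bar z_b$ are the images of the edges under the incidence map $A$ of \eqref{eqnincidencemap}. Hence $\bar\partial f=0$ says: after expanding in the basis of $\mathcal{V}_{\mathsf{D}}^{\mathrm{for}}$ at weight $k-1$, the vector of coefficients indexed by edges $\mathrm{E}$ lies in $\ker A\otimes\mathcal{V}^{\mathrm{for}}_{\mathsf{D}}$ for each fixed lowered graph.

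The idea is an induction on the modular weight $k$, organized by the circulation space $\ker A$. The base case $k=0$ consists of constants, which are $\Psi_{0,I}=1$. For the inductive step, fix a weight-$(k-1)$ forest $\Gamma'$. The edges $e$ whose raising $\Gamma'+e$ occurs with nonzero coefficient in $f$ carry a relation lying in $\ker A$. Decompose this relation into simple signed cycles $(I,\varepsilon)$; these are exactly the data satisfying \eqref{eqnepsiloncondition}. Each cycle matches the $\bar\partial$-pattern of $\Psi^{(\varepsilon)}_{M,I}$, whose derivative is $\mathbold{Y}\Psi^{(\varepsilon)}_{M-1,I}\sum_k\varepsilon_k d\bar s$. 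Subtracting suitable products $\Psi^{(\varepsilon)}_{M,I}\cdot h$, with $h$ built from factors on the disjoint remaining vertices, lowers the top-weight part supported on $\Gamma'$-type graphs. Iterating, this reduces $f$ to a $\bar\partial$-closed element whose $\bar\partial$ has no top contributions. Such an element is holomorphic in the $\bar z$ variables, hence a combination of weight-zero terms, and so constant in the variables. The restriction $M\le|I|-1$ enters because a $\Psi^{(\varepsilon)}_{M,I}$ with $M\ge|I|$ contains, among its monomials, the full cycle on $I$, which is a loop. Delooping via Proposition \ref{propgeneralpropofloopterm} re-expresses it through lower $\Psi$'s times constants $\widehat e_{r}(-\sum s)$, which are weight-zero factors.

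The main obstacle is the cycle-decomposition step. Cancellations among coefficients of different forests must be shown to come exactly from products of the $\Psi^{(\varepsilon)}$'s, and not from more exotic $\ker A$-relations. Here I would first use the $\mathbb{Z}$-basis $\gamma_k,\gamma_{ij}$ of $\ker A$ to control which circulations can occur. I would then use genericity of the $c_{k,\ell}$, as in the multi-residue argument of Lemma \ref{lemelementsinB(E_D)arelinearlyindependent}, to separate the contributions of different polar strata. Finally, I would use the forest condition to ensure that the relevant circulations are supported on single cycles closing up a forest component.
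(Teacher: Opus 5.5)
The inclusion $\supseteq$, the passage to the forest basis by delooping, the reduction to a single modular weight via Lemma~\ref{lemdclosednessformodularweightgrading}, and the final removal of $\Psi^{(\varepsilon)}_{M,I}$ with $M\geq |I|$ (Lemma~\ref{lemreductionformulaforPsiMI}) are all fine. The gap is the inductive step, which you flag as the main obstacle and leave unresolved.

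For each lowered forest $\Gamma'$ you correctly obtain a relation $\sum_e \lambda_{\Gamma'+1_e}A(e)=0$. But a single coefficient $\lambda_\Gamma$ enters the relations for every $\Gamma'=\Gamma-1_e$ with $e\in E(\Gamma)$. Decomposing each relation into signed cycles separately therefore does not tell you which products $\Psi^{(\varepsilon)}_{M,I}\cdot h$ to subtract so that all of them are matched at once. That integration step is the whole content of the proposition.

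The tools you propose do not supply it. Genericity of the $c_{k,\ell}$ enters only through the linear independence of the forest basis, which you have already used. A $\mathbb{Z}$-basis of $\ker A$ says nothing about how decorations are distributed along the edges.

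Your stated endpoint is also false as written. A $\bar\partial$-closed element of $\mathcal{V}^{\mathrm{for}}_{\mathsf{D}}$ need not have weight zero: for example, $\Psi_{1,(1,2)}=\widehat{e}_{1}(s_{1,2})+\widehat{e}_{1}(s_{2,1})$ is $\bar\partial$-closed and of weight one.

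The idea you are missing is the paper's linearization.
\begin{itemize}
\item Send $\prod_e\widehat{e}_{m(e)}(s_e)\mapsto\prod_e x_e^{m(e)}/m(e)!$ and $d\bar s_e\mapsto A(e)$. This map is injective on the forest basis.
\item Up to the constant factor in \eqref{eqnbarpartialem}, $\bar\partial\widehat{e}_m(s_e)$ is $\widehat{e}_{m-1}(s_e)\,d\bar s_e$. This matches $\partial_{x}(x^m/m!)=x^{m-1}/(m-1)!$.
\item Consequently $\bar\partial$ becomes the constant-coefficient derivation $\delta=\sum_e\partial_{x_e}\otimes A(e)$ on $\mathbb{C}[x_e]$.
\item Choose linear coordinates $y_1,\dots,y_p$ spanning $K=\ker\delta|_V\cong\ker A$, and complete them to coordinates on $V=\bigoplus_e\mathbb{C}x_e$. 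Then $\delta=\sum_i\partial_{y_{p+i}}\otimes\delta y_{p+i}$, with the $\delta y_{p+i}$ linearly independent.
\item Hence $\ker\delta=\mathrm{Sym}\,K$ directly. No induction on weight or cycle-by-cycle cancellation is needed, and there is no room for ``exotic'' relations.
\item The $m!$ normalization is what makes the $\Psi$'s appear. For a signed cycle form $y=\sum_k\varepsilon_k x_{\varepsilon_k(v_k,v_{k+1})}$, the multinomial expansion shows that $y^M/M!$ corresponds exactly to $\Psi^{(\varepsilon)}_{M,I}$.
\end{itemize}
One then passes back from $\mathrm{Sym}\,K$ to products of functions and applies Lemma~\ref{lemreductionformulaforPsiMI} to eliminate the terms with $M\geq|I|$, which contain loops.
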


\begin{proof}

Following Definition \ref{dfngraphterminologies}, 
and \eqref{eqneGamma}, 
we can write 
\[
\widehat{e}_{\Gamma}=\prod_{e\in E(\Gamma)}\widehat{e}_{m(e)}(s_{e})\in \mathcal{V}_{\mathsf{D}}\,.
\]
Set also 
\[
\widehat{e}_{\Gamma-1_{e}}:=\prod_{\substack{e'\in E(\Gamma)\\ e'\neq e}}\widehat{e}_{m(e')}(s_{e'})\cdot \widehat{e}_{m(e)-1}(s_{e})\,~\mathrm{for}~e\in E(\Gamma)\,.
\]
Then by Lemma \ref{lemanalyticpropertiesofemz} (iii), one has 
\begin{equation}\label{eqnbarpartialoneGamma}
\bar{\partial}\widehat{e}_{\Gamma}=\mathbold{Y}\sum_{e\in E(\Gamma)}
\widehat{e}_{\Gamma-1_{e}}d\bar{s}_{e}\,.
\end{equation}

Define the following map first on 
$\mathcal{B}(\mathcal{V}_{\mathsf{D}}^{\mathrm{for}})$ then linearly extend it to  $\mathcal{V}_{\mathsf{D}}^{\mathrm{for}}$
\begin{eqnarray}\label{eqnrhomapforkernelconsideration}
\rho: 
\mathcal{V}_{\mathsf{D}}^{\mathrm{for}}
\otimes (\mathbb{C}\oplus  \sum_{e\in \mathrm{E}}\mathbb{C}d\bar{s}_{e})
&\rightarrow &
 \mathbb{C}[x_{e}, e\in \mathcal{E}]\otimes (\mathbb{C}\oplus \mathbb{C}^{\mathrm{V}})\,,\\
 \prod_{e} \widehat{e}_{m_{e}}(s_{e})
 \otimes (\mu+\sum_{e} \lambda_{e}d\bar{s}_{e})
 &\mapsto & \prod_{e}{x_{e}^{m(e)}\over m(e)!}
 \otimes (\mu+\sum_{e} \lambda_{e} A(e))\,.\nonumber
\end{eqnarray}
Using  $d\bar{s}_{e}=d\overline{z}_{k}-d\overline{z}_{\ell}$,  it is straightforward to see that $\rho$ is injective.
By \eqref{eqnbarpartialoneGamma}, we see that under the map $\rho$, the derivation $\bar{\partial}$ is identified with the derivation 
\begin{equation}
\delta=\sum_{e\in \mathrm{E}}{\partial\over \partial x_{e}}\otimes  A(e) :  \mathbb{C}[x_{e}, e\in \mathcal{E}]
\rightarrow \mathbb{C}[x_{e}, e\in \mathcal{E}]\otimes \mathbb{C}^{\mathrm{V}}\,.
\end{equation}
Restricting $\delta$ to the vector space $V:=\oplus_{e\in \mathrm{E}}\mathbb{C}x_{e}$,  we see that 
\begin{equation}\label{eqnkerA}
K:=\ker \delta|_{V}=
\Big\{ \sum_{e}\lambda_{e}x_{e}\in V~|~ \sum_{e}A(e)\lambda_{e}=0
\Big\}\cong \ker A\,.
\end{equation}

Picking a basis $y_{1},\cdots, y_{p}$ for $K$  and then extending it to a full basis $y_{1},\cdots ,y_{p},y_{p+1},\cdots, y_{p+q}$  of $V$, one obtains a splitting 
\[
V=K\oplus \bigoplus_{i=1}^{q}\mathbb{C}y_{p+i}\,.
\]
With respect to this basis, one has 
\begin{equation}
\delta|_{V}=\sum_{i=1}^{q}{\partial\over \partial y_{p+i}}\otimes \delta y_{p+i}\,.
\end{equation}
By the construction of $K$, the quantities $\delta y_{p+i},1\leq i\leq q$ correspond to linearly independent 1-forms. Due to the fact that $\delta$ is a derivation and $\rho$ is an injection, we obtain 
\begin{equation}\label{eqnkerbarpartialassymK}
\ker \delta=\mathrm{Sym}K=\mathbb{C}[y_{1},\cdots ,y_{p}]\,,\quad 
\ker \bar{\partial}=\rho^{-1}(\mathrm{Sym}\,K)\,.
\end{equation}
Concretely, suppose 
\[
y_{i}=\sum_{e}\varepsilon_{e}x_{e},1\leq i\leq p\,,\quad 
\varepsilon=(\varepsilon_{e})_{e}\in \{0, \pm 1\}^{\mathrm{E}}\in \ker A\,.
\]
Then one has 
\[
{y_{i}^{M}\over M!}=\sum_{m(e): \sum_{e}m(e)=M}\prod_{e}\varepsilon_{e}^{m_{e}} {x_{e}^{m(e)}\over m(e)!}\,,
\]
and thus 
\begin{equation}
\rho^{-1}({y_{i}^{M}\over M!})
=\sum_{m(e): \sum_{e}m(e)=M}\prod_{e}\varepsilon_{e}^{m_{e}} \widehat{e}_{m_{e}}(s_{e})\,.
\end{equation}
Combining \eqref{eqnkerbarpartialassymK} and Definition \ref{dfnbuildingblockPsiMI}, we thus obtain 
\begin{equation}
\mathcal{V}^{\mathrm{for}}_{\mathsf{D}}\cap \ker\,\bar{\partial}\subseteq \mathcal{V}^{\mathrm{for}}_{\mathsf{D}}\cap\mathbb{C}[\Psi_{M,I}^{(\varepsilon)}~|~I,M, \varepsilon]\,.
\end{equation}
By Lemma \ref{lemreductionformulaforPsiMI} below, 
we obtain 
\begin{equation}
\mathcal{V}^{\mathrm{for}}_{\mathsf{D}}\cap \ker\,\bar{\partial}\subseteq \mathcal{V}^{\mathrm{for}}_{\mathsf{D}}\cap\mathbb{C}[\Psi_{M,I}^{(\varepsilon)}~|~I,M\leq |I|-1, \varepsilon]\,,
\end{equation}
where  $\Psi_{M,I}^{(\varepsilon)}$ involves no loops iff $M\leq |I|-1$ according to  Definition \ref{dfntreetypefunctions}.
Finally,  by Lemma \ref{lemexpressionbytreeterms} we see that as vector spaces $\mathcal{V}_{\mathsf{D}}=\mathcal{V}_{\mathsf{D}}^{\mathrm{for}}$,  and thus $\mathcal{V}_{\mathsf{D}}\cap \ker\,\bar{\partial}=\mathcal{V}^{\mathrm{for}}_{\mathsf{D}}\cap \ker\,\bar{\partial}$.
\end{proof}

\begin{lem}\label{lemreductionformulaforPsiMI}
The functions $\Psi_{M,I}^{(\varepsilon)},M \geq |I|=p$ can be expressed in terms of the building blocks $\Psi_{M,I}^{(\varepsilon)},M \leq p-1$ explicitly as 
	\begin{equation}\label{eqnPsiMIlargerintermsofsmaller}
	\Psi^{(\varepsilon)}_{M,I} =- \sum_{M'=0}^{p-1} \binom{M'-p}{M-p} \cdot \widehat{e}_{M-M'}(-\sum_{i=1}^{p} s_{\varepsilon_{i}(v_i,v_{i+1}})) \Psi^{(\varepsilon)}_{M',I} \,.
	\end{equation}
    \end{lem}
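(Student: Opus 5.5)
We derive the identity from the generating series of $\Psi^{(\varepsilon)}_{M,I}$ together with the generalized Fay identity, Lemma \ref{lem:identityforhatS}. First relabel the edges so that $t_{i}:=s_{\varepsilon_{i}(v_i,v_{i+1})}$ and $a_{i}:=c\,\varepsilon_{i}$. As in the proof of Lemma \ref{lemmaaboutclosednessofdecompositionofloopterm}, one has
\[
\prod_{i}\varepsilon_{i}\cdot\sum_{M\geq 0}c^{M-p}\Psi^{(\varepsilon)}_{M,I}=\prod_{i=1}^{p}\widehat{S}_{c\varepsilon_{i}}(t_{i})\,.
\]
Set $T:=\sum_{i}t_{i}$. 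Condition \eqref{eqnepsiloncondition} guarantees that $\sum_i \varepsilon_i t_i$ is constant in $z$. For the relation we want, the relevant constant argument is $-\sum_i t_i$, so we should first check that the $\varepsilon$-signs are absorbed correctly. The plan is to use the parity $\widehat{e}_{m}(-z)=(-1)^m\widehat{e}_m(z)$ to rewrite $\varepsilon_{i}^{m}\widehat{e}_{m}(t_i)=\widehat{e}_m(\varepsilon_i t_i)$. This reduces everything to the case $\varepsilon\equiv 1$ with arguments $u_i=\varepsilon_i t_i$. The identity is then exactly the simple-loop case: $\sum_i u_i$ is constant, and the sign conventions should match the stated $\widehat{e}_{M-M'}(-\sum_i s_{\varepsilon_i(\cdot)})$. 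I would verify this carefully, since it is a possible source of mismatch.

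In the case $\varepsilon\equiv1$, we rescale the parameters $a_i$ as in the proof of Proposition \ref{propgeneralpropofloopterm}. One option is to take the $a_i$ distinct, $a_i=t\alpha_i$, and apply Lemma \ref{lem:identityforhatS}. The cleaner route avoids distinct parameters. Since the loop condition makes $u_0:=-\sum_i u_i$ a constant, consider the $(p+1)$-fold product $\widehat{S}_{c}(u_0)\prod_{i}\widehat{S}_c(u_i)$. Its total argument sum is $0$, so the key input is the degenerate Fay identity in which all parameters coincide. The intended conclusion is that
\[
\widehat{S}_c(u_0)\cdot\Big(\sum_{M}c^{M-p}\Psi_{M,I}\Big)
\]
equals a series in $c$ with only nonpositive powers, up to the pole structure. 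Equivalently, the coefficients of $c^{N}$ vanish for $N\ge 0$ after the appropriate shift.

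The cleanest way to obtain this is to reuse Proposition \ref{propgeneralpropofloopterm} directly. Sum its relation over all $\mathrm{m}\in\mathbb{N}^p$ with $|\mathrm{m}|=M$ and $\mu_0(\mathrm{m})=0$, and add the terms with $\mu_0\neq0$. The left side assembles to $\Psi_{M,I}$. The right side gives $-\sum \widehat{e}_{m_0'}(u_0)\psi_{\mathrm{m}',I}\cdot C(\mathrm{m}')$, where
\[
C(\mathrm{m}')=\sum_{\mathrm{m}\geq \mathbold{1},\,|\mathrm{m}|=M}\prod_i\binom{m'_i-1}{m_i-1}\,.
\]
Interpreting $\binom{-1}{k}$ suitably when $m'_i=0$, Vandermonde's identity gives
\[
C(\mathrm{m}')=\binom{|\mathrm{m}'|-p}{M-p}=\binom{M'-p}{M-p}\,,\qquad M'=|\mathrm{m}'|=M-m_0'\,.
\]
Grouping by $M'$ then produces $\Psi_{M',I}$, but with the restriction $\mu_0(\mathrm{m}')\neq0$. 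The terms with $\mu_0(\mathrm{m}')=0$ require $M'\ge p$. We handle these with the convention for $\binom{\cdot}{\cdot}$ combined with induction on $M$: moving the $M'\ge p$ terms to the left and applying the formula inductively should telescope them. Alternatively, the restriction $M'\le p-1$ in \eqref{eqnPsiMIlargerintermsofsmaller} already encodes that these contributions cancel.

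The main obstacle is this combinatorial bookkeeping. It has two parts: first, the Vandermonde sum with the negative-argument binomials $\binom{-1}{m_i-1}=(-1)^{m_i-1}$ coming from $m'_i=0$; second, confirming that the $M'\ge p$ contributions cancel so that only $M'\le p-1$ survives. If this proves messy, I would instead work at the level of generating functions. Expanding $\widehat{S}_{c}(u_0)\prod_i\widehat{S}_c(u_i)$ via Lemma \ref{lem:identityforhatS} with $a_i=c+\epsilon\alpha_i$ and letting $\epsilon\to0$ should show that this product is a polynomial in $c^{-1}$ of bounded degree. Extracting the coefficients of $c^{M-p-1}$ for $M\ge p$ then yields \eqref{eqnPsiMIlargerintermsofsmaller}, with the binomials arising from the Laurent coefficients of $(1+\cdot)^{-1}$-type factors.
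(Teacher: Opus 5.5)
Your main route (the ``cleanest way'') is the paper's proof. You sum Proposition~\ref{propgeneralpropofloopterm} over $|\mathrm{m}|=M$ with $\mu_0(\mathrm{m})=0$ and exchange the sums. You evaluate the coefficient as $[t^{M-p}]\prod_i(1+t)^{m_i'-1}=\binom{M'-p}{M-p}$ with $\binom{-1}{k}=(-1)^k$, and you reduce general $\varepsilon$ to $\varepsilon=(1,\dots,1)$ by parity.

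The gap is that you leave the last step open. The induction that ``should telescope'' the $M'\ge p$ terms is never specified, and saying the restriction $M'\le p-1$ ``already encodes'' the cancellation assumes the conclusion. No induction is needed; three observations finish it.
\begin{enumerate}[i).]
\item After adding back $\sum_{|\mathrm{m}|=M,\,\mu_0(\mathrm{m})\neq 0}\psi_{\mathrm{m},I}$, the $M'=M$ term ($m_0'=0$) has coefficient $\binom{M-p}{M-p}=1$ and $\widehat{e}_0=1$. So it cancels exactly the sum you added.
\item For $p\le M'\le M-1$, the coefficient $\binom{M'-p}{M-p}$ is an ordinary binomial coefficient with $0\le M'-p<M-p$, hence zero. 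So all these terms drop out.
\item For $M'\le p-1$, the condition $\mu_0(\mathrm{m}')\neq 0$ holds automatically. So the restricted sum over $\mathrm{m}'$ is all of $\Psi_{M',I}$.
\end{enumerate}
This is exactly how the paper concludes.

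Two side remarks. First, your suspicion about the sign of the constant argument is justified. Substituting $u_i=\varepsilon_i s_{\varepsilon_i(v_i,v_{i+1})}$, as the paper's proof also does, gives $\widehat{e}_{M-M'}\bigl(-\sum_i\varepsilon_i s_{\varepsilon_i(v_i,v_{i+1})}\bigr)$, which is constant in $z$. The literal $-\sum_i s_{\varepsilon_i(v_i,v_{i+1})}$ in the display is in general not constant in $z$. For $\varepsilon=(-1,\dots,-1)$ it is off by the sign $(-1)^{M-M'}$. So the display should be read with the $\varepsilon_i$ inside.

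Second, discard your generating-function fallback: $\widehat{S}_c(u_0)\prod_i\widehat{S}_c(u_i)$ is not a polynomial in $c^{-1}$. Its $c^{-1}$-coefficient does vanish, by Corollary~\ref{corforlooptermofwidehate_1}. But already for $p=2$ its $c^{0}$-coefficient has residue $2\widehat{e}_2(u_0)-\widehat{e}_1(u_0)^2=-\wp(u_0)\neq 0$ along $u_1=0$.
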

    \begin{proof}
Invoking Proposition \ref{propgeneralpropofloopterm}, one has
    \begin{eqnarray*}
        \Psi_{M,I} &=& \sum_{\substack{\mathrm{m}\in \mathbb{N}^{p}\\ |\mathrm{m}|=M}} \left(\delta_{\mu_{0}(\mathrm{m})\neq 0}\psi_{\mathrm{m},I} - 
        \delta_{\mu_{0}(\mathrm{m})=0}
	\sum_{\substack{m_0'+|\mathrm{m}'|=M\\ \mu_0(\mathrm{m}')\neq 0}}
	\prod_{i=1}^{p} \binom{m_i'-1}{m_i-1} \cdot
	\widehat{e}_{m_0'}(-\sum_{i=1}^{p}s_{v_i,v_{i+1}})    \psi_{\mathrm{m}',I} \right) \\
    &=& \sum_{\substack{|\mathrm{m}|=M\\\mu_0(\mathrm{m})\neq 0}} \psi_{\mathrm{m},I} - \sum_{\substack{|\mathrm{m}|=M\\\mu_0(\mathrm{m})= 0}} 
    \left(\sum_{\substack{m_0'+|\mathrm{m}'|=M\\ \mu_0(\mathrm{m}')\neq 0}}
	\prod_{i=1}^{p} \binom{m_i'-1}{m_i-1} \cdot
	\widehat{e}_{m_0'}(-\sum_{i=1}^{p}s_{v_i,v_{i+1}})    \psi_{\mathrm{m}',I}\right) \\
    &=& \sum_{\substack{|\mathrm{m}|=M\\\mu_0(\mathrm{m})\neq 0}} \psi_{\mathrm{m},I} - 
    \sum_{m_0'=0}^{M} \widehat{e}_{m_0'}(-\sum_{i=1}^{p}s_{v_i,v_{i+1}}) 
    \sum_{\substack{|\mathrm{m}'|=M-m_0'\\ \mu_0(\mathrm{m}')\neq 0}} \sum_{\substack{|\mathrm{m}|=M\\\mu_0(\mathrm{m})= 0}} \prod_{i=1}^{p} \binom{m_i'-1}{m_i-1} \cdot \psi_{\mathrm{m}',I} \,.
    \end{eqnarray*}
    Using the generating function technique, we can simplify the coefficient of $\psi_{\mathrm{m}',I}$ as follows.
One has 
\begin{eqnarray*}
\sum_{\substack{|\mathrm{m}|=M\\\mu_0(\mathrm{m})= 0}} \prod_{i=1}^{p} \binom{m_i'-1}{m_i-1} &=&  [t^{M-p}] \prod_{i=1}^{p} (1+t)^{m_i'-1} 
= [t^{M-p}] (1+t)^{M-m_0'-p} =
    \binom{M-m_0'-p}{M-p}\,.
\end{eqnarray*}
Here the notation $[t^{M-p}]$ represents the degree $t^{M-p}$ coefficient, and the usual  generalized binomial coefficients are used.
Putting $M'=M-m_0'$, we therefore obtain
\[
	\Psi_{M,I}= \sum_{\substack{|\mathrm{m}|=M\\\mu_0(\mathrm{m})\neq 0}} \psi_{\mathrm{m},I} - 
    \sum_{M'=0}^{M} \widehat{e}_{M-M'}(-\sum_{i=1}^{p}s_{v_i,v_{i+1}}) 
    \sum_{\substack{|\mathrm{m}'|=M'\\ \mu_0(\mathrm{m}')\neq 0}}  \binom{M'-p}{M-p}\psi_{\mathrm{m}',I}\,.
\]
The $M'=M$ term in the summation above gets canceled, and one obtains 
\[
	\Psi_{M,I}=  - 
    \sum_{M'=0}^{M-1} \widehat{e}_{M-M'}(-\sum_{i=1}^{p}s_{v_i,v_{i+1}}) 
     \binom{M'-p}{M-p}\Psi_{M',I}\,.\]
Since $M'<M$ and $M-p\geq 0$, the nonvanishing of the generalized binomial coefficient above forces $M'\leq p-1$.
This gives the desired claim for the $\varepsilon=\mathbold{1}$ case.

By Lemma \ref{lemanalyticpropertiesofemz} (i) and \eqref{eqndfnofPsiMIvarepsilonrecalled}, one has 
\[
\varepsilon_i^{m_i}\widehat{e}_{m_i}(s_{\varepsilon_i(v_i,v_{i+1})}) = \widehat{e}_{m_i}(\varepsilon_i s_{\varepsilon_i(v_i,v_{i+1})})\,.
\]
Replacing $s_{v_{i},v_{i+1}}$ by $\varepsilon_{i}s_{\varepsilon_{i}(v_{i},v_{i+1})}$, 
the general case follows from the $\varepsilon=\mathbold{1}$ case directly.
\end{proof}

\subsection{Proof of Proposition \ref{propFnHndescription}}
 \label{secproofofpropFnHndescription}

We first relate the space of global logarithmic forms to the familiar space of Jacobi theta functions, then to Eisenstein-Kronecker forms.
Our main references for this part are \cite[Chapter 2]{Mumford:tataI}
and \cite[Chapter 2.6]{Griffiths:1994}.
 
Using the fact $\Omega^{n}_{E_{[n]}}\cong \mathcal{O}_{E_{[n]}}$
and the trivialization in \eqref{eqnholomorphicvolumeform}, one has 
\begin{equation}\label{eqnidentificationofH0logwithlinearsystem}
H^{0}(E_{[n]},\Omega^{n}_{E_{[n]}}(\log \mathsf{D}))
=\mathcal{L}(D)\Omega\,,
\end{equation}
where $\mathcal{L}(\mathsf{D}):=\{f\in \mathbb{C}(E_{[n]})~|~\mathrm{div}\,f+\mathsf{D}\geq 0\}$.
Introduce the translation map
\begin{equation}\label{eqntranslationmap}
\phi: 
E_{[n]}\rightarrow E^{n-1}\times E_{n}\,,\quad 
(z_{1},\cdots, z_{n-1},z_{n})
\mapsto (w,z_{n}):=(z_{1}-z_{n},\cdots, z_{n-1}-z_{n},z_{n})\,.
\end{equation}
Below we introduce the convention $w_{n}:=0$.
Under the isomorphism $\phi$, one has 
\begin{equation}\label{eqntkl}
s_{k,\ell}=\phi^{*}t_{k,\ell}\,,\quad 
t_{k,\ell}:=
  w_{k}-w_{\ell}+c_{k,\ell}\,.
\end{equation}
The divisor $\mathsf{D}$ is the pull-back of the corresponding divisor $\overline{\mathsf{D}}\subseteq E^{n-1}$ along the projection $p_{1}: E^{n-1}\times E_{n}\rightarrow E^{n-1}$. Explicitly, the irreducible component $\mathsf{D}_{k,\ell}$ corresponds to 
\begin{equation}\label{eqntranslateddivisor}
\overline{\mathsf{D}}_{k,\ell}=(t_{k,\ell}=0)=t_{k,\ell}^{*}(z=0)\,,
\end{equation}
where in the notation $t_{k,\ell}^{*}$ the quantity $t_{k,\ell}$ is regarded as a morphism $E^{n-1}\rightarrow E$.
Combining \eqref{eqntranslationmap}  and \eqref{eqntranslateddivisor}, the Künneth formula gives
\begin{equation}\label{eqnODfactorization}
\mathcal{L}(\mathsf{D})\cong 
H^{0}(E^{n-1}, \mathcal{O}_{E^{n-1}}(\overline{\mathsf{D}}))\,,\quad 
\mathcal{O}_{E^{n-1}}(\overline{\mathsf{D}})\cong \bigotimes_{k\neq \ell}t_{k,\ell}^{*}\mathcal{O}_{E}(0)\,.
\end{equation}
Thanks to \eqref{eqnidentificationofH0logwithlinearsystem} and \eqref{eqnODfactorization}, the space of logarithmic forms is identified with global sections of the line bundle $\bigotimes_{k\neq \ell}t_{k,\ell}^{*}\mathcal{O}_{E}(0)$ on the Abelian variety $E^{n-1}$.

\subsubsection{Incidence matrix and polarization matrix}

We next compute the polarization matrix of the line bundle $\bigotimes_{k\neq \ell}t_{k,\ell}^{*}\mathcal{O}_{E}(0)$ on $E^{n-1}$.

The incidence map in \eqref{eqnincidencemap} induces a map
\begin{equation} \label{eqntranslatedincidencemap}
B: \mathbb{C}^{\mathrm{E}}\rightarrow W\,,\quad 
e_{k\ell}\mapsto e_{k}-e_{\ell}\,.
\end{equation}
Denote by $B_{\mathbb{R}},B_{\mathbb{Z}}$  the real and integral form of $B$, respectively.

Identify the linear map $B$ with the matrix whose columns are given by $B(e)$, denoted by $B\in M_{(n-1)\times n(n-1)}(\mathbb{C})$ by abuse of notation.
By \eqref{eqntkl}, one has 
\begin{equation}\label{eqnODfactorizationcomponentmap}
t_{e}=w^{t}\cdot B(e)+c_{e}\,,\quad \text{for}~e=e_{k\ell}\,.
\end{equation}
It follows from \eqref{eqnODfactorization} and  \eqref{eqnODfactorizationcomponentmap}
that the polarization matrix of $\mathcal{O}_{E^{n-1}}(\overline{\mathsf{D}})$ is given by
\begin{equation}\label{eqnpolarizationmatrix}
M=\sum_{e\in \mathrm{E}}B_{e}B_{e}^{t}=BB^{t}\,.
\end{equation}
Let $\mathbold{1}\in \mathbb{R}^{n-1}$ be the column vector whose entries are all $1$.
Then based on \eqref{eqntranslatedincidencemap} and \eqref{eqnpolarizationmatrix}, one sees that 
\begin{equation}\label{eqndetM}
M=2(n \cdot \mathrm{id}-\mathbold{1}\cdot\mathbold{1}^{t})\,,\quad 
\det M=2^{n-1}n^{n-2}\,.
\end{equation}

\subsubsection{Dimension from Hirzebruch-Riemann-Roch}

We now compute the dimension of $\bigotimes_{k\neq \ell}t_{k,\ell}^{*}\mathcal{O}_{E}(0)$ based on Hirzebruch-Riemann-Roch.

From \eqref{eqnpolarizationmatrix}  one sees that
\[
v^{t}Mv=vBB^{t}v=\sum_{e}(B_{e}^{t}v)^{t}(B_{e}^{t}v)>0\,,\quad \text{if}~v\neq 0\,,
\]
since the $B_{e}$'s span the full space $\mathbb{R}^{n-1}$.
According to the Appell-Humbert description of line bundles on complex tori, this shows that $\mathcal{O}_{E^{n-1}}(\overline{\mathsf{D}})$ is an ample\footnote{In contrast, the line bundle $\mathcal{O}_{E_{[n]}}(\mathsf{D})$ is not ample, due to the presence of the small diagonal $\phi^{-1}E_{n}$ in \eqref{eqntranslationmap} which has trivial intersection with $\mathsf{D}$.} line bundle on the Abelian variety $E^{n-1}$.
Kodaira vanishing tells that higher cohomology groups of $\mathcal{O}_{E^{n-1}}(\overline{\mathsf{D}})$ all vanish. By Hirzebruch–Riemann–Roch and \eqref{eqnODfactorization}, one thus has 
\begin{eqnarray}\label{eqnHRR}
h^{0}(E^{n-1}, \mathcal{O}_{E^{n-1}}(\overline{\mathsf{D}}))&=&\chi(E^{n-1}, \mathcal{O}_{E^{n-1}}(\overline{\mathsf{D}}))\nonumber\\
&=&{1\over (n-1)!} c_{1} (\mathcal{O}_{E^{n-1}}(\overline{\mathsf{D}}))^{n-1}\nonumber\\
&=&{1\over (n-1)!}  \left(\sum_{e}c_{1}(t_{e}^{*}\mathcal{O}_{E}(0))\right)^{n-1}\,.
\end{eqnarray}
A direct computation using the explicit volume form ${\mathbf{i}\over 2\im\tau}dz\wedge d\bar{z}$ on $E$ and the polarization matrix \eqref{eqnpolarizationmatrix} gives
\begin{equation}\label{eqnfirstChernclass}
\sum_{e}c_{1}(t_{e}^{*}\mathcal{O}_{E}(0))={\mathbf{i}\over 2\im \tau}\sum_{i,j=1}^{n-1}M_{ij}dw_{i}\wedge d\overline{w}_{\bar{j}}\,.
\end{equation}
Combining \eqref{eqnHRR}, \eqref{eqnfirstChernclass} and \eqref{eqndetM}, we obtain 
\begin{equation}\label{eqndimofOD}
h^{0}(E^{n-1}, \mathcal{O}_{E^{n-1}}(\overline{\mathsf{D}}))=\det M=2^{n-1}n^{n-2}\,.
\end{equation}

\subsubsection{Generating set in terms of Jacobi theta functions}

One explicit basis of $H^{0}(E^{n-1}, \mathcal{O}_{E^{n-1}}(\overline{\mathsf{D}}))$ is given by \cite[Chapter 2]{Mumford:tataI}
\[
F_{\mu}=\sum_{k\in \mathbb{Z}^{n-1}+M^{-1}\mu}\exp
\left({1\over 2}2\pi i k^{t}Mk+2\pi i k^{t}(Mw+\sum_{e}B(e)c(e))
\right)\,,\quad \mu\in \mathbb{Z}^{n-1}/M\mathbb{Z}^{n-1}\,.
\]
In what follows, a more convenient generating set is needed to relate the sections to the Eisenstein-Kronecker functions.

Recall that any section $F\in  H^{0}(E^{n-1}, \mathcal{O}_{E^{n-1}}(\overline{\mathsf{D}}))$ lifts to a holomorphic function on $\mathbb{C}^{n-1}$ which
admits a Fourier expansion of the form
\[
F(w)=\sum_{k\in \mathbb{Z}^{n-1}}C_{k}e^{2\pi iw^{t}k}\,.
\]
The automorphy of the line bundle $\mathcal{O}_{E^{n-1}}(\overline{\mathsf{D}})$ determines all of the $C_{k}$'s from those in $k\in \mathbb{Z}^{n-1}/M\mathbb{Z}^{n-1}$. 
Choose once and for all a set $S$ of representatives of $\mathbb{Z}^{n-1}/M\mathbb{Z}^{n-1}$.
Then the map
\begin{equation}\label{eqninjectivemapfromH0toFouriermodes}
\varrho: H^{0}(E^{n-1}, \mathcal{O}_{E^{n-1}}(\overline{\mathsf{D}}))
\rightarrow (C_{k})_{k\in S}
\end{equation}
is injective. Since both have the same dimension $\det M$, this is an isomorphism.

Explicit sections can now be constructed as follows.
Similar to the proof of Proposition \ref{propcharacterizationofkerbarpartial}, we 
consider the space 
\begin{equation}\label{eqncyclespacelambdarecalled}
\ker B=
\mathrm{span}
\Big\{\lambda=(\lambda_{e})_{e}\in \mathbb{C}^{\mathrm{E}}~|~\sum_{e\in \mathrm{E}}B(e)\lambda_{e}=0 \in \mathbb{C}^{n-1}\Big\}\,.
\end{equation}
Define 
\begin{equation}\label{eqngeneratingsetofOD}
\Theta_{\lambda}(w)=\prod_{e}\theta(s_{e}+\lambda_{e})\,,\quad \lambda \in \ker B\,.
\end{equation}
Applying standard facts on Jacobi theta functions, we have 
 the Fourier expansion
\begin{equation}
\Theta_{\lambda}(w)=\sum_{\nu\in (\mathbb{Z}+{1\over 2})^{|\mathrm{E}|}}c_{\nu}e^{2\pi i (c+\lambda)^{t} \nu}e^{2\pi iw^{t}B\nu}\,,
\end{equation}
where $c_{\nu}$ are nonzero universal coefficients determined by the Fourier expansion of $\theta(z)$.
 Since the edge set contains  both orientations, one has $B\mathbold{1}=0$ and thus 
 \[
B\nu\in \mathbb{Z}^{n-1}\,,\quad ~\text{for}~\nu\in (\mathbb{Z}+{1\over 2})^{\mathrm{E}}\,. 
 \]\
 Grouping the terms in the sum above according to $k=B\nu\in  \mathbb{Z}^{n-1}$, one has
 \begin{equation}\label{eqnThetalambda}
 \Theta_{\lambda}(w)=\sum_{k\in \mathbb{Z}^{n-1}}e^{2\pi iw^{t}k}C_{k}(\lambda)\,,\quad 
 C_{k}(\lambda):=
 \sum_{\nu: B\nu=k}c_{\nu}e^{2\pi i (c+\lambda)^{t} \nu}\,.
 \end{equation}

 \begin{lem}\label{lemlinearindependenceofcharacters} Let $S$ be a set of representatives of $\mathbb{Z}^{n-1}/M\mathbb{Z}^{n-1}$. Then the functions $\{C_{k}(\lambda)\}_{k\in S}$ are linearly independent holomorphic functions on $ \ker B$.
\end{lem}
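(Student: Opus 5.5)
The plan is to reduce the claim to the linear independence of distinct exponential characters in $\lambda$. By \eqref{eqnThetalambda}, each $C_{k}(\lambda)$ is a sum over the fiber $\{\nu\in(\mathbb{Z}+\tfrac12)^{\mathrm{E}}~|~B\nu=k\}$ of terms $c_{\nu}e^{2\pi i c^{t}\nu}\, e^{2\pi i\lambda^{t}\nu}$. As a function of $\lambda\in\ker B$, the factor $e^{2\pi i\lambda^{t}\nu}$ depends only on the restriction of the linear functional $\lambda\mapsto\lambda^{t}\nu$ to $\ker B$, that is, on the class of $\nu$ in $\mathbb{C}^{\mathrm{E}}/(\ker B)^{\perp}=\mathbb{C}^{\mathrm{E}}/\mathrm{im}\,B^{t}$. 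Holomorphy is clear once the series is shown to converge locally uniformly on $\ker B$; this follows from the Gaussian decay of the coefficients $c_{\nu}$, which is inherited from the Fourier expansion of $\theta$.

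The first step is to show that distinct $k$ give disjoint sets of characters. Suppose $\nu\in B^{-1}(k)$ and $\nu'\in B^{-1}(k')$ induce the same character on $\ker B$. Then $\nu-\nu'\in\mathrm{im}\,B^{t}$, say $\nu-\nu'=B^{t}u$. Applying $B$ gives $k-k'=BB^{t}u=Mu$. Since $\nu-\nu'\in\mathbb{Z}^{\mathrm{E}}$, one expects $u$ to be integral up to the kernel of $B^{t}$, which is trivial because $B^{t}$ is injective. Indeed, the rows of $B$ span $\mathbb{Z}^{n-1}$ integrally: the edges $e_{kn}$ alone give the standard basis vectors of $W$. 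Hence $u=B^{t}{}^{+}(\nu-\nu')\in\mathbb{Z}^{n-1}$, so $k\equiv k'$ mod $M\mathbb{Z}^{n-1}$. Consequently, for representatives $k\neq k'$ in $S$, the characters occurring in $C_{k}$ and in $C_{k'}$ are disjoint.

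The second step uses the classical fact that distinct characters $e^{2\pi i\langle\cdot,\xi\rangle}$ on a complex vector space are linearly independent, even for absolutely convergent infinite sums. I would prove this by restricting to a generic real line in $\ker B$, where the frequencies remain distinct, and extracting Fourier coefficients through averaging. A relation $\sum_{k\in S}a_{k}C_{k}(\lambda)=0$ then forces every coefficient $a_{k}c_{\nu}e^{2\pi ic^{t}\nu}$ to vanish. Grouping terms by character, the coefficient of a given character is a sum over $\nu$ in the same class and the same $k$. This sum could cancel, and that cancellation is the main obstacle. I would first try to show that within a fixed $k$, the class of $\nu$ modulo $\mathrm{im}\,B^{t}$ determines $\nu$ up to elements of $\mathrm{im}\,B^{t}\cap\ker B=0$, since $\mathrm{im}\,B^{t}\perp\ker B$ and $B^{t}u\in\ker B$ implies $Mu=0$, so $u=0$. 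Each character therefore comes from a unique $\nu$, the grouped coefficient equals $a_{k}c_{\nu}e^{2\pi ic^{t}\nu}$, and the nonvanishing of $c_{\nu}$ gives $a_{k}=0$.

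The delicate points are the integrality of $u$, which relies on $\mathbb{Z}$-surjectivity of $B$, and the justification of character independence for infinite sums. For the latter, I would use genericity of the chosen line together with uniform convergence to interchange the limit and the sum.
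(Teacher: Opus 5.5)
Your proposal is correct and is essentially the paper's argument. Characters from different $k\in S$ are disjoint because $\nu-\nu'=B^{t}u$ forces $k-k'=Mu$ with $u$ integral (via $B(e_{kn})=e_k$). Within a single fiber, $Mu=0$ forces $\nu=\nu'$, and $c_\nu\neq 0$ finishes. The only difference is that you extract coefficients by Bohr averaging along a generic line in $\ker B_{\mathbb{R}}$, whereas the paper uses orthogonality of characters on the compact torus $\ker B_{\mathbb{R}}/\ker B_{\mathbb{Z}}$.

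Do add the one-line check that each fiber $\{\nu\in(\mathbb{Z}+\tfrac12)^{\mathrm{E}}~|~B\nu=k\}$ is nonempty: take an integral preimage and add $\tfrac12\mathbf{1}$, using $B\mathbf{1}=0$. Otherwise $C_k\equiv 0$ and nothing would force $a_k=0$.
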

\begin{proof}
    Suppose that there exists a relation
    \[
    \sum_{k\in S}a_{k}C_{k}(\lambda)=0\,.
    \]
    Restricting to $ \ker B_{\mathbb{R}}$ and shifting the index  $\nu$ by ${1\over 2}\mathbold{1}$, we obtain (recall that $B\mathbold{1}=0$)
    \begin{equation}\label{eqnofcharacters}
  \sum_{k\in S}a_{k}
e^{\pi i c_{e}^{t} \mathbold{1}}e^{\pi i \lambda_{e}^{t} \mathbold{1}}
 \sum_{\nu\in \mathbb{Z}^{|\mathrm{E}|}: B\nu=k}c_{\nu+{1\over 2}\mathbold{1}}e^{2\pi i c_{e}^{t} \nu}e^{2\pi i \lambda^{t} \nu}
=0\,,\quad \lambda\in \ker B_{\mathbb{R}}\,.
    \end{equation}
    Note that $B$ has integer coefficients and full rank, thus the space $T:=\ker B_{\mathbb{R}}/\ker B_{\mathbb{Z}}$ is a compact real torus.
    Therefore, the summation above gives an absolutely convergent Fourier series on $T$.

Consider the characters $\lambda\mapsto e^{2\pi i \lambda^{t}\nu}$ on $T$.
Two characters corresponding to $\nu,\nu'\in \mathbb{Z}^{\mathrm{E}}$ are the same
iff $\nu-\nu'=B_{\mathbb{Z}}^{t}r$ for some $r\in \mathbb{Z}^{n-1}$.
Indeed, one has 
    $e^{2\pi i \lambda^{t}(\nu-\nu') }=1$ for $ \lambda\in \ker B_{\mathbb{R}}$  is equivalent to
    \[
    (\nu-\nu')\in (\ker B_{\mathbb{R}})^{\perp}\cap \mathbb{Z}^{\mathrm{E}}=\im B_{\mathbb{R}}^{t}\cap \mathbb{Z}^{\mathrm{E}}=\im B_{\mathbb{Z}}^{t}\,.
    \]
    Now suppose $\nu-\nu'=B^{t}r$ for some $r\in \mathbb{Z}^{n-1} $. 
    Applying the map $B$ gives
    \[
   k-k'= B\nu-B\nu'=BB^{r}r=Mr\,.
    \]
    Since $k,k'\in S$ are representatives from $\mathbb{Z}^{n-1}/M\mathbb{Z}^{n-1}$, this shows that $k=k'\in S$.
 By the positivity of $M$, we see that $0=k-k'=Mr$ implies $r=0$, and thus $\nu=\nu'$.
        
    Consequently, for different values of $\nu$ in \eqref{eqnofcharacters},
    the characters in the summation are disjoint.
Every coefficient of the character occurring in the summation is nonzero since $c_{\nu+{1\over 2}\mathbold{1}}\neq 0$. Moreover, each preimage set $\{m\in \mathbb{Z}^{\mathrm{E}}~|~Bm=k\}$ is nonempty since $B_{\mathbb{Z}}$ is surjective (note that $B(e_{kn})=e_{k}$ for $ 1\leq k\leq n-1$).
Therefore, by the orthogonality of Fourier characters on the compact torus $T$, we see that $a_{k}=0$ for any $k\in S$.
This proves the desired linear independence.
\end{proof}

 \begin{lem}\label{lemspanningsetofH0OD} Let the notation be as above. 
 \begin{enumerate}[i).]
     \item 
One has \begin{equation}\label{eqnH0ODspace}
H^{0}(E^{n-1}, \mathcal{O}_{E^{n-1}}(\overline{\mathsf{D}}))
=\mathrm{span}\{\Theta_{\lambda}~|~\lambda\in \ker B\}\,.
\end{equation}
\item Set 
\begin{equation}\label{eqngeneratingsetofLD}
R_{\lambda}=\prod_{e\in \mathrm{E}}  {\theta(s_{e}+\lambda_{e})\over \theta(s_{e}) } \,,\quad \lambda=(\lambda_{e})_{e}\in \ker B\,.
\end{equation}
Then one has 
\begin{equation}\label{eqnLDspace}
\mathcal{L}(\mathsf{D})
=\mathrm{span}\{R_{\lambda}~|~\lambda\in \ker B\}\,.
\end{equation}
 \end{enumerate}
 
\end{lem}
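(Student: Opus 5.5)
Part (i) will follow from the Fourier-coefficient isomorphism $\varrho$ in \eqref{eqninjectivemapfromH0toFouriermodes} together with Lemma \ref{lemlinearindependenceofcharacters}. First I check that each $\Theta_{\lambda}$ with $\lambda\in\ker B$ is a section of $\mathcal{O}_{E^{n-1}}(\overline{\mathsf{D}})$.

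Using the automorphy \eqref{eqnautomorphyoftheta}, under $w\mapsto w+r+\tau r'$ with $r,r'\in\mathbb{Z}^{n-1}$, each factor $\theta(t_e+\lambda_e)$ picks up a factor
\[
\pm\exp\bigl(-\pi\mathbf{i}\tau (B(e)^t r')^2-\pii (B(e)^t r')(t_e+\lambda_e)\bigr).
\]
The product over $e$ therefore has multiplier
\[
\pm\exp\bigl(-\pi\mathbf{i}\tau\, r'^tMr'-\pii\, r'^t(Mw+\textstyle\sum_e B(e)c_e)\bigr)\cdot \exp\bigl(-\pii\, r'^t B\lambda\bigr).
\]
The last factor is $1$ because $B\lambda=0$. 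The remaining factor is exactly the multiplier of $\prod_e\theta(t_e)$, which is a holomorphic section of $\bigotimes_e t_e^*\mathcal{O}_E(0)\cong\mathcal{O}_{E^{n-1}}(\overline{\mathsf{D}})$ by \eqref{eqnODfactorization}. The signs also agree, since the sign is independent of $\lambda$. Hence $\Theta_\lambda\in H^0(E^{n-1},\mathcal{O}_{E^{n-1}}(\overline{\mathsf{D}}))$, which gives the inclusion "$\supseteq$" in \eqref{eqnH0ODspace}.

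For the reverse inclusion I apply $\varrho$. By \eqref{eqnThetalambda}, $\varrho(\Theta_\lambda)=(C_k(\lambda))_{k\in S}$. Suppose the span of these vectors over $\lambda\in\ker B$ were a proper subspace of $\mathbb{C}^{S}$. Then some nonzero linear functional $(a_k)_{k\in S}$ would annihilate all of them, i.e.\ $\sum_k a_kC_k(\lambda)=0$ for every $\lambda\in\ker B$. This contradicts Lemma \ref{lemlinearindependenceofcharacters}. So the images span $\mathbb{C}^S$, and since $\varrho$ is an isomorphism by the dimension count \eqref{eqndimofOD}, the $\Theta_\lambda$ span $H^0$.

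For (ii), I transport (i) through \eqref{eqnidentificationofH0logwithlinearsystem}--\eqref{eqnODfactorization}. Pulled back along $\phi$ and $p_1$, the isomorphism $\mathcal{L}(\mathsf{D})\cong H^0(\mathcal{O}(\overline{\mathsf{D}}))$ is division by the canonical section $\prod_e\theta(s_e)$ of $\mathcal{O}(\mathsf{D})$, whose divisor is exactly $\mathsf{D}$. The statement about multipliers shown above says precisely that $R_\lambda=\Theta_\lambda/\prod_e\theta(s_e)$ is $\Lambda$-periodic, hence a meromorphic function with $\mathrm{div}\,R_\lambda+\mathsf{D}\ge0$. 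It is independent of $z_n$, since it depends only on the differences $z_k-z_\ell$. Under this division isomorphism the spanning set $\{\Theta_\lambda\}$ maps to $\{R_\lambda\}$, which gives \eqref{eqnLDspace}.

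The main subtlety is the multiplier bookkeeping, and in particular the sign $\pm$. It is harmless because it is independent of $\lambda$, so it cancels in $R_\lambda$. Everything else is formal once Lemma \ref{lemlinearindependenceofcharacters} is in hand.
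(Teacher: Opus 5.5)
Your proposal is correct and takes essentially the same route as the paper. You check via the automorphy of $\theta$, together with $B\lambda=0$, that each $\Theta_\lambda$ is a section; you then use the Fourier-coefficient map $\varrho$ and Lemma \ref{lemlinearindependenceofcharacters} to get spanning, and you pass to $\mathcal{L}(\mathsf{D})$ through \eqref{eqnODfactorization} by dividing by $\prod_e\theta(s_e)$. Along the way you make explicit the multiplier bookkeeping and the division isomorphism, which the paper only summarizes as ``one can check'' and ``examining the poles.''
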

\begin{proof}
 \begin{enumerate}[i).]
     \item 
Using the automorphy \eqref{eqnautomorphyoftheta}, one can check that 
\begin{equation*}\label{eqngeneratingsetofODareindeedsections}
\Theta_{\lambda}\in H^{0}(E^{n-1}, \mathcal{O}_{E^{n-1}}(\overline{\mathsf{D}}))\,.
\end{equation*}
The desired claim then follows from the
construction of $C_{k}$ in \eqref{eqnThetalambda}, Lemma \ref{lemlinearindependenceofcharacters},   and the
fact that the map $\varrho$ in \eqref{eqninjectivemapfromH0toFouriermodes}
is injective.
\item 
Examining the poles shows that
$
R_{\lambda}\in \mathcal{L}(\mathsf{D})
$ for $\lambda\in \ker B$.
The claim then follows from \eqref{eqnODfactorization} and (i).
\end{enumerate}
\end{proof}

\subsubsection{Proof of Proposition \ref{propFnHndescription}}

We now give the proof of Proposition \ref{propFnHndescription}, whose statement is recalled below for reference.
\begin{prop}\label{propFnHndescriptionrecalled}
One has $F^{n}H^{n}(U)=\mathcal{V}_{\mathsf{D}}\Omega\cap  \ker\,d=\mathcal{V}_{\mathsf{D}}\Omega\cap  \ker\,\bar{\partial}$.

\end{prop}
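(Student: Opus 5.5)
We prove two inclusions and identify the three spaces through \eqref{eqnFnoHn}, which says $F^{n}H^{n}(U)=\Gamma(X,\Omega^{n}_{X}(\log\mathsf{D}))$.

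First, $\mathcal{V}_{\mathsf{D}}\Omega\cap\ker d=\mathcal{V}_{\mathsf{D}}\Omega\cap\ker\bar\partial$. An element $f\Omega$ with $f\in\mathcal{V}_{\mathsf{D}}$ is of type $(n,0)$, so $\partial(f\Omega)=0$ automatically and $d(f\Omega)=\bar\partial f\wedge\Omega$. This vanishes iff $\bar\partial f=0$, by \eqref{eqnholomorphicnformandholomorphicfunction}.

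Next, $\mathcal{V}_{\mathsf{D}}\Omega\cap\ker\bar\partial\subseteq\Gamma(X,\Omega^{n}_{X}(\log\mathsf{D}))$. If $\bar\partial f=0$ on $U$, then $f$ is holomorphic on $U$ and elliptic, since each $\widehat e_m$ is elliptic by Lemma \ref{lemanalyticpropertiesofemz}(i). By Lemma \ref{lemanalyticpropertiesofemz}(ii), $f$ has at most simple poles along the components $\{s_{k,\ell}=0\}$. Locally, $f=g/\prod s_{k,\ell}$ with $g$ smooth, and $g$ is holomorphic off the divisor. By continuity $\bar\partial g=0$ everywhere, so $g$ is holomorphic. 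Hence $f$ is meromorphic with $\mathrm{div} f+\mathsf{D}\geq0$, that is, $f\in\mathcal{L}(\mathsf{D})$, and \eqref{eqnidentificationofH0logwithlinearsystem} gives $f\Omega\in H^{0}(\Omega^{n}(\log\mathsf{D}))$.

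The reverse inclusion $\Gamma(X,\Omega^{n}_{X}(\log\mathsf{D}))\subseteq\mathcal{V}_{\mathsf{D}}\Omega$ is the main step. By Lemma \ref{lemspanningsetofH0OD}(ii), it suffices to show $R_\lambda\in\mathcal{V}_{\mathsf{D}}$ for every $\lambda\in\ker B$. We rewrite
\[
R_\lambda=\prod_e\theta(\lambda_e)\cdot\prod_e S_{\lambda_e}(s_e)=\prod_e\theta(\lambda_e)\cdot\prod_e e^{-\lambda_e\mathbold{A}(s_e)}\widehat S_{\lambda_e}(s_e).
\]
Since $\mathbold{A}(s_e)$ is linear in $\bar s_e-s_e$, the factor $\sum_e\lambda_e\mathbold{A}(s_e)$ depends on $z$ only through $\sum_e\lambda_e B(e)$, which is zero for $\lambda\in\ker B$. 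The prefactor therefore reduces to a constant coming from the $c_e$. We are left with a constant times $\prod_e\widehat S_{\lambda_e}(s_e)$, which is a $\bar\partial$-closed combination of the Laurent coefficients. The issue is that $\prod_e\widehat S_{\lambda_e}(s_e)$ is a product over all edges, so it lies in $\mathcal{R}_{\mathsf{D}}$ with poles of higher multiplicity, not in $\mathcal{V}_{\mathsf{D}}$. I would handle this by dimension counting rather than a direct expansion. Consider the forms $\Psi^{(\varepsilon)}_{M,I}\Omega$, their products over edge-disjoint cycles, and the forest terms. By Lemma \ref{lemmaaboutclosednessofdecompositionofloopterm} these lie in $\mathcal{V}_{\mathsf{D}}\cap\ker\bar\partial$. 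By the preceding paragraph they also lie in $\mathcal{L}(\mathsf{D})\Omega$. The goal is to show their span has dimension at least $\det M=2^{n-1}n^{n-2}$ from \eqref{eqndimofOD}.

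For linear independence I would use the injective symbol map $\rho$ from the proof of Lemma \ref{lemelementsinB(E_D)arelinearlyindependent}. It sends forest generators to independent polar monomials, and under it $\ker\bar\partial$ corresponds to $\mathrm{Sym}\,K$ restricted to multilinear monomials in forest supports. The dimension count then becomes a combinatorial identity. Summing over spanning forests of the complete graph with edges of both orientations, weighted by the admissible decorations $m(e)\geq1$ with $|E(\Gamma_k)|\leq m(\Gamma_k)\leq|V(\Gamma_k)|-1$, should give $2^{n-1}n^{n-2}$. This matches the matrix-tree count $\det(BB^t)$, which counts spanning trees with orientation choices $2^{n-1}$ times Cayley's $n^{n-2}$. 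I expect this counting step to be the main obstacle.

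As a fallback, I would instead show directly that the Laurent coefficients in $\lambda$ of the generating function $\prod_e\widehat S_{\lambda_e}(s_e)$, restricted to $\ker B$, span $\mathcal{L}(\mathsf{D})$. I would then reduce each coefficient into $\mathcal{V}_{\mathsf{D}}$ using the Fay-type relations of Lemma \ref{lem:identityforhatS} and Proposition \ref{propgeneralpropofloopterm}. This route requires controlling the higher pole orders: they must cancel in every $\bar\partial$-closed combination, since such combinations lie in $\mathcal{L}(\mathsf{D})$ and so have only simple poles.
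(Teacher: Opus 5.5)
Your handling of the easy parts is fine. On $\mathcal{V}_{\mathsf{D}}\Omega$ one has $d=\bar\partial$ by type. Your local argument ($f=g/\prod s_e$ with $g$ smooth, $\bar\partial g=0$ off $\mathsf{D}$ and hence everywhere) is a more careful version of the paper's ``by type reasons'' for the inclusion into $\Gamma(X,\Omega^n_X(\log\mathsf{D}))$.

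\textbf{The gap is in the reverse inclusion, and it starts from a misdiagnosis.} The coefficients of $\prod_e\widehat{S}_{\lambda_e}(s_e)$ in the $\lambda_e$ are $\prod_e\widehat{e}_{m_e}(s_e)$, with exactly one factor per edge. This is precisely the shape of the elements of $\mathcal{V}_{\mathsf{D}}=\mu(\bigotimes_{k,\ell}s_{k,\ell}^*\mathcal{V})$: color $1$, arbitrary decoration. Each factor has a simple pole along its own component of $\mathsf{D}$. So there are no higher-multiplicity poles, and no Fay-type reduction or pole-order control is needed.

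Having set that route aside, you rest the main step on a dimension count that you do not carry out. As formulated, the count is false. For $n=2$ your decorated forests are $\varnothing$, $e_{12}$ and $e_{21}$ (with $m=1$), which gives $3$, while $\dim\mathcal{L}(\mathsf{D})=2$. The individual forest generators $\widehat{e}_1(s_{1,2})$ and $\widehat{e}_1(s_{2,1})$ are not $\bar\partial$-closed; only their sum is. In general, $\dim(\mathcal{V}^{\mathrm{for}}_{\mathsf{D}}\cap\ker\bar\partial)$ is the dimension of an intersection ($\mathrm{Sym}\,K$ against forest-supported monomials), not a count of generators.

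\textbf{What closes the argument, as in the paper, is a jet argument.}
\begin{itemize}
\item Set $f(\lambda)=\prod_e\theta(\lambda_e)\widehat{S}_{\lambda_e}(s_e)=\prod_e\frac{\theta(\lambda_e)}{\lambda_e}\sum_{m\ge0}\lambda_e^m\widehat{e}_m(s_e)$. This is holomorphic on $\mathbb{C}^{\mathrm{E}}$ near $0$.
\item Since $\theta(c)/c$ is a power series with constant term $1$, every Taylor coefficient of $f$ is a finite combination of products $\prod_e\widehat{e}_{m_e}(s_e)$, and so lies in $\mathcal{V}_{\mathsf{D}}$. The same holds for $f|_{\ker B}$, by linear substitution.
\item On $\ker B$, $f$ and $R_\lambda$ differ by a nowhere-vanishing factor depending only on $\lambda$, as you observed. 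Hence the Taylor coefficients of $f|_{\ker B}$ at $0$ span the same space $W_2$ as the jets of $R|_{\ker B}$.
\item Finally, $W_2=W_1:=\mathrm{span}\{R_\lambda:\lambda\in\ker B\}=\mathcal{L}(\mathsf{D})$. The inclusion $W_2\subseteq W_1$ holds because $W_1$ is finite-dimensional, hence closed under limits of difference quotients. Conversely, composing $R$ with the quotient $\mathcal{L}(\mathsf{D})\to\mathcal{L}(\mathsf{D})/W_2$ gives a holomorphic map on the connected space $\ker B$ with all derivatives zero at $0$. By the identity theorem it vanishes, so $W_1\subseteq W_2$.
\end{itemize}

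Your fallback points in this direction but lacks exactly this spanning argument. It should also use Taylor coefficients of the holomorphic $f$, not ``Laurent coefficients restricted to $\ker B$''. The latter are not well defined, since $1/\lambda_e$ does not restrict to a Laurent monomial in coordinates on $\ker B$.
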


\begin{proof}
    
By type reasons and \eqref{eqnFnoHn}, we see that 
\[\mathcal{V}_{\mathsf{D}}\Omega\cap  \ker\,\bar{\partial}
=
\mathcal{V}_{\mathsf{D}}\Omega\cap \ker d
\subseteq H^{0}(E_{[n]}, \Omega^{n}_{X}(\log \mathsf{D}))
=F^{n}H^{n}(U)\,.
\]
We now prove the inverse inclusion, which is equivalent to $\mathcal{L}(\mathsf{D})\subseteq \mathcal{V}(\mathsf{D})\cap \ker \bar{\partial}$
by \eqref{eqnholomorphicnformandholomorphicfunction} and \eqref{eqnidentificationofH0logwithlinearsystem}.

Regard the quantity in \eqref{eqngeneratingsetofLD} as a holomorphic function $R:\ker B\rightarrow \mathcal{L}(\mathsf{D})$.
As in the proof of Proposition \ref{propcharacterizationofkerbarpartial}, one can pick a basis of $y_{1},\cdots, y_{N}$ for $\ker B$, with $d:=\dim \ker B$  which
extends to a full basis $y_{1},\cdots,y_{d},y_{d+1},\cdots, y_{|\mathrm{E}|}$  for $\mathbb{C}^{\mathrm{E}}$, with corresponding coordinates 
$u_{k},k=1,2,\cdots, d,d+1\cdots, |\mathrm{E}|$.
Set 
\[
W_{1}:=\mathrm{span} \Big\{R(\lambda)~|~\lambda\in \ker B\Big\}\,,\quad 
W_{2}:=\mathrm{span} \Big\{\partial_{u}^{\alpha}R(\lambda)|_{\lambda=0}~|~\alpha\in \mathbb{N}^{d}\Big\}\,.
\]
We claim that $W_{2}=W_{1}$ and thus $W_{2}=\mathcal{L}(\mathsf{D})$ by Lemma \ref{lemspanningsetofH0OD} (ii).
Note that the space $W_{1}=\mathcal{L}(\mathsf{D})$ is a finite dimensional closed space.
Since taking differences 
and limits stay in $W_{1}$, one sees that the jet space $W_{2}$ satisfies
$W_{2}\subseteq W_{1}$.
Conversely, consider the quotient map
\[
\pi: \mathcal{L}(\mathsf{D})\rightarrow \mathcal{L}(\mathsf{D})/W_{2}\,.
\]
All derivatives at $0$ of $\pi\circ R$ vanish.
Applying the identity theorem to the finitely many scalar components of $\pi\circ R$, one sees that $\pi\circ R\equiv 0$ on $\ker B$.
This proves $W_{1}\subseteq W_{2}$.

It remains to show $W_{2}\subseteq \mathcal{V}(\mathsf{D})\cap \ker \bar{\partial}$. Since $R(\lambda)$ is meromorphic in $s_{e},e\in\mathrm{E}$, we clearly have  
\begin{equation} \label{eqnW2barpartialclosed}
W_{2}\subseteq \ker\bar{\partial}\,.
\end{equation}
Now set (cf. Lemma \ref{lemmaaboutclosednessofdecompositionofloopterm})
\[
f(\lambda)=\prod_{e\in \mathrm{E}}  {\theta(s_{e}+\lambda_{e})\over \theta(s_{e})   }e^{\lambda_{e}\mathbold{Y}(\bar{s}_{e}-s_{e})}=R(\lambda)\prod_{e\in \mathrm{E}}  e^{\lambda_{e}\mathbold{Y}(\bar{s}_{e}-s_{e})} \,.
\]
Observe that for $\lambda\in \ker B$, the function $\prod_{e\in \mathrm{E}}e^{\lambda_{e}\mathbold{Y}(\bar{s}_{e}-s_{e})}$ is constant in $s_{e},e\in \mathrm{E}$ according to \eqref{eqncyclespacelambdarecalled}.
Thus $W_{2}$ is the same as the space of Taylor coefficients of the function $f|_{\ker B}$ at $\lambda=0$.
Set
\[
\widehat{S}(\lambda)=\prod_{e\in \mathrm{E}}  {\theta(s_{e}+\lambda_{e})\over \theta(s_{e}) \theta(\lambda_{e}) } e^{\lambda_{e}\mathbold{Y}(\bar{s}_{e}-s_{e})}
=f(\lambda)\prod_{e\in \mathrm{E}}{1\over \theta(\lambda_{e})}\,.
\]
It is straightforward to see by Definition \ref{dfnfunctionspaceassociatedtoD} that  every Taylor coefficient of $f|_{\ker B}$  lies in $\mathcal{V}_{\mathsf{D}}$, and thus 
\begin{equation}\label{eqnW2inVD}
W_{2}\subseteq \mathcal{V}_{\mathsf{D}}\,.
\end{equation}
Combining \eqref{eqnW2barpartialclosed} and \eqref{eqnW2inVD}, we complete the proof that
$W_{2}\subseteq \mathcal{V}(\mathsf{D})\cap \ker \bar{\partial}$.

\end{proof}

\begin{appendices}
	\section{Some combinatorial identities}
	\label{secappendixcombinatorialidentities}
		
In this part, we prove some combinatorial relations needed in the body of the paper.

	\subsection{A lemma for Proposition \ref{propgeneralpropofloopterm}}

	The following lemma is needed in the proof of Proposition \ref{propgeneralpropofloopterm}.
	
	\begin{lemma}\label{generallemmaforcomputingcoefficients} 
	
		Suppose $\sum_{i=0}^{n}m_i = M $, where $m_{i}\geq 0, i=0,1,\cdots,n$.
			Then one has 
			\[
			\sum_{k \in \mu_0}a_k^{m_0-1} \prod_{\substack{i=1\\i\neq k}}^{n} (a_i-a_k)^{m_i-1} = \delta_{m_{0},0}\prod_{i=1}^{n}a_i^{m_i-1} +
			(-1)^{m_0-1} 
			\sum_{\substack{d_1,\cdots,d_n \geq 0 \\ \sum_{i=1}^{n}d_i = M-n}}
			\prod_{i=1}^{n} \binom{m_i-1}{d_i} a_i^{d_i} \,,
			\]
			where $\mu_0(\mathrm{m}) = \{i \in \{1,\cdots,n\}|m_i=0\}$ and the binomial coefficients $\binom{m-1}{j} $ is the generalized binomial coefficient with $\binom{-1}{j} = (-1)^j$.	

	\end{lemma}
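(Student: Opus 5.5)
The identity is a residue-theorem statement for one rational function of an auxiliary variable $x$, so I would prove it by summing residues on $\mathbb{P}^{1}$. Throughout I assume, as in the proof of Proposition \ref{propgeneralpropofloopterm}, that the $a_{i}$ are distinct and nonzero; this is a generic condition, and the identity is polynomial in the $a_i$ once denominators are cleared. Consider the rational differential
\[
R(x)\,dx:=x^{m_{0}-1}\prod_{i=1}^{n}(a_{i}-x)^{m_{i}-1}\,dx\,.
\]
Its only possible poles are at $x=a_{k}$ for $k\in\mu_{0}(\mathrm{m})$, at $x=0$ when $m_{0}=0$, and at $x=\infty$.

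First, the finite poles. For $k\in\mu_{0}$ the factor $(a_{k}-x)^{-1}$ gives a simple pole at $x=a_{k}$, with residue
\[
-a_{k}^{m_{0}-1}\prod_{i\neq k}(a_{i}-a_{k})^{m_{i}-1}\,.
\]
Hence the left-hand side of the lemma equals $-\sum_{k\in\mu_{0}}\mathrm{Res}_{x=a_{k}}R\,dx$. When $m_{0}=0$ there is also a simple pole at $x=0$, with residue $\prod_{i}a_{i}^{m_{i}-1}$. When $m_{0}\geq 1$ there is no pole at $0$. So the residue at $0$ is exactly the term $\delta_{m_{0},0}\prod_{i}a_{i}^{m_{i}-1}$.

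Second, the residue theorem gives
\[
\text{LHS}=\mathrm{Res}_{x=0}R\,dx+\mathrm{Res}_{x=\infty}R\,dx\,,
\]
so it remains to identify $\mathrm{Res}_{x=\infty}$ with the second term. For $|x|>\max_{i}|a_{i}|$, expand
\[
\prod_{i}(a_{i}-x)^{m_{i}-1}=(-x)^{M-m_{0}-n}\prod_{i}\bigl(1-a_{i}/x\bigr)^{m_{i}-1}\,,
\]
using $\sum_{i\ge1}(m_{i}-1)=M-m_{0}-n$. The binomial series $(1-a_{i}/x)^{m_{i}-1}=\sum_{d_{i}\geq0}\binom{m_{i}-1}{d_{i}}(-a_{i}/x)^{d_{i}}$ is valid for all integers $m_{i}-1$, including $m_{i}=0$, where it gives the generalized coefficients $\binom{-1}{d}=(-1)^{d}$. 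This is where the generalized binomial convention enters. Multiplying by $x^{m_{0}-1}$, the monomial indexed by $d=(d_{1},\dots,d_{n})$ has degree $x^{M-n-1-|d|}$. The coefficient of $x^{-1}$ therefore comes from $|d|=M-n$ and equals
\[
(-1)^{M-m_{0}-n}(-1)^{M-n}\sum_{|d|=M-n}\prod_{i}\binom{m_{i}-1}{d_{i}}a_{i}^{d_{i}}\,.
\]
Since $\mathrm{Res}_{\infty}$ is minus this coefficient, and the sign simplifies to $(-1)^{m_{0}}$, we get $\mathrm{Res}_{x=\infty}R\,dx=(-1)^{m_{0}-1}\sum_{|d|=M-n}\prod_{i}\binom{m_{i}-1}{d_{i}}a_{i}^{d_{i}}$, as claimed.

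The main obstacle is bookkeeping of signs and conventions rather than any conceptual step. The overall sign of the residues at $a_{k}$ must match the orientation $(a_{i}-x)$ versus $(x-a_{i})$, and the parity $(-1)^{m_{0}-1}$ must come out correctly. Finally, the boundary cases need checking: $m_{0}=0$ gives the extra pole at $0$, $\mu_{0}=\varnothing$ gives no finite poles, and $M-n<0$ makes the sum at $\infty$ empty. In each of these the formula should degenerate consistently with the residue count.
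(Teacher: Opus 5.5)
Your proposal is correct and follows essentially the same route as the paper: both apply the global residue theorem on $\mathbb{P}^1$ to $x^{m_0-1}\prod_{i}(\pm(x-a_i))^{m_i-1}$, reading off the left-hand side from the simple poles at $a_k$ ($k\in\mu_0$), the $\delta_{m_0,0}$ term from the pole at $0$, and the generalized-binomial sum from the expansion at $\infty$. Your orientation $(a_i-x)$ differs from the paper's $(x-a_i)$ only by the global sign $(-1)^{M-m_0-n}$, and it spares you the paper's reordering of indices and the intermediate sign bookkeeping.
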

	
	\begin{proof}

			It suffices to show that the relation holds for generic values of the $a_{i}$'s since both sides are rational functions in them.
			Thus we can assume $a_{i}\neq 0$ for any $1\leq i\leq n$.
			Without loss of generality, we also assume that $ m_1 \leq \cdots \leq m_n, m_1=\cdots=m_s=0, m_{s+1}\geq 1$.			
			Consider the rational function
			\[
			F(z) = \frac{z^{m_0-1} \prod_{i=s+1}^{n} (z-a_i)^{m_i-1}}
			{\prod_{i=1}^{s} (z-a_i)} .
			\]
		When $m_{0}\neq 0$, it has simple poles at $z=a_1,\dots,a_s$ and a possible pole at $z=\infty$, and is holomorphic elsewhere.
			When $m_{0}=0$, it has an additional simple pole at $z=0$.
				By the global residue theorem, one has
			\[
			\sum_{k=1}^{s}\mathrm{res}_{a_k}(Fdz) +\mathrm{res}_{0}(Fdz) + \mathrm{res}_{\infty}(Fdz) = 0.
			\]
			
			The residue at $z=a_k$ is
			\begin{eqnarray*}
				\mathrm{res}_{a_k}(Fdz)
				= \frac{a_k^{m_0-1} \prod_{i=s+1}^{n} (a_k-a_i)^{m_i-1}}
				{\prod_{\substack{i=1\\i\neq k}}^{s} (a_k-a_i)}  
			= (-1)^{\sum_{i=s+1}^{n}(m_i-1)+s-1}\,
				a_k^{m_0-1}
				\prod_{\substack{i=1\\i\neq k}}^{n} (a_i-a_k)^{m_i-1}\,.
			\end{eqnarray*}		
		When $m_{0}=0$, the residue of $F$ at $z=0$ is
		\[
		\mathrm{res}_0 (Fdz) = \frac{\prod_{i=s+1}^{n} (-a_i)^{m_i-1}}
		{\prod_{i=1}^{s} (-a_i)}
		= (-1)^{\sum_{i=s+1}^{n}(m_i-1)-s}\,
	\prod_{i=1}^{n} a_i^{\,m_i-1}\,.	
		\]
			A direct computation gives 
			\[
			\mathrm{res}_{\infty}(Fdz) = (-1)^{M-n-1} \sum_{\substack{d_1,\cdots,d_n \geq 0 \\ \sum_{i=1}^{n}d_i = M-n}}
			\prod_{i=1}^{n} \binom{m_i-1}{d_i} a_i^{d_i} \,.
			\]
			Combining and simplifying the signs, we obtain the desired claim.

	\end{proof}

\subsection{Proof of Theorem \ref{thmdecompositionofdeterminant}}
\label{secproofofthmdecompositionofdeterminant}

In this part, we give the proof of Theorem \ref{thmdecompositionofdeterminant}, regarding the 
expansion of  $\det Z_{n}$ for the matrix $Z_{n}$ given in \eqref{eqndeterminantformula}.

As a preparation, we recall and introduce some shorthand notation.
\begin{dfn}\label{dfnPhisigmakappendix}
	Following Definition \ref{dfnPhisigmak}, for the cyclic permutation
	$\sigma_k = (a_1\cdots a_p)$ with $p\geq 2$, define
	\begin{equation}\label{eqndfnpsiPsi}
		\psi_{\mathrm{m},\sigma_k} := \prod_{t=1}^{p} \widehat{e}_{m_t}(s_{a_t,\sigma_k(a_{t})})
		\,,
		\quad 
		\Psi_{M,\sigma_k}=\sum_{\substack{\mathrm{m}\in \mathbb{N}^{p}\\|\mathrm{m}|=M}}\psi_{\mathrm{m},\sigma_k}\,.
	\end{equation}
	where we keep the convention that $a_{p+t}=a_t$.
	Set also
	\begin{equation}
		\Phi^{\circ}_{0,\sigma_k} := \sum_{\substack{|\mathrm{m}|=|\sigma_{k}| \\ \mathrm{m}\neq \mathbold{1}}}\psi_{\mathrm{m},\sigma_k} \,,\quad 
		\Phi^{\circ}_{1,\sigma_k} := \sum_{\substack{|\mathrm{m}|=|\sigma_k|-1 \\ m_t \leq 1 }} \psi_{\mathrm{m},\sigma_k} \,.
	\end{equation}
	Denote 
	\begin{equation}\label{eqndnfPhirappendix}
		\Phi_{r,\sigma_k} := \widehat{e}_{r}(-\sum_{i\in \sigma_k}s_{i,\sigma_k(i)})
		\sum_{|\mathrm{m}|=|\sigma_{k}|-r} \psi_{\mathrm{m},\sigma_k} \,~\mathrm{for}~ 0\leq r\leq |\sigma_{k}|\,,\quad 
		\Phi_{\geq 1,\sigma_k} := \sum_{r=1}^{|\sigma_{k}|} 	\Phi_{r,\sigma_k} \,.
	\end{equation}
	When $p=1$, say $\sigma_{k}=(a)$, we employ the convention that 
	\begin{equation}\label{eqnconventionPsisigmakappendix}		 	
		\Phi^{\circ}_{0,\sigma_k}=0\,,\quad  \Phi^{\circ}_{1,\sigma_k}=1\,,\quad   \Phi_{\geq 1,\sigma_k}=\widehat{e}_{1}(-s_{a,a})\,.
	\end{equation}	
\end{dfn}

The determinant $\det\, Z_{n}$ can now be organized as follows.
\begin{lemma}\label{lemdetZnintermsofPsisigma}
	Let the notation be as above. Then one has 
	\begin{eqnarray*}
		\mathrm{det}\,  Z_n  
		=
		(-1)^{n-1} \sum_{\substack{\pi_0 \subseteq [n] \\ |\pi_0| \geq 1}} \left(\sum_{\sigma_0 \in \mathrm{cyc} (\pi_0) }\Phi^{\circ}_{1,\sigma_0}\right)\left( \sum_{\substack{\sigma \in \mathfrak{S}_{[n]\backslash \pi_0} \\ \sigma = \sigma_1\cdots\sigma_{\ell} } }    
		\prod_{k=1}^{\ell} (\Phi^{\circ}_{0,\sigma_k} + \Phi_{\geq 1,\sigma_k})\right) \,,
	\end{eqnarray*}
	where $\bar{\partial} \Phi_{\geq 1,\sigma_k}=0$ for any $\sigma_{k}$.
\end{lemma}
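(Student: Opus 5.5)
The plan is to expand $\det Z_n$ combinatorially along the bordering row and column, then resum the resulting cycle products using the loop identity of Proposition \ref{propgeneralpropofloopterm}.

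\emph{Step 1: expand along the border.} For a bordered matrix with zero corner one has
\[
\det\begin{pmatrix}0 & \mathbold{1}^{t}\\ -\mathbold{1} & A\end{pmatrix}=\sum_{i,j}\mathrm{cof}_{ij}(A),
\]
where $A=(\widehat{e}_{1}(s_{k,\ell}))_{k,\ell}$. Equivalently, in \eqref{eqndetZnexpansion} the permutations of $[n]\cup\{0\}$ with $\sigma(0)\neq 0$ decompose into the cycle through $0$, say $(0\,a_{1}\cdots a_{p})$, and a permutation $\sigma$ of the remaining set. The cycle through $0$ contributes a path $a_1\to a_2\to\cdots\to a_p$ in $A$, with edge factors $\widehat{e}_1(s_{a_t,a_{t+1}})$ and the two border entries $1,-1$. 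I would collect all paths with the same underlying set $\pi_0$. Summing over orderings, a path with $p-1$ edges is precisely a term $\psi_{\mathrm{m},\sigma_0}$ with $|\mathrm{m}|=|\pi_0|-1$ and all $m_t\le 1$: the unique zero entry marks where the cycle is broken to attach $0$. Each cyclic order in $\mathrm{cyc}(\pi_0)$ together with a choice of break point gives each ordered path exactly once, so this sum equals $\sum_{\sigma_0}\Phi^{\circ}_{1,\sigma_0}$. The case $|\pi_0|=1$ matches convention \eqref{eqnconventionPsisigmakappendix}. The complementary permutation $\sigma$ contributes $\prod_k\prod_t\widehat{e}_1(s_{a_t,\sigma_k(a_t)})=\prod_k\psi_{\mathbold{1},\sigma_k}$, with fixed points giving $\widehat{e}_1(s_{a,a})$. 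I would check the signs: $\mathrm{sgn}$ of the full permutation against the $\pm1$ border entries should give an overall $(-1)^{n-1}$ times $\prod_k(-1)^{|\sigma_k|-1}$. I expect these cycle signs to be absorbed in the next step.

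\emph{Step 2: resum each cycle factor.} For each cycle $\sigma_k$ with $p\ge2$, apply Proposition \ref{propgeneralpropofloopterm} with $\mathrm{m}=\mathbold{1}$, or equivalently Corollary \ref{corforlooptermofwidehate_1} with $z_i=s_{a_i,\sigma_k(a_i)}$. This gives
\[
\psi_{\mathbold{1},\sigma_k}=-\sum_{r\ge0}\widehat{e}_r\Bigl(-\sum_i s_{i,\sigma_k(i)}\Bigr)\sum_{|\mathrm{m}|=p-r,\ \mathrm{m}\neq\mathbold{1}\text{ if }r=0}\psi_{\mathrm{m},\sigma_k},
\]
that is, $\psi_{\mathbold{1},\sigma_k}=-(\Phi^{\circ}_{0,\sigma_k}+\Phi_{\ge1,\sigma_k})$. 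The $r=0$ part is $\Phi^{\circ}_{0,\sigma_k}$ because $\widehat{e}_0=1$. For a fixed point, $\widehat{e}_1(s_{a,a})=-\widehat{e}_1(-s_{a,a})$ by the parity in Lemma \ref{lemanalyticpropertiesofemz} (i), which agrees with the convention $\Phi^{\circ}_0=0$, $\Phi_{\ge1}=\widehat{e}_1(-s_{a,a})$. The resulting factor $-1$ per cycle should combine with the sign $(-1)^{|\sigma_k|-1}$ from Step 1 to give uniform signs. The main obstacle is this sign bookkeeping, together with confirming that the break-point bijection in Step 1 is exact. I would verify both against the case $n=2$ in Example \ref{exn=2}.

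\emph{Step 3: $\bar\partial$-closedness.} Since $-\sum_i s_{i,\sigma_k(i)}$ is constant in $z$, $\widehat{e}_r$ of it is $\bar\partial$-closed. Then
\[
\bar\partial\Phi_{\ge1,\sigma_k}=\sum_{r\ge1}\widehat{e}_r(\cdot)\,\bar\partial\Psi_{p-r,\sigma_k}.
\]
Each $\Psi_{p-r,\sigma_k}$ is $\bar\partial$-closed by Lemma \ref{lemmaaboutclosednessofdecompositionofloopterm} with $\varepsilon=\mathbold{1}$. Condition \eqref{eqnepsiloncondition} holds here because the $\bar z$-parts of $s$ around a cycle telescope. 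For fixed points $\Phi_{\ge1}$ is constant, so it is $\bar\partial$-closed as well.
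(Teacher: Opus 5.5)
Your proposal is correct and follows the paper's proof essentially step for step: you expand over permutations of $[n]\cup\{0\}$, sum the cycle through $0$ into $\sum_{\sigma_0\in\mathrm{cyc}(\pi_0)}\Phi^{\circ}_{1,\sigma_0}$ via the break-point bijection, rewrite each remaining $\psi_{\mathbf{1},\sigma_k}=-(\Phi^{\circ}_{0,\sigma_k}+\Phi_{\geq 1,\sigma_k})$ by Corollary~\ref{corforlooptermofwidehate_1}, and obtain $\bar{\partial}$-closedness from Lemma~\ref{lemmaaboutclosednessofdecompositionofloopterm}. The only slip is your Step~1 sign: it is $(-1)^{|\pi_0|}\cdot(-1)\cdot\prod_{k}(-1)^{|\sigma_k|-1}=(-1)^{n-1}(-1)^{\ell}$ (using $|\pi_0|+\sum_k|\sigma_k|=n$), not $(-1)^{n-1}\prod_k(-1)^{|\sigma_k|-1}$, and it is this factor $(-1)^{\ell}$ that cancels the $-1$ per cycle from Step~2.
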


\begin{proof}
	Expanding $\det\, Z_n$ according to definition
	\begin{equation}\label{eqndetZnexpansionappendix}
		\mathrm{det} \, Z_n  = \sum_{\sigma \in \mathfrak{S}_{[n]\cup\{0\} } } \mathrm{sgn}(\sigma)\prod_{i=0}^{n} (Z_{n})_{i,\sigma(i)} 
		\,.
	\end{equation}
 There are three types of summands in \eqref{eqndetZnexpansionappendix}.
	\begin{itemize}
		\item Type I. The cycle $\sigma_0$ contains only one element, i.e. $\sigma_0=(0)$. Then
		\[
		\mathrm{sgn}(\sigma) \prod_{i=0}^{n} (Z_{n})_{i,\sigma(i)} = 0.
		\]
		
		\item Type II. The cycle $\sigma_0$ contains two elements. Suppose that $\sigma_0=(0j)$.
		Then
		\[
		\mathrm{sgn}(\sigma) \prod_{i=0}^{n} (Z_{n})_{i,\sigma(i)} = \mathrm{sgn}(\sigma_{0})\cdot 1\cdot (-1)\cdot 
		\prod_{k=1}^{\ell}\mathrm{sgn}(\sigma_{k})\psi_{\mathbold{1},\sigma_k} \,.
		\]
		Applying  Corollary \ref{corforlooptermofwidehate_1} (which holds as long as $|\sigma_{k}|\geq 1$ by the convention in \eqref{eqnconventionPsisigmak} and the parity of $\widehat{e}_{1}$)  to each  $\sigma_{k}$, this gives 
		\[
		-	\mathrm{sgn}(\sigma_{0})(-1)^{\ell} \prod_{k=1}^{\ell}\mathrm{sgn}(\sigma_{k}) ( \Phi^{\circ}_{0,\sigma_k}+ \Phi_{\geq 1,\sigma_k})\,.
		\]
		It follows that the contribution of Type II terms is 
		\begin{eqnarray*}
			\mathrm{Type~ II}= \sum_{j=1}^{n} \sum_{\substack{\sigma \in \mathfrak{S}_{[n]\backslash\{j\}} \\ \sigma = \sigma_1\cdots\sigma_{\ell}} }  \mathrm{sgn}(\sigma) \prod_{k=1}^{\ell} \psi_{\mathbold{1},\sigma_k}
			=
			(-1)^{n-1} \sum_{j=1}^{n} \sum_{\substack{\sigma \in \mathfrak{S}_{[n]\backslash\{j\}} \\ \sigma = \sigma_1\cdots\sigma_{\ell}} } 
			\prod_{k=1}^{\ell} 
			(\Phi^{\circ}_{0,\sigma_k} + \Phi_{\geq 1,\sigma_k}) \,.
		\end{eqnarray*}

		\item  Type III. The cycle $\sigma_0$ contains more than two elements. Suppose that $\sigma_0=(0 a_1\cdots a_p)$ with $p\geq 2$.
		Then
		\[
	\mathrm{sgn}(\sigma)\prod_{i=0}^{n} (Z_{n})_{i,\sigma(i)} = \mathrm{sgn}(\sigma_{0}) \cdot 
		(-\widehat{e}_{1}(s_{a_1,a_2}) \cdots \widehat{e}_1(s_{a_{t-1},a_t}) )\cdot 	\prod_{k=1}^{\ell}	\mathrm{sgn}(\sigma_{k}) \psi_{\mathbold{1},\sigma_k} \,.
		\]
		
			By  the definition of $\psi_{\mathrm{m},\sigma_0'}$, for $\pi_0' \subseteq [n]$ with $|\pi_0'|\geq 2$,  one has 
		\begin{eqnarray*}
			\sum_{\substack{\sigma_0 \in \mathfrak{S}_{\pi_0' \cup \{0\}} \\ \sigma_0=(0 a_1\cdots a_p) }}
			\widehat{e}_{1}(s_{a_1,a_2}) \cdots \widehat{e}_1(s_{a_{p-1},a_p}) 
			= \sum_{\sigma_0' \in \mathrm{cyc} (\pi_0') } \sum_{\substack{|\mathrm{m}|=|\sigma_0'|-1 \\ m_t \leq 1 }} \psi_{\mathrm{m},\sigma_0'} 	= \sum_{\sigma_0' \in \mathrm{cyc} (\pi_0') }\Phi^{\circ}_{1,\sigma_{0}'}\,.
		\end{eqnarray*}	
		This gives 
		\begin{eqnarray*}
			\mathrm{Type~ III}~&=& \sum_{\substack{\pi_0' \subseteq [n] \\ |\pi_0'| \geq 2}} 
			\left(\sum_{\sigma_0' \in \mathrm{cyc} (\pi_0')   }
			\mathrm{sgn}(\sigma_{0}')			\Phi^{\circ}_{1,\sigma_{0}'} \right)
			\left(
			\sum_{\substack{\sigma' \in \mathfrak{S}_{[n]\backslash \pi_0'} \\ \sigma' = \sigma_1'\cdots\sigma_{\ell'}' } }    
			\mathrm{sgn}(\sigma')	
			\prod_{k=1}^{\ell'} \psi_{\mathbold{1},\sigma_k'} \right)\,.		
		\end{eqnarray*}
		We again apply Corollary \ref{corforlooptermofwidehate_1}
		to rewrite each $\psi_{\mathbold{1},\sigma_k'} $. 
	\end{itemize}
	Putting all three types of terms together and applying convention \eqref{eqnconventionPsisigmakappendix}  proves the desired identity.
	The $\bar{\partial}$-closedness of $\Phi_{\geq 1,\sigma_k}=0$ follows from Lemma \ref{lemmaaboutclosednessofdecompositionofloopterm}.
\end{proof}

The summation over permutations formula in Lemma \ref{lemdetZnintermsofPsisigma}
can be expressed as a summation over set partitions.
To see that we introduce some new shorthand notation.
\begin{definition}\label{dfnPsipi}
	Let the notation be as in Definition \ref{dfnPhisigmakappendix}.
	Define 
    \[
    \psi_{\mathrm{m},\pi_k}=\sum_{\sigma_k \in \mathrm{cyc} (\pi_k)}\psi_{\mathrm{m},\sigma_k}\,,\quad 
\Psi_{M,\pi_k}=\sum_{\sigma_k \in \mathrm{cyc} (\pi_k)}\Psi_{M,\sigma_k}\,,
    \]
    and 
	\[
	\Phi^{\circ}_{0,\pi_k} = \sum_{\sigma_k \in \mathrm{cyc} (\pi_k)} \Phi^{\circ}_{0,\sigma_k}\,,\quad 
	\Phi^{\circ}_{1,\pi_k} = \sum_{\sigma_k \in \mathrm{cyc} (\pi_k)} 	\Phi^{\circ}_{1, \sigma_k}
	\,,\quad 
	\Phi_{\geq 1,\pi_k} = \sum_{\sigma_k \in \mathrm{cyc} (\pi_k)} \Phi_{\geq 1,\sigma_k}\,.
	\]	
    	For any partition $\pi=\{\pi_{1},\cdots, \pi_{\ell}\}$, we denote 
	$\Phi^{\circ}_{0,\pi}=\prod_{k=1}^{\ell}\Phi^{\circ}_{0,\pi_{k}}$ and  similarly for 
	$\Phi^{\circ}_{1,\pi}$ and $ \Phi_{\geq 1,\pi}$. 
\end{definition}

\begin{corollary}\label{cordetZnintermsofPsipi}
	Let the notation be as above. Then the following statements hold.
	\begin{enumerate}
		\item [i).]
		One has 
		\[
		\det Z_{n}=
		(-1)^{n-1} \sum_{\substack{\pi_0 \subseteq [n] \\ |\pi_0| \geq 1}} \Phi^{\circ}_{1,\pi_0}\left( \sum_{\substack{\pi \in \Pi_{[n]\backslash \pi_0} \\ \pi =  \{\pi_{1},\cdots,\pi_{\ell} \} }}    
		\prod_{k=1}^{\ell} (\Phi^{\circ}_{0,\pi_k} + \Phi_{\geq 1,\pi_k})\right) \,.
		\]
		\item [ii).]
		One has 
		\begin{eqnarray*}
			\det\,  Z_n =(-1)^{n-1} \sum_{\substack{S \subseteq [n] \\ |S| \geq 1}} 
			\sum_{\substack{\pi_0 \subseteq S \\ |\pi_0| \geq 1}} \Phi^{\circ}_{1,\pi_0} \sum_{\pi' \in \Pi_{S\backslash \pi_0}} \Phi^{\circ}_{0,\pi'} 
			\sum_{\pi'' \in \Pi_{[n]\backslash S}} \Phi_{\geq 1,\pi''}\,.
		\end{eqnarray*}
	\end{enumerate} 
\end{corollary}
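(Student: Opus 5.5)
Part (i) is obtained from Lemma \ref{lemdetZnintermsofPsisigma} by grouping permutations according to their underlying set partitions. In Lemma \ref{lemdetZnintermsofPsisigma}, the sum over $\sigma_0\in\mathrm{cyc}(\pi_0)$ is by definition $\Phi^{\circ}_{1,\pi_0}$, so I only need to treat the inner sum over $\sigma\in\mathfrak{S}_{[n]\setminus\pi_0}$. Each such $\sigma$ is determined by two pieces of data: the partition $\pi=\{\pi_1,\dots,\pi_\ell\}$ of $[n]\setminus\pi_0$ into the supports of its cycles, and, for each $k$, a cyclic ordering $\sigma_k\in\mathrm{cyc}(\pi_k)$ of $\pi_k$. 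This correspondence is a bijection, and singletons are included with $\mathrm{cyc}(\{a\})$ consisting of one element. Because the summand $\prod_k(\Phi^{\circ}_{0,\sigma_k}+\Phi_{\geq 1,\sigma_k})$ factorizes over the cycles, summing over the independent choices of $\sigma_k$ turns the product of sums into $\prod_k(\Phi^{\circ}_{0,\pi_k}+\Phi_{\geq1,\pi_k})$, using Definition \ref{dfnPsipi}. For singleton blocks the conventions \eqref{eqnconventionPsisigmakappendix} give $\Phi^{\circ}_{0}=0$ and $\Phi_{\geq1}=\widehat e_1(-s_{a,a})$, which is consistent with what the lemma produced.

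For part (ii), I start from (i) and expand the product $\prod_{k=1}^{\ell}(\Phi^{\circ}_{0,\pi_k}+\Phi_{\geq1,\pi_k})$ over subsets $K\subseteq\{1,\dots,\ell\}$: the blocks indexed by $K$ receive the factor $\Phi^{\circ}_0$, and the remaining blocks receive $\Phi_{\geq1}$. Write $\pi'=\{\pi_k\}_{k\in K}$ and $\pi''=\{\pi_k\}_{k\notin K}$, and set $S=\pi_0\cup\bigcup_{k\in K}\pi_k$. Then the data $(\pi_0,\pi,K)$ correspond bijectively to the data $(S,\pi_0\subseteq S,\pi'\in\Pi_{S\setminus\pi_0},\pi''\in\Pi_{[n]\setminus S})$, where $|\pi_0|\geq1$ forces $|S|\geq1$. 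In the other direction, $\pi=\pi'\sqcup\pi''$ and $K$ records which blocks belong to $\pi'$. Empty partitions, coming from $S=\pi_0$ or $S=[n]$, contribute the empty product $1$, and both sides use this convention. Reindexing the sum by this bijection gives exactly the stated triple sum.

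None of these steps involves an analytic difficulty; all of them are reindexing. The one point I need to check carefully is that the bijections are genuinely one-to-one, including the degenerate cases: singleton blocks (where $\mathrm{cyc}$ has one element and the conventions \eqref{eqnconventionPsisigmakappendix} apply) and empty index sets (where the empty-product and empty-partition conventions must agree on the two sides). It is also worth confirming that no sign is lost in passing from permutations to partitions. This holds because Lemma \ref{lemdetZnintermsofPsisigma} already absorbed $\mathrm{sgn}(\sigma)$ into the global factor $(-1)^{n-1}$, so every summand appears with coefficient $+1$ and the regrouping is sign-free.
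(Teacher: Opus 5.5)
Your proposal is correct and follows essentially the same route as the paper. Part (i) identifies each permutation of $[n]\setminus\pi_0$ with a set partition together with a cyclic ordering in $\mathrm{cyc}(\pi_k)$ on each block, and part (ii) expands the product over subsets of blocks and reindexes by $S=\pi_0\cup\bigcup_{k\in K}\pi_k$. You are also more explicit than the paper about the singleton and empty-partition conventions and about why no signs appear, points the paper's write-up leaves implicit.
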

\begin{proof}
	\begin{enumerate}
		\item [i).]
		Suppose the  underlying set of $\sigma_{k}$ is  $\pi_{k}$, then the set of 
		possible cycles $\sigma_{k}$ is in bijection with $\mathrm{cyc} (\pi_k)$.
		The desired claim then follows immediately from Lemma \ref{lemdetZnintermsofPsisigma}.
		\item [ii).]
		
		Suppose $\pi_{0}\subseteq [n]$
		and $\pi=\{\pi_{1},\cdots, \pi_{\ell}\}\in \Pi_{[n]\setminus \pi_{0}}$.
		We use the notation $\pi=(\pi',\pi'')$ to denote a partition of 
		$[\ell]$ into an ordered tuple $(I, [\ell]-I)$, with 
		\[
		\pi'=\{\pi_{k}~|~k\in I\}\,,\quad 
		\pi''=\{\pi_{k}~|~k\in [\ell]\setminus I\}\,.
		\]
		Correspondingly, we denote 
		\[
		\Phi^{\circ}_{0,\pi'} = \prod_{k \in I} \Phi^{\circ}_{0,\pi_k'}\,,
		\quad 
		\Phi_{\geq 1,\pi''} = \prod_{k \in [\ell]\setminus I} \Phi_{\geq 1,\pi_k}\,.
		\]
		Then we have 
		\begin{eqnarray*}
			\sum_{\substack{\pi \in \Pi_{[n]\backslash \pi_{0}} \\ \pi = \{\pi_1,\cdots,\pi_\ell\}} } \prod_{k=1}^{\ell} 
			( \Phi^{\circ}_{0,\pi_k} + \Phi_{\geq 1,\pi_k} )
			&=& \sum_{\substack{\pi \in \Pi_{[n]\backslash \pi_{0}} \\ \pi = \{\pi_1,\cdots,\pi_l\}} } \sum_{I \subseteq [\ell]}\left( \prod_{k \in I} \Phi^{\circ}_{0,\pi_{k}} \prod_{k \in [\ell]\backslash I}\Phi_{\geq 1,\pi_{k}}\right) \\
			&=& \sum_{\pi \in \Pi_{[n]\backslash \pi_{0}}}  
			\sum_{\pi = (\pi',\pi'')}
			\Phi^{\circ}_{0,\pi'} \Phi_{\geq 1,\pi''}
			\\
			&=& \sum_{S \subseteq [n]\backslash \pi_{0}}
			\sum_{\pi' \in \Pi_S  }
			\sum_{\pi'' \in \Pi_{[n]\backslash (\pi_{0} \cup S) }}
			\Phi_{0,\pi'}^{\circ} \Phi_{\geq 1,\pi''}\,. \\
		\end{eqnarray*}
		It follows from (i) that 
		\begin{eqnarray*}
			\det\, Z_n &=&  (-1)^{n-1} \sum_{\substack{\pi_0 \subseteq [n] \\ |\pi_0| \geq 1}} \Phi^{\circ}_{1,\pi_0}
			\sum_{S \subseteq [n]\backslash \pi_{0}}
			\sum_{\pi' \in \Pi_S  }
			\sum_{\pi'' \in \Pi_{[n]\backslash (\pi_{0} \cup S) }}
			\Phi_{0,\pi'}^{\circ} \Phi_{\geq 1,\pi''}\\
			&=& (-1)^{n-1} \sum_{\substack{S \subseteq [n] \\ |S| \geq 1}} 
			\sum_{\substack{\pi_0 \subseteq S \\ |\pi_0| \geq 1}} \Phi^{\circ}_{1,\pi_0} \sum_{\pi' \in \Pi_{S\backslash \pi_0}} \Phi_{0,\pi'}^{\circ} 
			\sum_{\pi'' \in \Pi_{[n]\backslash S}} \Phi_{\geq 1,\pi''} \,.
		\end{eqnarray*}
	\end{enumerate} 
\end{proof}

We next reorganize the summation formula in Corollary \ref{cordetZnintermsofPsipi} according to the behavior under the $\bar{\partial}$-operator. We first establish the following technical result.

\begin{lem}\label{lemmessytermsindetZnexpansion}
	For any $S \subseteq [n], |S| \geq 1$, one has
	\[
	\sum_{\substack{\pi_0 \subseteq S \\ |\pi_0| \geq 1}} \left(\sum_{\sigma_0 \in \mathrm{cyc}(\pi_0)} \Phi^{\circ}_{1,\sigma_0}\right)\left(
	\sum_{\substack{\sigma' \in \mathfrak{S}_{S\backslash \pi_0} \\ \sigma' = \sigma_1'\cdots\sigma_{\ell'}'}} 
	\prod_{k=1}^{\ell'} \Phi^{\circ}_{0,\sigma_k'} \right)
	= \sum_{\sigma \in \mathrm{cyc} (S)} \sum_{|\mathrm{m}|=|S|-1} \psi_{\mathrm{m},\sigma}\,.
	\]
	In particular, it is $\bar{\partial}$-closed.
\end{lem}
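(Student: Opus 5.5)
The plan is to reduce the identity to a purely combinatorial statement about decorated path forests on $S$, and then prove a counting identity of cycle-lemma type.

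\emph{Step 1: every term is a product over a path forest.} In every monomial on either side, the factor on a cycle has at least one vanishing decoration. For $\Phi^{\circ}_{1,\sigma_0}$ one has $m_t\le 1$ and $|\mathrm{m}|=|\sigma_0|-1$, so exactly one $m_t=0$ and all others equal $1$. For $\Phi^{\circ}_{0,\sigma'_k}$ one has $|\mathrm{m}|=|\sigma'_k|$ and $\mathrm{m}\neq\mathbold{1}$, which forces some $m_t=0$. The fixed-point convention $\Phi^{\circ}_{0,(a)}=0$ fits this pattern. On the right-hand side $|\mathrm{m}|=|S|-1$, so again some $m_t=0$. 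Since $\widehat{e}_0=1$, each monomial equals $\widehat{e}_F:=\prod_{e}\widehat{e}_{m(e)}(s_e)$ for a forest $F$ of vertex-disjoint directed paths $P_1,\dots,P_r$ covering $S$, with positive decorations; isolated vertices count as trivial paths. Conversely, a cycle on a set of paths is obtained by choosing a cyclic order of those paths and joining consecutive paths by $0$-decorated edges. For a path $P$ with $j$ vertices and decoration sum $d$, set its excess $x(P):=d-(j-1)\ge 0$.

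\emph{Step 2: translate both sides into counts.} The right-hand side counts, for each $F$, the cyclic orders of all $r$ paths, subject to the condition $|\mathrm{m}|=|S|-1$. This condition is equivalent to $\sum_i x(P_i)=r-1$. So the coefficient of $\widehat{e}_F$ is $(r-1)!$ if the total excess is $r-1$, and $0$ otherwise.

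On the left-hand side, $\sigma_0$ with its single zero is exactly one path $P_{i_0}$ with all decorations $1$, i.e.\ a path with $x(P_{i_0})=0$. Any path of excess $0$ has all decorations $1$, and every such path arises from exactly one $\sigma_0\in\mathrm{cyc}(\pi_0)$. The remaining paths are grouped into cycles $\sigma'_k$. The condition $|\mathrm{m}|=|\sigma'_k|$ means that a cycle built from $c$ paths has total excess exactly $c$. The condition $\mathrm{m}\ne\mathbold{1}$ is automatic, because a cycle contains a zero edge; the fixed-point convention excludes exactly the trivial one-path cycle of excess $0$, which violates ``excess $=c$'' anyway. So the coefficient of $\widehat{e}_F$ on the left is
\[
\#\Big\{(i_0,\rho)~\Big|~x(P_{i_0})=0,\ \rho\in\mathfrak{S}_{[r]\setminus\{i_0\}},\ \textstyle\sum_{i\in\text{cycle}}(x(P_i)-1)=0\ \text{for every cycle of }\rho\Big\}.
\]
Summing over all cycles shows this is nonzero only if the total excess is $r-1$, consistent with the right-hand side.

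\emph{Step 3: the key counting identity (main obstacle).} Put $y_i=x(P_i)-1\ge -1$, so that $\sum y_i=-1$. One must show that the number of pairs (an index $i_0$ with $y_{i_0}=-1$, and a permutation of the remaining indices all of whose cycles have $y$-sum $0$) equals $(r-1)!$, the number of cyclic orders of $[r]$.

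I would first try a bijection built from the cycle lemma. Given a cyclic order, the cycle lemma provides a unique rotation $w=(w_1,\dots,w_r)$ whose partial sums stay $\ge 0$ until the final step, and that final step is the letter $i_0$ with $y=-1$. The prefix $w_1\cdots w_{r-1}$ then has sum $0$, and I would cut it into cycles with $y$-sum $0$ via a Foata-type left-to-right minimum (or record-return) decomposition, checking that this construction is invertible.

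If the bijection proves fiddly, the fallback is generating functions. Give each index a formal weight and use the exponential formula: the left-hand side becomes the coefficient extraction $[t^{-1}]$ of $\big(\sum_{y_i=-1}(\cdot)\big)\exp\big(\sum_{\text{cycles}}[t^0](\cdots)\big)$. Compare this with $(r-1)!=[t^{-1}]$ of the cyclic-arrangement series, which is a Lagrange-inversion identity. Alternatively, argue by induction on $r$ by removing an index with $y=-1$.

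Once the coefficient of each $\widehat{e}_F$ agrees, the identity follows. The final claim, $\bar\partial$-closedness, is then immediate from Lemma \ref{lemmaaboutclosednessofdecompositionofloopterm} applied to each $\Psi_{|S|-1,\sigma}$ with $\varepsilon=\mathbold{1}$.
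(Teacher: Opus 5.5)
\textbf{Gap: the counting identity in Step 3 is not proved.} Your Steps 1 and 2 are correct and are exactly the paper's reduction. You expand both sides in the monomials $\widehat{e}_{\Gamma}$, note that every component is a chain, and read off $(r-1)!$ on the right. You then identify the left coefficient with the number of pairs $(i_0,\rho)$; your $y_i=x(P_i)-1$ is the paper's $m(\Gamma_k)-|V(\Gamma_k)|$. Step 3 carries all of the nontrivial content (it is Lemma \ref{lemmaforcalculatingcoefficients}). None of your three routes is carried out, and two of them do not work as stated.
\begin{itemize}
\item The cycle-lemma route needs, for each fixed $i_0$, a bijection between orderings of $[r]\setminus\{i_0\}$ with nonnegative prefix sums and permutations with zero-sum cycles. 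Cutting at returns to zero is neither injective nor surjective. Take $a,b,c,d$ with values $1,-1,1,-1$: the words $abcd$ and $cdab$ both give $(a\,b)(c\,d)$, and no word gives the $4$-cycle $(a\,b\,c\,d)$.
\item Cutting at left-to-right minima of the labels does not in general produce zero-sum blocks.
\item Removing an index with $y=-1$ leaves values summing to $0$, so the inductive hypothesis no longer applies.
\item The generating-function route names the needed identity but does not prove it.
\end{itemize}

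\textbf{The missing idea: the paper's merging induction on $r$.}
\begin{itemize}
\item If no $y_i$ is positive, then exactly one equals $-1$ and the rest vanish. So $i_0$ is forced, $\rho$ is arbitrary, and the count is $(r-1)!$.
\item Otherwise fix $q$ with $y_q>0$. In every admissible pair, $q\neq i_0$ and $q$ is not a fixed point of $\rho$, so $s:=\rho(q)$ is one of the other $r-1$ indices.
\item For each such $s$, replace $q,s$ by a single index of value $y_q+y_s\geq 0$. This is a bijection onto the admissible pairs for $r-1$ values that still satisfy $y\geq -1$ and $\sum y=-1$. The merged index can never serve as $i_0$. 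If it is a fixed point, its value is $0$, and it splits back into the $2$-cycle $(q\,s)$.
\item Hence $N_r=(r-1)N_{r-1}=(r-1)!$.
\end{itemize}

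\textbf{Your generating-function fallback can also be completed.} Put $X=\sum_i u_i t^{y_i}$ over $\mathbb{Q}[u_1,\dots,u_r]/(u_1^2,\dots,u_r^2)$.
\begin{itemize}
\item The left count is the coefficient of $u_1\cdots u_r$ in $([t^{-1}]X)\exp([t^0]\log\frac{1}{1-X})$.
\item $(r-1)!$ is the coefficient of $u_1\cdots u_r$ in $[t^{-1}]\log\frac{1}{1-X}$.
\item These agree because $1-X=(1-\alpha t^{-1})B(t)$ with $B$ a power series in $t$, and both expressions then equal $\alpha$.
\end{itemize}
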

\begin{proof}
	The $\bar{\partial}$-closedness of the right hand side follows from Lemma \ref{lemmaaboutclosednessofdecompositionofloopterm}. Thus, it is automatically $\bar{\partial}$-closed if  the identity is proved. 

	We expand the left hand side of the desired identity as a polynomial in the functions $\widehat{e}_{m}(s_{i,j})$.
	Consider the labeled decorated graphs for the monomials in the expansions, as in Definition \ref{dfndivisorgraph}.
	For the exceptional case where the monomial is $\widehat{e}_{0}(s_{j,j})=1$, corresponding to $\pi_{0}=\{j\}$ for some $j$, we associate a single vertex $j$ without edges. 		
	Any monomial (denoted by $\widehat{e}_{\Gamma}$) in the expansion has a uniquely determined graph (denoted by $\Gamma$) corresponding to it.		
	For example, any graph $\gamma_{0}$ arising from a monomial in $\Phi^{\circ}_{1,\sigma_0}$ must be a chain with the decoration on each edge being $1$, or a single vertex when $|\pi_{0}|=1$.
	
	Denote the set of such graphs by $\mathcal{G}^{\circ}$.
	These graphs satisfy the following conditions.

	\begin{itemize}
		\item  The properties in  Lemma \ref{lemdecorateddivisorgraphs}.

		\item Following the notation in Definition \ref{dfngraphterminologies}, 	for each 
		graph $\Gamma=\{\Gamma_{1},\cdots, \Gamma_{\ell}\}\in \mathcal{G}^{\circ}$,
		one has
		\begin{equation}
			\sum_{k=1}^{\ell} |V(\Gamma_{k})| = |S|\,,
			\quad 
			\sum_{k=1}^{\ell} m(\Gamma_{k}) = |S|-1 \,.
		\end{equation}
		In particular,
		\begin{equation}\label{eqnconstraintonGamma}
			m(\Gamma)-|V(\Gamma_{\Gamma})|=
			\sum_{k=1}^{\ell} (m(\Gamma_{k}) - |V(\Gamma_{k})|) = -1 \,.
		\end{equation}
		\item There must exist a  connected component (corresponding to the aforementioned graph $\gamma_{0}$ arising from monomials in $\Phi^{\circ}_{1,\sigma_0}$) which is either a chain with the decoration on each edge being $1$  or is a single vertex. 
		In fact, the existence  follows from \eqref{eqnmGammakminusVGammak}, \eqref{eqnconstraintonGamma} and  Lemma \ref{lemdecorateddivisorgraphs} (ii). 
	\end{itemize}	
	It is straightforward to check that these properties characterize all possible graphs arising from the monomials in the expansions of both the left hand side and right hand side.

	We denote the resulting expansions of the left and right hand sides as
	\[
	\sum_{\Gamma\in \mathcal{G}^{\circ}} n_{\Gamma}^{\mathrm{left}} \widehat{e}_{\Gamma}\,,\quad 	\sum_{\Gamma\in \mathcal{G}^{\circ}} n_{\Gamma}^{\mathrm{right}} \widehat{e}_{\Gamma}  \,.
	\]
	It suffices to show that  for any graph  $\Gamma\in \mathcal{G}^{\circ}$ one has 
	\begin{equation}\label{eqnmessytermsindetZnexpansion}
		n_{\Gamma}^{\mathrm{left}} =	n_{\Gamma}^{\mathrm{right}} \,.
	\end{equation}
	
	Consider the right hand side first.
	The outer summation over $\sigma\in \mathrm{cyc} (S)$
	corresponds to the summation over
	all possible configurations of labeled beads (with the labelings from $S$) placed on a circle, while the inner one corresponds to separating the beads on the circle into $\ell$ blocks.
	Each block gives rise to a connected component of the resulting graph $\Gamma\in \mathcal{G}^{\circ}$. Since $\widehat{e}_{\Gamma}$ is independent of the positions of the blocks on the circle, 
	for each $\Gamma$ there are $(\ell-1)!$ ways of expressing $\widehat{e}_{\Gamma}$ from the double sum. This gives 
	\[
	n_{\Gamma}^{\mathrm{right}} = (\ell-1)!\,.
	\]
	
	We next  consider $n_{\Gamma}^{\mathrm{left}}$.	
	By construction,	
	$n_{\Gamma}^{\mathrm{left}}$ is the number of ways of grouping the $\ell$ connected components $\Gamma_{1},\cdots, \Gamma_{\ell}$ in $\Gamma$ in the following way.
	\begin{itemize}
		\item Since $\Gamma\in \mathcal{G}^{\circ}$, among the connected components, there must exist a block say $\Gamma_{k_{0}}$ that is a chain with decoration being $1$ or a single vertex. We designate it as $\gamma_{0}$, corresponding to monomials in $\Phi^{\circ}_{1,\sigma_0}$. By the previous discussions, any $\Gamma_{k}$ satisfying $m(\Gamma_{k})-|V(\Gamma_{k})|=-1$  can be used to construct $\gamma_{0}$.
		
		\item By \eqref{eqnconstraintonGamma}, the rest of the $\ell-1$ connected components  now satisfy
		\[
		\sum_{k: k\neq k_{0}} (m(\Gamma_{k})-|V(E_{\Gamma_{k}})|)=0\,.
		\]  
		We group the rest of the connected components $\Gamma_{1},\cdots, \Gamma_{k_{0}-1},\Gamma_{k_{0}+1}, \Gamma_{\ell}$
		into  a set of $\ell'$ tuples $\{\gamma_{1},\cdots, \gamma_{\ell'}\}$, where
		\[
		\gamma_{1}=(\Gamma_{i})_{i\in I_{1}}\,\cdots\,,
		\gamma_{k}=(\Gamma_{i})_{i\in I_{k}}\,,\cdots\,,
		\gamma_{\ell'}=(\Gamma_{i})_{i\in I_{\ell'}}\,,\quad 
		\sqcup_{k=1}^{\ell'}I_{k}=	[\ell]-\{k_{0}\}\,.	
		\]
		Two tuples
		$\gamma_{i}=(\Gamma_{i_{1}},\cdots, \Gamma_{i_{m}}),\tilde{\gamma}_{i}=(\Gamma_{\tilde{i}_{1}},\cdots, \Gamma_{\tilde{i}_{n}})$ are considered to be the same if 
		$\{i_{1},\cdots, i_{m}\}=\{\tilde{i}_{1},\cdots, \tilde{i}_{n}\}$ and
		the cyclic permutations $(i_{1}\cdots i_{m})$, $(\tilde{i}_{1}\cdots \tilde{i}_{n})$ are the same.
		Each group is required to satisfy
		\[
		m(\gamma_{k})-|V(E_{\gamma_{k}})|=\sum_{i\in I_{k}}	m(\Gamma_{i})-|V(E_{\Gamma_{i}})|
		=0\,,~ k=1,2,\cdots, \ell'\,.
		\]
		The graph $\gamma_{k}$ corresponds to the (possibly disconnected) graph associated to monomials in $\Phi^{\circ}_{0,\sigma_k}, k=1,2,\cdots,\ell'$, respectively.
	\end{itemize} 				  
	
	By Lemma \ref{lemmaforcalculatingcoefficients} proved below, one has 
	\begin{equation}\label{eqnnGammaleft}
	n_{\Gamma}^{\mathrm{left}} = (\ell-1)!\,.
	\end{equation}
	This proves \eqref{eqnmessytermsindetZnexpansion}, and thus the claim.	    
\end{proof}

The following gives the proof of the identity \eqref{eqnnGammaleft}.

\begin{lemma}\label{lemmaforcalculatingcoefficients}
	Suppose the sequence of integers $a_1,\cdots,a_\ell$ satisfies 
	\begin{equation}\label{eqnsummationofdefectsrelation}
		a_{k}\geq -1\,,\quad 
		\sum_{k=1}^{\ell} a_k = -1\,.
	\end{equation}
	Let $N$ be the number of ways of grouping $a_1,\cdots,a_\ell$ into  a set of $\ell'+1$ tuples $\{\gamma_{0},\gamma_{1},\cdots, \gamma_{\ell'}\}$, where 
	
	\begin{itemize}
		\item the tuple $\gamma_{0}$ consists of a single element whose value is $-1$;
		\item 
		for each tuple $\gamma_{k},k=1,2,\cdots, \ell'$, one has $\sum_{a\in \gamma_{k}}a=0$;
		\item two tuples
		$\gamma_{i}=(a_{i_{1}},\cdots, a_{i_{m}}),\tilde{\gamma}_{i}=(a_{\tilde{i}_{1}},\cdots, a_{\tilde{i}_{n}})$ are considered to be the same if 
		$\{i_{1},i_{2},\cdots, i_{m}\}=\{\tilde{i}_{1},\cdots, \tilde{i}_{n}\}$ and
		the cyclic permutations $(i_{1}\cdots i_{m})$, $(\tilde{i}_{1}\cdots \tilde{i}_{n})$ are the same.
	\end{itemize}
	Then $N=(\ell-1)!$.
\end{lemma}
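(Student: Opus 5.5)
The plan is a bijective count: I will show that the groupings counted by $N$ are in bijection with a set of linear words of size $(\ell-1)!$. Since the $a_k$ carry distinct labels $k\in[\ell]$, I work with words $w=x_1x_2\cdots x_\ell$ that are arrangements of the labels $1,\dots,\ell$, and I write $\Sigma_j(w)=\sum_{i\le j}a_{x_i}$ for their partial sums.

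First I invoke the cycle lemma for \eqref{eqnsummationofdefectsrelation}. Since every $a_k\ge -1$ and $\sum_k a_k=-1$, each cyclic class of $\ell$ rotations of a word contains exactly one rotation with $\Sigma_j\ge 0$ for all $j\le \ell-1$. The standard proof takes the rotation that starts right after the last position where the partial sums attain their global minimum. Because the labels are distinct, the $\ell$ rotations of a word are distinct. Hence the set $\mathcal{W}$ of such "good" words has cardinality $\ell!/\ell=(\ell-1)!$. For $w\in\mathcal{W}$ we have $\Sigma_{\ell-1}\ge 0$ and $\Sigma_\ell=-1$, and $a_{x_\ell}\ge -1$ then forces $a_{x_\ell}=-1$ and $\Sigma_{\ell-1}=0$. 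So the last letter is a legitimate choice of $\gamma_0$. What remains is a word $x_1\cdots x_{\ell-1}$ on the other labels whose partial sums are all $\ge 0$ and whose total is $0$.

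Second, I need a bijection $\beta$. Fix a label $k_0$ with $a_{k_0}=-1$. The domain of $\beta$ is the set of collections $\{\gamma_1,\dots,\gamma_{\ell'}\}$ of cyclic tuples partitioning $[\ell]\setminus\{k_0\}$ with each $\sum_{a\in\gamma_k}a=0$, i.e.\ permutations of $[\ell]\setminus\{k_0\}$ all of whose cycles have zero sum. The target is the set of words on $[\ell]\setminus\{k_0\}$ with nonnegative partial sums and total $0$. Granting such a $\beta$, summing over $k_0$ gives $N=|\mathcal{W}|=(\ell-1)!$.

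The map $\beta$ is Foata-type. Let $\gamma$ be a zero-sum cycle and read it linearly from some starting point. The partial sums are then all $\ge 0$ exactly when the start sits right after a position where the cyclic partial sums attain their minimum; call such starts admissible. Every zero-sum cycle has at least one admissible start. Among the admissible starts of $\gamma$ I choose the one with the largest label, and call it $\max(\gamma)$. I write each cycle from its chosen start, and then concatenate the cycles in increasing order of $\max(\gamma_k)$. The resulting word has nonnegative partial sums and total $0$, since each block does and each block sums to $0$.

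To invert $\beta$, I look at the zero-return positions of the word (positions $j$ with $\Sigma_j=0$) together with the position $0$. Each position immediately after one of these is a potential block start. I cut exactly before those potential starts whose label is a left-to-right maximum among the labels of potential starts.

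The step I expect to be the main obstacle is checking that this cutting rule recovers the cycles. The point to verify is that inside one block the potential starts are precisely the admissible starts of that cycle. This holds because a position inside a block is a zero return of the whole word iff it is a minimum of that cycle's partial sums, the minimum being $0$ when read from an admissible start. Given this, the other potential starts inside a cycle have labels below its chosen start. Every potential start in earlier blocks is at most the maximum of its own cycle, which is smaller than the next cycle's maximum. So the cuts fall exactly at the cycle starts, and cyclically closing each block gives back the $\gamma_k$. The last thing to confirm is that "same tuple up to cyclic rotation" in the definition of $N$ matches counting cycles, so that no grouping is double counted; this identification gives $N=(\ell-1)!$, which is \eqref{eqnnGammaleft}.
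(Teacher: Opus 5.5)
Your argument is correct, and it takes a genuinely different route from the paper.

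\textbf{The paper's route.} The paper argues by induction on $\ell$. If some $a_\ell>0$, it cannot form a tuple alone. One picks its cyclic successor $a_s$ inside its tuple ($\ell-1$ choices) and merges the pair into one entry $a_\ell+a_s\ge 0$. The shorter sequence still satisfies \eqref{eqnsummationofdefectsrelation}, and its groupings correspond to those of the original in which $a_s$ follows $a_\ell$, giving $(\ell-1)\cdot(\ell-2)!$. If no $a_k$ is positive, the sequence is one $-1$ and $\ell-1$ zeros, and the groupings are just the elements of $\mathfrak{S}_{[\ell-1]}$.

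\textbf{Your route.} You build an explicit bijection instead. The cycle lemma picks, from each of the $(\ell-1)!$ cyclic arrangements of the labels, one good word. Its last letter is the forced $\gamma_0$. Your Foata-type map, restricted to admissible starts, turns the remaining nonnegative zero-sum word into the remaining zero-sum cycles.

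\textbf{Comparison.} The induction is shorter and needs only the merging observation. Your bijection costs more verification but explains the number structurally: it matches groupings with cyclic arrangements of the $\ell$ components. That is exactly the ``beads on a circle'' count behind $n_{\Gamma}^{\mathrm{right}}=(\ell-1)!$ in the proof of Lemma \ref{lemmessytermsindetZnexpansion}. So it gives a direct combinatorial reason for $n_{\Gamma}^{\mathrm{left}}=n_{\Gamma}^{\mathrm{right}}$.

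\textbf{Two small repairs.}

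First, with total $-1$ and nonnegative proper partial sums, the good rotation starts right after the \emph{first} index $p\in\{0,\dots,\ell-1\}$ where $\Sigma_p$ is minimal. You need $\Sigma_j\ge\Sigma_p$ for $j>p$ but $\Sigma_j>\Sigma_p$ for $j<p$. ``Last'' is the rule for the total-$+1$, positive-partial-sum version, and it fails here. For example, with $(-1,1,-1,1,-1)$ the rotation after the last minimum begins $1,-1,-1$. The count $(\ell-1)!$ is unaffected.

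Second, you only check $\beta^{-1}\circ\beta=\mathrm{id}$. Bijectivity also needs every nonnegative zero-sum word to lie in the image of $\beta$. This follows from the same observation:
\begin{enumerate}[i).]
\item The blocks cut from an arbitrary such word begin and end at zero returns, so they are zero-sum cycles read from admissible starts.
\item The potential starts inside a block are exactly that cycle's admissible starts.
\item The running maximum throughout a block is its first label, so that label is the largest admissible start, and the block leaders increase.
\end{enumerate}
Hence $\beta$ applied to the recovered cycles gives back the word.
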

\begin{proof}
	We prove by induction on the number of integers $\ell$.
	
	For $\ell=1$, one has $a_1=-1$ and the claim holds trivially in this case.
	For $\ell=2$, we have that $a_1=0,a_2=-1$, or $a_1=-1,a_2=0$.
	There is $1$ possible way of grouping for each scenario.  The lemma also holds true.
	
	Assuming the claim holds when the number of the integers  is $\ell-1$. Now let's consider the case when the number is $\ell$.
	There are two cases.
	\begin{itemize}
		\item Case 1.
		There exists at least one $a_k>0$. 
		By a relabeling of indices if needed, we can assume that $a_\ell>0$.  Then 
		$a_\ell$ can not be grouped separately to form a tuple $\gamma_{0}$ or $\gamma_{k}$.  Using the identification of tuples under cyclic permutations, we 
		can assume that in the tuple containing $a_{\ell}$,  the term after $a_\ell$ is $a_s$.  That is, the tuple takes the form $(\cdots a_{\ell}a_{s}\cdots )$. There are $\ell-1$ possibilities for $a_{s}$ chosen from $\{a_{1},\cdots ,a_{\ell}\}\setminus \{a_{\ell}\}$.
			
		For any choice  of $a_{s}$ one has 
		$a_{\ell}+a_{s}\geq a_{s}\geq -1$. 
		We then replace the sequence $a_{1},\cdots ,a_{\ell}$ by the new sequence 
		$a_{1},\cdots ,\check{a}_{s},\cdots, a_{\ell-1},\check{a}_{\ell}, a_{\ell}+a_{s}$, here the notation $\check{a}_{s},\check{a}_{\ell}$ mean the omission of $a_{s},{a}_{\ell}$.		
		According to the inductive hypothesis,
				there are $(\ell-2)!$ possible ways to group the new sequence.		
		Therefore, the total number of possible ways of grouping the original sequence $a_1,\cdots,a_\ell$ is
		\[
		(\ell-1) \times (\ell-2)! = (\ell-1)! \,.
		\]
		\par
		\item Case 2.
		There is no $a_k$ satisfying $a_{k} > 0$. By the condition $\sum_{k=1}^{\ell} a_k = -1$, we see that all $a_{k}$'s take the value $0$ except for one which takes the value $-1$. Again by a relabeling of indices if needed, we can assume that $ a_1 = -1$ and $ a_k = 0, 2 \leq k \leq \ell$.
		Hence $a_1$ must be grouped separately to form $\gamma_{0}$. Now one  needs to group the remaining $\ell-1$ terms into several tuples. The ways of grouping are in one-to-one correspondence with the standard cycle decompositions of permutations in $\mathfrak{S}_{[\ell-1]}$.
		This shows
		\[
		N=|\mathfrak{S}_{[\ell-1]}| = (\ell-1)! \,.
		\]
	\end{itemize}

\end{proof}

We can finally prove Theorem \ref{thmdecompositionofdeterminant} that expresses $	\det\,  Z_n$ as a summation of the building blocks $\Psi_{M,I}$.
We recall its statement for reference.
\begin{thm} \label{thmdecompositionofdeterminantrecalled}
	
	Let the notation be as above.
	\begin{enumerate}[i).]
		\item 
		
		One has
		\begin{equation}\label{finalexpansionofdetZnappendix}
			\det\,  Z_n = (-1)^{n-1} 
			\sum_{\substack{S \subseteq [n] \\ |S| \geq 1}}  \Psi_{|S|-1,S}
			\sum_{\pi \in \Pi_{[n]\backslash S}} \Phi_{\geq 1,\pi} \,.
		\end{equation}
		\item 
		One has	\begin{equation}\label{eqnTngwpureweightstructuregeneralappendix}
			[(\det Z_{n})\Omega]_{\mathrm{Hodge}}=
			\left(\sum_{j\in [n]}\sum_{\pi\in \Pi_{[n]\setminus \{j\}}}
			\prod_{k=1}^{\ell} \sum_{\sigma_{k}\in \mathrm{cyc}(\pi_{k})} \widehat{e}_{|\pi_{k}|}(\sum_{i\in \sigma_{k}}s_{i,\sigma_{k}(i)})\right)\Omega\,.
		\end{equation}
		In particular, Theorem \ref{mainthm1Zhou} holds.
	\end{enumerate}
\end{thm}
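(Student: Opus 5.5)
For part (i), I will start from Corollary \ref{cordetZnintermsofPsipi} (ii), which already writes
\[
\det Z_n=(-1)^{n-1}\sum_{S}\Big(\sum_{\pi_0\subseteq S}\Phi^{\circ}_{1,\pi_0}\sum_{\pi'\in\Pi_{S\setminus\pi_0}}\Phi^{\circ}_{0,\pi'}\Big)\sum_{\pi''\in\Pi_{[n]\setminus S}}\Phi_{\geq 1,\pi''} .
\]
The first step is to identify the bracketed factor. I will unfold the partition sums over $\pi_0$ and $\pi'$ into sums over cycles, using $\Phi^{\circ}_{\ast,\pi_k}=\sum_{\sigma_k\in\mathrm{cyc}(\pi_k)}\Phi^{\circ}_{\ast,\sigma_k}$. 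A partition of $S\setminus\pi_0$ together with a choice of a cycle on each block is exactly a permutation of $S\setminus\pi_0$. This turns the bracket into the left-hand side of Lemma \ref{lemmessytermsindetZnexpansion}. That lemma then gives $\sum_{\sigma\in\mathrm{cyc}(S)}\Psi_{|S|-1,\sigma}$, which is the quantity written $\Psi_{|S|-1,S}$ in \eqref{finalexpansionofdetZnappendix}, using the notation of Definition \ref{dfnPsipi}. For $|S|=1$ the convention $\Psi_{0,\cdot}=1$ matches $\Phi^{\circ}_{1,(a)}=1$. The same unfolding applied to $\Phi_{\geq1,\pi''}$ gives the $\pi$-sum. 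This proves (i).

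For part (ii), I will take the Hodge projection of each summand $\Psi_{|S|-1,\tau}\,\Phi_{\geq1,\sigma}\Omega$. First I expand $\Phi_{\geq 1,\sigma}$ as in Lemma \ref{lemexpressgradingjpartintermsofbuldingblocks}, obtaining products of constants $\widehat{e}_{r_k}(-s_{\sigma_k})$ times $\Psi_{|\sigma_k|-r_k,\sigma_k}$. Each such term is $\bar\partial$-closed by Lemma \ref{lemmaaboutclosednessofdecompositionofloopterm}. Its variables are split among the disjoint vertex sets $S,\pi_1,\dots,\pi_\ell$.

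I then apply Proposition \ref{propleadingcomponent} (i). The Hodge projection is $\int_X(\cdot)\,\mathrm{vol}$, and Fubini lets this integral factor over the disjoint vertex blocks. Because the functions depend only on differences $z_k-z_\ell$ within a block, the integral over each block reduces to an integral over the reduced coordinates. By Proposition \ref{propleadingcomponent} (ii), applied blockwise, any factor $\Psi_{M,I}$ with $1\le M\le |I|-1$ integrates to zero. Hence only two kinds of terms survive: those with $|S|-1=0$, so $|S|=1$, and those with $r_k=|\pi_k|$ for every $k$. In the surviving terms every $\Psi$-factor equals $1$, and the factor $\widehat{e}_{|\pi_k|}(-s_{\sigma_k})$ is constant. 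The volume has total mass $1$, so these constants pass through the integral unchanged.

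Finally I use the parity in Lemma \ref{lemanalyticpropertiesofemz} (i). The sign from $\widehat{e}_{|\pi_k|}(-x)=(-1)^{|\pi_k|}\widehat{e}_{|\pi_k|}(x)$, multiplied over all $k$, gives $(-1)^{n-1}$, since $\sum_k|\pi_k|=n-1$. This cancels the prefactor $(-1)^{n-1}$ and yields \eqref{eqnTngwpureweightstructuregeneralappendix}.

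To recover Theorem \ref{mainthm1Zhou}, I specialize to $s_{k,\ell}=z_k+w_k-z_\ell$. Then $\sum_{i\in\sigma_k}s_{i,\sigma_k(i)}=\sum_{i\in\pi_k}w_i$ for every cycle $\sigma_k$. The sum over $\mathrm{cyc}(\pi_k)$ therefore contributes the factor $(|\pi_k|-1)!$.

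The main obstacle is the factorization step in (ii). Proposition \ref{propleadingcomponent} (ii) is stated for a single $\Psi_{M,I}$ on all of $E_{[n]}$. I will need to justify that it applies blockwise after Fubini, with each block's vertex set standing in for $[n]$. I would check this by noting that the proof cited in that proposition uses only Fubini, Stokes and residues, all of which localize to a sub-product $E_{\pi_k}$.
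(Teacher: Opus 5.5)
Your proposal is correct and follows the paper's own proof: part (i) is Lemma~\ref{lemmessytermsindetZnexpansion} (after unfolding partitions with chosen cycles into permutations) substituted into Corollary~\ref{cordetZnintermsofPsipi}~(ii). Part (ii) discards every summand containing a factor $\Psi_{M,I}$ with $1\leq M\leq |I|-1$ via Proposition~\ref{propleadingcomponent}~(ii), which leaves only $|S|=1$ and $r_k=|\pi_k|$. The differences are minor: you justify the factorization over disjoint vertex blocks by Fubini applied to the integral formula of Proposition~\ref{propleadingcomponent}~(i), rather than by the K\"unneth isomorphism. That requires the formula for arbitrary integrable $\bar\partial$-closed forms in $\mathcal{V}_{\mathsf{D}}\Omega$, which its proof does give. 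You also make explicit the parity cancellation of the sign $(-1)^{n-1}$ that the paper leaves implicit.
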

\begin{proof}
	\begin{enumerate}[i).]
		\item 
		Invoking Definition \ref{dfnPsisigma}, Definition \ref{dfnPhisigmakappendix} and Definition \ref{dfnPsipi},  Lemma \ref{lemmessytermsindetZnexpansion} reads 
		\[
		\sum_{\substack{\pi_0 \subseteq S \\ |\pi_0| \geq 1}} \Phi^{\circ}_{1,\pi_0}
		\sum_{\pi' \in \Pi_{S\backslash \pi_0} } 
		\Phi^{\circ}_{0,\pi'} 
		=  \sum_{|\mathrm{m}|=|S|-1} \psi_{\mathrm{m},S}
		= \Psi_{|S|-1,S}  \,.
		\]
		Plugging this into Corollary \ref{cordetZnintermsofPsipi} (ii), one  obtains the result.
		
		\item Both $\Psi_{|S|-1,S}$ and $\Phi_{\geq 1,\pi}$ are  $\bar{\partial}$-closed by 
		Lemma \ref{lemmaaboutclosednessofdecompositionofloopterm}. 
		Recall that one has the natural isomorphism
		\begin{equation}\label{eqnfactorizationofholvolform}
			H^{0}(E_{[n]},\Omega_{E_{[n]}}^{n})\cong H^{0}(E_{S},\Omega_{E_{S}}^{|S|})
			\otimes H^{0}(E_{[n]\setminus |S|},\Omega_{E_{[n]\setminus S}}^{n-|S|})\,,
		\end{equation}
        Similar to the proof of Theorem \ref{thmDelignecanonicalsplittingonformlevel}, using the K\"unneth isomorphism, 
		to compute the class $[(	\det\,  Z_n)\Omega]_{\mathrm{Hodge}}$ it suffices to compute those for the individual factors of each summand.	
		By \eqref{eqndnfPhirappendix}, \eqref{eqnconventionPsisigmakappendix}	 and Proposition \ref{propleadingcomponent} (ii), we see that the  corresponding class of $\Psi_{|S|-1,S}$ is nonzero iff
		$
		|S|=1$ in which case 
		\[
		\Psi_{|S|-1,S}  =1\,.
		\]
		That of $\Phi_{\geq 1,\pi_{k}}$ is nonzero iff  $|\pi_{k}|=1$ in which case 
		\[
		\Phi_{\geq 1,\pi_{k}}=e_{1}(-\sum_{i\in \pi_{k}}s_{i,i})\,;\]
		or $|\pi_{k}|\geq 2$ in which case the nonzero contribution arises from
		\[
		\Phi_{|\pi_{k}|,\pi_{k}}=\sum_{\sigma_{k}\in \mathrm{cyc}(\pi_{k})}\widehat{e}_{r}(-\sum_{i\in \sigma_k}s_{i,\sigma_k(i)})\,.
		\]
		Combining, this yields the desired relation \eqref{eqnTngwpureweightstructuregeneral}.
		
		The extra assumption \eqref{eqnassumptiononskl}  ensures that 
		\[
		\sum_{i\in \sigma_{k}}s_{i,\sigma_{k}(i)}
		=\sum_{i\in \pi_{k}}w_{i}
		\]
		only depends on $\sigma_{k}$ through its underlying set $\pi_{k}$, thus 
		\[
		\sum_{\sigma_{k}\in \mathrm{cyc}(\pi_{k})} \widehat{e}_{|\pi_{k}|}(\sum_{i\in \sigma_{k}}s_{i,\sigma_{k}(i)})=(|\pi_{k}|-1)!\cdot \widehat{e}_{|\pi_{k}|}(\sum_{i\in \pi_{k}}w_{i})\,.
		\]
		This  proves Theorem \ref{mainthm1Zhou}.

	\end{enumerate}
\end{proof}

\end{appendices}

\bibliographystyle{amsalpha}

\providecommand{\bysame}{\leavevmode\hbox to3em{\hrulefill}\thinspace}
\providecommand{\MR}{\relax\ifhmode\unskip\space\fi MR }
\providecommand{\MRhref}[2]{%
  \href{http://www.ams.org/mathscinet-getitem?mr=#1}{#2}
}
\providecommand{\href}[2]{#2}

\bigskip{}

\noindent{\small Qiuzhen College, Tsinghua University, Beijing 100084, P. R. China}

\noindent{\small Email: \tt shengzx23@mails.tsinghua.edu.cn}\\

\noindent{\small Yau Mathematical Sciences Center, Tsinghua University, Beijing 100084, P. R. China}

\noindent{\small Email: \tt jzhou2018@mail.tsinghua.edu.cn}

\end{document}